\newcommand{\ff}{\mathcal{F}}
\newcommand{\A}{{\mathcal{A}}}
\newcommand{\Gcal}{{\mathcal{G}}}
\newcommand{\Ical}{{\mathcal{I}}}
\newcommand{\Ncal}{{\mathcal{N}}}
\newcommand{\Ocal}{{\mathcal{O}}}
\newcommand{\RR}{\mathbb{R}}
\newcommand{\Gr}{\operatorname{Gr}}
\newcommand{\Hom}{\operatorname{Hom}}
\newcommand{\id}{\textbf{\textit{I}}}
\newcommand{\End}{\operatorname{End}}
\newcommand{\GL}{\operatorname{GL}}
\newcommand{\Diff}{\operatorname{Diff}}
\newcommand{\rank}{\operatorname{rank}}
\newcommand{\hol}{{\operatorname{hol}}}
\newcommand{\Fol}{{\operatorname{Fol}}}
\newcommand{\Tangency}{{\operatorname{Tan}}}
\newcommand{\Tan}{{\operatorname{Tan}}}
\newcommand{\RH}{{\operatorname{RH}}}
\newcommand{\Mor}{{\operatorname{Hom}}}
\newcommand{\Char}{{\operatorname{Char}}}
\newcommand{\Fr}{{\operatorname{Fr}}}
\newcommand{\set}[1]{\lbrace #1\rbrace}
\newcommand{\inp}[1]{\langle #1\rangle}
\newcommand{\rr}{\mathbb{R}}
\DeclareMathOperator{\Res}{Res}
\newcommand{\N}{\mathcal{N}}
\newcommand{\C}{\mathcal{C}}
\DeclareMathOperator{\Der}{Der}
\DeclareMathOperator{\At}{At}
\DeclareMathOperator{\Aut}{Aut}
\DeclareMathOperator{\Hol}{Hol}
\DeclareMathOperator{\Var}{Var}
\DeclareMathOperator{\Sym}{Sym}
\renewcommand{\hat}{\widehat}
\DeclareMathOperator{\vertrm}{vert}
\newcommand{\tsigma}{\tilde{\sigma}}
\DeclareMathOperator{\ad}{ad}
\declaretheorem[style=definition,qed=$\diamondsuit$]{definition}
\declaretheorem[style=definition,qed=$\triangle$,sibling=definition]{example}
\declaretheorem[style=plain,sibling=definition]{theorem}
\declaretheorem[style=plain,sibling=definition]{lemma}
\declaretheorem[style=plain,sibling=definition]{proposition}
\declaretheorem[style=plain,sibling=definition]{corollary}
\declaretheorem[style=definition,qed=$\diamondsuit$,sibling=example]{claim}
\declaretheorem[style=definition,sibling=example]{question}
\declaretheorem[style=definition,qed=$\diamondsuit$,sibling=claim]{remark}
\declaretheorem[style=definition,sibling=example]{problem}
\newtheorem*{claim*}{Claim}
\numberwithin{theoremalpha}{section}
\numberwithin{equation}{section}
\numberwithin{definition}{section}
\numberwithin{theorem}{section}
\numberwithin{proposition}{section}
\numberwithin{lemma}{section}
\numberwithin{example}{section}
\numberwithin{remark}{section}
\numberwithin{corollary}{section}
\numberwithin{question}{section}
\numberwithin{problem}{section}
\newtheoremstyle{named}{}{}{\itshape}{}{\bfseries}{.}{.5em}{\thmnote{#3}#1}
\theoremstyle{named}
\newtheorem*{namedtheorem}{}
\numberwithin{equation}{section}
\address{3737 Wascana Pkwy, Regina, SK S4S 0A2, Canada}
\email{Francis.Bischoff@uregina.ca}
\address{Utrecht University, Department of Mathematics, Budapestlaan 6, 3584 CD Utrecht, The Netherlands}
\email{a.delpinogomez@uu.nl}
\address{Departement of Mathematics, Middelheimlaan 1, 2020 Antwerp, Belgium}
\email{aldowitte@hotmail.nl}
\title{Jets of foliations and $b^k$-algebroids}
\author{Francis Bischoff, \'Alvaro del Pino and Aldo Witte}
\begin{document}
\maketitle

\begin{abstract}
In this article, we introduce and study singular foliations of $b^k$ type. These singular foliations formalize the properties of vector fields that are tangent to order $k$ along a submanifold $W \subset M$. Our first result is a classification of these foliations, relating them to geometric structures defined in a formal neighborhood of the submanifold, such as jets of distributions that are involutive up to order $k-1$. 

When $W$ is a hypersurface, singular foliations of $b^k$ type are Lie algebroids. In this particular case, they are generalizations of the $b^k$-tangent bundles introduced by Scott. Indeed, they are always locally isomorphic to $b^k$-tangent bundles, but globally such an isomorphism is obstructed by a holonomy invariant. Our second main result is a Riemann-Hilbert-style classification of singular foliations of $b^k$ type in terms of holonomy representations. 

In this paper, we study singular foliations of $b^k$ type from several different perspectives. In particular: (1) We study the problem of extending a $k$th-order foliation to a $(k+1)$-th order foliation and prove that this is obstructed by a characteristic class. (2) When $W$ is a hypersurface, we give a detailed study of algebroid differential forms and extend Scott's calculation of the cohomology. (3) We study algebroid symplectic forms in terms of the geometric structures induced on $W$. In particular, we find that there is a close relationship between the above obstruction class for extensions and the symplectic variation of the symplectic foliation induced on $W$. 

\end{abstract}

\tableofcontents

\section{Introduction}

Let $D$ be a smooth hypersurface in a manifold $M$. To $(M,D)$ we can associate the $b$-tangent bundle, the Lie algebroid over $M$ whose sheaf of sections is the sheaf of vector fields on $M$ which are tangent to $D$. This was first introduced in algebraic geometry where it is known as the logarithmic tangent bundle. It's origins go at least as far back as Deligne's work flat connections with regular singularities \cite{MR0417174} and Hodge theory \cite{deligne1971hodge}  and it was further generalized in the work of Saito \cite{saito1980theory} on free divisors. In the $C^{\infty}$-setting it was introduced by Melrose \cite{MR1348401} in the course of his study of index theory on non-compact manifolds. In this setting, the hypersurface $D$ is taken to be the boundary of $M$, thought of as an ``ideal boundary at infinity.'' In both cases, the $b$-tangent bundle is used in order to treat geometric structures on $M$ that are singular along $D$. 

The $b$-tangent bundle is a central example in the theory of Lie algebroids, due both to its simplicity and its usefulness in studying singular geometric structures in a wide variety of settings; see for example \cite{GUILLEMIN2014864, Gualtieri-Li-2012, gualtieri2018stokes, MR3245143, cavalcanti2020self, bischoff2022normal, del2022regularisation}. This has motivated the study of Lie algebroids admitting a larger class of singularities, some examples being \cite{MR3805052, MR4229238, klaasse2018poisson}.

Lie algebroids can be seen as particularly nice examples of singular foliations. Singular foliations appear in many different situations, such as the orbits of Lie groups, the symplectic foliation underlying a Poisson manifold, and within geometric mechanics. The study of singular foliations has received renewed attention within differential geometry following the seminal work of Androulidakis and Skandalis \cite{AS09}.

A crucial class of Lie algebroids for us are the $b^{k+1}$-tangent bundles introduced by Scott in \cite{MR3523250} and studied in depth by Miranda and collaborators \cite{MR4523256, MR4257086, MR4236806, MR3952555}. Informally, its sections are the vector fields that are tangent to $D$ to order $k$.\footnote{In the literature, the $b^{k+1}$-tangent bundle is defined as vector fields $(k+1)$-tangent to $D$. Unfortunately, as we will see in Remark \ref{rem:clash} below, this is inconsistent with the terminology coming from jets. Our indexing for $b^{k+1}$-tangent bundles is consistent with Scott's, but we have a shift by 1 in what we call $k$-tangent.}
However, there is a subtlety here: the notion of $k$-th order tangency is not well-defined. To formalise it, Scott chooses extra data: the $k$-jet of a defining function for $D$. As a result, there is not just one $b^{k+1}$-tangent bundle for a given hypersurface $D \subset M$, but several.

\subsection{Hypersurface Lie algebroids}

In order to understand $b^{k+1}$-tangent bundles better, we abstract their properties:
\begin{definition}
Fix a manifold $M^n$ of dimension $n$ and a hypersurface $D \subset M$. A Lie algebroid $\rho: \A^n \rightarrow M^n$ of rank $n$ is of \textbf{hypersurface} type if its orbits are the components of $D$ and the components of the complement $M\backslash D$.
\end{definition}
This definition can equivalently be stated as the condition that $\rho$ is an isomorphism on the complement of $D$ and has rank $n-1$ over $D$. Given $\rho$, we can consider its determinant $\det(\rho)$ as a section of $\Gamma(\det(A^*) \otimes \det(TM))$. The hypersurface assumption implies that $\det(\rho)$ vanishes precisely along $D$. 
\begin{definition}
A hypersurface Lie algebroid $A \rightarrow M$ is of \textbf{$b^{k+1}$-type} if $\det(\rho)$ vanishes to order $k+1 < \infty$ along $D$.
\end{definition}
Do note that, more generally, we could consider algebroids that have a different order of vanishing in each component of $D$. However, we will henceforth focus on the case in which $D$ is connected, since all interesting behaviours are already present in this setting.

It is immediate that $b^{k+1}$-tangent bundles are Lie algebroids of $b^{k+1}$-type. The question that kick-started the present paper is the converse: \emph{Is every Lie algebroid of $b^{k+1}$-type a $b^{k+1}$-tangent bundle?}

\emph{Locally}, the answer is \emph{yes}:
\begin{proposition}\label{prop:localformhypersurfacealgebroid}
Let $A \rightarrow M$ be a Lie algebroid of $b^{k+1}$-type. Then for any $x \in D$ there are coordinates $(x_1,\ldots,x_n)$ around $D$ such that
\begin{equation}\label{eq:localform}
\Gamma(\A) \simeq \inp{x_1^{k+1}\partial_{x_1},\partial_{x_2},\ldots,\partial_{x_n}}.
\end{equation}
\end{proposition}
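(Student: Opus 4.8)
The plan is to replace $\A$ by its image under the anchor. First I would note that $\rho\colon\Gamma(\A)\to\X(M)$ is injective on sections: any $s\in\ker\rho$ vanishes on the dense open set $M\setminus D$, where $\rho$ is a fibrewise isomorphism, hence vanishes identically. Thus an isomorphism as in \eqref{eq:localform} amounts to finding coordinates in which the image submodule $\Ical:=\rho(\Gamma(\A))\subseteq\X(M)$ equals $\inp{x_1^{k+1}\partial_{x_1},\partial_{x_2},\ldots,\partial_{x_n}}$. The submodule $\Ical$ is locally free of rank $n$, is closed under the Lie bracket (as $\rho$ preserves brackets), and coincides with $\X(M)$ off $D$. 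Picking coordinates with $D=\{x_1=0\}$, the assumption that the orbit through $x$ is an open subset of $D$ forces every element of $\Ical$ to be tangent to $D$, so $\Ical\subseteq\inp{x_1\partial_{x_1},\partial_{x_2},\ldots,\partial_{x_n}}$, the module of the $b$-tangent bundle.

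Two tasks remain: to arrange the coordinates so that $\partial_{x_2},\ldots,\partial_{x_n}\in\Ical$, and then to pin down the transverse generator. Granting the first, let $e$ complete lifts of $\partial_{x_2},\ldots,\partial_{x_n}$ to a local frame of $\A$. Writing $\rho(e)=f\,\partial_{x_1}+\sum_{j\ge2}g_j\partial_{x_j}$ and subtracting from $e$ the corresponding combination of the lifts of $\partial_{x_2},\ldots,\partial_{x_n}\in\Ical$, I may assume $\rho(e)=f\,\partial_{x_1}$, with $f\in(x_1)$ by tangency. In the resulting frame the anchor matrix is diagonal with entries $(f,1,\ldots,1)$, so $\det(\rho)=f$ up to a unit; the $b^{k+1}$-hypothesis therefore says precisely that $f$ vanishes to order $k+1$, i.e. $f=x_1^{k+1}u$ with $u(x)\ne0$. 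Since $u$ is then a unit near $x$, we have $\inp{f\partial_{x_1}}=\inp{x_1^{k+1}\partial_{x_1}}$ and hence $\Ical=\inp{x_1^{k+1}\partial_{x_1},\partial_{x_2},\ldots,\partial_{x_n}}$, as desired. It is worth emphasising that only the \emph{order} of vanishing of $f$ is used here; the unit $u$ is absorbed for free, so no residue-type invariant can obstruct the \emph{local} normal form. This is exactly where the local problem is easier than the global one, whose holonomy invariant arises from units that cannot be trivialised coherently.

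The crux is thus the first task: finding coordinates with $\partial_{x_2},\ldots,\partial_{x_n}\in\Ical$, which I expect to be the main difficulty. This is the codimension-one instance of the local splitting theorem for Lie algebroids. The naive route — select sections $s_2,\ldots,s_n$ with $\rho(s_j)(x)$ a basis of $T_xD$ and straighten the rank-$(n-1)$ distribution $\inp{\rho(s_2),\ldots,\rho(s_n)}$ via Frobenius — does not work directly, because this distribution is involutive only \emph{along} $D$: a bracket $[\rho(s_i),\rho(s_j)]=\rho([s_i,s_j])$ can acquire a transverse component proportional to $\rho(e)=f\,\partial_{x_1}$, which vanishes on $D$ but not nearby. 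Instead I would use that the $\rho(s_j)$ are tangent to $D$ and hence have $D$-preserving flows, and run the standard flow-box induction — using only that $\Ical$ is bracket-closed — to produce a chart in which $\A$ is the product of $TD$ with a rank-one transverse algebroid over the $x_1$-axis. In such a chart the leaf directions are literally $\partial_{x_2},\ldots,\partial_{x_n}\in\Ical$ and the transverse factor is $\inp{f(x_1)\partial_{x_1}}$, returning us to the determinant computation above. Executing this splitting carefully, despite the rank jump of $\rho$ across $D$, is where the genuine work lies.
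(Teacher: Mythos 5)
Your proposal is correct and takes essentially the same route as the paper: the paper's proof also reduces everything to the local splitting theorem for Lie algebroids (which it simply cites, obtaining $A|_U \cong \gamma^{!}A \times TV$ for a curve $\gamma$ transverse to $D$), and then pins down the rank-one transverse factor from the order of vanishing of the anchor, exactly as in your determinant/unit-absorption step. The splitting step that you flag as ``where the genuine work lies'' is precisely what the paper outsources to the cited Splitting Theorem, so your flow-box sketch could simply be replaced by that citation.
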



But \emph{globally}, the answer turns out to be \emph{no}. The goal of this article is to formalise these claims. We dedicate the rest of the introduction to stating and discussing our main results.

\subsection{$k$-th order foliations}
To understand Lie algebroids of $b^{k+1}$-type we have to introduce the notion of \emph{$k$-th order foliation.} These can be thought of as germs of distributions along $D$, which are \emph{involutive up to order $k$}. We will show that:
\begin{theorem}
There is a one-to-one correspondence between Lie algebroids of $b^{k+1}$-type and $k$-th order foliations.
\end{theorem}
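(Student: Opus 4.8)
The plan is to construct the correspondence explicitly in both directions and then verify the two assignments are mutually inverse. In the forward direction, given a Lie algebroid $\rho\colon \A \to M$ of $b^{k+1}$-type, I would send it to the image of its anchor, $\F_\A := \rho(\Gamma(\A)) \subseteq \X(M)$, regarded as a $C^\infty(M)$-submodule of vector fields. Two facts make this a $k$-th order foliation. First, since the anchor of a Lie algebroid is a morphism of brackets, $\F_\A$ is closed under the Lie bracket of vector fields, i.e. involutive. Second, the local normal form of Proposition~\ref{prop:localformhypersurfacealgebroid} shows that near any point of $D$ the module $\F_\A$ equals $\inp{x_1^{k+1}\partial_{x_1},\partial_{x_2},\dots,\partial_{x_n}}$, which is precisely the local model of a $k$-th order foliation; in particular its involutivity is visibly of the correct order-$k$ type. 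The one point requiring care is that although $\rho$ is not injective as a bundle map (it drops rank along $D$), it is injective as a map of sheaves of sections: a section killed by $\rho$ vanishes on the dense open set $M \setminus D$, hence everywhere. Thus $\Gamma(\A) \xrightarrow{\rho} \F_\A$ is an isomorphism of modules, which is what will let us recover $\A$.

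For the reverse direction, given a $k$-th order foliation $\F$ I would reconstruct the algebroid by a Serre--Swan argument. The crux is to show that $\F$, as a sheaf of $C^\infty$-modules, is locally free of rank $n$: using the local normal form for $k$-th order foliations, one exhibits $n$ generators (a transverse generator $x_1^{k+1}\partial_{x_1}$ together with a frame $\partial_{x_2},\dots,\partial_{x_n}$ of $TD$) and checks they satisfy no nontrivial relation, since $x_1^{k+1}$ is not a zero divisor in $C^\infty(M)$. Serre--Swan then produces a rank-$n$ vector bundle $\A \to M$ with $\Gamma(\A) \cong \F$. The inclusion $\F \hookrightarrow \X(M)$ supplies the anchor $\rho$, and the Lie bracket of vector fields restricts to $\F$ by involutivity, endowing $\A$ with a Lie algebroid structure (the Leibniz identity being inherited from $\X(M)$). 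Finally I would verify that $\A$ is of $b^{k+1}$-type by computing $\det(\rho)$ in the above frame: relative to $\partial_{x_1},\dots,\partial_{x_n}$ the anchor is $\mathrm{diag}(x_1^{k+1},1,\dots,1)$, so $\det(\rho)$ equals a unit times $x_1^{k+1}$ and vanishes to order exactly $k+1$ along $D$.

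It then remains to check that the two constructions are mutually inverse. Starting from $\F$, forming $\A$, and taking $\F_\A = \rho(\Gamma(\A))$ returns $\F$ by construction. Starting from $\A$, forming $\F_\A$, and reconstructing, the sheaf isomorphism $\Gamma(\A) \cong \F_\A$ established in the first paragraph shows that Serre--Swan returns $\A$, together with its anchor and bracket, up to canonical isomorphism.

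The step I expect to be the main obstacle is establishing local freeness of an \emph{abstract} $k$-th order foliation, that is, deriving the local normal form directly from the intrinsic ``involutive up to order $k$'' definition rather than assuming it. This is where the order-$k$ bookkeeping genuinely enters: one must show that the involutivity condition forces the transverse part of the module to be generated by a single element vanishing to the correct order $k+1$, with no extra generators or relations appearing. Once the module is known to be locally free of rank $n$ with the stated local model, the Lie algebroid structure and the $b^{k+1}$ condition follow formally.
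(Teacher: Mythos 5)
There is a genuine gap here, and you have in fact flagged it yourself without filling it: your argument never derives the local normal form for an \emph{abstract} $k$-th order foliation from the intrinsic ``involutive up to order $k$'' condition, yet your reverse direction cannot begin without it. Note first a definitional mismatch: in the paper a $k$-th order foliation is not a $C^\infty(M)$-submodule of $\X(M)$ but an involutive $k$-jet of distribution $\sigma \in \Hol^k_D(\Gr(TM,l))$; the object you construct, $\F_\A = \rho(\Gamma(\A))$, is what the paper calls a \emph{singular foliation of type $b^{k+1}$}. Passing between these modules and jets of distributions is exactly the content of Proposition \ref{prop:singfolstojets}, so by working only at the module level you have proved the comparatively easy statement that Lie algebroids of $b^{k+1}$-type correspond to singular foliations of type $b^{k+1}$ (anchor injectivity plus Serre--Swan, which is essentially the paper's Proposition \ref{prop:isliealg}), not the stated theorem. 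To go from a jet $\sigma$ to a module one must set $\A(\sigma) = \mathcal{I}_D^{k+1}\X(M) + \Gamma(\xi)$ for a germ representative $\xi$ of $\sigma$ (Lemma \ref{lem:decomposeTangencySheaf}), check that this is independent of the choice of $\xi$, and check that the jet is recovered as $J^k_D\A(\sigma)$; none of this appears in your write-up, and it is needed for your ``mutually inverse'' verification to be about the objects the theorem actually concerns.

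The missing core is the Frobenius-type statement: every involutive $k$-jet of distribution admits coordinates in which it is the $k$-jet of the standard foliation (Theorem \ref{th:Frobenius}). This is precisely what shows that $\A(\sigma)$ has the local model $\inp{x_1^{k+1}\partial_{x_1},\partial_{x_2},\ldots,\partial_{x_n}}$, hence is locally free, so that your Serre--Swan step and your determinant computation can run. It is not formal bookkeeping: the paper's proof goes through the machinery of Section \ref{sec:Atiyah}, reinterpreting $\sigma$ as a bracket-preserving splitting $\sigma : TD \to \At^k(\nu_D)$ of the $k$-jet Atiyah algebroid (a flat connection on the bundle of $k$-jet frames), integrating it by Lie's second theorem to a holonomy morphism, and using Proposition \ref{prop:simplyconnected} to conclude that over a simply connected base --- in particular locally --- $\sigma$ is the $k$-jet of an honest foliation, to which the classical Frobenius theorem applies. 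Your forward direction, which rests on Proposition \ref{prop:localformhypersurfacealgebroid} (itself a consequence of the splitting theorem for Lie algebroids), is sound as far as it goes; but until the normal form for abstract involutive jets is established, the correspondence you describe is only half-built.
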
 

The Lie algebroids of Scott fit into this framework by letting the $k$-th order foliation $\sigma$ be induced by the foliation of level sets of a defining function around $D$. However, general $k$-th order foliations might have holonomy around $D$. This holonomy is precisely what obstructs Lie algebroids of $b^{k+1}$-type from being globally isomorphic to $b^{k+1}$-tangent bundles.


$k$-th order foliations are interesting objects in their own right, and there is no reason to restrict ourselves to jets along a hypersurface $D$. Many of the results of this paper apply more generally to jets along arbitrary submanifolds $W \subset M$. This forces us to leave the realm of Lie algebroids and work with the following more general family of singular foliations:
\begin{definition}\label{def:typek}
Fix a manifold $M$ and a submanifold $W$. A singular foliation $\ff \subset \mathfrak{X}^1(M)$ is of \textbf{type $b^{k+1}$} if its leaves are the components of $W$ and the components of the complement and, moreover, around every point of $W$, $\ff$ is isomorphic to the following local model
\begin{equation*}
\inp{x^I\partial_{x_1},\ldots,x^I\partial_{x_l},\partial_{x_{l+1}},\ldots,\partial_{x_n}},
\end{equation*}
with $x^I = x_1^{i_1}\cdot \ldots \cdot x_l^{i_l}$ with $I = (i_1,\ldots,i_l)$ running over all multi-indices of length $k+1$. 
\end{definition}

Then:
\begin{namedtheorem}[Proposition \ref{prop:singfolstojets}]
There is a one-to-one correspondence between singular foliations of $b^{k+1}$-type and $k$-th order foliations.
\end{namedtheorem}
The correspondence goes roughly as follows. Given a $k$-th order foliation $\sigma$ along a submanifold $W$, we can consider a germ of distribution $\xi$ along $W$ representing it. To it, we can associate the module of vector fields which are tangent to it up to order $k$:
\begin{equation*}
\Tan^k(M,\xi) = \mathcal{I}_W^{k+1}\mathfrak{X}(M) + \Gamma(\xi),
\end{equation*}
where $\mathcal{I}_W^{k+1}$ is the ideal of functions vanishing to order $k+1$ along $W$ and $+$ denotes the $C^{\infty}(M)$-span of the union. This module is a singular foliation and locally of the form described above.

We will see that there are several equivalent ways of describing $k$-th order foliations, each with their own merits. The first comes from adopting the algebraic viewpoint and considering a \emph{$k$-th order neighbourhood $\N^k_W$} of $W$. One may think of this as a ringed space whose structure sheaf of functions consists of $k$-jets of polynomials in the normal directions of $W$. One may then make sense of \emph{foliations} on ${\rm Spec}(\N^k_W)$.

The second alternative approach comes from the following observation: the singular foliation has a preferred local model and can thus, locally, be described by a defining function. These local defining functions can be combined into a \emph{subsheaf of invariant functions} $\mathcal{C} \subset \mathcal{N}^k_W$.

The final alternative approach comes from observing that, although a singular foliation of \emph{type $b^{k+1}$} is not naturally a Lie algebroid, there is nevertheless a natural Lie algebroid that one can define once a choice of tubular neighbourhood has been fixed. This is given by the $k$-th order Atiyah algebroid of the normal bundle $\At^{k}(\nu_{W})$, defined to be the algebroid of $k$-jets of projectable vector fields on the total space of $\nu_{W}$ which are tangent to the zero section. It turns out that a singular foliation $\mathcal{F}$ of type $b^{k+1}$ is completely encoded by the intersection $\mathcal{F} \cap \At^{k}(\nu_{W})$, which is in turn equivalent to a Lie algebroid morphism $\sigma : TW \to \At^{k}(\nu_{W})$.

The first main result of this article is to show that all these approaches are equivalent:
\begin{namedtheorem}[Theorem \ref{th:equivalence} and Proposition \ref{prop:folsassplittings}]
Let $M^n$ be a manifold together with a connected embedded submanifold $W^{n-l}$ of codimension $l$. Then there is a one-to-one correspondence between:
\begin{enumerate}
\item Singular foliations of type $b^{k+1}$, $\mathcal{A} \subset \mathfrak{X}(M)$.
\item $k$-th order foliations $\sigma \in \Hol^k_W(\Gr(TM,l))$ .
\item Foliations on the $k$-th order neighbrouhood of $W$, $F \subset \Der(\N_W^k)$.
\item Subsheaves of invariant functions $\C \subset \N^k$.
\item And, after fixing a tubular neighbourhood embedding, flat connections $\sigma : TW \rightarrow \At^k(\nu_W)$.
\end{enumerate}

\end{namedtheorem}

\subsection{Holonomy, Frobenius and Riemann-Hilbert}
We will give a precise notion of \emph{holonomy} for $k$-th order foliations, using the Lie algebroid $\At^k(\nu_W)$ of $k$-jets of vector fields along $W$. We view a $k$-th order foliation as a Lie algebroid morphism $\sigma: TW \rightarrow \At^k(\nu_W)$. By invoking Lie's second theorem for Lie algebroid morphisms we obtain a groupoid morphism $\Sigma : \Pi_1(W) \rightarrow \mathcal{G}(\Fr^k(\nu_W))$. By restricting $\Sigma$ to $x \in W$, and choosing a framing for $\nu_{W,x}$ we obtain a representation 
\begin{equation*}
\hol_{\sigma,x} : \pi_1(W,x) \rightarrow J^k\Diff(\rr^l,0)
\end{equation*}
which is the holonomy of $\sigma$. Here $J^k\Diff(\rr^l,0)$ denotes the group of $k$-jets at 0 of diffeomorphisms of $\rr^l$, which fix the origin.


Observe that $\At^1(\nu_W) = \At(\nu_W)$, the Atiyah algebroid of the normal bundle of $W$. Hence a $1$-th order foliation is simply a flat linear connection on $\nu_W$. In this case, the holonomy as defined above is precisely the holonomy of the connection. 

The classical Riemann-Hilbert correspondence states that every linear representation of $\pi_1(W,x)$ corresponds to a flat vector bundle. We will show a higher order version of this result:

\begin{namedtheorem}[Theorem \ref{thm:RH}]
The association $\sigma \mapsto \hol_{\sigma,x}$ induces an equivalence between:
\begin{itemize}
\item  $k$-th order foliations, up to isomorphism
\item conjugacy classes of representions, i.e. points of ${\rm Hom}(\pi_1(W,x),J^k\Diff(\rr^l,0))/J^k\Diff(\rr^l,0)$
\end{itemize}
\end{namedtheorem}
In this theorem, two $k$-th order foliations $\sigma,\sigma'$ are isomorphic if there exists a germ of diffeomorphism $\varphi : \nu_W \rightarrow \nu_W$ which is the identity on $W$ and satisfies $\varphi^*\sigma = \sigma'$. 


The existence of a holonomy representation for $k$-th order foliations also implies the following:

\begin{namedtheorem}[Proposition \ref{prop:simplyconnected}]
Let $W$ be simply connected, and $\sigma$ a $k$-th order foliation. Then, there exists a germ of foliation $\mathcal{F}$ around $W$ with $J^k\mathcal{F} = \sigma$.
\end{namedtheorem}
In particular, this allows us to show that every $k$-th order foliation is locally represented by an actual foliation. Using this we immediately obtain a Frobenius theorem result for $k$-th order foliations (Theorem \ref{th:Frobenius}).

\subsection{The extension problem}
Although every $k$-th order foliation around a symply connected submanifold is representable by an actual foliation, this is not true in general. In Section \ref{sec:extending} we study the following question:
\begin{question}
Let $\sigma$ be a $k$-jet of a foliation. When does there exist a $(k+1)$-th order foliation $\tilde{\sigma}$ with the same $k$-jet as $\sigma$?
\end{question}
We will solve this problem by making use of the theory of Lie algebroid extensions. We will show that:

\begin{namedtheorem}[Proposition \ref{prop:extensionresult}]
A $k$-th order foliation $\sigma$ may be extended to a $(k+1)$-th order foliation if and only if a certain characteristic class $c(\sigma)\in H^2(D;\nu_W^{-k})$ vanishes. 
\end{namedtheorem}
Secondly we will make use of the Riemann-Hilbert correspondence to show:
\begin{namedtheorem}[Theorem \ref{th:secondextensionclass}]
A $k$-th order foliation $\sigma$ may be extended to a $(k+1)$-th order foliation if and only if a certain class in $H^2(\pi_1(D,x);J^k\Diff(\rr^l,0))$ vanishes. 
\end{namedtheorem}
The two approaches will be linked using the Van-Est isomorphism and Morita equivalence of the integrating groupoids.

\subsection{Maurer-Cartan equation}
A $k$-th order foliation always induces a $1$-th order foliation, and consequently a flat connection $\nabla$ on $\nu_W$. One way of thinking about the $k$-th order foliation is as a deformation of this flat connection. This can be made precise as follows:
\begin{namedtheorem}[Proposition \ref{prop:MCexplicit}]
Let $E \rightarrow W$ be a vector bundle with flat linear connection $\nabla$. Then, $k$-th order foliations correspond to tuples $(\eta_0,\ldots,\eta_{k-1})$ with $\eta_i \in \Omega^1(W;\Sym^iE^* \otimes \End(E))$ satisfying the Maurer-Cartan equation
\begin{equation*}
d^{\nabla}\eta_r = \frac{1}{2}\sum_{i+j=r}[\eta_i,\eta_j]
\end{equation*}
\end{namedtheorem}
Here $\Sym^iE^* \otimes \End(E) \subset \At^k(E)$ is identified with the vertical vector fields of degree $i$. The corresponding bracket is the one induced from the bracket on $\At^k(E)$.

\subsection{Cohomology and symplectic structures}
We now revert back to the case where $W = D$ is codimension one. We can use the description of a $k$-th order foliation as a Maurer-Cartan element to endow $\Omega^{\bullet-1}(D;\bigoplus_{i=0}^k\nu_D^i)$ with a differential. This allows us to compute the cohomology of Lie algebroids of $b^{k+1}$-type:
\begin{namedtheorem}[Corollary \ref{cor:exactsequence}]
Let $A$ be a Lie algebroid of $b^{k+1}$-type. Then there exists an isomorphism
\begin{equation*}
H^{\bullet}(A) \simeq H^{\bullet}(M) \oplus H^{\bullet-1}(D;\bigoplus_{i=0}^k\nu_D^i),
\end{equation*}
where the cohomology of the right-most entry is obtained from a particular $TD$ connection on $\bigoplus_{i=0}^{k}\nu_D^i$ induced by the $k$-th order foliation.
\end{namedtheorem}
As a special case, this recovers Scott's calculation of the cohomology of $b^{k+1}$-tangent bundles. Indeed, in this case the normal bundle $\nu_{D}$ is trivial, and the right hand side reduced to a direct sum of $k+1$ copies of $H^{\bullet-1}(D)$. For general Lie algebroids this cohomology is much more intricate. We show in Proposition \ref{spectralsequenceprop} that there exists a spectral sequence computing this cohomology whose $E_{1}$-page is given by the cohomology of the local systems $\nu_{D}^{i}$ induced by the flat connection on the normal bundle. Furthermore, in Proposition \ref{localalgebraformula} we give a detailed description of the structure of $(\Omega_{A}^{\bullet}, d)$ as a commutative differential graded algebra in the case that $M$ is a tubular neighbourhood of $D$. Interestingly, as we explain in Section \ref{UniversalAlgebrasection}, this algebra structure is controlled by a universal cdga related to the Lie algebra of $k$-jets of vector fields on $\mathbb{R}$.

\subsection{Symplectic structures}
We can now use the description of the Lie algebroid cohomology to study symplectic structures on Lie algebroids of $b^{k+1}$-type. We will provide ample examples of such structures, and present a normal form result around $D$. A $b^{k+1}$-symplectic structure induces a symplectic foliation on $D$. In contrast to the case of $b^{k+1}$-tangent bundles, this symplectic foliation can have non-trivial symplectic variation. 
\begin{namedtheorem}[Proposition \ref{lem:structureonD}]
Let $\omega \in \Omega^2(\A)$ be a $b^{k+1}$-symplectic structure. Then the symplectic foliation on $D$ has symplectic variation given by (the restriction of) the extension class $c(\sigma)\in H^2(D;\nu_D^{-k})$ from Proposition \ref{prop:extensionresult}.
\end{namedtheorem} 



\subsection*{Relation to other work}
Near the time of completing this article, we learned of the preprint \cite{francis2023singular} by Michael Francis, which is based on his 2021 thesis \cite{francis2021groupoids}. In this work, Francis independently introduces the notion of $k$-th order foliations for codimension-one submanifolds, under the name \emph{foliations of transverse order k}, and proves a Riemann-Hilbert correspondence result similar to our Theorem \ref{thm:RH}. However, the focus of our two works seems to be complementary: while we focus on the geometry of the hypersurface Lie algebroids, Francis instead goes on to describe their holonomy groupoids and the resulting groupoid $C^*$-algebras.

\subsection*{Structure of the article}

Section \ref{sec:new} is a completely self-contained section, summarizing the results of this article applicable to the theory of Lie algebroids and providing concrete examples. It is aimed at the reader interested in the applications to the study of singular symplectic structures. Indeed, in this section we describe the geometric structures induced by an algebroid symplectic form on the submanifold $D$ and prove normal form theorems. 

In Section \ref{sec:fols} we set the stage by fixing the terminology regarding holonomic jets of subbundles. This will be used to define the notion of a $k$-th order foliation.  We will also introduce foliations on $k$-th order neighbourhoods, sheaves of invariant functions and show that all these objects are equivalent to singular foliations of type $b^{k+1}$.

In Section \ref{sec:Atiyah} we will introduce the Atiyah algebroid of $k$-jets of vector fields. We will use this to identify $k$-jets of vector fields with splittings of this Atiyah algebroid. We will then use this to introduce a notion of holonomy, which we will employ to obtain a Frobenius-type result.

In Section \ref{sec:RH} we will introduce and prove the Riemann-Hilbert correspondence for $k$-th order foliations. This will be achieved by introducing a universal $J^k\Diff(\rr^l,0)$-bundle and studying its properties. 

In Section \ref{sec:extending} we will introduce and solve the extension problem for $k$-th order foliations. This will be done by using the theory of extension of Lie algebroids, groupoids and groups (which for the readers convenience have been collected in Appendix \ref{sec:appa}). 

In Section \ref{sec:cohomology} we will compute the cohomology of Lie algebroids of $b^{k+1}$-type and study the structure of the resulting cdgas. 

\subsection*{Acknowledgements}
We are grateful to Marius Crainic and Marco Zambon for insightful discussions. We are also grateful to the organizers of the `Higher Geometric Structures along the Lower Rhine XVI' conference, where a preliminary version of this article was presented.

The third author was supported by FWO and FNRS under EOS projects G0H4518N and G012222N, FWO Project G083118N, and directly by the University of Antwerp via a BOF opvangmandaat. 

\section{Symplectic geometry on Lie algebroids of $b^k$-type}\label{sec:new}
In this section we will collect the results in this paper applicable to the study of Lie algebroids. This section is self-contained and aimed at the reader interested in applying the results obtained in this paper to the study of symplectic structures on Lie algebroids.

\subsection{$b^k$-Lie algebroids}
We start with proving the normal form statement of hypersurface Lie algebroids stated in the introduction:
\begin{proof}[Proof of Proposition \ref{prop:localformhypersurfacealgebroid}]
This follows from the Splitting Theorem for Lie algebroids \cite{Duf01, Fer02, Wei00, BLM19}. Namely, let $x \in D$ and let $\gamma: I \to M$ be a curve which passes through $x$ and is transverse to $D$. Then, there is a neighbourhood $U$ of $x$ and an isomorphism of Lie algebroids $A|_U \cong \gamma^{!}A \times TV$, where $V = U \cap D$. Here $\gamma^{!}A$ is a rank-$1$ Lie algebroid over the interval $I$ with the property that its anchor vanishes precisely at the origin to order $k+1$. Hence, $i^{!}A$ is generated by a vector field $x_{1}^{k+1} \partial_{x_{1}}$, for $x_{1}$ a coordinate on $I$. Let $x_{2}, ..., x_{n}$ be coordinates on $V$. Then $A|_U$ has a basis given by the vector fields $x_{1}^{k+1} \partial_{x_{1}}, \partial_{x_{2}}, ..., \partial_{x_{n}}$.
\end{proof}

We will see in Theorem \ref{th:equivalence} that there are several ways to classify Lie algebroids of $b^{k+1}$-type. One way is by associating to them distributions that are involutive up to a certain order:
\begin{namedtheorem}[Definition \ref{def:asssingfol} and Proposition \ref{prop:singfolstojets}]
Let $D \subset M$ be a hypersurface and let $\xi \subset TM$ be a germ of a distribution around $D$. Suppose this distribution is 
\begin{enumerate}
\item tangent to $D$: $\xi|_{D} = TD$, 
\item and is involutive up order $k$, meaning that $j^k_x[X,Y] \in j^k_x\xi$ for all $x \in D$ and $X,Y \in \Gamma(\xi)$ 
\end{enumerate}
Then the sheaf of vector fields which are tangent to $\xi$ along $D$ to order $k$:
\begin{equation}\label{eq:tanfirst}
\Tan^k(M,\xi) := \mathcal{I}_D^{k+1}\mathfrak{X}(M) + \Gamma(\xi),
\end{equation}
defines a Lie algebroid of $b^{k+1}$-type, and every Lie algebroid of $b^{k+1}$-type is of this form.
\end{namedtheorem}
In the above $j^{k}$ denotes taking the $k^{th}$ order jet and $\mathcal{I}_{D}^{k+1}$ denotes the ideal of functions on $M$ vanishing along $D$ to order at least $k+1$. 

We will study this construction in detail in the Section \ref{sec:fols} below. In particular, we show that the Lie algebroid is determined by the $k$-jet $\sigma = j^k_D\xi$ and that there is a bijection between $b^{k+1}$-algebroids and the data of such $k$-jets of distributions.

\begin{remark}\label{rem:clash}
There is an unfortunate clash of terminology between the theory of $b^k$-tangent bundles and jets of vector fields. Informally, Scott states in \cite{MR3523250} that the $b^k$-tangent bundle consists of ``vector fields $k$-tangent to $D$''. In particular, this defines vector fields tangent to $D$ to be called $1$-tangent to $D$.

This is however inconsistent with the point of view of jets: A vector field $X$ being tangent to $D$, means that for any $p \in D$ the value of $X$ at $p$ is in $T_pD$: $X_p \in T_pD$. In terms of jets, this is a condition on the zero jet, and hence should be called 0-tangent.

Consequently, we will say that the $b^{k+1}$-tangent bundle consists of ``vector fields $k$-tangent to $D$''. This ensures that the meaning of the $b^k$-tangent bundle remains unchanged, and only the terminology of $k$-tangency is different from \cite{MR3523250}.
\end{remark}

\subsubsection{Scott's Lie algebroids}
In \cite{MR3523250}, the $b^{k+1}$-tangent bundles are introduced making use of a semi-global defining function $f$ for the hypersurface $D$. Given this defining function one considers the jet $j = j^{k}f$ and the module
\begin{equation*}
\mathfrak{X}(I_D^{k+1},j) :=\set{ X \in \mathfrak{X}^1(M) : \mathcal{L}_X(f) \in I_D^{k+1} \text{ for all } f \in j}.\hfill \qedhere
\end{equation*}
In \cite{MR3523250} it is then proven that this is a well-defined Lie algebroid, i.e. only depending on $j^{k}f$ and not on the whole $f$.

In the language of Equation \eqref{eq:tanfirst} we have $\mathfrak{X}(I_D^{k+1},j) = \Tan^k(M,\ker df)$.

\subsection{Extension problem}
When working on the total space of a real line bundle we may also classify Lie algebroids of $b^{k+1}$-type in terms of explicit Maurer-Cartan elements. Let $L$ be a real line bundle over $D$ and let $\At^{k}(L)$ be the extended Atiyah algebroid 
\[
\At^{k}(L) = \At(L) \oplus \bigoplus_{i = 1}^{k-1}L^{-i}.
\]
This contains the subalgebroid $\bigoplus_{i = 0}^{k-1}L^{-i}$, which is a bundle of Lie algebras whose bracket is given by 
\begin{equation*}
[-,-] : L^{-i} \times L^{-j} \rightarrow L^{-i-j}, \quad [s,t] = (j-i)s\otimes t.
\end{equation*}

\begin{proposition} \label{MaurerCartanAlgebroidProp}
A $b^{k+1}$-algebroid $A$ on the total space of $L$ is equivalent to the data of a bracket preserving splitting $\nabla + \sigma : TD \to \At^{k}(L)$ of the anchor map of $\At^k(L)$. This decomposes into the data of 
\begin{enumerate}
\item A bracket preserving splitting $\nabla : TD \to \At(L)$ corresponding to a flat connection on $L$.
\item An element $\sigma = \sum_{i = 1}^{k-1} \eta_{i} \in \Omega_{D}^{1}(\bigoplus_{i = 1}^{k-1}L^{-i})$ satisfying the following Maurer-Cartan equation: 
\begin{equation} \label{eq:MCfirst}
d^{\nabla}\eta_{r} + \frac{1}{2} \sum_{i + j = r} (j-i) \eta_{i} \wedge \eta_{j} = 0,
\end{equation}
for $r = 1, ..., k-1$. 
\end{enumerate}
\end{proposition}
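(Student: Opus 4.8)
The plan is to deduce the statement from the classification already established in Theorem~\ref{th:equivalence}. Since $M$ is the total space of $L=\nu_D$, it comes equipped with a canonical tubular neighbourhood of the zero section $D$, so item~(5) of that theorem applies verbatim: Lie algebroids of $b^{k+1}$-type on $M$ are in bijection with bracket-preserving splittings $\sigma\colon TD\to\At^k(L)$ of the anchor. It therefore remains to unpack what such a splitting amounts to in terms of the explicit decomposition $\At^k(L)=\At(L)\oplus\bigoplus_{i=1}^{k-1}L^{-i}$, and to show that the bracket-preservation condition is exactly the system \eqref{eq:MCfirst} together with the flatness of $\nabla$.

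First I would record the structure of $\At^k(L)$ as an extension. The anchor factors as $\At^k(L)\to\At(L)\to TD$, with kernel the bundle of vertical fields $\bigoplus_{i=0}^{k-1}L^{-i}$, graded by fibrewise degree and carrying the bracket $[s,t]=(j-i)\,s\otimes t$ recalled above. Consequently a splitting $\sigma$ of the anchor decomposes, according to degree, as $\sigma=\nabla+\sum_{i=1}^{k-1}\eta_i$, where the degree-zero component $\nabla\colon TD\to\At(L)$ is a linear connection on $L$ (a splitting of the Atiyah sequence is precisely a connection) and $\eta_i\in\Omega^1(D;L^{-i})$. This gives the claimed correspondence at the level of underlying data; the content is in the bracket condition.

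Next I would translate bracket-preservation into vanishing curvature. A splitting $\sigma$ of the anchor is a morphism of Lie algebroids precisely when its (tensorial) curvature $F_\sigma(X,Y)=[\sigma(X),\sigma(Y)]-\sigma([X,Y])$ vanishes for all $X,Y\in\mathfrak{X}(D)$, and $F_\sigma$ takes values in $\bigoplus_{i=0}^{k-1}L^{-i}$. Expanding $F_\sigma$ bilinearly and sorting by fibrewise degree, I would use three facts: that $\nabla$ acts on a vertical section $t\in\Gamma(L^{-i})$ by the induced connection, $[\nabla_X,t]=\nabla_X t$; that the degree-zero self-bracket of $\nabla$ produces its curvature $R^\nabla$; and that the remaining vertical brackets are computed by $[s,t]=(j-i)\,s\otimes t$, which raise degree additively. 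The degree-zero part of $F_\sigma=0$ then reads $R^\nabla=0$, i.e.\ $\nabla$ is flat. The degree-$r$ part, for $1\le r\le k-1$, collects the linear-in-$\eta$ terms into $(d^\nabla\eta_r)(X,Y)$ and the quadratic terms into $\sum_{i+j=r}(j-i)\,\eta_i(X)\otimes\eta_j(Y)$. A short symmetrisation, using that the product $L^{-i}\otimes L^{-j}\to L^{-(i+j)}$ of sections of line bundles is commutative, rewrites the latter as $\tfrac{1}{2}\sum_{i+j=r}(j-i)\,(\eta_i\wedge\eta_j)(X,Y)$, so that $F_\sigma=0$ in degree $r$ becomes exactly \eqref{eq:MCfirst}. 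Since any bracket of degrees $i+j\ge k$ is annihilated by the $k$-jet truncation, no equations survive beyond $r=k-1$, matching the stated range.

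The conceptual steps above are immediate once Theorem~\ref{th:equivalence} is invoked, and the same computation underlies the general Maurer--Cartan description of Proposition~\ref{prop:MCexplicit} specialised to the line bundle $E=L$, where $\Sym^iE^*\otimes\End(E)\cong L^{-i}$ and the induced bracket is the one displayed before the statement. I expect the only delicate point to be the bookkeeping in the curvature expansion: cleanly separating the horizontal and vertical contributions, identifying the connection-induced action on each $\Gamma(L^{-i})$, and carrying out the symmetrisation that yields the precise coefficient $\tfrac{1}{2}(j-i)$ (and, in particular, checking that the $r=1$ equation has no quadratic term). That the correspondence is a genuine bijection -- every tuple $(\nabla,\{\eta_i\})$ arises, and distinct tuples give inequivalent data -- then follows from the bijectivity already asserted in Theorem~\ref{th:equivalence}.
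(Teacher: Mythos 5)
Your proposal is correct and takes essentially the same route as the paper: the paper likewise reduces the statement to bracket-preserving splittings of the anchor of $\At^k(L)$ (this is Proposition \ref{prop:folsassplittings} rather than an ``item (5)'' of Theorem \ref{th:equivalence}, which as stated has only four items) and then identifies bracket-preservation with flatness of $\nabla$ together with the Maurer--Cartan system \eqref{eq:MCfirst}. The only difference is organizational: where you expand the curvature of $\nabla+\sum_i \eta_i$ directly degree by degree, the paper runs the identical computation through the abstract extension formalism of Proposition \ref{prop:algsplittingsabstract} and the explicit bracket on $\ad(\Fr^k(L))$, packaged as Proposition \ref{prop:MCexplicit}.
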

This result will be proven in Proposition \ref{prop:MCexplicit}. In particular, we will see that a $b^{k+1}$-algebroid for $k\geq 1$ always induces a linear connection on the normal bundle to $D$. In terms of the above proposition this is the connection $\nabla$. 

Given a Lie algebroid of $b^{k+1}$-type $A_k$, we see that we can truncate $(\eta_1,\ldots,\eta_{k-1})$ to a Maurer-Cartan element $(\eta_1,\ldots,\eta_{i-1})$. Therefore, we see that we obtain a sequence of Lie algebroids of $b^{i+1}$-type:
\begin{equation}
A_k \rightarrow A_{k-1} \rightarrow \cdots \rightarrow A_0 = T(-\log D).
\end{equation}
In fact, we will see below in Proposition \ref{prop:restricted fol} that the Lie algebroid $A_k$ may be described as a canonical elementary modification of $A_{k-1}$ at $W$.

The sequence of Lie algebroids leads us to the following problem:
\begin{problem}[The extension problem]
Given a Lie algebroid of $b^{k+1}$-type $A_k \rightarrow M$, does there exist a Lie algebroid of $b^{k+2}$-type $A_{k+1} \rightarrow M$ for which the induced Lie algebroid of $b^{k+1}$-type is $A_k$?
\end{problem}
When this is the case we will say that $A_{k+1}$ \textbf{extends} $A_k$. In terms of the $\eta_i$, we see that the extension problem boils down to finding $\eta_k \in \Omega^1(D;L^{-k})$ such that Equation \ref{eq:MCfirst} is satisfied. Finding this may be addressed purely in cohomological terms:

\begin{theorem} \label{extensionclasssection2}
Let $A_k \rightarrow M$ be a Lie algebroid of $b^{k+1}$-type, with corresponding $(\eta_1,\ldots,\eta_{i-1})$. Then there exists a Lie algebroid of type $b^{k+2}$ extending it if and only if the cohomology class
\begin{equation*}
c = \left[
\sum_{i < k/2} (k-2i)\eta_i \wedge \eta_{k-i}\right] \in H^2(D;L^{-k})
\end{equation*}
vanishes.
\end{theorem}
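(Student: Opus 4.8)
The plan is to reduce the extension problem to the solvability of a single linear equation and then to recognize its obstruction as the stated cohomology class. By Proposition \ref{MaurerCartanAlgebroidProp}, the algebroid $A_k$ is equivalent to a flat connection $\nabla$ on $L$ together with forms $\eta_1, \dots, \eta_{k-1}$, $\eta_i \in \Omega^1(D; L^{-i})$, satisfying the Maurer--Cartan equations $d^\nabla \eta_r + \frac12 \sum_{i+j=r}(j-i)\,\eta_i \wedge \eta_j = 0$ for $r = 1, \dots, k-1$. A $b^{k+2}$-extension $A_{k+1}$ corresponds to the same data together with one additional form $\eta_k \in \Omega^1(D; L^{-k})$ for which these equations hold up to $r = k$. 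Since the lower equations are already satisfied by $A_k$, the only new condition is the one at level $r = k$.

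First I would observe that this equation is affine in $\eta_k$. Indeed, the quadratic sum $\sum_{i+j=k}(j-i)\,\eta_i\wedge\eta_j$ ranges over $i,j \geq 1$, so it never involves $\eta_k$ itself; the condition reads $d^\nabla \eta_k = -\Theta_k$, where $\Theta_k := \frac12\sum_{i+j=k}(j-i)\,\eta_i\wedge\eta_j$. Using that $\eta_i \wedge \eta_j = -\eta_j \wedge \eta_i$ (the forms have odd degree while the line-bundle coefficients multiply symmetrically), the ordered pairs $(i,j)$ and $(j,i)$ contribute equally and the diagonal term $i=j$ carries coefficient zero. Folding the sum onto unordered pairs and writing $j = k-i$ then yields exactly $\Theta_k = \sum_{i<k/2}(k-2i)\,\eta_i\wedge\eta_{k-i}$, the representative appearing in the statement.

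The heart of the argument is to show that $\Theta_k$ is $d^\nabla$-closed, so that $[\Theta_k] \in H^2(D; L^{-k})$ is well-defined (recall that flatness of $\nabla$ gives $(d^\nabla)^2 = 0$). I would compute $d^\nabla \Theta_k$ by the graded Leibniz rule and substitute the lower Maurer--Cartan equations for each $d^\nabla \eta_i$ and $d^\nabla \eta_j$. After this substitution every term becomes a triple product $[[\eta_a,\eta_b],\eta_c]$ with $a+b+c = k$, where $[\,\cdot\,,\cdot\,]$ denotes the fiberwise bracket $[s,t]=(j-i)\,s\otimes t$. The graded symmetry of the wedge together with the Jacobi identity of this bracket then forces the total sum to cancel: concretely, for $s \in L^{-a}$, $t\in L^{-b}$, $u\in L^{-c}$ one has $[[s,t],u] = (b-a)(c-a-b)\,s\otimes t\otimes u$, and the cyclic sum of the scalar coefficients $(b-a)(c-a-b)$ vanishes identically. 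This is the step I expect to be the main obstacle, essentially because it is the usual (but sign-sensitive) verification that the primary obstruction of a Maurer--Cartan element in a differential graded Lie algebra is a cocycle.

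Finally I would conclude. The equation $d^\nabla \eta_k = -\Theta_k$ admits a solution $\eta_k$ precisely when the closed form $\Theta_k$ is $d^\nabla$-exact, i.e.\ precisely when the class $c = [\Theta_k] = \big[\sum_{i<k/2}(k-2i)\,\eta_i\wedge\eta_{k-i}\big]$ vanishes in $H^2(D; L^{-k})$. Since producing such an $\eta_k$ is the same as producing a $b^{k+2}$-extension of $A_k$, this establishes the claim.
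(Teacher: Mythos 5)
Your proof is correct, but it takes a different route from the paper's. The paper deduces Theorem \ref{extensionclasssection2} from its general theory of Lie algebroid extensions: Proposition \ref{prop:extensionresult} identifies extendability of the $k$-th order foliation with the vanishing of the extension class of the restricted sequence $0 \to L^{-k} \to A_{k}|_{D} \to TD \to 0$ (equivalently, of the sequence \eqref{eq:sesofrest}), where the fact that the obstruction is a well-defined cohomology class comes for free from the abstract machinery of Proposition \ref{prop:algsplittingsabstract} in the appendix; the explicit representative $\sum_{i<k/2}(k-2i)\,\eta_i\wedge\eta_{k-i}$ is then the curvature of the vector-bundle splitting determined by the Maurer--Cartan data with $\eta_k=0$. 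You instead work entirely at the level of the Maurer--Cartan equations of Proposition \ref{MaurerCartanAlgebroidProp}: extendability becomes solvability of the affine equation $d^{\nabla}\eta_k = -\Theta_k$, and you verify by hand that $\Theta_k$ is $d^{\nabla}$-closed using the lower Maurer--Cartan equations and the Jacobi identity of the fiberwise bracket. Your key computations check out: the folding of the sum is valid because the coefficients multiply symmetrically while the form parts anticommute (and the diagonal term carries coefficient zero); the identity $[[s,t],u]=(b-a)(c-a-b)\,s\otimes t\otimes u$ is the correct iterated bracket; the cyclic sum of $(b-a)(c-a-b)$ does vanish identically; and grouping the triple sum into cyclic orbits is legitimate since a cyclic permutation of three $1$-forms with commuting coefficients preserves the wedge. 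What your approach buys is an elementary, self-contained argument that makes the cocycle property explicit instead of quoting extension theory -- it is essentially the statement $[[\eta,\eta],\eta]=0$ in the graded Lie algebra $\Omega^{\bullet}(D;\bigoplus_i L^{-i})$. What the paper's approach buys is the intrinsic interpretation of $c$ as the extension class of $A_{k}|_{D}$, independent of the tubular neighbourhood and of the Maurer--Cartan presentation, which is precisely what is exploited later: in the comparison with group cohomology via Van Est and Morita equivalence in Section \ref{sec:extending}, and in the identification of $c$ with the symplectic variation in Proposition \ref{lem:structureonD}.
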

This cohomology class will be studied in detail in Section \ref{sec:extending}. In particular, we will see that it coincides with the extension class of the sequence 
\[
0 \to L^{-k} \to A_{k}|_{D} \to TD \to 0. 
\]

\begin{example} \label{genusgsurfaceexampleextension}
Let $S$ be a genus $g$ Riemann surface. Its de Rham cohomology ring is given by 
\[
H^{\bullet}(S) \cong \mathbb{R}[\alpha_{i}, \beta_{i}, \omega ; \ i = 1, ..., g]/(\alpha_{i} \alpha_{j} = \beta_{i} \beta_{j} = 0, \alpha_{i} \beta_{j} = \delta_{ij} \omega),
\]
where the degrees are $|\alpha_{i}| = |\beta_{i}| = 1$ and $|\omega| = 2$. Choose representatives for the degree $1$ classes. Abusing notation, we denote these closed $1$-forms by $\alpha_{i}$ and $\beta_{i}$. Consider the trivial line bundle $L = S \times \mathbb{R}$ with trivial connection. Let $t$ be a coordinate on $\mathbb{R}$. By Proposition \ref{MaurerCartanAlgebroidProp}, a $b^{4}$-algebroid can be specified by the data of two closed $1$-forms: 
\[
\eta_{1} = \sum_{i = 1}^{g} (x_{i} \alpha_{i} + y_{i} \beta_{i}), \qquad \eta_{2} = \sum_{i = 1}^{g} (w_{i} \alpha_{i} + z_{i} \beta_{i}),
\] 
where $(x_{i}, y_{i}, w_{i}, z_{i}) \in \mathbb{R}^{4g}$. The associated $b^{4}$-algebroid $A_{3}$ is the subsheaf generated by the following vector fields on the total space of $L$: 
\[
X + \eta_{1}(X) t^2 \partial_{t} + \eta_{2}(X) t^3 \partial_{t}, \qquad t^{4} \partial_{t},
\]
where $X \in \mathfrak{X}(S)$ are vector fields on $S$ which can be lifted to horizontal vector fields on $L = S \times \mathbb{R}$. This algebroid can be extended to a $b^{5}$-algebroid if and only if the cohomology class $c$ from Theorem \ref{extensionclasssection2} vanishes. This cohomology class is given by 
\[
c(x_{i}, y_{i}, w_{i}, z_{i}) = \eta_{1} \wedge \eta_{2} = \sum_{i,j} (x_{i} \alpha_{i} + y_{i} \beta_{i})  (w_{j} \alpha_{j} + z_{j} \beta_{j}) = \sum_{i = 1}^{g} (x_{i} z_{i} - y_{i} w_{i}) \omega. \qedhere 
\]
\end{example}

\begin{example}\label{ex:Heisenberg1}
Let $H(\mathbb{R})$ be the Heisenberg group, which is the subgroup of real valued $3 \times 3$ upper-triangular matrices with $1$'s on the diagonal. Let $H(\mathbb{Z})$ be the subgroup with integer entries. Let $X = H(\mathbb{R})/H(\mathbb{Z})$, which is a compact $3$-dimensional manifold. Writing an upper triangular matrix as 
\[
\begin{pmatrix} 1 & x & z \\ 0 & 1 & y \\ 0 & 0 & 1 \end{pmatrix}
\]
we get local coordinates $(x,y,z)$ on $X$. The $1$-forms $a = dx, b = dy$ and $c = dz - ydx$ are well-defined on $X$ and generate the cohomology. Furthermore, $dc = a \wedge b$. See Example \ref{Heisenbergalgebraexample} for more details on the cohomology. Let $L = X \times \mathbb{R}$ be the trivial line bundle and equip it with the trivial flat connection. By Proposition \ref{MaurerCartanAlgebroidProp}, a $b^{5}$-algebroid $A_{4}$ on $L$ can be specified by the data of three $1$-forms which satisfy the Maurer-Cartan equation. Such a solution is provided by 
\[
\eta_{1} = a, \qquad \eta_{2} = b, \qquad \eta_{3} = -c. 
\]
The cohomology class of Theorem \ref{extensionclasssection2} is given by $-2 a \wedge c$, which is non-trivial in cohomology. Therefore $A_{4}$ cannot be extended to a $b^{6}$-algebroid. Let $t$ be a linear coordinate on $\mathbb{R}$, so that $(x, y, z, t)$ define coordinates on $X$. Then $A_{4}$ has a global basis given in coordinates by 
\[
X_{1} = \partial_{x} + y \partial_{z} + t^2 \partial_{t}, \ \ X_{2} = \partial_{y} + t^3 \partial_{t}, \ \ X_{3} = \partial_{z} - t^4 \partial_{t}, \ \ X_{4} = t^{5} \partial_{t}. \qedhere
\]
\end{example}

\subsection{Cohomology}
Let $A$ be a $b^{k+1}$-algebroid on the total space of a line bundle $L \rightarrow M$, and let $(\eta_1,\ldots,\eta_{k-1})$ be the corresponding Maurer-Cartan element. Define $S_{k}(L) = \oplus_{i = 0}^{k} L^{i}$. In Section \ref{sec:cohomology} we will see that we may endow $S_k(L)$ with a natural flat $TD$-connection induced by the $b^{k+1}$-algebroid. In terms of the Maurer-Cartan element the action of this connection on a section $t_r \in \Gamma(L^r)$ is given by:
\begin{equation}
d(t_{r}) = d^{\nabla}t_{r} + \sum_{i = 1}^{r-1} (i-r) \eta_{i} \otimes t_{r},
\end{equation}
where $\eta_{i} \otimes t_{r} \in \Omega_{D}^1(L^{r-i})$. 

Let $\Omega^{\bullet-1}(D;S_{k}(L))$ be the cochain complex endowed with the differential induced by this connection:
\begin{namedtheorem}[Lemma \ref{decompositionlemma}]
Let $A \rightarrow L$ be a Lie algebroid of $b^{k+1}$-type over a line bundle $L \rightarrow D$. Then there exists $\tau_i \in \Omega^1(A;L^{-i})$, for $0 \leq i \leq k$, such that the following morphism is an isomorphism cochain complexes
\[
\tau : \Omega^{\bullet}_{L} \oplus \Omega^{\bullet - 1}_{D}( S_k(L)) \to \Omega^{\bullet}(A), \qquad (\beta, \sum_{r = 0}^{k} \phi_{r} \otimes u^{(r)}) \mapsto \beta + \sum_{r = 0}^{k} \phi_{r} \wedge \tau_{r}(u^{(r)}).
\]
\end{namedtheorem}
We will also sometimes write $(\phi_r \otimes u^{(r)}) \wedge \tau_r$ for $\phi_r \wedge  \tau_r(u^{(r)})$.

This isomorphism $\tau$ may also be upgraded to an isomorphism of algebras by introducing an algebra structure on $\Omega_{D}^{\bullet-1}(S_k(L))$, which is the content of Proposition \ref{localalgebraformula}. In Section \ref{Heisenbergalgebraexample}, an example of this algebra for a $b^{5}$-algebroid is computed in detail. 

%
%
%
%

We will see that we can use Lemma \ref{decompositionlemma} to compute the cohomology of Lie algebroids of $b^{k+1}$-type on general manifolds:

\begin{namedtheorem}[Proposition \ref{decomp1proof}]
Let $A\rightarrow M$ be a Lie algebroid of $b^{k+1}$-type. Then there exist isomorphisms 
\[
H^{i}(A) \cong H^{i}(M) \oplus H^{i-1}(D, S_{k}(L)),
\]
for all $i$. 
\end{namedtheorem}

\subsection{$b^{k+1}$-symplectic structures}
Now that we understand the cohomology of the Lie algebroids of $b^{k+1}$-type, we can study symplectic structures on them. The first step is to find out what structure is induced on the hypersurface $D$. As our study is completely local around $D$, we will make use of a tubular neighbourhood embedding and assume that $M = L =\nu_D$ for the remainder of this section.

Using the isomorphism $\tau$ in Lemma \ref{decompositionlemma} we immediately obtain that a $b^{k+1}$-symplectic form $\omega \in \Omega^2(A)$ decomposes into a smooth part $\beta \in \Omega^{2}_{L}$ and a singular part $\alpha \in \Omega_{D}^{1} \otimes (L^{0} \oplus ... \oplus L^{k})$. Explicitly:
\begin{lemma}\label{lem:sympdata}
Let $A \rightarrow L$ be a Lie algebroid of $b^{k+1}$-type. Then the data of a closed two-form $\omega \in \Omega^2(A)$ is equivalent to the data:
\begin{itemize}
\item $\beta \in \Omega^{2}_{L}$, with $d\beta = 0$,
\item $\alpha \in \Omega_{D}^{1} \otimes (L^{0} \oplus ... \oplus L^{k})$, satisfying
\end{itemize}
\begin{equation*}
d^{\nabla} \alpha_{p} = p \sum_{r = p+1}^{k} \eta_{r-p} \wedge \alpha_{r},
\end{equation*}
for all $0\leq p \leq k$. This is an iterative system of equations starting with 
\[
d^{\nabla}\alpha_{k} = 0, \ \ d^{\nabla} \alpha_{k-1} = (k-1) \eta_{1} \wedge \alpha_{k}, \ \ d^{\nabla} \alpha_{k-2} = (k-2)\big( \eta_{1} \wedge \alpha_{k-1} + \eta_{2} \wedge \alpha_{k} \big), \ \  ...
\]
\end{lemma}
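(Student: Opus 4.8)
The plan is to read off both statements directly from the isomorphism of cochain complexes $\tau$ of Lemma \ref{decompositionlemma}. Since we have assumed $M = L = \nu_D$, that lemma identifies $\Omega^{\bullet}(A)$, as a cochain complex, with the direct sum $\Omega^{\bullet}_{L} \oplus \Omega^{\bullet-1}_{D}(S_k(L))$, where $S_k(L) = \bigoplus_{i=0}^{k} L^{i}$ carries the differential induced by the flat $TD$-connection described immediately above the lemma. A two-form $\omega \in \Omega^2(A)$ therefore corresponds under $\tau^{-1}$ to a unique pair $(\beta, \alpha)$ with $\beta \in \Omega^{2}_{L}$ and $\alpha \in \Omega^{1}_{D}(S_k(L))$, the degree shift by one being exactly what places $\alpha$ in form-degree one. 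As $\tau$ intertwines the differentials and the domain is a direct sum of complexes, the equation $d\omega = 0$ is equivalent to the two independent conditions that $\beta$ and $\alpha$ be closed in their respective complexes. This already yields the first bullet, $d\beta = 0$, and reduces the second to unwinding the equation ``$\alpha$ is closed''.

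Next I would decompose $\alpha = \sum_{p=0}^{k} \alpha_{p}$ according to $S_k(L) = \bigoplus_{p=0}^{k} L^{p}$, with $\alpha_{p} \in \Omega^{1}_{D}(L^{p})$, and compute the differential degree by degree. The connection acts on $t_{r} \in \Gamma(L^{r})$ by $d(t_{r}) = d^{\nabla} t_{r} + \sum_{i=1}^{r-1}(i-r)\,\eta_{i} \otimes t_{r}$, so it is lower triangular for the grading: the diagonal part is $\nabla$ on each $L^{r}$, while the strictly lower part sends $L^{r}$ into $L^{r-i}$ through $\eta_{i}$ with coefficient $(i-r)$. Extending by the Leibniz rule $\phi \otimes t_{r} \mapsto d\phi \otimes t_{r} - \phi \wedge d(t_{r})$ for $\phi \in \Omega^{1}_{D}$ and collecting the component landing in $\Omega^{2}_{D}(L^{p})$, the diagonal part produces $d^{\nabla}\alpha_{p}$, while each $\alpha_{r}$ with $r > p$ contributes through the index $i = r-p$; here the factor $-1$ from the Leibniz rule cancels the factor $-1$ from commuting $\phi$ past $\eta_{r-p}$, leaving the connection coefficient $(i-r) = -p$, so the net contribution is $-p\,\eta_{r-p}\wedge\alpha_{r}$.

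Setting the $\Omega^{2}_{D}(L^{p})$-component of the differential of $\alpha$ to zero then gives exactly
\[
d^{\nabla}\alpha_{p} = p\sum_{r=p+1}^{k} \eta_{r-p}\wedge\alpha_{r}
\]
for each $0 \le p \le k$, and reading off $p = k, k-1, k-2$ recovers the displayed iterative system, the top line $d^{\nabla}\alpha_{k} = 0$ arising from the empty sum. I expect the only genuine obstacle to be the sign-and-coefficient bookkeeping: one must combine the coefficient $(i-r)$ coming from the connection with the two Koszul signs (from the Leibniz rule and from the commutation of $\phi$ past $\eta_{r-p}$) so that they conspire to give precisely the factor $+p$ in front of the sum, rather than $-p$ or $r-p$. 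Beyond Lemma \ref{decompositionlemma} and the explicit connection formula no further input is required; all the content lies in matching the lower-triangular structure of the connection with the stated recursion.
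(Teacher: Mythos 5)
Your proposal is correct and follows exactly the route the paper intends: Section \ref{sec:new} derives Lemma \ref{lem:sympdata} directly from the chain-complex isomorphism $\tau$ of Lemma \ref{decompositionlemma}, with closedness of $\omega$ splitting into $d\beta = 0$ and closedness of $\alpha$ in the anticanonical complex, whose differential (Equation \ref{Anticanonicaldifferential}) unwinds componentwise to the stated recursion. Your sign bookkeeping (the Leibniz sign cancelling against the Koszul sign, leaving $(i-r) = -p$) is accurate, so you have simply made explicit the computation the paper labels as immediate.
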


Now let $\omega$ be a closed algebroid $2$-form. In order for $\omega$ to be non-degenerate in a neighbourhood of the zero section $D$, it suffices to check that the restriction $\omega|_{D}$ to $A|_{D}$ is non-degenerate. Using $\tau$ we can decompose $A|_{D} \cong TD \oplus L^{-k}$ as a vector bundle. The restriction of forms then is described by the following lemma from Section \ref{sec:cohomology}. 

\begin{namedtheorem}[Lemma \ref{lem:restrictmap}]
Under the identifications $\Omega^{\bullet}_{A} \cong \Omega^{\bullet}_{L} \oplus \Omega^{\bullet - 1}_{D}( S_k(L))$ and $\Omega^{\bullet}_{A|_{D}} \cong \Omega^{\bullet}_{D} \oplus \Omega^{\bullet - 1}_{D}(L^{k})$, the restriction of forms $\Omega_{A}^{\bullet} \to  \Omega_{A|_{D}}^{\bullet}$ is given by the following chain map 
\[
 \Omega^{\bullet}_{L} \oplus \Omega^{\bullet - 1}_{D}( S_k(L)) \to  \Omega^{\bullet}_{D} \oplus \Omega^{\bullet - 1}_{D}(L^{k}), \qquad  (\beta, \sum_{r = 0}^{k} \alpha_{r}) \mapsto (\beta|_{D} + \sum_{r = 0}^{k-1} \alpha_{r} \wedge \eta_{r}, \alpha_{k}). 
\]
\end{namedtheorem}


We can now describe non-degeneracy of a $2$-form. 
\begin{lemma} \label{nondegeneracy}
Let $\omega = (\beta, \sum_{r = 0}^{k} \alpha_{r}) \in  \Omega^{2}_{L} \oplus \Omega^{1}_{D}( S_k(L))$ be an algebroid $2$ form. It's restriction to $D$ is given by $\omega|_{D} = (\gamma, \alpha_{k}) \in  \Omega^{2}_{D} \oplus \Omega^{1}_{D}(L^{k})$, where 
\[
\gamma = \beta|_{D} + \sum_{r = 0}^{k-1} \alpha_{r} \wedge \eta_{r}. 
\]
Then $\omega$ is non-degenerate in a neighbourhood of $D$ if and only if $\alpha_{k}$ is nowhere vanishing and $\gamma$ restricts to a non-degenerate $2$-form on the kernel of $\alpha_{k}$. 
\end{lemma}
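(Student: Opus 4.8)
The plan is to reduce the statement to a fibrewise computation in linear algebra on the vector bundle $A|_{D}$. As already observed above, $\omega$ is non-degenerate in a neighbourhood of $D$ precisely when its restriction $\omega|_{D}$ is non-degenerate as a $2$-form on $A|_{D}$, since non-degeneracy is the non-vanishing of the top exterior power of $\omega$ as a section of $\det A^{*}$, an open and smooth condition that is therefore detected along $D$. Under the splitting $A|_{D} \cong TD \oplus L^{-k}$ coming from $\tau$, and the description $\omega|_{D} = (\gamma, \alpha_{k})$ of Lemma \ref{lem:restrictmap}, the skew form $\omega|_{D}$ decomposes into three blocks: the $TD \times TD$ block is $\gamma$; the mixed $TD \times L^{-k}$ block is the pairing $(v, w) \mapsto \langle \alpha_{k}(v), w\rangle$ induced by $\alpha_{k} \in \Omega^{1}_{D}(L^{k}) = \Gamma(\Hom(TD, L^{k}))$ together with the canonical duality $L^{k} \otimes L^{-k} \cong \mathbb{R}$; and the $L^{-k} \times L^{-k}$ block vanishes identically, because $L^{-k}$ has rank one and hence $\Lambda^{2}(L^{-k})^{*} = 0$.

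With this in place I would fix a point $x \in D$, trivialise the line $L^{-k}_{x} \cong \mathbb{R}$, and write $V = T_{x}D$, $a = \alpha_{k}(x) \in V^{*}$ and $\gamma = \gamma(x) \in \Lambda^{2}V^{*}$, so that the fibre form becomes $\Omega\big((v_{1}, s_{1}), (v_{2}, s_{2})\big) = \gamma(v_{1}, v_{2}) + a(v_{1}) s_{2} - a(v_{2}) s_{1}$ on $V \oplus \mathbb{R}$. The pointwise claim is then that $\Omega$ is non-degenerate if and only if $a \neq 0$ and $\gamma|_{\ker a}$ is non-degenerate. For the forward direction, non-vanishing of $a$ is immediate, since $a = 0$ would place $(0,1)$ in the radical of $\Omega$. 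To see that $\gamma|_{\ker a}$ is non-degenerate, suppose $v \in \ker a$ is $\gamma$-orthogonal to all of $\ker a$; choosing $u \in V$ with $a(u) = 1$ and decomposing $v_{2} = w + a(v_{2}) u$ with $w \in \ker a$, a short computation gives $\Omega\big((v, s), (v_{2}, s_{2})\big) = a(v_{2})\big(\gamma(v, u) - s\big)$, so the vector $(v, \gamma(v,u))$ lies in the radical of $\Omega$; non-degeneracy then forces $v = 0$. The essential point here, and the step I expect to require the most care, is precisely this choice of line-direction component $s = \gamma(v, u)$, which is exactly what converts a degeneracy of $\gamma|_{\ker a}$ into a genuine radical vector for the full form $\Omega$.

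For the converse, assuming $a \neq 0$ and $\gamma|_{\ker a}$ non-degenerate, I would take a radical vector $(v, s)$ of $\Omega$ and extract the conditions in turn: pairing against $(0,1)$ gives $a(v) = 0$, so $v \in \ker a$; pairing against $(v_{2}, 0)$ with $v_{2} \in \ker a$ then gives $\gamma(v, v_{2}) = 0$ for all such $v_{2}$, whence $v = 0$ by non-degeneracy of $\gamma|_{\ker a}$; and finally pairing $(0, s)$ against an arbitrary $(v_{2}, s_{2})$ yields $-a(v_{2}) s = 0$, forcing $s = 0$ since $a \neq 0$. Thus the radical is trivial and $\Omega$ is non-degenerate. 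Because the two conditions hold at every $x \in D$ exactly when $\alpha_{k}$ is nowhere vanishing and $\gamma$ restricts to a non-degenerate form on $\ker \alpha_{k}$, the fibrewise equivalence assembles directly into the statement of the lemma.
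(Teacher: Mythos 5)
Your proof is correct, but it takes a different route from the paper's. The paper's argument is a wedge-power computation: after the same initial reduction to non-degeneracy of $\omega|_{D}$ on $A|_{D}$, it writes $\omega|_{D} = \gamma + \alpha_{k}$ and uses the rank bounds (the pullback $\gamma$ has rank at most $2n-2$ and the pairing term $\alpha_{k}$ is decomposable, of rank at most $2$) to conclude $(\omega|_{D})^{n} = n\, \alpha_{k} \wedge \gamma^{n-1}$, from which both conditions are read off simultaneously from the non-vanishing of the top power. You instead run a pointwise radical analysis on the block decomposition of $\omega|_{D}$ over $TD \oplus L^{-k}$, exhibiting explicitly how $a = 0$ or a degeneracy of $\gamma|_{\ker a}$ produces a radical vector (your choice $s = \gamma(v,u)$ being the key step), and conversely. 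The paper's proof buys brevity, at the cost of leaving implicit the facts that $\gamma^{n} = 0$, that $\alpha_{k}\wedge\alpha_{k} = 0$, and that non-vanishing of $\alpha_{k}\wedge\gamma^{n-1}$ unpacks into exactly the two stated conditions; your proof is longer but entirely elementary, needs no determinant or top-power identity, and makes the logical role of each of the two conditions transparent. Both arguments rest on the same two inputs: the openness of non-degeneracy (reducing to $D$) and the description of $\omega|_{D} = (\gamma,\alpha_{k})$ from Lemma \ref{lem:restrictmap}, including the vanishing of the $L^{-k}\times L^{-k}$ block because $L^{-k}$ is a line bundle.
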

\begin{proof}
It suffices to determine the conditions for non-degeneracy of $\omega|_{D}$. This in turn is equivalent to $(\omega|_{D})^{n}$ being nowhere vanishing. Writing $\omega|_{D} = \gamma + \alpha_{k}$, and noting that the ranks of $\gamma$ and $\alpha_{k}$ are respectively bounded above by $2n-2$ and $2$, we see that 
\[
(\omega|_{D})^n = n \alpha_{k} \wedge \gamma^{n-1}. 
\]
From this the two conditions follow. 
\end{proof}

Let $\omega \in \Omega^{2}(A)$ be a symplectic form. We now analyze the geometric structures induced on $D$ from the restriction $\omega|_{D} = (\gamma, \alpha_{k})$. Starting with $\alpha_{k} \in \Omega_{D}^{1}(L^{k})$, we note first from Lemma \ref{nondegeneracy} that the kernel $\mathcal{F} = \ker(\alpha_{k}) \subset TD$ defines a corank $1$ distribution on $D$. By Lemma \ref{lem:sympdata}, $\alpha_{k}$ is closed and hence $\mathcal{F}$ defines a foliation. In fact, more is true. 

\begin{proposition} \label{trivialvariationfoliationinduced}
Let $A \to L$ be a $b^{k+1}$-algebroid on the total space of a line bundle over $D$ and let $\nabla$ be the induced flat connection on $L$. Let $\omega \in \Omega^{2}(A)$ be a symplectic form and let $\omega|_{D} = (\gamma, \alpha_{k})$ be its restriction to $D$. Let $\mathcal{F} = \ker(\alpha_{k}) \subset TD$ be the induced foliation on $D$ and let $\nu_{\mathcal{F}}$ be the normal bundle of $\mathcal{F}$, which is equipped with the flat Bott connection $\nabla^{Bott}$. Then $\alpha_{k}$ defines a flat isomorphism 
\[
\alpha_{k} : (\nu_{\mathcal{F}}, \nabla^{Bott}) \to (L^{k}, \nabla^{k})|_{\mathcal{F}}.
\]
In particular, the foliation $\mathcal{F}$ has trivial variation: $\mathrm{var}(\mathcal{F}) = 0$ (see Definition \ref{def:varfol}). 
\end{proposition}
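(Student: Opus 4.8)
The plan is to reduce the entire statement to the single structural identity $d^{\nabla}\alpha_{k} = 0$ recorded in Lemma \ref{lem:sympdata}, together with the nonvanishing of $\alpha_{k}$ guaranteed by Lemma \ref{nondegeneracy}. First I would check that $\alpha_{k}$ descends to a bundle map on the normal bundle. By definition $\mathcal{F} = \ker(\alpha_{k})$, so the $L^{k}$-valued one-form $\alpha_{k} \in \Omega_{D}^{1}(L^{k})$ factors through the quotient $TD \twoheadrightarrow \nu_{\mathcal{F}} = TD/\mathcal{F}$, producing a map $\bar{\alpha}_{k} : \nu_{\mathcal{F}} \to L^{k}$ characterised by $\bar{\alpha}_{k}(\bar{Y}) = \alpha_{k}(Y)$ for any lift $Y$ of $\bar{Y}$. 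Since $\alpha_{k}$ is nowhere vanishing and $\mathcal{F}$ has corank one, both $\nu_{\mathcal{F}}$ and $L^{k}$ are line bundles and $\bar{\alpha}_{k}$ is a fibrewise isomorphism, hence an isomorphism of vector bundles over $D$.

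The heart of the argument is the flatness of $\bar{\alpha}_{k}$ along $\mathcal{F}$. I would expand the Koszul formula for the covariant exterior derivative of the $L^{k}$-valued one-form $\alpha_{k}$,
\[
(d^{\nabla}\alpha_{k})(X,Y) = \nabla^{k}_{X}(\alpha_{k}(Y)) - \nabla^{k}_{Y}(\alpha_{k}(X)) - \alpha_{k}([X,Y]),
\]
and specialise to $X \in \Gamma(\mathcal{F})$, so that $\alpha_{k}(X) = 0$ and the middle term drops out. Recalling that the Bott connection is $\nabla^{Bott}_{X}\bar{Y} = \overline{[X,Y]}$ for $X \in \Gamma(\mathcal{F})$, the vanishing $d^{\nabla}\alpha_{k} = 0$ collapses to
\[
\nabla^{k}_{X}\big(\bar{\alpha}_{k}(\bar{Y})\big) = \alpha_{k}([X,Y]) = \bar{\alpha}_{k}\big(\nabla^{Bott}_{X}\bar{Y}\big),
\]
which is exactly the statement that $\bar{\alpha}_{k}$ intertwines $\nabla^{Bott}$ with the restriction of $\nabla^{k}$ to $\mathcal{F}$. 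Thus $\bar{\alpha}_{k}$ is a flat isomorphism, establishing the first claim.

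For the triviality of the variation, I would invoke that $(L^{k}, \nabla^{k})$ is a flat vector bundle defined over all of $D$, its flatness inherited from the flat connection $\nabla$ on $L$. The flat isomorphism $\bar{\alpha}_{k}$ then identifies the leafwise-flat normal bundle $(\nu_{\mathcal{F}}, \nabla^{Bott})$ with the leafwise restriction of this ambient flat bundle. By Definition \ref{def:varfol}, the variation $\mathrm{var}(\mathcal{F})$ is precisely the obstruction to extending the Bott-flat structure transversally in this way, so it must vanish. I expect this last identification, matching the abstract definition of variation against the concrete flat isomorphism, to be the only point requiring genuine care; the flatness computation itself is a one-line consequence of Lemma \ref{lem:sympdata}.
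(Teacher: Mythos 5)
Your proposal is correct and follows essentially the same route as the paper: both arguments descend $\alpha_{k}$ to a line-bundle isomorphism $\nu_{\mathcal{F}} \to L^{k}$ and obtain flatness by expanding $d^{\nabla}\alpha_{k}(X,Y) = 0$ for $X \in \Gamma(\mathcal{F})$, using $\alpha_{k}(X) = 0$ to reduce the closedness identity from Lemma \ref{lem:sympdata} to the intertwining of $\nabla^{\mathrm{Bott}}$ with $\nabla^{k}$. The only cosmetic difference is in the last step: the paper concludes $\mathrm{var}(\mathcal{F}) = 0$ by citing Proposition \ref{prop:bottextend} (extendability of the Bott connection to a global flat structure implies trivial variation), which is precisely the content of your final paragraph argued informally from Definition \ref{def:varfol}.
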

\begin{proof}
By definition of $\alpha_{k}$, it defines an isomorphism of line bundles $\nu_{F} \to L^{k}$, so we must check that it relates the two connections, meaning that for all sections $X \in \mathcal{F}$, $s \in \nu_{F}$, we have 
\begin{equation} \label{flatnessofalpha}
\alpha_{k}(\nabla^{\rm Bott}_{X}(s)) = \nabla_{X}(\alpha_{k}(s)). 
\end{equation}
Let $Y \in \mathfrak{X}(D)$ be a vector field extending $s$, meaning that $p(s) = Y$, where $p: TD \to \nu_{\mathcal{F}}$. By definition of the Bott connection
\[
\nabla^{\rm Bott}_{X}(s) = p([X, Y]). 
\]
Furthermore, since $X$ is tangent to the foliation, $\alpha_{k}(X) = 0$. Hence, Equation \ref{flatnessofalpha} may be rearranged as follows: 
\[
d^{\nabla}(\alpha_{k})(X,Y) = \nabla_{X}(\alpha_{k}(Y)) - \nabla_{Y}(\alpha_{k}(X)) - \alpha_{k}([X,Y]) = 0, 
\]
and this follows because $\alpha_{k}$ is closed. Finally, $\mathcal{F}$ has trivial variation by Proposition \ref{prop:bottextend}. 
\end{proof}

Next, we consider the restriction of $\gamma$ to the foliation, denoted $\omega_{\mathcal{F}} = \gamma|_{\mathcal{F}}$. The next result shows, in particular, that $(\mathcal{F}, \omega_{\mathcal{F}})$ defines a symplectic foliation. This is not surprising. Indeed, $\rho(\omega^{-1}) \in \mathfrak{X}^{2}(L)$ is a Poisson structure and $(\mathcal{F}, \omega_{\mathcal{F}})$ is part of its induced symplectic foliation. What is more surprising is that its symplectic variation is nontrivial and is determined by the extension class of $A|_{D}$. 

\begin{proposition} \label{lem:structureonD}
Let $A \to L$ be a $b^{k+1}$-algebroid on the total space of a line bundle over $D$ and let $c \in H^{2}(D, L^{-k})$ be its extension class, as in Theorem \ref{extensionclasssection2}. Let $\omega \in \Omega^{2}(A)$ be a symplectic form, let $\omega|_{D} = (\gamma, \alpha_{k})$ be its restriction to $D$, and let $\mathcal{F}$ be the induced foliation on $D$ from Proposition \ref{trivialvariationfoliationinduced}. Then 
\[
\omega_{\mathcal{F}} = \gamma|_{\mathcal{F}} \in \Omega^{2}_{\mathcal{F}}
\]
is a foliated symplectic form whose symplectic variation is given by the restriction of $c$: 
\[
\mathrm{var}(\omega_{\mathcal{F}}) = -c|_{\mathcal{F}} \in H^{2}_{D}(\nu^{-1}_{F}). 
\]
\end{proposition}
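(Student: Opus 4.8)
The plan is to compute the variation directly from its definition, using the explicit decomposition $\gamma = \beta|_D + \sum_{r}\alpha_r\wedge\eta_r$ of the smooth part of $\omega|_D$ furnished by Lemma \ref{lem:restrictmap}. Recall that the symplectic variation assigns to a normal vector $\bar Y \in \nu_{\mathcal{F}}$ the leafwise cohomology class of $\mathcal{L}_Y\gamma|_{\mathcal{F}}$, where $Y \in \mathfrak{X}(D)$ is any lift of $\bar Y$ and $\gamma$ is viewed as a global extension of $\omega_{\mathcal{F}}$ to a $2$-form on $D$; since $\mathcal{L}_Y\gamma = \iota_Y d\gamma + d\iota_Y\gamma$ and the last term is leafwise exact, this class is represented by $\iota_Y d\gamma|_{\mathcal{F}}$. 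Thus everything reduces to identifying $d\gamma$.

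The key step is the identity
\[
d\gamma = -\,c\wedge\alpha_k, \qquad c = \sum_{i<k/2}(k-2i)\,\eta_i\wedge\eta_{k-i},
\]
as honest $3$-forms on $D$ (the right-hand side being $L^{-k}\otimes L^k = \underline{\mathbb{R}}$-valued). To prove it I would differentiate termwise: $d(\beta|_D) = 0$ because $\beta$ is closed, while for each summand the Leibniz rule gives $d(\alpha_r\wedge\eta_r) = d^\nabla\alpha_r\wedge\eta_r - \alpha_r\wedge d^\nabla\eta_r$. Substituting the structure equations $d^\nabla\alpha_r = r\sum_{s>r}\eta_{s-r}\wedge\alpha_s$ of Lemma \ref{lem:sympdata} and the Maurer--Cartan equations $d^\nabla\eta_r = -\tfrac12\sum_{i+j=r}(j-i)\eta_i\wedge\eta_j$ of Proposition \ref{MaurerCartanAlgebroidProp}, one collects the coefficient of each $\eta_i\wedge\eta_j\wedge\alpha_s$. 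For $s<k$ there are two contributions, one from $d^\nabla\alpha_r$ (with $r<s$) and one from $-\alpha_s\wedge d^\nabla\eta_s$, whose coefficients add to $-j+\tfrac12(j-i) = -\tfrac{s}{2}$ on the pair $(i,j)$; since $\eta_i\wedge\eta_j = -\eta_j\wedge\eta_i$ for line-bundle-valued $1$-forms, the symmetric sum $\sum_{i+j=s}\eta_i\wedge\eta_j$ vanishes and all $\alpha_s$ with $s<k$ drop out. For $s=k$ only the $d^\nabla\alpha_r$ contribution survives, because $\alpha_k$ never appears in $\gamma$ and hence there is no $-\alpha_k\wedge d^\nabla\eta_k$ term; antisymmetrizing $-\sum_{i+j=k}j\,\eta_i\wedge\eta_j$ into the range $i<k/2$ produces exactly $-c$, yielding the displayed identity.

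With this in hand the conclusion is immediate. First, $\omega_{\mathcal{F}}$ is leafwise closed, since $d\gamma|_{\mathcal{F}} = -c\wedge\alpha_k|_{\mathcal{F}} = 0$ because $\alpha_k|_{\mathcal{F}} = 0$; together with the nondegeneracy from Lemma \ref{nondegeneracy} this shows $(\mathcal{F},\omega_{\mathcal{F}})$ is a foliated symplectic form. Second, for a transverse lift $Y$ of $\bar Y$ we contract:
\[
\iota_Y d\gamma = -(\iota_Y c)\wedge\alpha_k - c\cdot\alpha_k(Y),
\]
and restricting to $\mathcal{F} = \ker\alpha_k$ annihilates the first term, so $\iota_Y d\gamma|_{\mathcal{F}} = -\,c\cdot\alpha_k(Y)|_{\mathcal{F}}$. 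Under the flat identification $\alpha_k\colon(\nu_{\mathcal{F}},\nabla^{\mathrm{Bott}})\to(L^k,\nabla^k)|_{\mathcal{F}}$ of Proposition \ref{trivialvariationfoliationinduced}, the assignment $\bar Y\mapsto[-c\cdot\alpha_k(Y)|_{\mathcal{F}}]$ is precisely $-c|_{\mathcal{F}}\in H^2_D(\nu_{\mathcal{F}}^{-1})$, giving $\mathrm{var}(\omega_{\mathcal{F}}) = -c|_{\mathcal{F}}$.

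I expect the main obstacle to be the bookkeeping in the key identity $d\gamma = -c\wedge\alpha_k$: keeping track of the signs produced by reordering line-bundle-valued $1$-forms and of the tensor-power conventions in $d^\nabla$, and verifying that the two families of contributions to each $\alpha_s$ with $s<k$ cancel on the nose. Matching the resulting representative with the definition of the symplectic variation and with the flat identification of the coefficient bundles is then routine.
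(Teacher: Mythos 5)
Your proposal is correct and takes essentially the same route as the paper: both arguments hinge on establishing the identity $d\gamma = -c\wedge\alpha_{k}$ from the closedness conditions of Lemma \ref{lem:sympdata}, the restriction formula of Lemma \ref{lem:restrictmap}, and the Maurer--Cartan equation, and then read off $\mathrm{var}(\omega_{\mathcal{F}}) = -c|_{\mathcal{F}}$ together with leafwise closedness and the nondegeneracy from Lemma \ref{nondegeneracy}. The only cosmetic differences are that you verify the key identity by termwise differentiation of $\gamma = \beta|_{D} + \sum_{r}\alpha_{r}\wedge\eta_{r}$, where the paper instead uses that $\sum_{r}\alpha_{r}$ is closed in the anticanonical complex and that $\mathcal{R}$ is a chain map, and that you spell out the variation via $\iota_{Y}d\gamma|_{\mathcal{F}}$ rather than quoting it as a connecting homomorphism.
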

\begin{proof}
First, we decompose $\omega = (\beta, \sum_{r = 0}^{k} \alpha_{r}) \in \Omega^{2}_{L} \oplus \Omega^{1}_{D}( S_k(L))$. Both $\beta$ and $\sum_{r = 0}^{k} \alpha_{r}$ are individually closed. By Lemma \ref{lem:sympdata} we have $d^{\nabla}\alpha_{k} = 0$. Hence, the closure can be written as 
\[
0 = d\alpha_{k} + d(\sum_{r=0}^{k-1} \alpha_{r}) = \sum_{i = 1}^{k-1} (i-k) \eta_{i} \wedge \alpha_{k} + d(\sum_{r=0}^{k-1} \alpha_{r}). 
\]
This is now an equation in $\Omega^{1}_{D}( S_{k-1}(L))$. Applying the restriction to the $D$ map $\mathcal{R}$ from Lemma \ref{lem:restrictmap} this gives us 
\[
0 = - \sum_{i = 1}^{k-1} (i-k) \eta_{i} \wedge \eta_{k-i} \wedge \alpha_{k} + d \sum_{r = 0}^{k-1} \alpha_{r} \wedge \eta_{r}. 
\]
Using the fact that $\beta$ is closed and the expression for $c$ from Theorem \ref{extensionclasssection2}, this rearranges to the equation $d \gamma = -c \wedge \alpha_{k}$. This implies in particular that $d_{\mathcal{F}}\omega_{\mathcal{F}} = 0$. Hence, since it is non-degenerate by Lemma \ref{nondegeneracy}, it defines a foliated symplectic form. Furthermore, since $\gamma$ is an extension of $\omega_{\mathcal{F}}$ to $\Omega_{D}^2$, it can be used to compute the symplectic variation. Indeed, this is nothing but a connecting homomorphism and is given by 
\[
\mathrm{var}(\omega_{\mathcal{F}}) = \frac{d \gamma}{\alpha_{k}} = -c|_{F}. 
\]
\end{proof}

\subsection{Normal form theorem around the hypersurface}
Using the already abundant theory of normal forms for Lie algebroid symplectic forms we can readily obtain a normal form theorem for $b^{k+1}$-symplectic forms in terms of the data described in Lemma \ref{lem:sympdata}.

\begin{theorem}
Let $\omega \in \Omega^2(A)$ be a $b^k$-symplectic form. Then there exists a tubular neighbourhood $L$ of $D$, on which $\omega$ is symplectomorphic to
\begin{equation*}
\omega'= \sum_{i=0}^k \alpha_i \wedge \tau_i + p^*\iota^*\beta,
\end{equation*}
where the $\alpha_i$ and $\beta$ are as in Lemma \ref{lem:sympdata}, $p : L \to D$ is the projection, and $\iota : D \to L$ is the inclusion of the zero section. 
\end{theorem}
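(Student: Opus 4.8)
The plan is to run a Moser-type deformation argument inside the Lie algebroid $A$, comparing $\omega$ with the model $\omega'$. First I would record that $\omega'$ is again a symplectic form on $A$. Its singular part $\sum_{i=0}^k \alpha_i \wedge \tau_i$ is identical to that of $\omega$, so the $\alpha_i$ automatically satisfy the equations of Lemma \ref{lem:sympdata}, while its smooth part $p^*\iota^*\beta$ is closed because $\iota^*\beta = \beta|_D$ is; hence $\omega'$ is a closed algebroid $2$-form. Since $p \circ \iota = \mathrm{id}_D$ we have $\iota^*(p^*\iota^*\beta) = \iota^*\beta$, so by Lemma \ref{lem:restrictmap} both forms have the same restriction $\omega|_D = \omega'|_D = (\gamma, \alpha_k)$; in particular $\omega'$ carries the same $\alpha_k$ and the same $\gamma$ as $\omega$, so Lemma \ref{nondegeneracy} shows $\omega'$ is non-degenerate in a neighbourhood of $D$.

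Next I would note that the difference is purely smooth: $\omega - \omega' = \beta - p^*\iota^*\beta \in \Omega^2_L$, a closed ordinary $2$-form whose restriction to the zero section vanishes. Applying the relative Poincaré lemma for the fibrewise scaling retraction $r_s(v) = sv$ of $L$ onto $D$ — whose homotopy operator contracts with the Euler vector field $t\partial_t$, which vanishes on $D$ — produces a smooth primitive $\mu_0 \in \Omega^1_L$ with $d\mu_0 = \beta - p^*\iota^*\beta$ and, crucially, $\mu_0$ vanishing identically at every point of $D$. Pulling back along the anchor gives an algebroid primitive $\mu := \rho^*\mu_0 \in \Omega^1(A)$ with $d_A\mu = \omega - \omega'$, and $\mu$ still vanishes on $D$ because $\mu(a) = \mu_0(\rho(a)) = 0$ for every $a \in A|_D$.

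Then comes the Moser path $\omega_t = \omega' + t(\omega - \omega')$ for $t \in [0,1]$. Each $\omega_t$ is closed, and because the difference $\omega-\omega'$ is purely smooth and restricts to zero on $D$, the data $(\gamma, \alpha_k)$ of $\omega_t|_D$ are independent of $t$ and equal to those of $\omega$; thus Lemma \ref{nondegeneracy} guarantees non-degeneracy in a neighbourhood of $D$, and compactness of $[0,1]$ lets me fix a single such neighbourhood. Solving $\omega_t(X_t, -) = -\mu$ yields a unique time-dependent section $X_t \in \Gamma(A)$, which vanishes along $D$ since $\mu$ does. Its anchor $\rho(X_t)$ is therefore a vector field vanishing on $D$, whose flow fixes $D$ and, after shrinking the neighbourhood, is defined up to time $1$; being the flow of a section of $A$, it lifts to a flow of algebroid automorphisms $\Phi_t$ acting on $\Omega^\bullet(A)$. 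The standard computation $\tfrac{d}{dt}\Phi_t^*\omega_t = \Phi_t^*\big(\mathcal{L}_{X_t}\omega_t + \tfrac{d}{dt}\omega_t\big) = \Phi_t^*\big(d_A(\omega_t(X_t,-)) + d_A\mu\big) = \Phi_t^*\big(-d_A\mu + d_A\mu\big) = 0$ then gives $\Phi_1^*\omega = \omega'$, so $\omega$ and $\omega'$ are symplectomorphic on the resulting tubular neighbourhood.

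The main obstacle is the algebroid bookkeeping rather than any single estimate: one must ensure the Moser vector field is genuinely a section of $A$, not merely a smooth vector field, so that its flow preserves the $b^{k+1}$-structure and acts on $\Omega^\bullet(A)$, and that the primitive $\mu$ vanishes on $D$ so the flow keeps $D$ fixed and remains inside the tubular neighbourhood. Both are arranged above — the first because $\mu = \rho^*\mu_0$ is an honest algebroid form and $\omega_t$ is non-degenerate as an algebroid form, the second by the relative Poincaré lemma — so that the argument reduces to the established Moser machinery for Lie algebroid symplectic forms.
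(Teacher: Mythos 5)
Your proof is correct and follows essentially the same route as the paper: both reduce the statement to a Moser deformation for algebroid symplectic forms whose difference is a smooth closed form vanishing along $D$, after the same preliminary checks ($\omega|_D = \omega'|_D$, hence non-degeneracy of $\omega'$ near $D$). The only difference is that the paper outsources the Moser step by citing Theorem 4.61 of \cite{klaasse2018poisson}, whereas you carry it out explicitly — constructing the primitive via the relative Poincar\'e lemma for the fibrewise scaling and verifying that the Moser vector field is a genuine section of $A$ vanishing along $D$ — which is exactly the content of that cited result.
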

\begin{proof}
By the discussion above we have that $\omega|_D = \omega'|_D$. Therefore, by choosing the tubular neighbourhood small enough, we can ensure that $\omega'$ is non-degenerate. Next note that by construction $\omega-\omega'$ extends smoothly to zero over $D$. Therefore we are in the setting to use Theorem 4.61 from \cite{klaasse2018poisson}. The desired result then follows immediately.
\end{proof}

\subsection{The dual of the Lie algebroid}
Recall that there exists a Poisson structure on the dual of any Lie algebroid. This Poisson structure is itself Lie algebroid symplectic for the right choice of Lie algebroid:

Let $A \rightarrow M$ be a Lie algebroid, and write $\pi : A^* \rightarrow M$ for the dual. Then $A^*$ admits a Poisson structure $\pi_{A^*}$ defined by the brackets:
\begin{equation*}
\{s_1,s_2\}_{A^*} = [s_1,s_2], \quad \{s,f\}_{A^*} = \rho_{A}(s)(f), 
\end{equation*}
where we identify $s_1 \in \Gamma(A)$ with fibre-wise linear functions on ${\rm tot}(A^*)$. It can be shown that this is in fact a Lie algebroid symplectic structure:
\begin{proposition}[\cite{LMM04,SMILDE2022682}]
Let $\lambda_{\rm can} \in \Omega^1(\pi^!A)$ be the canonical one-form defined by
\begin{equation*}
\lambda_{\rm can}(\alpha)(v) = \alpha(d_{A}\pi(v)),
\end{equation*}
for all $\alpha \in A^*$ and $v \in (\pi^!A)_v$. Then $\omega_{\rm can} := -d\lambda_{\rm can}$ is an $\pi^!A$-symplectic form, which induces $\pi_{A^*}$.
\end{proposition}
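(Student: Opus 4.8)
The plan is to check the three defining properties separately: that $\omega_{\mathrm{can}}$ is $d_{\pi^!A}$-closed, that it is non-degenerate, and that the Poisson bracket it induces on $A^*$ agrees with $\pi_{A^*}$. Closedness is automatic: $\pi$ is a submersion, so $\pi^!A$ is a genuine Lie algebroid, its de Rham differential squares to zero, and hence $d_{\pi^!A}\omega_{\mathrm{can}}=-d_{\pi^!A}(d_{\pi^!A}\lambda_{\mathrm{can}})=0$.

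For the other two properties I would compute in a local frame. Recall the exact sequence $0\to\pi^*A^*\to\pi^!A\xrightarrow{\mathrm{pr}_A}\pi^*A\to0$, in which the anchor $\rho_{\pi^!A}$ is the first projection to $TA^*$ and $\mathrm{pr}_A=d_A\pi$ is the canonical morphism covering $\pi$. Fix coordinates $(x^i)$ on $M$, a frame $(e_a)$ of $A$ with $\rho(e_a)=\rho_a^i\partial_{x^i}$ and $[e_a,e_b]=c_{ab}^ce_c$, and the dual fibre coordinates $p_a=\ell_{e_a}$ on $A^*$. I would then take the frame $\{V_a,H_a\}$ of $\pi^!A$ given by the vertical sections $V_a=(\partial_{p_a},0)$ spanning $\pi^*A^*$ and the horizontal lifts $H_a=(\rho_a^i\partial_{x^i},\pi^*e_a)$. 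Since $d_A\pi(H_a)=e_a$ and $d_A\pi(V_a)=0$, the defining formula gives $\lambda_{\mathrm{can}}(H_a)=p_a$ and $\lambda_{\mathrm{can}}(V_a)=0$, i.e. $\lambda_{\mathrm{can}}=p_aH^a$ in the dual frame.

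Evaluating $\omega_{\mathrm{can}}=-d_{\pi^!A}\lambda_{\mathrm{can}}$ through $d\theta(u,v)=\rho(u)\theta(v)-\rho(v)\theta(u)-\theta([u,v])$ requires only the $\mathrm{pr}_A$-relatedness of sections: as $\mathrm{pr}_A$ is an algebroid morphism, $\mathrm{pr}_A[V_a,H_b]=[0,e_b]=0$ and $\mathrm{pr}_A[H_a,H_b]=[e_a,e_b]=c_{ab}^ce_c$. This yields $\omega_{\mathrm{can}}(V_a,V_b)=0$, $\omega_{\mathrm{can}}(V_a,H_b)=-\delta_a^b$ and $\omega_{\mathrm{can}}(H_a,H_b)=c_{ab}^cp_c$, so in the frame $\{V_a,H_a\}$ the form has the block matrix $\begin{pmatrix}0&-I\\I&C\end{pmatrix}$ with $C=(c_{ab}^cp_c)$, which is manifestly invertible. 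This proves non-degeneracy and recovers the usual $\sum_i dx^i\wedge dp_i$ when $A=TM$.

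Finally, for the induced Poisson structure I would solve $\iota_{u_h}\omega_{\mathrm{can}}=d_{\pi^!A}h$ for the Hamiltonian section of a function $h$ on $A^*$, obtaining
\begin{equation*}
u_h=\big[(\partial_{p_a}h)\,c_{ab}^cp_c-\rho_b^i\partial_{x^i}h\big]V_b+(\partial_{p_b}h)\,H_b,
\end{equation*}
and set $X_h=\rho_{\pi^!A}(u_h)$. Testing $\{h_1,h_2\}=X_{h_1}(h_2)$ on the generators $p_a$ and $\pi^*f$ of $C^\infty(A^*)$ gives $\{p_a,p_b\}=c_{ab}^cp_c=\ell_{[e_a,e_b]}$ and $\{p_a,\pi^*f\}=\pi^*(\rho(e_a)f)$, which are exactly the brackets defining $\pi_{A^*}$. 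The main obstacle is bookkeeping rather than conceptual content: one has to handle the pullback-algebroid bracket correctly through $\mathrm{pr}_A$-relatedness and fix a consistent sign convention linking $\iota_{u_h}\omega_{\mathrm{can}}=d_{\pi^!A}h$ to $\{h_1,h_2\}=X_{h_1}(h_2)$; once these are fixed every step is a one-line local computation.
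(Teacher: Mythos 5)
The paper never proves this proposition: it is imported verbatim from \cite{LMM04,SMILDE2022682}, so there is no internal argument to compare yours against, and your proposal has to be judged as a self-contained verification --- which it passes. Your frame is legitimate: the sections $V_a=(\partial_{p_a},0)$ span $\ker(\mathrm{pr}_A)\cong\pi^*A^*$, and $H_a=(\rho_a^i\partial_{x^i},\pi^*e_a)$ is a genuine section of $\pi^!A$ because the coordinate lift satisfies $d\pi(\rho_a^i\partial_{x^i})=\rho(e_a)$. The key simplification --- that the Cartan formula for $d\lambda_{\rm can}$ only ever needs $\lambda_{\rm can}([u,v])=\langle\alpha,\mathrm{pr}_A[u,v]\rangle$, and that $\mathrm{pr}_A$-relatedness ($V_a\sim 0$, $H_a\sim e_a$) computes these projections without computing the full pullback-algebroid brackets --- is correct and is exactly what keeps the computation short. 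I checked the resulting entries $\omega_{\rm can}(V_a,V_b)=0$, $\omega_{\rm can}(V_a,H_b)=-\delta_a^b$, $\omega_{\rm can}(H_a,H_b)=c_{ab}^cp_c$, the invertibility of the block matrix, and your Hamiltonian section: with the conventions $\iota_{u_h}\omega_{\rm can}=d_{\pi^!A}h$ and $\{h_1,h_2\}=\rho_{\pi^!A}(u_{h_1})(h_2)$ one indeed gets $\{p_a,p_b\}=c_{ab}^cp_c=\ell_{[e_a,e_b]}$ and $\{p_a,\pi^*f\}=\pi^*(\rho(e_a)f)$, which are precisely the brackets the paper uses to define $\pi_{A^*}$, with matching signs (this is where the minus sign in $\omega_{\rm can}=-d\lambda_{\rm can}$ is used).

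One half-sentence worth adding for completeness: since $(x^i,p_a)$ are coordinates on $A^*$, the induced bivector is determined by the pairwise brackets of these coordinate functions, and the one pairing you did not display, $\{\pi^*f,\pi^*g\}$, vanishes for both structures because $X_{\pi^*f}=-\rho_b^i(\partial_{x^i}f)\,\partial_{p_b}$ is vertical. With that remark the identification of the induced Poisson structure with $\pi_{A^*}$ is airtight.
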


Therefore, whenever one is given a Lie algebroid one immediately obtains Lie algebroid symplectic structures on its dual. We will now see that the pull-back Lie algebroid of a Lie algebroid of $b^{k+1}$-type is again of $b^{k+1}$-type:
\begin{proposition}\label{prop:tautsymp}
Let $A \rightarrow M$ be a Lie algebroid of $b^{k+1}$-type corresponding to a $k$-th order foliation $\sigma$. Then $\pi^!A \rightarrow A^*$ is a Lie algebroid of $b^{k+1}$-type corresponding to the $k$-th order foliation $\pi^*\sigma$.
\end{proposition}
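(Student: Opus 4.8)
The plan is to realize $\pi^! A$ explicitly and to match the image of its anchor with the singular foliation $\Tan^k(A^*,\pi^*\xi)$ attached to the pulled-back distribution, then invoke the correspondence of Proposition \ref{prop:singfolstojets}.

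First I would recall the structure of the pullback algebroid. Since $\pi : A^* \to M$ is a surjective submersion, $\pi^! A = T(A^*) \times_{TM} A$ is a Lie algebroid over $A^*$ of rank $\dim A^* = 2n$, fitting into the exact sequence
\[
0 \to \ker(d\pi) \to \pi^! A \to \pi^* A \to 0,
\]
with anchor $\rho_{\pi^! A}(v,a) = v$, which is the inclusion on $\ker(d\pi)$ and covers $\rho_A$ on $\pi^* A$. As $d\pi$ is surjective, the image of $\rho_{\pi^! A}$ at $\xi \in A^*$ over $x = \pi(\xi)$ is $(d\pi)^{-1}(\im \rho_A(x))$, of dimension $n + \rk \rho_A(x)$. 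Hence $\rho_{\pi^! A}$ is an isomorphism over $A^* \setminus \pi^{-1}(D)$ and drops rank by one over the hypersurface $\pi^{-1}(D) = A^*|_D$; since $\im \rho_A = TD$ over $D$, the image of $\rho_{\pi^! A}$ is tangent to $\pi^{-1}(D)$, so the orbits of $\pi^! A$ are exactly the components of $\pi^{-1}(D)$ and of its complement. Thus $\pi^! A$ is of hypersurface type over $(A^*, \pi^{-1}(D))$.

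Next I would pin down the order of vanishing and the foliation at once, using the local model of Proposition \ref{prop:localformhypersurfacealgebroid}. Choose coordinates $(x_1,\ldots,x_n)$ around a point of $D$ and a frame $e_1,\ldots,e_n$ of $A$ with $\rho_A(e_1) = x_1^{k+1}\partial_{x_1}$ and $\rho_A(e_i) = \partial_{x_i}$ for $i \geq 2$; let $(p_1,\ldots,p_n)$ be the dual fibre coordinates, so that $(x_i,p_j)$ are coordinates on $A^*$ with $\pi^{-1}(D) = \{x_1 = 0\}$. Then $\epsilon_1 = (x_1^{k+1}\partial_{x_1}, e_1)$, $\epsilon_i = (\partial_{x_i}, e_i)$ for $i \geq 2$, and $\phi_j = (\partial_{p_j}, 0)$ form a frame of $\pi^! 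A$, and $\rho_{\pi^! A}$ sends them to $x_1^{k+1}\partial_{x_1}, \partial_{x_2}, \ldots, \partial_{x_n}, \partial_{p_1}, \ldots, \partial_{p_n}$. In these frames $\rho_{\pi^! A}$ is diagonal with entries $(x_1^{k+1},1,\ldots,1)$, so $\det(\rho_{\pi^! A})$ vanishes exactly to order $k+1$ along $\{x_1 = 0\} = \pi^{-1}(D)$. This proves $\pi^! A$ is of $b^{k+1}$-type and exhibits the local model of Definition \ref{def:typek} with $l = 1$. Reading off the module, $\rho_{\pi^! A}(\Gamma(\pi^! A)) = \langle x_1^{k+1}\partial_{x_1}, \partial_{x_2}, \ldots, \partial_{x_n}, \partial_{p_1}, \ldots, \partial_{p_n}\rangle$, which is precisely $\Tan^k(A^*, \pi^*\xi) = \mathcal{I}_{\pi^{-1}(D)}^{k+1}\mathfrak{X}(A^*) + \Gamma(\pi^*\xi)$ for the pulled-back distribution $\pi^*\xi := (d\pi)^{-1}\xi$, where $\xi$ represents $\sigma$: indeed $\Gamma(\pi^*\xi)$ contains all vertical fields together with lifts of $\Gamma(\xi) = \langle \partial_{x_2},\ldots,\partial_{x_n}\rangle$, while $\mathcal{I}_{\pi^{-1}(D)}^{k+1}$ is generated by $x_1^{k+1}$. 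Since $\xi$ is tangent to $D$ and involutive up to order $k$ and $\pi$ is a submersion, $\pi^*\xi$ is tangent to $\pi^{-1}(D)$ and involutive up to order $k$, so $\pi^*\sigma = j^k_{\pi^{-1}(D)}(\pi^*\xi)$ is a well-defined $k$-th order foliation; by Proposition \ref{prop:singfolstojets} it is the one associated to $\pi^! A$, which is the claim.

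I expect the only genuine subtlety to be the coordinate-free justification that $\rho_{\pi^! A}(\Gamma(\pi^! A))$ equals $\Tan^k(A^*, \pi^*\xi)$ globally rather than merely on a coordinate chart: one must check that, under $\pi$ a submersion, the generation of the pullback module by verticals and lifts of $\rho_A(\Gamma(A)) = \Tan^k(M,\xi)$ matches the $\Tan^k$ construction on $A^*$, using that lifts of $\mathcal{I}_D^{k+1}\mathfrak{X}(M)$ land in $\mathcal{I}_{\pi^{-1}(D)}^{k+1}\mathfrak{X}(A^*)$ and that $\mathcal{I}_{\pi^{-1}(D)}^{k+1}$ is generated over $C^\infty(A^*)$ by $\pi^*\mathcal{I}_D^{k+1}$. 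Everything else is the routine bookkeeping carried out above in local coordinates.
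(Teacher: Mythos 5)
The paper states Proposition \ref{prop:tautsymp} without proof, so there is no argument of the authors to compare against; judged on its own, your proof is correct and complete. The route you take --- realising $\pi^!A$ as $T(A^*)\times_{TM}A$, reading off the hypersurface condition from the anchor image $(d\pi)^{-1}(\im\rho_A)$, and then matching $\rho_{\pi^!A}(\Gamma(\pi^!A))$ with $\Tan^k(A^*,(d\pi)^{-1}\xi)$ in the frame built from the local model of Proposition \ref{prop:localformhypersurfacealgebroid} --- is exactly the verification needed, and the ``global'' step you flag at the end is harmless: both modules are subsheaves of $\mathfrak{X}(A^*)$ that agree on a cover (and coincide with everything away from $\pi^{-1}(D)$), so their equality is a purely local question settled by your chart computation. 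One small streamlining: the involutivity up to order $k$ of $(d\pi)^{-1}\xi$ and the independence of $\pi^*\sigma$ from the chosen representative $\xi$ both follow at once from the paper's Frobenius theorem (Theorem \ref{th:Frobenius}) --- locally $\sigma$ is represented by the honest foliation $\ker dx_1$, whose preimage $\ker d(x_1\circ\pi)$ is again an honest foliation, and since $\pi^*\mathcal{I}_D^{k+1}\subset\mathcal{I}_{\pi^{-1}(D)}^{k+1}$ the $k$-jet of the preimage depends only on the $k$-jet of $\xi$.
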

Therefore, we immediately obtain a large class of examples of symplectic structures on Lie algebroids of $b^{k+1}$-type.

\subsection{Some explicit examples}
In this section we give some explicit examples of $b^{k}$-symplectic forms. 
\subsubsection{Genus $g$ surface}
Let $S$ be a genus $g$ surface. In Example \ref{genusgsurfaceexampleextension} we constructed $b^4$-algebroids on the total space of the trivial line bundle $L = S \times \mathbb{R}$. Now let $D = S \times \mathbb{R}$, with coordinate $u$ on the factor $\mathbb{R}$. Consider the trivial line bundle $L' = D \times \mathbb{R}$ with fibre coordinate $t$. The total space inherits a family of $b^{4}$-algebroid $A_{3}$ depending on $4g$-parameters $(x_{i}, y_{i}, w_{i}, z_{i}) \in \mathbb{R}^{4g}$. Indeed, let $\eta_{1}$ and $\eta_{2}$ be the two closed $1$-forms from Example \ref{genusgsurfaceexampleextension}, now pulled back to $D$. We recall their definitions here: 
\[
\eta_{1} = \sum_{i = 1}^{g} (x_{i} \alpha_{i} + y_{i} \beta_{i}), \qquad \eta_{2} = \sum_{i = 1}^{g} (w_{i} \alpha_{i} + z_{i} \beta_{i}).
\] 
These determine $b^{4}$-algebroids $A_{3}$ just as in Example \ref{genusgsurfaceexampleextension}. Recall that the extension class of the $A_{3}$ is given by the formula $c = \sum_{i = 1}^{g} (x_{i} z_{i} - y_{i} w_{i}) \omega$, where $\omega \in H^{2}(D) \cong \mathbb{R}$ is the generator. 

We now construct a symplectic form. First, let $\beta \in \Omega^2_{S}$ be a symplectic representative of $\omega$, pulled back to a form on $L'$. Define 
\[
\alpha_{1} = -u \eta_{2}, \qquad \alpha_{2} = -2u \eta_{1}, \qquad \alpha_{3} = du. 
\] 
By Lemma \ref{lem:sympdata} these define the data of a closed algebroid $2$-form $\omega \in \Omega^2(A_{3})$. 

Restricting $\omega$ to $D$ we get $(\alpha_{3} = du, \gamma)$, where $\gamma$ is given by
\[
\gamma = \beta - u \eta_{1} \wedge \eta_{2} = \beta - u c. 
\]
The $1$-form $\alpha_{3}$ defines a foliation whose leaves are given by the surface $S$. The restriction of $\gamma$ to this foliation is $\omega_{\mathcal{F}} = \beta - uc$ which is non-degenerate for small values of $u$. Let $D_{\epsilon} = S \times (-\epsilon, \epsilon)$. By Lemma \ref{nondegeneracy}, we therefore obtain a symplectic form on a neighbourhood of $D_{\epsilon}$ for some value $\epsilon > 0$. Furthermore, the symplectic variation is given by $-c \in H^{2}_{\mathcal{F}} \cong H^2_{S}( C^{\infty}( -\epsilon, \epsilon))$. 

Writing the symplectic form out in coordinates we get 
\[
\omega = \beta + (du - ut^2 \eta_{2} - 2u t \eta_{1}) \wedge \frac{dt}{t^4} + (\frac{1}{t^2}\eta_{1} + \frac{1}{t} \eta_{2}) \wedge du. 
\]

\subsubsection{Heisenberg group}
We continue studying the quotient $X = H(\mathbb{R})/H(\mathbb{Z})$ of the Heisenberg group from Example \ref{ex:Heisenberg1}. In terms of the coordinates introduced there, the Lie algebroid $A_4$ has dual basis
\begin{equation*}
e^1 = dx, e^2 = dy, e^3 = dz - ydx, e^4 = \frac{dt}{t^5}- \frac{dx}{t^3}(1+t^2y) - \frac{dy}{t^2} + \frac{dz}{t}
\end{equation*} 
By Proposition \ref{prop:tautsymp} we get that the induced Poisson structure on $Y = {\rm tot}(A_4^*)$ is again a symplectic structure of $b^5$-type. We now explicitly describe this structure. We have that  $Y \simeq D \times \rr^4$, on which we have global coordinates $(x,y,z,t,\alpha,\beta,\eta,\gamma)$. The tautologial form is then given by 
  \[
 \lambda = \alpha e^1 + \beta e^2 + \eta e^3 + \gamma e^4,
 \]
The corresponding symplectic form is then given by
\begin{align*}
\Omega &= d\alpha \wedge e^1 + d\beta \wedge e^2 + d\eta \wedge e^3 + - \eta e^1 \wedge e^2 +  d\gamma \wedge e^4\\
&- \gamma\left( 3t e^4\wedge e^1 + 2t^2e^4 \wedge e^2 + t^3 e^4 \wedge e^3 - 2e^1 \wedge e^3 - te^2 \wedge e^3\right)
\end{align*} 

Along $t=0$ we get a foliation defined by $d\gamma = 0$, and a foliated symplectic form
\begin{equation*}
 \omega = d\alpha \wedge e^1 + d\beta \wedge e^2 - \eta e^1 \wedge e^2 + d\eta \wedge e^3 - 2\gamma e^1 \wedge e^3. 
\end{equation*}
Hence this form has symplectic variation $-2 e^1 \wedge e^3$, which is indeed the extension class as computed in Example \ref{ex:Heisenberg1}, which is consistent with Proposition \ref{lem:structureonD}.

\section{$k$-th order foliations}\label{sec:fols}
We begin (Subsection \ref{ssec:vectorBundles}) by recalling some elementary facts about bundles, subbundles, their sheaves of sections, and their bundles of jets. We do this in order to set notation. We then specialise this discussion to the case of the tangent bundle (Subsection \ref{ssec:distributions}), focusing on the notion of involutivity. This allows us to introduce the Lie algebroids, and some more general singular foliations, to be studied in the paper. Along the way we discuss various equivalent ways of thinking about these objects.

\subsection{Bundles} \label{ssec:vectorBundles}
Let $M$ be a smooth manifold, and let $\Ocal_M$ denote its structure sheaf of smooth functions.  Fix a non-negative integer $k$. Given a fibre bundle $E \rightarrow M$, we write $\Gamma(E)$ for its sheaf of sections and $J^k(E)$ for its bundle of $k$-jets of sections. When $E$ is a trivial fibre bundle with fibre $F$ we simply write $J^k(M,F)$ for the $k$-jets of $F$-valued functions.

When $E = M\times \rr$ is the trivial bundle, we have that $J^k(M,\rr)$ moreover inherits the structure of commutative $\rr$-algebra. It is convenient to recall that the fibre $J^k_p(M,\rr)$ is the stalk at $p \in M$ of the skyscraper sheaf $\Ocal_M/\Ical^{k+1}_p$; here $\Ical^k_p \subset \Ocal_M$ is the ideal of functions that vanish to order at least $k$ at $p$.

More generally, consider a fibre bundle $E \rightarrow M$. Its space of sections $\Gamma(E)$ is an $\Ocal_M$-module. Moreover, $J^k_p(E)$ is the stalk at $p$ of $\Gamma(E)/(\Ical^{k+1}_p\Gamma(E))$. Putting these facts together we deduce that $J^k(E)$ is a bundle of $J^k(M,\rr)$-modules.

\subsubsection{Jets along submanifolds}

Fix a submanifold $i : W \to M$. Write $\Ical_W^k$ for the sheaf of functions vanishing to order $k$ along $W$.
\begin{definition}
The \textbf{$k$-th order neighbourhood of $W$} is defined as:
\[ \Ncal_W^k :=  i^{-1}\dfrac{\Ocal_M}{\Ical_W^{k+1}}, \]
i.e. the sheaf of $k$-jets of functions along $W$.
\end{definition}
In algebraic geometry it is customary to think of $W$ in terms of its sheaf of functions $\Ocal_W$. The restriction map $\Ncal_W^k \rightarrow \Ocal_W$ then allows us to think of $\Ncal_W^k$ as a thickening of $W$, capable of seeing information around $W$ up to $k$-jets.

More generally, consider a fibre bundle bundle $E \rightarrow M$. Recall that a section of $J^k(E)|_W \rightarrow W$ is said to be \textbf{holonomic} if its projection to $J^k(E|_W)$ is the $k$-jet of a section of $E|_W$. 
\begin{definition}
We write $\Hol_W^k(E)$ for the sheaf of holonomic sections of $J^k(E)|_W$. Its elements are said to be \textbf{$k$-jets of sections} of $E$ along $W$.
\end{definition} 
We have maps $\Gamma(E) \rightarrow \Hol_W^k(E) \rightarrow \Gamma(E|_W)$ which we denote by $j^k_W$ and $|_W$, respectively.

The following useful lemma says that holonomic sections have germ representatives:
\begin{lemma} \label{lem:germRepresentative}
Let $E \rightarrow M$ be a fibre bundle. Fix a section $\sigma \in \Hol_W^k(E)$. Then there is a germ of section $s \in \Gamma(E)$ such that $j^k_Ws = \sigma$.
\end{lemma}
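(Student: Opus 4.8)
The plan is to produce the germ $s$ by a local-to-global argument: first realise $\sigma$ by a section on each chart of a suitable cover of $W$, and then glue these local solutions with a partition of unity. The gluing succeeds because the failure of two local solutions to agree to order $k$ along $W$ is controlled by an $\Ocal_M$-module. Concretely, recall from Subsection \ref{ssec:vectorBundles} that $J^k_p(E)$ is the stalk at $p$ of $\Gamma(E)/(\Ical^{k+1}_p\Gamma(E))$; intersecting these conditions over all $p \in W$ identifies the kernel of $j^k_W \colon \Gamma(E) \to \Hol^k_W(E)$ with $\Ical_W^{k+1}\Gamma(E)$. Since $\Ical_W^{k+1}$ is an ideal sheaf, this kernel is a sheaf of $\Ocal_M$-modules, hence stable under multiplication by smooth functions and under finite sums — exactly the operations carried out by a partition of unity.

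First I would treat local realisation. Fix $p \in W$ and choose adapted coordinates $(x_1,\dots,x_{n-l},y_1,\dots,y_l)$ on a chart $U$ with $W\cap U = \{y=0\}$, together with a local trivialisation of $E$. In these coordinates $\sigma$ is recorded by its jet components over each point of $W\cap U$, varying smoothly along $W$; holonomicity is precisely the compatibility making these components the coefficients of the jet of a bona fide section. Reading off the normal Taylor coefficients $\sigma^\beta(x)$ for $|\beta|\le k$ and setting $s_U(x,y)=\sum_{|\beta|\le k}\tfrac{y^\beta}{\beta!}\,\sigma^\beta(x)$ produces a local section, and a direct differentiation that invokes the compatibility encoded in holonomicity verifies $j^k_W s_U = \sigma|_U$. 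For a genuine (non-vector) fibre bundle this additive formula does not make sense directly, so I would first extend the $0$-jet $\sigma|_W\in\Gamma(E|_W)$ to a section $\hat s$ of $E$ on a neighbourhood of $W$, identify a fibrewise tubular neighbourhood of $\hat s$ with the vertical bundle $\hat s^*\ker(d\pi)$, and carry out the construction there, thereby reducing to the vector-bundle case.

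With local solutions $\{s_j\}$ over a locally finite cover $\{U_j\}$ of a neighbourhood of $W$, choose a partition of unity $\{\rho_j\}$ subordinate to it with $\sum_j \rho_j \equiv 1$ near $W$, and set $s = \sum_j \rho_j s_j$. To check $j^k_W s = \sigma$, fix $p\in W$ and a reference index $j_0$ with $p\in U_{j_0}$. For each $j$ with $p\in U_j$ the difference $s_j - s_{j_0}$ has vanishing $k$-jet along $W$ near $p$, i.e. lies in $\Ical_W^{k+1}\Gamma(E)$. Using $\sum_j\rho_j = 1$ we may write $s - s_{j_0} = \sum_j \rho_j (s_j - s_{j_0})$, which near $p$ is a finite sum of elements of the module $\Ical_W^{k+1}\Gamma(E)$ and hence again lies in $\Ical_W^{k+1}\Gamma(E)$. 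Therefore $j^k_p s = j^k_p s_{j_0} = \sigma_p$, and since $p$ was arbitrary, $j^k_W s = \sigma$.

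The main obstacle is the local step rather than the gluing: one must genuinely use holonomicity to know the prescribed jet data is realised by a section, and one must handle the passage from vector bundles to arbitrary fibre bundles through the vertical-bundle reduction above. Once local realisability is in hand, the partition-of-unity argument is essentially forced to succeed, precisely because $\ker j^k_W = \Ical_W^{k+1}\Gamma(E)$ is an $\Ocal_M$-module and so survives multiplication by the $\rho_j$ and summation.
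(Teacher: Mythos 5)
Your proof is correct and takes essentially the same route as the paper's: the paper identifies $\Hol_W^k(E) = i^{-1}\bigl(\Gamma(E)/\Ical_W^{k+1}\Gamma(E)\bigr)$, notes that the statement therefore holds locally, and concludes by gluing with a partition of unity, which is exactly your local-realisation-plus-gluing scheme resting on the fact that $\ker j^k_W = \Ical_W^{k+1}\Gamma(E)$ is an $\Ocal_M$-module. You merely spell out details the paper leaves implicit, namely the explicit fibrewise Taylor construction and the reduction of a general fibre bundle to the vector-bundle case via a fibrewise tubular neighbourhood of an extension of $\sigma|_W$ (which is indeed needed for the partition-of-unity sum to make sense).
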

\begin{proof} 
Note that
\[ \Hol_W^k(E) = i^{-1}\dfrac{\Gamma(E)}{\Ical_W^{k+1}\Gamma(E)}, \]
hence the statement holds locally. Therefore, by applying a partition of unity we may prove the desired result.
\end{proof}
We can embrace the algebraic viewpoint and study $E$ via its sheaf of sections $\Gamma(E)$. This motivates us to write
\[ E|_{\Ncal_W^k} := \Hol_W^k(E), \]
which is readily seen to be a locally free sheaf of $\Ncal_W^k$-modules.


\subsubsection{Jets of vector subbundles}

Consider now a vector bundle $E \rightarrow M$. We will now study vector subbundles $R \subset E$, which we view as sections of the Grassmannian bundle $\Gr(E,l)$, with $l$ the rank. Recall that there is a correspondence between subbundles $R \subset E$ and locally free subsheaves $\Gamma(R) \subset \Gamma(E)$ of $\Ocal_M$-modules. Our goal now is to translate this to the jet setting. 

By exactness of taking jets, the exact sequence of vector bundles
\[ 0 \rightarrow R \rightarrow E \rightarrow E/R \rightarrow 0 \]
translates into a exact sequence of $J^k(M,\rr)$-modules:
\[ 0  \rightarrow J^k(R) \rightarrow J^k(E) \rightarrow J^k(E/R) \rightarrow 0. \]
An element $\sigma \in J^k(E)$ is \textbf{tangent} to $R$ if it is in the kernel of the second map (or equivalently, in the image of the first). A section $s: M \to E$ is \textbf{$k$-tangent} to $R$ along $W$ if the jets $j^k_Ws$ take values in $J^k(R)$. 

\begin{definition}
We write $\Tangency^k(W,R)$ for the sheaf of sections that are $k$-tangent to $R$ along $W$.
\end{definition}
The following structure theorem provides a rather explicit description of $\Tangency^k(W,R)$:
\begin{lemma} \label{lem:decomposeTangencySheaf}
\begin{align*}
\Tangency^k(W,R) = \Ical_W^{k+1}\Gamma(E) + \Gamma(R).
\end{align*}
Here the symbol $+$ denotes the $\Ocal_M$-span of the union.
\end{lemma}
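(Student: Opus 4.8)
The plan is to prove the two inclusions separately, regarding the asserted identity as an equality of $\Ocal_M$-submodules of $\Gamma(E)$. Since both sides are sheaves, it suffices to establish the equality locally around each point of $M$, and the content is concentrated near $W$: at points not lying in $W$ the jet condition defining $\Tangency^k(W,R)$ is vacuous and $\Ical_W$ is locally the full structure sheaf, so both sides equal $\Gamma(E)$ there. First I would record that $\Tangency^k(W,R)$ is genuinely an $\Ocal_M$-submodule: it is the kernel of the composite $\Gamma(E) \xrightarrow{j^k_W} \Hol_W^k(E) \to \Hol_W^k(E/R)$ induced by $E \to E/R$, and since $\Hol_W^k(-) = i^{-1}\big(\Gamma(-)/\Ical_W^{k+1}\Gamma(-)\big)$ is an $\Ncal_W^k$-module, this composite is $\Ocal_M$-linear.

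For the inclusion $\supseteq$, I would check the two generators of the right-hand side. A section $s \in \Gamma(R)$ has $j^k_W s$ taking values in $J^k(R)$ tautologically, so $\Gamma(R) \subseteq \Tangency^k(W,R)$. If $s = f t$ with $f \in \Ical_W^{k+1}$ and $t \in \Gamma(E)$, then $j^k_W s = 0$ because $f$ vanishes to order $k+1$ along $W$; hence $\Ical_W^{k+1}\Gamma(E) \subseteq \Tangency^k(W,R)$. As $\Tangency^k(W,R)$ is an $\Ocal_M$-module, the $\Ocal_M$-span of the union is contained as well.

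The inclusion $\subseteq$ is the heart of the matter and is where I would use that $R$ is a subbundle. Locally near a point of $W$ I would choose a complementary subbundle, splitting $E \cong R \oplus Q$, and decompose $s = s_R + s_Q$ accordingly. Under the identification $E/R \cong Q$, the projection $\Hol_W^k(E) \to \Hol_W^k(E/R)$ carries $j^k_W s$ to $j^k_W s_Q$, so the tangency hypothesis says precisely that $j^k_W s_Q = 0$. By the description $\Hol_W^k(Q) = i^{-1}\big(\Gamma(Q)/\Ical_W^{k+1}\Gamma(Q)\big)$, the kernel of $j^k_W$ on $\Gamma(Q)$ is exactly $\Ical_W^{k+1}\Gamma(Q)$, whence $s_Q \in \Ical_W^{k+1}\Gamma(Q) \subseteq \Ical_W^{k+1}\Gamma(E)$. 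Since $s_R \in \Gamma(R)$, this exhibits $s = s_R + s_Q$ as an element of $\Ical_W^{k+1}\Gamma(E) + \Gamma(R)$ locally.

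The main obstacle is essentially bookkeeping rather than substance: the local splittings $E \cong R \oplus Q$ do not glue, so the decomposition $s = s_R + s_Q$ is only local. This is harmless because we are comparing two subsheaves of $\Gamma(E)$, and an inclusion of subsheaves may be verified on stalks (equivalently, on an open cover); the sheaf property of the right-hand side then upgrades local membership to the global statement. The only points deserving genuine care are the equivalence $j^k_W s_Q = 0 \Leftrightarrow s_Q \in \Ical_W^{k+1}\Gamma(Q)$ and the compatibility of the projection to $J^k(E/R)$ with the chosen splitting, both of which follow directly from the module descriptions of $\Hol_W^k$ recalled above.
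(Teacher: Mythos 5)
Your proof is correct and follows essentially the same route as the paper's: the easy inclusion is verified on the two generating pieces, and the reverse inclusion is obtained by working locally as sheaves, choosing a complementary subbundle $Q$ to $R$, and using the splitting of jets to conclude that the $Q$-component of a $k$-tangent section has vanishing $k$-jet along $W$, hence lies in $\Ical_W^{k+1}\Gamma(E)$. The only cosmetic difference is that you phrase the key step via the projection to $\Hol_W^k(E/R) \cong \Hol_W^k(Q)$, while the paper uses the direct sum $J^k(E) \cong J^k(R) \oplus J^k(C)$; these are the same argument.
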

\begin{proof}
We write $\pi$ for the quotient map $E \rightarrow E/R$. First we observe that $\Tangency^k(W,R)$ contains both $\Ical_W^{k+1}\Gamma(E)$ and $\Gamma(R)$. Indeed, sections $X$ in $\Gamma(R)$ satisfy $j^k\pi \circ j^k(X) = 0$ everywhere, not just along $W$. Similarly, a section $Y$ in $\Ical_W^{k+1}\Gamma(E)$ satisfies $j^rY|_W = 0$, so in particular $j^k\pi \circ j^k(Y)|_W = 0$.

For the other inclusion observe first that we can work locally, since we are arguing with sheaves. Fix a complement $C \subset E$ for $R$. Then, any $X \in \Tangency^k(W,R)$ can be uniquely written as $X = Y + Z$, where $Y \in \Gamma(R)$ and $Z \in \Gamma(C)$. It follows that $Z$ is also in $\Tangency^r(W,R)$. We can then observe that there are canonical isomorphisms of $J^k(M,\mathbb{R})$-modules
\[ J^k(E) \cong J^k(R) \oplus J^k(C) \]
and $j^kZ$ takes values in the term $J^k(C)$. However, $Z$ being in $\Tangency^k(W,R)$ implies that $j^kZ|_W$ takes values in $J^k(R)$. The conclusion is that $j^kZ|_W$ is zero, proving that $Z \in  \Ical_W^{k+1}\Gamma(E)$. This concludes the proof.
\end{proof}

The following characterises jets of subbundles in terms of the $k$-tangency condition:
\begin{lemma} \label{lem:tangenciesSubbundles}
Let $R$ and $R'$ be subbundles of $E$ of rank $l$. The following conditions are equivalent:
\begin{enumerate}
\item $j^k_WR$ and $j^k_WR'$ coincide as sections of the Grassmanian bundle, that is $j^k_WR = j^k_WR' \in \Hol_W^k(\Gr(E,l))$\label{one}
\item $j^k_WR$ and $j^k_WR'$ coincide as subsheaves of $\Hol_W^k(E)$.
\item $\Tangency^k(W,R) = \Tangency^k(W,R')$. \label{three}
\end{enumerate}
\end{lemma}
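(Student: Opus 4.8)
The plan is to establish the three equivalences by proving the cyclic chain of implications $(\ref{one}) \Rightarrow (2) \Rightarrow (\ref{three}) \Rightarrow (\ref{one})$, exploiting the dictionary between subbundles and their sheaves of sections that was set up in the preceding subsection. The conceptual content is that each of the three conditions is a faithful repackaging of the same datum — the holonomic jet $j^k_W R$ — so most steps should reduce to unwinding definitions, with the structure theorem of Lemma \ref{lem:decomposeTangencySheaf} doing the real work in the final implication.

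First I would treat $(\ref{one}) \Leftrightarrow (2)$ essentially as a matter of language. Viewing $R$ as a section of $\Gr(E,l)$, its holonomic jet $j^k_W R \in \Hol_W^k(\Gr(E,l))$ encodes precisely the locally free $\Ncal_W^k$-submodule $J^k(R)|_W \subset J^k(E)|_W = E|_{\Ncal_W^k}$ that it cuts out via the exact sequence $0 \to J^k(R) \to J^k(E) \to J^k(E/R) \to 0$. Since a subbundle is recovered from its sheaf of sections, and likewise a locally free submodule of $\Hol_W^k(E)$ is recovered from the point of the Grassmannian bundle it determines at each point of $W$, the equality of the jets as Grassmannian sections is the same statement as the equality of the associated submodules of $\Hol_W^k(E)$. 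This is where I would be careful: the equivalence of (\ref{one}) and (2) is really the assertion that passing from a holonomic section of the Grassmannian to the submodule it spans is injective, which follows from the pointwise nature of both descriptions and the local freeness already recorded.

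Next, for $(2) \Rightarrow (\ref{three})$, I would use Lemma \ref{lem:decomposeTangencySheaf} to write $\Tangency^k(W,R) = \Ical_W^{k+1}\Gamma(E) + \Gamma(R)$, and observe that under the quotient map $\Gamma(E) \to \Hol_W^k(E) = i^{-1}\Gamma(E)/\Ical_W^{k+1}\Gamma(E)$ the image of $\Tangency^k(W,R)$ is exactly the submodule $j^k_W R$. Indeed, the term $\Ical_W^{k+1}\Gamma(E)$ is the kernel of this quotient, so it is killed, and $\Gamma(R)$ maps onto $j^k_W R$. Thus $\Tangency^k(W,R)$ is the preimage of $j^k_W R$ under $j^k_W$, and equality of the jet-submodules forces equality of the preimages, giving (\ref{three}). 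The converse $(\ref{three}) \Rightarrow (2)$ runs the same argument backwards: $j^k_W R$ is the image of $\Tangency^k(W,R)$ under $j^k_W$, so equal tangency sheaves yield equal images.

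The main obstacle I anticipate is not any single hard estimate but rather making the identification ``image of $\Tangency^k(W,R)$ under $j^k_W$ equals $j^k_W R$'' fully rigorous as sheaves, since it requires knowing that $j^k_W$ is surjective onto holonomic sections (Lemma \ref{lem:germRepresentative}) and that $\Ical_W^{k+1}\Gamma(E)$ is precisely its kernel — both of which are available from the excerpt. I would keep the whole argument local, invoking the partition-of-unity reduction already used in Lemma \ref{lem:germRepresentative}, so that I may choose a complement $C \subset E$ and argue with the splitting $J^k(E) \cong J^k(R) \oplus J^k(C)$ exactly as in the proof of Lemma \ref{lem:decomposeTangencySheaf}. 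With that reduction in place, each implication becomes a short diagram-chase, and the equivalence of all three conditions follows.
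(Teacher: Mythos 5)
Your cyclic strategy is sound in outline, and your treatment of $(2)\Leftrightarrow(\ref{three})$ is correct and arguably cleaner than the paper's: identifying $\Tangency^k(W,R)$ as the full preimage of the jet subsheaf under $j^k_W$ (kernel equal to $\Ical_W^{k+1}\Gamma(E)$ by Lemma \ref{lem:decomposeTangencySheaf}, surjectivity onto holonomic sections by Lemma \ref{lem:germRepresentative}) makes both implications between $(2)$ and $(\ref{three})$ immediate. The paper does not factor the proof this way at all; it proves all three equivalences simultaneously by one local computation, so this leg of your argument is a genuinely different, and tidy, organization.

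The problem is $(\ref{one})\Leftrightarrow(2)$, which you dismiss as ``essentially a matter of language.'' It is not: this is precisely the step carrying the geometric content of the lemma, and your justification (``the pointwise nature of both descriptions and the local freeness already recorded'') is not an argument, since nothing established before the lemma tells you that the $k$-jet of the Grassmannian-valued map $q\mapsto R_q$ determines, or is determined by, the submodule of $k$-jets of sections; indeed both well-definedness (for $(\ref{one})\Rightarrow(2)$) and injectivity (for $(2)\Rightarrow(\ref{one})$) of that passage are exactly what must be proved. The paper's mechanism is the graph chart: near $p\in W$ fix a complement (the paper uses $R^\perp$; your $C$ works equally well) and note that any $R'$ agreeing with $R$ along $W$ is, near $W$, the graph of a bundle map $A\colon R\to C$; each condition is then equivalent to the vanishing of $j^k_W A$. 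With this, $(\ref{one})\Rightarrow(2)$ reduces to $j^k_W(s+A(s))=j^k_W(s)$, and $(2)\Rightarrow(\ref{one})$ follows by writing a frame of $R'$ as a frame of $R$ perturbed by sections of $\Ical_W^{k+1}\Gamma(E)$ and reading off that $A$ has vanishing $k$-jet. Your closing paragraph invokes the splitting $J^k(E)\cong J^k(R)\oplus J^k(C)$, but that splitting is adapted to $R$ alone and does not by itself compare $R$ with a second subbundle; the graphical representation of $R'$ over $R$ is the one idea your plan never states, and without it the promised ``short diagram-chase'' for $(\ref{one})\Leftrightarrow(2)$ does not exist.
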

\begin{proof}
Choose a local frame $f$ for $R$ around $p$. We can provide a chart in $\Gr(E,l)$ around $R$ as follows: Given any $R'$ which is $0$-tangent to $R$ at $p$, then near $p$ $R'$ is graphical over $R$. That is, for all $q$ near $p$ there exists a linear map $A_q \Hom(R_q,R_q^\perp)$ such that $R'$ has as frame $q \mapsto f + A_g(f)$. In these terms, conditions \ref{one} to \ref{three} are all seen to be equivalent to the vanishing of the $k$-jet of $q \mapsto A_q$ at $p$.
\end{proof}

The lemma allows us to discuss jets of subbundles of $E$ along a submanifold $W$.
\begin{definition}
A $k$-jet of subbundle of $E$ along $W$ is an element of $\Hol_W^k(\Gr(E,l))$. The integer $l$ is called the rank of the subbundle. 
\end{definition}
\begin{definition}
A subbundle of $E|_{\Ncal_W^k}$ is a locally free $\Ncal_W^k$-submodule of $E|_{\Ncal_W^k}$. The rank of the subbundle is its rank as a module.
\end{definition}

Given a $k$-jet of subbundle, we can associate to it a subbundle of $E|_{\Ncal_W^k}$ by taking sections:
\begin{lemma} \label{lem:sectionsOfJet}
An element $\sigma \in \Hol_W^k(\Gr(E,l))$ uniquely defines a subbundle $\Gamma(\sigma) \subset E|_{\Ncal_W^k}$.
\end{lemma}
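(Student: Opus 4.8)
The plan is to realise $\sigma$ locally by a genuine subbundle and then push it into $\Hol_W^k(E)$, using Lemma \ref{lem:tangenciesSubbundles} to guarantee that the result is independent of all choices. First I would invoke Lemma \ref{lem:germRepresentative}, applied to the fibre bundle $\Gr(E,l) \to M$: on a sufficiently small open set $U \subset W$ the holonomic section $\sigma|_U$ admits a germ representative, that is, an honest rank-$l$ subbundle $R \subset E$ defined near $U$ with $j^k_W R = \sigma|_U$. I would then define $\Gamma(\sigma)|_U$ to be the image of $\Gamma(R)$ under the projection $j^k_W : \Gamma(E) \to \Hol_W^k(E) = E|_{\Ncal_W^k}$, which is automatically an $\Ncal_W^k$-submodule.

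Next I would check local freeness of rank $l$. Choosing a local frame $s_1, \dots, s_l$ of $R$ and completing it to a frame $s_1, \dots, s_m$ of $E$ (with $m = \rk E$), the elements $j^k_W s_1, \dots, j^k_W s_m$ form a free $\Ncal_W^k$-basis of $E|_{\Ncal_W^k}$, since the latter is locally free. Because $j^k_W$ is semilinear over the surjective ring map $\Ocal_M \to \Ncal_W^k$, the image of $\Gamma(R)$ is exactly the $\Ncal_W^k$-span of $j^k_W s_1, \dots, j^k_W s_l$. A submodule generated by part of a free basis is itself free, so $\Gamma(\sigma)|_U$ is a locally free $\Ncal_W^k$-submodule of rank $l$, hence a subbundle of $E|_{\Ncal_W^k}$ in the sense of the definition above.

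It remains to see that $\Gamma(\sigma)$ is well-defined (which is the force of the word \emph{uniquely}) and patches to a global subsheaf. This is precisely where Lemma \ref{lem:tangenciesSubbundles} enters: the equivalence of its conditions \ref{one} and the sheaf condition says that two subbundles with the same $k$-jet along $W$, i.e. $j^k_W R = j^k_W R'$ as holonomic sections, have the same image as subsheaves of $\Hol_W^k(E)$. Consequently $\Gamma(\sigma)|_U$ depends only on $\sigma|_U$ and not on the chosen representative $R$. The local definitions therefore agree on overlaps and glue to a single subbundle $\Gamma(\sigma) \subset E|_{\Ncal_W^k}$ determined by $\sigma$ alone.

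The only genuine content is the reconciliation of the two descriptions of a jet of subbundle, namely as a holonomic section of the Grassmannian and as a submodule of jets of $E$, and this is already carried out in Lemma \ref{lem:tangenciesSubbundles}; everything else is bookkeeping. Accordingly I expect no serious obstacle. The one point to handle with care is that the germ representative supplied by Lemma \ref{lem:germRepresentative} exists only locally, so the construction must be performed open-by-open and then glued, with the uniqueness statement providing the compatibility on overlaps.
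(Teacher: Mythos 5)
Your proof is correct and follows essentially the same route as the paper: represent $\sigma$ locally by a genuine subbundle $R$ (via Lemma \ref{lem:germRepresentative}), take the induced submodule $R|_{\Ncal_W^k} \subset E|_{\Ncal_W^k}$, and invoke Lemma \ref{lem:tangenciesSubbundles} both for independence of the chosen representative and for compatibility across charts. The only difference is that you spell out the rank-$l$ local-freeness verification explicitly, which the paper leaves implicit in calling $R|_{\Ncal_W^k}$ a subbundle.
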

\begin{proof}
We first argue locally. Find a subbundle $R \subset E$ representing $\sigma$. Then $R|_{\Ncal_W^k} \subset E|_{\Ncal_W^k}$ is a subbundle of $E|_{\Ncal_W^k}$. According to Lemma \ref{lem:tangenciesSubbundles}, this subbundle is independent of our choice of $R$. To argue globally, we observe that the choices of $R$ for differing charts are compatible, thanks again to Lemma \ref{lem:tangenciesSubbundles}.
\end{proof}

\begin{lemma} \label{lem:characterisationSubbundles}
The assignment $\sigma \mapsto \Gamma(\sigma)$ is a bijection between:
\begin{itemize}
\item $k$-jets of subbundles of $E$ along $W$.
\item Subbundles of $E|_{\Ncal_W^k}$.
\end{itemize}
\end{lemma}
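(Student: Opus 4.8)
The plan is to note that $\sigma \mapsto \Gamma(\sigma)$ is already well-defined by Lemma \ref{lem:sectionsOfJet}, and then to establish injectivity and surjectivity separately. Since both sides are sheaves on $W$ and a morphism of sheaves is an isomorphism iff it is so locally, I may work in a chart over germs at a point $p \in W$, provided I check that the local inverses glue. Throughout I will use the identification $\Gamma(\sigma) = \Hol_W^k(R) = j^k_W R$, viewed as a subsheaf of $\Hol_W^k(E) = E|_{\Ncal_W^k}$, whenever $R \subset E$ is an honest subbundle representing $\sigma$; such a local representative always exists, since holonomic sections of $\Gr(E,l)$ admit germ representatives by Lemma \ref{lem:germRepresentative}.

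For injectivity, suppose $\Gamma(\sigma) = \Gamma(\sigma')$. Choosing local representatives $R, R'$ with $j^k_W R = \sigma$ and $j^k_W R' = \sigma'$, the hypothesis says precisely that $j^k_W R$ and $j^k_W R'$ coincide as subsheaves of $\Hol_W^k(E)$. This is condition (2) of Lemma \ref{lem:tangenciesSubbundles}, which is equivalent to condition (1), namely $j^k_W R = j^k_W R' \in \Hol_W^k(\Gr(E,l))$, i.e. $\sigma = \sigma'$.

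For surjectivity, let $S \subset E|_{\Ncal_W^k}$ be a subbundle of rank $l$. Working locally, I choose an $\Ncal_W^k$-frame $s_1, \dots, s_l$ of $S$ and---crucially---extend it to a frame $s_1, \dots, s_n$ of the free module $E|_{\Ncal_W^k}$ of rank $n = \rk E$ (this is where I use that $S$ is a subbundle, i.e. a local direct summand, and not merely an abstractly free submodule). By Lemma \ref{lem:germRepresentative} each $s_i$ lifts to a germ $\tilde s_i \in \Gamma(E)$ with $j^k_W \tilde s_i = s_i$. Because $s_1, \dots, s_n$ is a frame of the free $\Ncal_{W,q}^k$-module $(E|_{\Ncal_W^k})_q$, its reduction modulo the maximal ideal at any $q \in W$ near $p$ is a basis of the fibre $E_q$; hence the values $\tilde s_1(q), \dots, \tilde s_n(q)$ are linearly independent, and in particular $\tilde s_1(q), \dots, \tilde s_l(q)$ span an $l$-dimensional subspace of $E_q$ for all such $q$. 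Therefore $\tilde s_1, \dots, \tilde s_l$ span an honest rank-$l$ subbundle $R \subset E$ in a neighbourhood of $p$, and $j^k_W \tilde s_i = s_i$ gives $\Gamma(j^k_W R) = R|_{\Ncal_W^k} = S$ locally. Finally, the local jets $j^k_W R$ agree on overlaps by the injectivity just proven, so they glue to a global $\sigma \in \Hol_W^k(\Gr(E,l))$ with $\Gamma(\sigma) = S$.

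The main obstacle is precisely the passage, in the surjectivity step, from an algebraic frame of $S$ over $\Ncal_W^k$ to an honest subbundle $R \subset E$: a priori a locally free $\Ncal_W^k$-submodule of $E|_{\Ncal_W^k}$ need not restrict along $W$ to a subbundle (for instance $y_1 \cdot \Ncal_W^k$, with $y_1$ a coordinate on $W$, is abstractly free of rank one but restricts along $W$ to the non-subbundle $y_1\Ocal_W \subset \Ocal_W$, whose quotient has torsion). The argument therefore genuinely relies on reading ``subbundle'' as a local direct summand, and on extending the frame of $S$ to a frame of the ambient free module before lifting, so that the lifted vectors are forced to be fibrewise independent along $W$. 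Once this is secured, the remaining verifications are routine bookkeeping with jets.
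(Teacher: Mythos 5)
Your proposal is correct, and in outline it is the same argument as the paper's: construct the inverse locally by lifting an $\Ncal_W^k$-basis of $S$ to honest sections of $E$ that span a subbundle $R$ near $W$, then glue the resulting local jets, with Lemma \ref{lem:tangenciesSubbundles} guaranteeing all choices are compatible. But your write-up goes beyond the paper's proof in two ways that are worth recording. First, you make the injectivity half explicit via the equivalence of conditions (1) and (2) in Lemma \ref{lem:tangenciesSubbundles}, whereas the paper only sketches the inverse construction. Second, and more substantially, the paper's step ``pick a tuple of $l$-linearly independent sections of $E$, close to $W$, representing said basis'' is exactly the point you isolate as the main obstacle, and the paper offers no justification for it: for a general locally free $\Ncal_W^k$-submodule this step can fail, since the values of a basis can degenerate along $W$. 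Your counterexample is valid: for $E$ a trivial line bundle, $y_1\Ncal_W^k \subset \Ncal_W^k$ (with $y_1$ a coordinate function on $W$) is an abstractly free rank-one submodule, yet it is not $\Gamma(\sigma)$ for any $\sigma$, because $\Gr(E,1)$ is a single point in each fibre and hence the only rank-one jet of subbundle is all of $E$. So under the paper's literal definition of ``subbundle of $E|_{\Ncal_W^k}$'' (any locally free submodule) the lemma is false as stated; the direct-summand reading you adopt is what makes it true, since the split injection $S \hookrightarrow E|_{\Ncal_W^k}$ survives reduction modulo the maximal ideal of the (local) stalk ring at each $q \in W$, forcing the lifted sections $\tilde s_1(q),\ldots,\tilde s_l(q)$ to be independent and hence to span a genuine subbundle near $W$. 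One small streamlining: you do not actually need to extend the frame of $S$ to a frame of $E|_{\Ncal_W^k}$ (which quietly uses that the complementary summand is locally free); split injectivity of $S \hookrightarrow E|_{\Ncal_W^k}$ after tensoring with the residue field at $q$ already yields the fibrewise independence directly.
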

\begin{proof}
We will provide an inverse. We work locally at first. Given a subbundle $S$ of $E|_{\Ncal_W^k}$, we use the locally free condition to produce a basis of $S$. We then pick a tuple of $l$-linearly independent sections of $E$, close to $W$, representing said basis. The tuple spans a subbundle $R \subset E$ in the vicinity of $W$. All such choices are $k$-tangent to each other along $W$. It follows from Lemma \ref{lem:tangenciesSubbundles} that they are compatible and define a holonomic section of $\Hol_W^k(\Gr(E,l))$. The choices of tuple across charts are compatible, due to Lemma \ref{lem:tangenciesSubbundles}, yielding a global element of $\Hol_W^k(\Gr(E,l))$.
\end{proof}

%
%

\subsection{Distributions} \label{ssec:distributions}

All the results and discussion in Subsection \ref{ssec:vectorBundles} apply to the concrete case of $TM \rightarrow M$ and to its subbundles, the tangent distributions $\xi \subset TM$. The new ingredient in this case is the Lie bracket. Henceforth, we will assume that $\rank(\xi) = {\rm codim}(W) = l$.



\subsubsection{The Lie bracket in $TM|_{\Ncal_W^k}$}
The Lie bracket 
\[ [-,-]: \Gamma(TM) \times \Gamma(TM) \rightarrow \Gamma(TM) \]
is a bidifferential operator of order one. It therefore defines an antisymmetric bundle map
\[ J^k(TM) \times J^k(TM) \rightarrow J^{k-1}(TM) \]
that is compatible with the $J^k(M,\rr)$-module structure on $J^k(TM)$ via the Leibniz identity: i.e. $[fX,Y] = f[X,Y] + df(Y)X$ holds for every $f \in J^k(M,\rr)$ and every pair $X,Y \in J^k(TM)$, all based at the same point.

Due to the loss of a derivative, we do not have a Lie algebra structure on $J^k(TM)$. To remedy this we will restrict ourselves to $\A_W \subset \Gamma(TM)$: the subsheaf of vector fields tangent to $W$. Then $k$-jets of vector fields correspond precisely to the derivations of $\Ncal_W^k$.
\begin{lemma}
The sheaf $i^{-1}(\A_W)/i^{-1}(\Ical_W^{k+1}\Gamma(TM))$ of $k$-jets of vector fields tangent to $W$ is the sheaf of derivations $\Der(\Ncal_W^k)$ of $\Ncal_W^k$. 
\end{lemma}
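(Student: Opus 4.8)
The plan is to construct the natural comparison map explicitly, compute its kernel, and then verify surjectivity locally in adapted coordinates; since the statement concerns sheaves, it suffices to argue on stalks. First I would define the map. A germ of vector field $X \in \A_W$ satisfies $X(\Ical_W) \subseteq \Ical_W$ by definition of tangency, and the Leibniz rule upgrades this to $X(\Ical_W^{k+1}) \subseteq \Ical_W^{k+1}$: any generator $f_0 \cdots f_k$ of $\Ical_W^{k+1}$, with each $f_i \in \Ical_W$, satisfies $X(f_0\cdots f_k) = \sum_i f_0 \cdots X(f_i) \cdots f_k \in \Ical_W^{k+1}$. Hence $X$ descends to an $\RR$-linear derivation $\bar X$ of $\Ocal_M/\Ical_W^{k+1}$, and applying $i^{-1}$ yields a sheaf map $\Phi : i^{-1}\A_W \to \Der(\Ncal_W^k)$, $X \mapsto \bar X$.

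Next I would compute the kernel. Working in adapted coordinates $(x_1,\ldots,x_l,y_1,\ldots,y_{n-l})$ with $W = \{x_1 = \cdots = x_l = 0\}$, write $X = \sum_i f_i \partial_{x_i} + \sum_j g_j \partial_{y_j}$. Then $\bar X = 0$ means $X(h) \in \Ical_W^{k+1}$ for every germ $h$; testing on the coordinate functions gives $f_i = X(x_i) \in \Ical_W^{k+1}$ and $g_j = X(y_j) \in \Ical_W^{k+1}$, so $X \in \Ical_W^{k+1}\Gamma(TM)$. Conversely, any such $X$ has all coefficients in $\Ical_W^{k+1}$, hence is tangent to $W$ and lies in the kernel. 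Therefore $\ker\Phi = i^{-1}(\Ical_W^{k+1}\Gamma(TM))$, and $\Phi$ descends to an injection $i^{-1}(\A_W)/i^{-1}(\Ical_W^{k+1}\Gamma(TM)) \hookrightarrow \Der(\Ncal_W^k)$.

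The main step is surjectivity, which I would check locally. Here $\Ncal_W^k$ is the algebra of $k$-jets $\sum_{|\alpha|\le k} a_\alpha(y) x^\alpha$, generated as an $\RR$-algebra by the coordinate functions $x_i$ together with the subalgebra $\Ocal_W$ of functions depending only on $y$; a derivation is determined by its values on such a generating set. Given $\delta \in \Der(\Ncal_W^k)$, the crucial observation is that the truncation relation $x_i^{k+1} = 0$ forces $0 = \delta(x_i^{k+1}) = (k+1)x_i^k\, \delta(x_i)$, and since multiplication by $x_i^k$ is injective on the degree-$0$ part, $a_i := \delta(x_i)$ has vanishing constant term, i.e. lies in the image of $\Ical_W$. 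I would lift each $a_i$ to some $\tilde a_i \in \Ical_W$, and decompose $\delta|_{\Ocal_W} = \sum_{|\beta|\le k} x^\beta Y_\beta$ into vector fields $Y_\beta = \sum_j c_{\beta,j}(y)\partial_{y_j}$ on $W$, setting $g_j = \sum_{|\beta|\le k} c_{\beta,j}(y) x^\beta$. Then $X = \sum_i \tilde a_i \partial_{x_i} + \sum_j g_j \partial_{y_j}$ is tangent to $W$ (its $\partial_{x_i}$-coefficients lie in $\Ical_W$), and $\bar X$ agrees with $\delta$ on every generator, hence $\bar X = \delta$.

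Since injectivity holds globally and surjectivity holds on stalks, $\Phi$ induces an isomorphism of sheaves, proving the lemma. I expect the delicate point to be the surjectivity computation, and specifically the verification that the relation $x_i^{k+1} = 0$ in $\Ncal_W^k$ is exactly what guarantees $\delta(x_i)$ has no constant term, so that the lifted vector field can be chosen tangent to $W$.
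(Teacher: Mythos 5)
Your proposal is correct, and it is more than the paper provides: the paper states this lemma with no proof at all (it is treated as a definition-chasing identification, with only the subsequent Lie--Rinehart lemma receiving a one-line remark). Your argument therefore fills a genuine gap rather than replicating an existing one. The construction of $\Phi$ and the kernel computation are routine, and you carry them out correctly; the real content is the surjectivity step, and you isolate exactly the right mechanism there. The point that makes the lemma true --- rather than merely plausible --- is that a derivation $\delta$ of the truncated algebra $\Ncal_W^k$ cannot move the normal coordinates off the ideal generated by them: the relation $x_i^{k+1}=0$ forces $(k+1)x_i^k\,\delta(x_i)=0$, and since $\{x^\alpha\}_{|\alpha|\le k}$ is a basis of $\Ncal_W^k$ as a free module over $\Ocal_W$, multiplication by $x_i^k$ is injective on the degree-zero part, so $\delta(x_i)$ has no constant term. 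This is precisely why tangency to $W$ appears on the vector-field side of the correspondence, i.e.\ why the quotient is of $\A_W$ and not of all of $\Gamma(TM)$. The remaining steps (Taylor expansion showing $\Ocal_W$ and the $x_i$ generate $\Ncal_W^k$, the coefficient-wise decomposition $\delta|_{\Ocal_W}=\sum_\beta x^\beta Y_\beta$ into derivations of $\Ocal_W$, and the fact that an $\RR$-linear derivation is determined by its values on a generating set) are all sound, as is the reduction to stalks since $\Phi$ is defined globally as a sheaf map. One could streamline the surjectivity by noting that $\delta$ is determined by the finitely many elements $\delta(x_i)$ and $\delta|_{\Ocal_W}$, but your version is complete as written.
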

%
\begin{lemma}
The Lie bracket of vector fields provides $\Der(\Ncal_W^k)$ with the structure of a sheaf of Lie-Rinehart algebras over $\Ncal_W^k$. That is:
\begin{itemize}
\item It is a sheaf of $\Ncal_W^k$-modules.
\item $\Der(\Ncal_W^k)$ acts on $\Ncal_W^k$ by derivations.
\item The Lie bracket in $\Ncal_W^k$ satisfies the Leibniz rule with respect to $\Ncal_W^k$.
\end{itemize}
\end{lemma}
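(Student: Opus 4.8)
The plan is to realise all three axioms as consequences of the corresponding facts for honest vector fields, transported through the identification $\Der(\Ncal_W^k) \cong i^{-1}(\A_W)/i^{-1}(\Ical_W^{k+1}\Gamma(TM))$ furnished by the preceding lemma. Throughout I suppress $i^{-1}$ and abbreviate $I := \Ical_W^{k+1}\Gamma(TM)$, so that $\Der(\Ncal_W^k) = \A_W/I$.

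First I would record the single analytic input that makes everything work: a vector field $X$ tangent to $W$ preserves the vanishing ideal, $X(\Ical_W) \subseteq \Ical_W$, and hence by the Leibniz rule preserves every power, $X(\Ical_W^{m}) \subseteq \Ical_W^{m}$. Indeed, writing $g = g_1\cdots g_m$ with each $g_j \in \Ical_W$, one has $X(g) = \sum_j g_1\cdots X(g_j)\cdots g_m$, and each summand again lies in $\Ical_W^{m}$. From this it is immediate that $\A_W$ is both an $\Ocal_M$-submodule of $\Gamma(TM)$ (since $(fX)(\Ical_W) \subseteq \Ical_W$) and a Lie subalgebra (since $[X,Y](\Ical_W)\subseteq \Ical_W$ whenever $X,Y$ are tangent to $W$).

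The crux is to show that the bracket descends to $\A_W/I$; equivalently, that $I$ is a Lie ideal inside $\A_W$, i.e. $[\A_W, I] \subseteq I$. For this I would take $X \in \A_W$ and a generator $fZ$ of $I$ with $f \in \Ical_W^{k+1}$ and $Z \in \Gamma(TM)$, and expand using the Leibniz identity for the bracket:
\[
[X, fZ] = f[X,Z] + X(f)\,Z.
\]
The first term lies in $I$ because $f \in \Ical_W^{k+1}$; the second lies in $I$ because $X(f) \in \Ical_W^{k+1}$ by the preservation fact applied with $m = k+1$. Since $\A_W$ is a Lie subalgebra and $I \subseteq \A_W$ is a Lie ideal, the bracket passes to the quotient $\A_W/I = \Der(\Ncal_W^k)$, and antisymmetry together with the Jacobi identity are inherited for free from $\Gamma(TM)$.

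Finally I would verify the three Lie-Rinehart axioms. The $\Ncal_W^k$-module structure holds because the $\Ocal_M$-action on $\A_W/I$ factors through $\Ncal_W^k = \Ocal_M/\Ical_W^{k+1}$, as $\Ical_W^{k+1}\A_W \subseteq I$. The anchor action of $\Der(\Ncal_W^k)$ on $\Ncal_W^k$ is by derivations: this is exactly the identification of the previous lemma, the key well-definedness points again being $X(\Ical_W^{k+1}) \subseteq \Ical_W^{k+1}$ (independence of the chosen representative of the function) and $(gZ)(\Ocal_M) \subseteq \Ical_W^{k+1}$ for $g \in \Ical_W^{k+1}$ (independence of the representative of the vector field). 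The Leibniz rule $[X, fY] = f[X,Y] + X(f)Y$ is then obtained by lifting $X,Y$ to tangent vector fields and $f$ to a function, applying the Leibniz identity for the bracket of vector fields, and projecting; every operation appearing is already known to be well-defined on the quotient, so the relation descends verbatim. The main obstacle is the well-definedness of the bracket in the crux step, which is precisely where the tangency hypothesis is indispensable; everything else is bookkeeping built on the single observation that tangency forces preservation of $\Ical_W^{k+1}$.
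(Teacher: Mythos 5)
Your proof is correct, but it takes a genuinely different route from the paper's. The paper's proof is a one-liner: having identified $\Der(\Ncal_W^k)$ with the sheaf of derivations of the sheaf of algebras $\Ncal_W^k$ (the preceding lemma), it simply observes that the Lie bracket is the commutator of derivations, whereupon all three Lie-Rinehart axioms are tautological — derivations of any sheaf of commutative algebras form a module over the algebra, act by derivations by definition, and the commutator satisfies $[X,fY]=f[X,Y]+X(f)Y$ by a universal computation. You instead work upstairs in $\Gamma(TM)$: you isolate the analytic fact that tangency to $W$ forces $X(\Ical_W^{m})\subseteq\Ical_W^{m}$, deduce that $I=\Ical_W^{k+1}\Gamma(TM)$ is a Lie ideal in $\A_W$, and descend the bracket and all the axioms through the quotient $\A_W/I$. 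What your approach buys is explicitness: it shows concretely where the tangency hypothesis enters, verifies by hand the well-definedness that the paper's abstract framing renders automatic, and in passing establishes that the descended bracket of vector fields coincides with the commutator bracket of derivations — which is precisely the content of the paper's remark, there asserted rather than checked. What the paper's approach buys is brevity and generality: no choice of representatives, no ideal computation, and the argument works verbatim for derivations of any quotient sheaf of algebras. Both arguments are sound; yours is the longer but more self-contained of the two.
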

\begin{proof}
Do note that the resulting Lie bracket is precisely the one inherited from being derivations.
\end{proof}
We may now consider
\begin{definition}
A distribution $\xi \subset TM|_{\Ncal_W^k}$ on the $k$-th order neighbourhood $\Ncal_W^k$ is \textbf{involutive} if:
\begin{itemize}
\item It is tangent to $W$, or in other words, it is a subsheaf of $\Der(\Ncal_W^k)$.
\item It is closed under Lie bracket.
\end{itemize}
We will refer to $\xi$ as a \textbf{foliation on the $k$-th order neighbourhood of $W$.}
\end{definition}

\subsubsection{Jets of distributions}
We may now translate the involutivity condition to the corresponding $k$-jet of distribution:
\begin{definition}
We say that a $k$-jet of distribution $\sigma \in \Hol^k_W(\Gr(TM,l))$ is \textbf{involutive at $p \in W$} if for any germ of distribution $\xi$ around $p$ with $\xi_p \in T_pM$ with $j^k_p\xi = \sigma_p$  we have
\begin{itemize}
\item $\xi$ is tangent to $W$ 
\item  $j^k_p[X,Y] \in \sigma_p$, for all $X,Y \in \xi_p$.
\end{itemize}
We say that a $k$-jet distribution is \textbf{involutive} when it is involutive for all $p \in W$.
\end{definition}
We will also refer to an involutive $k$-jet of distribution as a \textbf{$k$-th order foliation}.
\begin{lemma}\label{lem:jetsoffoltofolonjets}
A $k$-jet of distribution $\sigma \in \Hol_W^k(\Gr(TM,l))$ is involutive if and only if the corresponding distribution $\Gamma(\sigma) \subset E|_{\Ncal_W^k}$ is involutive.
\end{lemma}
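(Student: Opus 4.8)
The plan is to reduce both notions of involutivity to a single, identical condition on a local frame of a representing distribution, and to check that they produce the same condition. Since the statement concerns sheaves and both definitions are phrased pointwise along $W$, I would first fix a point $p \in W$ and, using Lemma \ref{lem:germRepresentative} for $\Gr(TM,l)$, choose a germ of honest distribution $\xi$ with $j^k_W\xi = \sigma$. By Lemma \ref{lem:sectionsOfJet} the associated module is then $\Gamma(\sigma) = \Hol_W^k(\xi) = i^{-1}\Gamma(\xi)/(\Ical_W^{k+1}\Gamma(\xi))$, so that $\Gamma(\sigma)$ is generated over $\Ncal_W^k$ by the jets $j^k_W e_a$ of a local frame $e_1,\dots,e_r$ of $\xi$.

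Next I would match the two tangency clauses. A germ of vector field descends to a derivation of $\Ncal_W^k$ precisely when it preserves $\Ical_W$, i.e. when its restriction to $W$ is valued in $TW$; hence $\Gamma(\sigma)\subset\Der(\Ncal_W^k)$ holds if and only if every section of $\xi$ is tangent to $W$ along $W$, which is exactly the first bullet in the definition of an involutive $k$-jet. The crux is then to identify the abstract Lie bracket on $\Der(\Ncal_W^k)$ with the $k$-jet of the ordinary Lie bracket, the only danger being the loss of a derivative in $[-,-]\colon J^k(TM)\times J^k(TM)\to J^{k-1}(TM)$. I would dispose of this by noting that, when $X,Y$ are tangent to $W$, altering $X$ by some $fZ$ with $f\in\Ical_W^{k+1}$ alters the bracket by $[fZ,Y]=f[Z,Y]-Y(f)Z$, and $Y(f)\in\Ical_W^{k+1}$ because a $W$-tangent field preserves $\Ical_W^{k+1}$; thus $j^k_W[X,Y]$ depends only on $j^k_WX$ and $j^k_WY$ and agrees with the bracket of the corresponding derivations. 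This is precisely the Lie--Rinehart structure recorded above, and it lets me compute the bracket on $\Gamma(\sigma)$ by bracketing representatives and taking $k$-jets.

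Finally I would invoke the Leibniz rule on both sides to reduce everything to the frame $\{e_a\}$. For the module $\Gamma(\sigma)$, the identity $[fe_a,ge_b]=fg[e_a,e_b]+f\,e_a(g)\,e_b-g\,e_b(f)\,e_a$ shows that closure under the bracket is equivalent to $j^k_W[e_a,e_b]\in\Gamma(\sigma)$ for all $a,b$, since the last two terms already lie in $\Gamma(\sigma)$. For the $k$-jet $\sigma$, the same identity evaluated at $p$, together with the fact that each fibre $\sigma_p = J^k_p\xi$ is a $J^k_p(M,\rr)$-submodule of $J^k_p(TM)$ (by exactness of jets applied to $0\to\xi\to TM\to TM/\xi\to 0$), shows that the condition ``$j^k_p[X,Y]\in\sigma_p$ for all germ-sections $X,Y$ of $\xi$'' is equivalent to ``$j^k_p[e_a,e_b]\in\sigma_p$ for all $a,b$''. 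Both notions of involutivity therefore collapse to the single condition $j^k_W[e_a,e_b]\in\Gamma(\sigma)$ for all $a,b$, and so are equivalent. The one genuinely delicate point, which I would treat with care, is the claim in the second paragraph that no derivative is lost along $W$-tangent directions; everything else is a formal consequence of the module structures and the Leibniz rule.
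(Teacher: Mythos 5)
Your proof is correct and follows essentially the same route as the paper's: both rest on the structural lemmas of the section (germ representatives, the identification $\Gamma(\sigma)=\Hol^k_W(\xi)$, and the fact that the bracket on $\Der(\Ncal_W^k)$ is computed by bracketing $W$-tangent vector-field representatives and taking $k$-jets), and then match the two involutivity conditions. The paper's version is terser --- it argues pointwise with arbitrary sections of $\Gamma(\sigma)$ rather than reducing to a frame --- while your explicit treatment of the derivative-loss point is exactly the content the paper delegates to its Lie--Rinehart lemma, so the two arguments coincide in substance.
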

\begin{proof}
First observe that each local section $X$ of $\Der(\Ncal_W^k)$ defines a $k$-jet of vector field $X_p$ at each $p \in W$. Moreover, $X$ takes values in $\Gamma(\sigma)$ if and only if $X_p$ is $k$-tangent to $\sigma$. 

Suppose then that $\sigma$ is involutive and consider two local sections $X,Y$ of $\Gamma(\sigma)$. The jets $X_p$ and $Y_p$ at $p$ are $k$-tangent to $\sigma$ so, by involutivity, the $(k-1)$-jet $j^k_p[X,Y]$ is again in $\sigma_p$. Reasoning similarly we get that $\Gamma(\sigma)$ being involutive implies that $\sigma$ is involutive.
\end{proof}

\begin{definition}
We say that a distribution $\xi \in TM$ is \textbf{involutive up to order $k$ at $W$} if $\xi$ is tangent to $W$ and $J^k_W[\xi,\xi] \subset J^k_W\xi$.
\end{definition}
By Lemma \ref{lem:germRepresentative}, we have that any holonomic section $\sigma \in \Hol^k_W(\Gr(TM,l))$ can be represented by a germ of distribution $\xi \in TM$ around $W$. The following now follows immediately:
\begin{lemma}
A $k$-jet of distribution $\sigma \in \Hol^k_W(\Gr(TM,l))$ is involutive if and only if there exists a germ $\xi \subset TM$ around $W$ which is involutive up to order $k$ with $J_W^k\xi = \sigma$.
\end{lemma}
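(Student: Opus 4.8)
The plan is to prove the equivalence by using the germ representative guaranteed by Lemma \ref{lem:germRepresentative} and then showing that the two involutivity notions---involutivity of the $k$-jet $\sigma$ in the sense of the preceding definition, and involutivity up to order $k$ of a germ $\xi$ with $J^k_W\xi = \sigma$---are literally the same condition once one tracks the definitions. Since we are comparing a statement about $\sigma$ against an existential statement about germs, I would prove the two implications separately, but both reduce to the single observation that the conditions are pointwise and depend only on the $k$-jet of $\xi$ along $W$.

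First I would invoke Lemma \ref{lem:germRepresentative} to produce a germ of distribution $\xi \subset TM$ around $W$ with $J^k_W\xi = \sigma$; this handles the existential content and establishes that at least one such germ exists. For the forward direction, suppose $\sigma$ is involutive. Then by definition, for every $p \in W$ and every germ representing $\sigma_p$ (in particular our chosen $\xi$), the distribution is tangent to $W$ and $j^k_p[X,Y] \in \sigma_p$ for all sections $X,Y$ of $\xi$ near $p$. Tangency to $W$ holds along all of $W$, and the bracket condition holding at every $p \in W$ is exactly the statement that $J^k_W[\xi,\xi] \subset J^k_W\xi$. Hence $\xi$ is involutive up to order $k$ with $J^k_W\xi = \sigma$, which is what the existential claim requires.

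For the reverse direction, suppose there exists a germ $\xi$ which is involutive up to order $k$ with $J^k_W\xi = \sigma$. I would then verify that $\sigma$ is involutive at each $p \in W$. The subtlety is that the definition of involutivity of $\sigma$ quantifies over \emph{all} germs representing $\sigma_p$, not just the given $\xi$; so I must argue that the bracket condition $j^k_p[X,Y] \in \sigma_p$ is insensitive to the choice of representative. This is immediate because the bracket is a bidifferential operator of order one, so $j^k_p[X,Y]$ depends only on $j^{k}_p X$ and $j^{k}_p Y$ (in fact it descends to a map into $J^{k-1}(TM)$, as recorded in the discussion preceding the definition of the Lie bracket on $TM|_{\Ncal_W^k}$). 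Two germs with the same $k$-jet along $W$ therefore produce the same $k$-jets of brackets along $W$, and tangency to $W$ is likewise a condition on jets along $W$. Thus involutivity up to order $k$ of $\xi$ transfers to involutivity of $\sigma$.

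The main obstacle---though a mild one---is precisely this bookkeeping around the quantifier over representatives and the loss of a derivative in the bracket: one must be careful that the condition $j^k_p[X,Y] \in \sigma_p$ is well-posed and representative-independent. Once the order-one nature of the bracket is used to see that the relevant data is captured entirely by $J^k_W\xi$, both implications collapse into restatements of one another, and the lemma follows. I would phrase the whole argument as a short remark that the definition of involutivity of $\sigma$ and of involutivity up to order $k$ of a germ representative are, after applying Lemma \ref{lem:germRepresentative}, two formulations of the same pointwise jet condition.
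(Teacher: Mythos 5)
Your forward direction is fine; it is the trivial half (the universal quantifier in the definition applies in particular to the germ representative supplied by Lemma \ref{lem:germRepresentative}), and this is also essentially all the paper says, since it records germ representability and declares the lemma immediate. The genuine gap is in your reverse direction: the claim that ``$j^k_p[X,Y]$ depends only on $j^k_pX$ and $j^k_pY$'' is false, and your own parenthetical contradicts it. Because the bracket is a bidifferential operator of order one, it induces a map $J^k(TM)\times J^k(TM)\to J^{k-1}(TM)$; the top-order part of $j^k_p[X,Y]$ --- exactly the part that the condition $j^k_p[X,Y]\in\sigma_p$ constrains --- is \emph{not} determined by the $k$-jets of $X$ and $Y$. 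Concretely, take $M=\RR^3$, $W=\{z=0\}$, $k=1$, $p=0$, and compare $\xi=\langle\partial_x,\partial_y\rangle$ with $\xi'=\langle\partial_x+yz\,\partial_z,\partial_y\rangle$: both are tangent to $W$, and their frames differ by $yz\,\partial_z\in\Ical_p^{2}\X(M)$, so $j^1_p\xi'=j^1_p\xi$; yet $[\partial_x+yz\,\partial_z,\partial_y]=-z\partial_z$, whose $1$-jet at $p$ does not lie in $\sigma_p$, while all brackets of sections of $\xi$ vanish. So the bracket condition is \emph{not} insensitive to the choice of representative matching $\sigma_p$ at the point $p$, and your argument does not close.

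What actually makes the statement work is not the order of the bracket but the combination of (i) jets taken along $W$, not at a point, and (ii) tangency to $W$. If $X$ is tangent to $W$ and $f\in\Ical_W^{k+1}$, then $Xf\in\Ical_W^{k+1}$ (not merely $\Ical_W^{k}$), since a $W$-tangent field preserves $\Ical_W$ and hence its powers; consequently $[X,\Ical_W^{k+1}\X(M)]\subset\Ical_W^{k+1}\X(M)$, with no loss of a derivative. Hence if $\xi,\xi'$ are tangent to $W$ and $J^k_W\xi'=J^k_W\xi$, their sections differ by elements of $\Ical_W^{k+1}\X(M)$ (Lemmas \ref{lem:decomposeTangencySheaf} and \ref{lem:tangenciesSubbundles}), and one computes $[X',Y']-[X,Y]\in\Ical_W^{k+1}\X(M)$, so $J^k_W[X',Y']=J^k_W[X,Y]$. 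This is precisely the mechanism the paper encodes in the Lie--Rinehart structure on $\Der(\Ncal_W^k)$ --- the reason it restricts to $W$-tangent fields in the first place --- and it is what drives the parallel Lemma \ref{lem:jetsoffoltofolonjets}. Your proof should either invoke that structure or redo the reverse direction with representatives matching $\sigma$ along $W$ near $p$, where moreover the tangency bullet (which you never verify for an arbitrary representative) becomes automatic from equality of $0$-jets along $W$. My example also shows that the definition of involutivity, read literally with representatives matching $\sigma_p$ only at the single point $p$, is defective, so some charitable rereading is needed in any case; but a correct proof must supply the tangency argument above, and a well-definedness claim based solely on the order of the bracket cannot replace it.
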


\subsection{The associated singular foliation}
We now study the module $\Tan^k(M,\xi)$ introduced in Lemma \ref{lem:decomposeTangencySheaf} for tangent distributions:

\begin{definition}\label{def:asssingfol}
Let $\sigma \in \Hol^k(\Gr(TM,l))$ be $k$-th order foliation, and let $\xi$ be a germ of distribution extending it. We define $\A(\sigma) \subset \mathfrak{X}(M)$ as
\begin{equation*}
\A(\sigma) = \Tan^k(W,\xi).
\end{equation*}
\end{definition}

\begin{lemma}
Let $\sigma \in \Hol^k(\Gr(TM,l))$ be $k$-th order foliation, then $\A(\sigma)$ is a singular foliation with leaves (the connected components) of $M\backslash W$ and $W$. Moreover $\A(\sigma)$ does not depend on the choice of extending distribution $\xi$.
\end{lemma}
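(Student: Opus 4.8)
The plan is to verify the two defining properties of a singular foliation of type $b^{k+1}$ (in the sense of Definition \ref{def:typek}): that the leaves are as claimed, and that around every point of $W$ the module $\A(\sigma)$ is isomorphic to the prescribed local model. The well-definedness (independence of the extending distribution $\xi$) should be handled first, since it makes everything else cleaner, and it is essentially a restatement of earlier results. Indeed, by Lemma \ref{lem:decomposeTangencySheaf} we have $\A(\sigma) = \Tan^k(W,\xi) = \Ical_W^{k+1}\mathfrak{X}(M) + \Gamma(\xi)$, and by Lemma \ref{lem:tangenciesSubbundles} the sheaf $\Tan^k(W,\xi)$ depends only on the jet $j^k_W\xi = \sigma$ and not on the choice of germ $\xi$ representing it. So I would open by citing these two lemmas to dispatch independence immediately.

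Next I would establish the local normal form, which is the substantive point. Fix $p \in W$ and choose an involutive germ of distribution $\xi$ representing $\sigma$ near $p$; such a germ exists by Lemma \ref{lem:germRepresentative}. The key input is that $\xi$ is involutive \emph{up to order $k$} along $W$, not honestly involutive, so one cannot directly apply the Frobenius theorem. However, the statement we need is only about the module $\A(\sigma) = \Ical_W^{k+1}\mathfrak{X}(M) + \Gamma(\xi)$ up to isomorphism, and here the point is that modifying $\xi$ by anything vanishing to order $k+1$ along $W$ does not change $\A(\sigma)$. The cleanest route is to choose adapted coordinates. Since $\rank(\xi) = \codim(W) = l$ and $\xi|_W = T_pW$ is transverse to the situation, I would pick coordinates $(x_1,\ldots,x_n)$ with $W = \{x_1 = \cdots = x_l = 0\}$ and arrange, using a frame for $\xi$, that $\xi$ is spanned near $p$ by vector fields of the form $\partial_{x_j} + (\text{terms vanishing on } W)$ for $j=1,\ldots,l$, together with the tangent directions $\partial_{x_{l+1}},\ldots,\partial_{x_n}$ lying in $\Gamma(\xi)$. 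Then $\Ical_W^{k+1} = \langle x^I : |I| = k+1 \rangle$ as an ideal, where $I$ ranges over multi-indices in the first $l$ variables, and one computes directly that $\Ical_W^{k+1}\mathfrak{X}(M) + \Gamma(\xi)$ is generated by $x^I\partial_{x_1},\ldots,x^I\partial_{x_l}$ (for $|I| = k+1$) together with $\partial_{x_{l+1}},\ldots,\partial_{x_n}$, matching the local model exactly.

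I expect the main obstacle to be justifying that the coordinates can genuinely be chosen so that the $\partial_{x_{l+1}},\ldots,\partial_{x_n}$ directions lie honestly in $\Gamma(\xi)$ (or rather in $\A(\sigma)$) rather than merely up to order $k$; this is where involutivity up to order $k$ does real work, as it controls the error terms well enough that a change of coordinates absorbs them into the $\Ical_W^{k+1}$ part. The safest formulation is to argue at the level of the $k$-th order neighbourhood: by Lemma \ref{lem:jetsoffoltofolonjets}, $\sigma$ corresponds to an involutive distribution $\Gamma(\sigma) \subset \Der(\Ncal_W^k)$, and one can apply a formal (jet-level) Frobenius straightening to $\Gamma(\sigma)$ to put it in the standard split form over $\Ncal_W^k$, then lift this straightening to an honest local diffeomorphism of $M$. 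Under this diffeomorphism $\A(\sigma)$ becomes exactly the stated model. Finally, reading off the leaves is routine: the generators $\partial_{x_{l+1}},\ldots,\partial_{x_n}$ are everywhere tangent to the $W$-directions while the $x^I\partial_{x_j}$ vanish precisely on $W$, so the orbit through a point of $W$ is its connected component of $W$, and off $W$ the generators span all of $TM$, giving the components of $M\setminus W$ as the remaining leaves. I would close by noting that since the local model coincides with that of Definition \ref{def:typek}, $\A(\sigma)$ is a singular foliation of type $b^{k+1}$, completing the proof.
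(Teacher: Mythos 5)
Your proposal is correct in substance, but it takes a genuinely different and much heavier route than the paper. The paper's own proof of this lemma is a direct verification from the decomposition $\A(\sigma) = \Tan^k(W,\xi) = \Ical_W^{k+1}\mathfrak{X}(M) + \Gamma(\xi)$ of Lemma \ref{lem:decomposeTangencySheaf}: local finite generation is immediate from the two summands; involutivity follows because $\xi$ is involutive up to order $k$ (so $[\Gamma(\xi),\Gamma(\xi)] \subset \Tan^k(W,\xi)$ by definition of $k$-tangency) and tangent to $W$ (so $\Gamma(\xi)$ preserves $\Ical_W^{k+1}$, whence $[\Gamma(\xi),\Ical_W^{k+1}\mathfrak{X}(M)] \subset \Ical_W^{k+1}\mathfrak{X}(M)$); and the leaves are read off by evaluation, since away from $W$ the module localizes to all vector fields while along $W$ it evaluates to $TW$. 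Independence of $\xi$ is Lemma \ref{lem:tangenciesSubbundles}, exactly as you say. You instead prove the stronger claim that $\A(\sigma)$ has the local model of a foliation of $b^{k+1}$-type --- but that is precisely the content of the subsequent Proposition \ref{prop:singfolstojets}, whose proof the paper deliberately postpones until the Frobenius Theorem \ref{th:Frobenius} is available. Your fallback argument (straighten $\Gamma(\sigma) \subset \Der(\Ncal_W^k)$ via the jet-level Frobenius theorem, lift to a local diffeomorphism of $M$, then compute $\A(\sigma)$ in straightened coordinates using independence) is valid and not circular, since Theorem \ref{th:Frobenius} is established through the holonomy machinery of Section \ref{sec:Atiyah} without any reference to this lemma; but it buys the normal form at the cost of a forward reference to later machinery, which is exactly what this lemma is phrased to avoid needing.

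One concrete flaw: your coordinate description of $\xi$ is incoherent as written. You propose a frame for $\xi$ consisting of $\partial_{x_j} + (\text{terms vanishing on } W)$ for $j = 1,\ldots,l$ \emph{together with} $\partial_{x_{l+1}},\ldots,\partial_{x_n}$; that is $n$ vector fields, i.e.\ a frame for $TM$, not for a proper distribution, and if $\Gamma(\xi)$ contained vector fields with nonvanishing normal component along $W$ then $W$ could not be a leaf of $\A(\sigma)$ at all. (Part of the confusion is inherited from the paper's own convention $\rank(\xi) = \codim(W) = l$ in Subsection \ref{ssec:distributions}, which is inconsistent with the local model; the condition actually used throughout is $\xi|_W = TW$, so $\rank(\xi) = \dim W = n-l$.) The correct normalization is that $\xi$ is spanned by $\partial_{x_a} + \sum_{j\leq l} A_a^j \partial_{x_j}$ for $a = l+1,\ldots,n$ with $A_a^j$ vanishing along $W$, and the whole difficulty --- as you correctly observe in your final paragraph --- is that involutivity up to order $k$ only allows one to remove the $A_a^j$ up to order $k+1$ after a diffeomorphism, which is the Frobenius theorem. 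Since you flag this and defer to the jet-level Frobenius argument, the proposal as a whole stands, but the middle coordinate paragraph should be deleted rather than repaired.
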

\begin{proof}
These facts follow readily from the explicit description of $\Tan^k(W,\xi)$ as in Lemma \ref{lem:decomposeTangencySheaf}.
\end{proof}

We may also express $\A(\sigma)$ in terms of the foliation on the $k$-th order neighbourhood $\Ncal^k_W$. 
\begin{definition}
Let $F \subset \Der(\Ncal^k_W)$ be a foliation, then we define a singular foliation $\A_F \subset \mathfrak{X}(M)$ by
\begin{equation*}
\A_F = p_k^{-1}(i_*F),
\end{equation*}
where $i : W \hookrightarrow M$ is the inclusion and $p_k : T(-\log W) \rightarrow \iota_*\Der(\Ncal^k_W)$ the projection map.
\end{definition}
From this description it is clear that when $F$ and $\sigma \in \Hol^k(\Gr(TM,l))$ are related via Lemma \ref{lem:characterisationSubbundles}, then $\A_F = \A(\sigma)$.

In general, $\A(\sigma)$ is just a singular foliation. We can characterise precisely when it is a Lie algebroid:
\begin{proposition}\label{prop:isliealg}
Let $\sigma \in \Hol^k(\Gr(TM,l))$ be a $k$-th order foliation. Then $\A(\sigma)$ is locally free, and consequently the sections of some Lie algebroid $A(\sigma)$, if and only if $W$ is codimension-one.
\end{proposition}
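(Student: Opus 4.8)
The plan is to analyze the local model from Definition \ref{def:typek} and determine exactly when it is free as a $C^\infty(M)$-module. Recall from Lemma \ref{lem:decomposeTangencySheaf} that $\A(\sigma) = \Tan^k(W,\xi) = \Ical_W^{k+1}\mathfrak{X}(M) + \Gamma(\xi)$. Since freeness is a local condition, I would work in adapted coordinates $(x_1,\ldots,x_l,x_{l+1},\ldots,x_n)$ in which $W = \{x_1 = \cdots = x_l = 0\}$ and $\xi = \langle \partial_{x_{l+1}},\ldots,\partial_{x_n}\rangle$ (using that $\A(\sigma)$ is independent of the choice of extending distribution $\xi$). In these coordinates the local model is generated by $\partial_{x_{l+1}},\ldots,\partial_{x_n}$ together with the module $\Ical_W^{k+1}\mathfrak{X}(M)$, whose generators are $x^I \partial_{x_j}$ for $j = 1,\ldots,n$ and $I$ a multi-index of length $k+1$ in the normal variables $x_1,\ldots,x_l$.

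The key step is the codimension-one case $l=1$. Here $\Ical_W^{k+1}$ is the principal ideal generated by $x_1^{k+1}$, so $\Ical_W^{k+1}\mathfrak{X}(M)$ is freely generated over $C^\infty(M)$ by $x_1^{k+1}\partial_{x_1},\ldots,x_1^{k+1}\partial_{x_n}$. Combined with $\partial_{x_2},\ldots,\partial_{x_n}$ from $\Gamma(\xi)$, I would show that $\A(\sigma)$ admits the global frame $x_1^{k+1}\partial_{x_1},\partial_{x_2},\ldots,\partial_{x_n}$, matching the local form \eqref{eq:localform}: the generators $x_1^{k+1}\partial_{x_j}$ for $j\geq 2$ are redundant, being $x_1^{k+1}$ times elements of $\Gamma(\xi)$. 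Linear independence over $C^\infty(M)$ is clear since these $n$ vector fields are pointwise linearly independent away from $W$, and this establishes local freeness, hence that $\A(\sigma)$ is the sheaf of sections of a rank-$n$ Lie algebroid $A(\sigma)$.

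For the converse, when $W$ has codimension $l \geq 2$, I would argue that $\A(\sigma)$ cannot be locally free by examining its behavior along $W$. The obstruction is that $\Ical_W^{k+1}$ is no longer principal: it requires $\binom{l+k}{l}$ generators (the monomials $x^I$ of degree $k+1$ in $l$ variables), and these satisfy nontrivial syzygies, for instance $x_2 \cdot (x_1^{k+1}\partial) - x_1 \cdot (x_1^{k}x_2\partial) = 0$. The cleanest way to conclude is to compute the fiber dimension of $\A(\sigma)$ at a point $p \in W$, i.e. the dimension of $\A(\sigma)_p / \Ical_p \cdot \A(\sigma)_p$ where $\Ical_p$ is the maximal ideal. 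I expect this fiber to have dimension strictly greater than $n$ when $l \geq 2$: the contribution of $\Gamma(\xi)$ gives the $n-l$ tangential directions, while the generators $x^I\partial_{x_1},\ldots,x^I\partial_{x_l}$ contribute independent fiber directions indexed by pairs $(I,j)$, and no single monomial relation can be used to eliminate generators as in the principal case. Since a locally free module must have constant fiber rank equal to the generic rank $n$, the jump in fiber dimension along $W$ obstructs local freeness.

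The main obstacle will be making the fiber-dimension computation for $l \geq 2$ rigorous and clean, since one must carefully identify $\A(\sigma)_p / \Ical_p\A(\sigma)_p$ and verify that the degree-$(k+1)$ monomial generators $x^I\partial_{x_i}$ ($1 \le i \le l$) descend to linearly independent elements in this quotient. The subtlety is that while the $x^I$ are not independent as elements of $\Ocal_M$ modulo lower-order behavior, the corresponding vector field generators are not killed upon passing to the fiber at $p$, precisely because no generator lies in $\Ical_p$ times the others. I would verify this by noting that any relation among the generators with coefficients in $C^\infty(M)$ must, upon restriction to $W$, force the coefficients of the $x^I\partial_{x_i}$ to lie in $\Ical_W$, which shows these generators survive in the fiber and push its dimension above $n$.
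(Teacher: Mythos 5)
Your proposal is correct and takes essentially the same route as the paper: the paper's entire proof is a one-line appeal to the explicit description $\A(\sigma) = \Ical_W^{k+1}\mathfrak{X}(M) + \Gamma(\xi)$ of Lemma \ref{lem:decomposeTangencySheaf}, and your codimension-one frame $x_1^{k+1}\partial_{x_1},\partial_{x_2},\ldots,\partial_{x_n}$ together with the fiber-dimension jump $(n-l)+l\binom{k+l}{l-1}>n$ at points of $W$ when $l\geq 2$ is exactly the verification the paper leaves to the reader. One caveat: writing $\xi = \langle\partial_{x_{l+1}},\ldots,\partial_{x_n}\rangle$ presupposes an \emph{integrable} local representative of $\sigma$, which is the forward-referenced Frobenius Theorem \ref{th:Frobenius} (the same forward reference the paper itself makes for the adjacent Proposition \ref{prop:singfolstojets}); this can be avoided entirely by working with an arbitrary local frame of $\xi$ plus a transverse complement of vector fields, since both of your computations only use the resulting direct-sum decomposition of $\mathfrak{X}(M)$ and the vanishing orders along $W$, not the coordinate form of $\xi$.
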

\begin{proof}
This again follows readily from the explicit description of $\A(\sigma)$ in terms of Lemma \ref{lem:decomposeTangencySheaf}.
\end{proof}

We may characterize precisely what properties the induced singular foliation has.

\begin{definition}\label{def:typek}
Let $M^n$ be a manifold with embedded submanifold $W^{n-l}$. Then a singular foliation $\A$ is said to be of \textbf{$b^{k+1}$-type} if there exists local coordinates $(x_1,\ldots,x_n)$ with $W = \set{x_1=\ldots = x_l}$ and $\A$ given by:
\begin{equation*}
\inp{x^I\partial_{x_1},\ldots,x^I\partial_{x_l},\partial_{x_{l+1}},\ldots,\partial{x_k}},
\end{equation*}
with $I$ running over all multi-indices in $1,\ldots,l$ of length $k+1$.
\end{definition}

\begin{proposition}\label{prop:singfolstojets}
There is an equivalence between $k$-th order foliations and singular foliations of $b^{k+1}$-type. Explicitly:
\begin{itemize}
\item If $\sigma \in \Hol^k(\Gr(TM,l))$ is a $k$-th order foliation, then $\A(\sigma)$ is a singular foliation of type $b^{k+1}$.
\item If $\A$ is of $b^{k+1}$-type, then $\sigma = J^k_W\A$ is a $k$-th order foliation.
\end{itemize}
\end{proposition}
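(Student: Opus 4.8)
The plan is to prove the two directions of the equivalence separately, where the real content is the first bullet point (that $\A(\sigma)$ has the required local normal form), while the second bullet and the fact that the two constructions are mutually inverse follow from the structural results already established. I would begin by fixing notation: let $\sigma \in \Hol^k_W(\Gr(TM,l))$ be a $k$-th order foliation and let $\xi \subset TM$ be a germ of distribution around $W$ representing it, which exists by Lemma \ref{lem:germRepresentative} and which we may take to be involutive up to order $k$. Recall from Lemma \ref{lem:decomposeTangencySheaf} the explicit description
\begin{equation*}
\A(\sigma) = \Tan^k(W,\xi) = \Ical_W^{k+1}\mathfrak{X}(M) + \Gamma(\xi),
\end{equation*}
so that the entire problem is local and reduces to finding adapted coordinates in which this module takes the stated form.

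First I would establish the local normal form. Fix $p \in W$ and choose coordinates $(x_1,\ldots,x_n)$ centred at $p$ with $W = \{x_1 = \cdots = x_l = 0\}$. Since $\xi$ is tangent to $W$ (part of the involutivity condition) and has rank $l = \codim(W)$, along $W$ it is transverse to $TW$; after a linear change of the normal coordinates we may assume $\xi|_W = \inp{\partial_{x_1},\ldots,\partial_{x_l}}$ at $p$. The key simplification is that modulo $\Ical_W^{k+1}\mathfrak{X}(M)$ only the $k$-jet of $\xi$ along $W$ matters, and involutivity up to order $k$ lets me straighten $\xi$ to the split form $\inp{\partial_{x_1},\ldots,\partial_{x_l}}$ to the relevant order; this is exactly the content of the graphical chart argument in Lemma \ref{lem:tangenciesSubbundles}, which shows that any two representatives agreeing to order $k$ give the same tangency module. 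With $\Gamma(\xi)$ reduced to $\inp{\partial_{x_1},\ldots,\partial_{x_l},\partial_{x_{l+1}},\ldots,\partial_{x_n}}$ modulo $\Ical_W^{k+1}$ (the fields $\partial_{x_{l+1}},\ldots,\partial_{x_n}$ tangent to $W$ already lie in any extension), the sum with $\Ical_W^{k+1}\mathfrak{X}(M)$ collapses: the transverse generators $\partial_{x_{l+1}},\ldots,\partial_{x_n}$ survive intact, while in the normal directions the module becomes $\Ical_W^{k+1}\partial_{x_1} + \cdots + \Ical_W^{k+1}\partial_{x_l}$ together with the honest $\partial_{x_1},\ldots,\partial_{x_l}$ directions — but these last are absorbed since $\xi$ contributes them. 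Unwinding, the normal part is generated over $C^\infty(M)$ precisely by $\{x^I \partial_{x_j}\}$ with $|I| = k+1$ and $1 \le j \le l$, because $\Ical_W^{k+1}$ is the ideal generated by the degree-$(k+1)$ monomials $x^I$ in $x_1,\ldots,x_l$. This matches Definition \ref{def:typek} verbatim.

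For the second bullet, suppose conversely that $\A$ is of $b^{k+1}$-type. In the given local model I read off that $\A$ contains the transverse fields $\partial_{x_{l+1}},\ldots,\partial_{x_n}$ and, in the normal directions, exactly $\Ical_W^{k+1}$ times the coordinate fields; hence $\sigma := J^k_W\A$, the $k$-jet along $W$ of the distribution spanned by the lowest-order generators, is a well-defined holonomic section of $\Gr(TM,l)$, and it is involutive because the bracket of any two generators of $\A$ lands back in $\A$ and therefore its $k$-jet lands in $\sigma$ (this is Lemma \ref{lem:jetsoffoltofolonjets} applied to $\Gamma(\sigma)$). Finally, to see the two assignments are mutually inverse, I would invoke Lemma \ref{lem:tangenciesSubbundles}: the module $\A(\sigma)$ determines, and is determined by, the tangency data $\Tangency^k(W,\xi)$, which in turn is equivalent to the $k$-jet $J^k_W\xi = \sigma$; so $J^k_W\A(\sigma) = \sigma$, and starting from a $b^{k+1}$-type $\A$ the reconstruction $\A(J^k_W\A) = \A$ holds because both sides agree with the local model.

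The main obstacle is the straightening step in the local normal form: I must argue carefully that involutivity \emph{up to order $k$} (rather than genuine involutivity) is exactly enough to bring $\xi$ into the split coordinate form modulo $\Ical_W^{k+1}$. Genuine involutivity would give a true Frobenius chart, but here I only control jets, so the argument should proceed order by order — or, more cleanly, by observing via Lemma \ref{lem:decomposeTangencySheaf} that $\A(\sigma)$ depends only on $\sigma$ and not on the extension $\xi$, reducing the problem to exhibiting \emph{one} convenient representative in the desired form, for which the degree-by-degree involutivity is the precise hypothesis needed.
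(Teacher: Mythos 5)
There is a genuine gap, and it is exactly at the point you yourself flag as ``the main obstacle'': the straightening step. Your argument needs coordinates in which $\xi$ agrees \emph{to order $k$ along $W$} with the flat model $\inp{\partial_{x_{l+1}},\ldots,\partial_{x_n}}$, and you assert that this ``is exactly the content of the graphical chart argument in Lemma \ref{lem:tangenciesSubbundles}.'' It is not. Lemma \ref{lem:tangenciesSubbundles} (like Lemma \ref{lem:decomposeTangencySheaf}) only says that two subbundles with the same $k$-jet along $W$ have the same tangency module; it gives independence of the choice of representative $\xi$, but it produces no representative, and involutivity plays no role in it whatsoever. The existence of a representative of $\sigma$ that is flat to order $k$ in suitable coordinates is precisely the Frobenius theorem for $k$-th order foliations, which is the nontrivial content here: a $k$-jet of distribution that is involutive only up to order $k$ need not visibly come from any genuine foliation, so the classical Frobenius theorem does not apply, and the order-by-order normalization you gesture at (``the argument should proceed order by order'') is never carried out. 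That inductive argument is not a formality --- it is equivalent to the theorem being proven.

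This is also exactly how the paper proceeds: its proof of Proposition \ref{prop:singfolstojets} explicitly defers to Theorem \ref{th:Frobenius}, whose proof requires the machinery of Section \ref{sec:Atiyah} --- reinterpreting $\sigma$ as a bracket-preserving splitting $TW \to \At^k(\nu_W)$ (Proposition \ref{prop:folsassplittings}), integrating it by Lie's second theorem to a holonomy morphism, and using parallel transport over a simply connected chart (Proposition \ref{prop:simplyconnected}) to trivialize the jet. In other words, the local normal form is obtained by converting the jet-level involutivity into honest flatness of a connection on the principal bundle $\Fr^k(\nu_W)$ and applying integration there, not by manipulating the module $\Ical_W^{k+1}\mathfrak{X}(M) + \Gamma(\xi)$ directly. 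Your treatment of the converse bullet and of the mutual-inverse property is fine modulo this point, but without the straightening step (either by invoking Theorem \ref{th:Frobenius}, or by actually performing the degree-by-degree argument with the Maurer--Cartan-type cancellations that involutivity provides), the first bullet --- which is the real content of the proposition --- remains unproven.
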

\begin{proof}
The proof of this result depends on a Frobenius theorem for $k$-jets of foliations, whose proof we postpone till Theorem \ref{th:Frobenius}. When $\sigma$ is a $k$-th order foliation, it follows directly from the explicit description of $\A(\sigma)$ from Lemma \ref{lem:decomposeTangencySheaf} and this Frobenius theorem that $\A(\sigma)$ is of $b^{k+1}$-type. Conversely $\A$ being of type $k$ immediately implies that $J^k_W\A$ is a $k$-th order foliation.
\end{proof}

\begin{remark}
There is no a priori reason to restrict ourselves to the case where ${\rm codim}(W) = \rank (\xi)$, the associated singular foliation $\Tan^k(M,\xi)$ is still well-defined. However, it is no longer of $b^{k+1}$-type. We will address singular foliations of this form in upcoming work.
\end{remark}

\subsection{Subsheaves of invariant functions}

Given a foliation $F$ on $\N_W^k$, we may define the following subsheaf $\C_F \subset \N^k_W$ of functions
\begin{equation*}
\C_F = \set{f \in \N_W^k : V(f) = 0 \text{ for all } V \in F}.
\end{equation*}
To characterize the properties of $\C_F$ recall that there are maps $\N^k_W \rightarrow \N_W^1 \rightarrow \mathcal{O}_Z$, and that we have a short exact sequence
\begin{equation*}
0 \rightarrow \nu_Z^* \rightarrow \N_W^1 \rightarrow \mathcal{O}_Z\rightarrow 0.
\end{equation*}
\begin{definition}
We say that $f = (f_1,\ldots,f_l)$ with $f_i \in \N_W^k$ is a \textbf{local defining function} for $W^{n-l} \subset M^n$ if its projection to $\mathcal{O}_W$ is zero, and the corresponding sections $df_i \in \nu_Z^*$ are linearly independent.
\end{definition}

\begin{proposition}
Let $\C_F$ be the sheaf of invariant functions of a foliation $F$. Then $\C_F$ is locally generated over $\rr$ by a local defining function. More precisely, for every $x\in D$, there is a neighbourhood $x\in U \subset M$ and a local defining function $s \in \C_F(U)$ such that we have a surjection of sheaves of algebras
\begin{equation*}
\rr_U[z_1,\ldots,z_l] \rightarrow \C_F(U), \quad f \mapsto f\circ s.
\end{equation*}
Furthermore, this map induces an isomorphism of sheaves $\rr_U[z_1,\ldots,z_l]/\mathcal{I}_k \simeq \C_F(U)$, where $\mathcal{I}_{k+1}$ is the ideal of polynomials vanishing to at least order $k+1$.
\end{proposition}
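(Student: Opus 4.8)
The plan is to reduce the statement to an \emph{honest} foliation and then read off $\mathcal{C}_F$ in straightening coordinates. Fix $x \in W$ and a chart $U$ small enough that $W \cap U$ is simply connected. Under the correspondence between foliations $F \subset \Der(\mathcal{N}^k_W)$ and $k$-th order foliations $\sigma \in \Hol^k_W(\Gr(TM,l))$, the Frobenius theorem for $k$-jets (Theorem \ref{th:Frobenius}, equivalently Proposition \ref{prop:simplyconnected} applied to the simply connected $W\cap U$) produces a germ of genuine foliation $\mathcal{G}$ around $W$ in $U$ with $J^k_W\mathcal{G} = \sigma$, so that $F$ is the $k$-jet of $\mathcal{G}$. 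Because the representing distribution $\xi$ is tangent to $W$ and has the same dimension as $W$, it restricts to $TW$ along $W$; hence $W$ is itself a leaf of $\mathcal{G}$. Applying the classical Frobenius theorem, I would choose coordinates $(y_1,\dots,y_n)$ on a possibly smaller $U$ with $W = \{y_1 = \dots = y_l = 0\}$ and $\mathcal{G} = \langle \partial_{y_{l+1}},\dots,\partial_{y_n}\rangle$, so that the leaves are the level sets of $s := (y_1,\dots,y_l)$. This $s$ is a local defining function for $W$: each $y_i$ vanishes on $W$ and $dy_1,\dots,dy_l$ span the conormal $\nu_W^*$.

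Next I would verify that $s$ takes values in $\mathcal{C}_F$ and produces the claimed algebra map. Each $y_i$ with $i \le l$ is constant along the leaves of $\mathcal{G}$, so its class $[y_i] \in \mathcal{N}^k_W$ is annihilated by the derivations induced by $\partial_{y_{l+1}},\dots,\partial_{y_n}$, which generate $F$ as an $\mathcal{N}^k_W$-module. Since the action is $\mathcal{N}^k_W$-linear in the derivation, annihilation by these generators is the same as annihilation by all of $F$, whence $[y_i]\in\mathcal{C}_F$. Consequently $f \mapsto [f\circ s]$ is a well-defined homomorphism of $\mathbb{R}$-algebras $\mathbb{R}_U[z_1,\dots,z_l] \to \mathcal{C}_F(U)$.

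The core of the argument is then a direct computation in $\mathcal{N}^k_W$. In the chosen coordinates every class in $\mathcal{N}^k_W$ is represented uniquely by a Taylor polynomial $g = \sum_{|I|\le k} a_I(y_{l+1},\dots,y_n)\, y^I$ of degree at most $k$ in the normal variables $y_1,\dots,y_l$, with coefficients germs of functions on $W$. Using that $F$ is generated over $\mathcal{N}^k_W$ by $\partial_{y_{l+1}},\dots,\partial_{y_n}$, the element $g$ is $F$-invariant if and only if $\partial_{y_j}g = 0$ for all $j>l$, i.e.\ if and only if each $a_I$ is locally constant. Hence $\mathcal{C}_F(U)$ is exactly the algebra of degree-$\le k$ polynomials in $y_1,\dots,y_l$ with (locally) constant coefficients, which is the sheaf $\mathbb{R}_U[z_1,\dots,z_l]/\mathcal{I}_{k+1}$. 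Comparing with the map $f\mapsto[f\circ s]$: its image is precisely these polynomials, because monomials $y^I$ with $|I|\ge k+1$ lie in $\mathcal{I}^{k+1}_W$ and so vanish in $\mathcal{N}^k_W$, giving surjectivity; its kernel consists of polynomials all of whose coefficients in degree $\le k$ vanish, namely $\mathcal{I}_{k+1}$. This yields the isomorphism $\mathbb{R}_U[z_1,\dots,z_l]/\mathcal{I}_{k+1} \cong \mathcal{C}_F(U)$.

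The only genuinely nontrivial input is the reduction in the first step: Theorem \ref{th:Frobenius}/Proposition \ref{prop:simplyconnected} is what allows one to replace the formal object $F$ by an actual foliation that can be straightened by the classical Frobenius theorem. The step requiring the most care is ensuring the straightening is compatible with $W$ — that $W$ can simultaneously be taken as the zero leaf $\{y_1=\dots=y_l=0\}$ (legitimate precisely because $W$ is itself a leaf) and that the module generators of $F$ become the jets of $\partial_{y_{l+1}},\dots,\partial_{y_n}$, so that invariance collapses to the vanishing of all normal derivatives. Once this normal form is in place, the identification of $\mathcal{C}_F$ and the computation of the kernel are routine.
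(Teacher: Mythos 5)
Your proposal is correct and follows essentially the same route as the paper: invoke the $k$-jet Frobenius theorem (Theorem \ref{th:Frobenius}) to put $F$ in the normal form $\inp{\partial_{x_{l+1}},\ldots,\partial_{x_n}}$, then identify $\C_F(U)$ in these coordinates with polynomials of degree at most $k$ in the normal variables with constant coefficients, the defining function being the tuple of normal coordinates. Your intermediate step of realizing $F$ by an honest foliation via Proposition \ref{prop:simplyconnected} and straightening it classically is just an unpacking of how Theorem \ref{th:Frobenius} is itself proved, and your explicit Taylor-coefficient computation of invariance and of the kernel fills in details the paper leaves implicit.
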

\begin{proof}
The proof of this result depends on a Frobenius theorem for $k$-jets of foliations, whose proof we postpone till Theorem \ref{th:Frobenius}. By this result, we have that there exists local coordinates $(x_1,\ldots,x_n)$ such that $F = \inp{\partial_{x_{l+1}},\ldots,\partial_{x_n}}$ and $\N^k_W|_U \simeq \mathcal{O}_U[z_1,\ldots,z_l]/(z^I)$, with $I$ running over all multi-indices of length $k+1$. Therefore, we naturally obtain an isomorphism $\varphi : \C_F(U) \simeq \rr_U[x_1,\ldots,x_l]/(z^I)$. The function $s = \varphi^{-1}(x_1,\ldots,x_l)$ provides the desired defining function.
\end{proof}
We may use this to abstractly characterize the sheaf of invariant functions:
\begin{definition}
Let $W^{n-l}$ be an embedded connected submanifold of $M^n$. A \textbf{sheaf of invariant functions} on $\N^k_W$ is
\begin{itemize}
\item a locally constant sheaf $\C_F$ of $\rr$-algebras which is locally isomorphic to $\rr_U[z_1,\ldots,z_l]/\mathcal{I}_{k+1}$, and
\item a morphism of sheaves of $\rr$-algebras $\C_F \rightarrow \N^k$ which sends a set of local generators of $\C_F$ to local defining functions in $\N^k_W$. 
\end{itemize}
\end{definition}
Using this definition we may now recover the foliation:
\begin{lemma}
Let $\C$ be a sheaf of invariant functions on $\N^k_W$. Then the subsheaf of derivations $F_{\C} \subset \Der(\N^k_W)$ which annihilate $\C$ defines a foliation on $\N^k_W$. 
\end{lemma}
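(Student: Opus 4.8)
The plan is to verify directly the two conditions defining a foliation on $\N^k_W$: that $F_\C$ is closed under the Lie bracket (involutivity), and that it is a genuine distribution, i.e. a locally free $\N^k_W$-submodule of $\Der(\N^k_W) = TM|_{\N^k_W}$ of constant rank. I would dispose of the formal parts first. Since $F_\C$ is by definition the annihilator of $\C$, it is automatically an $\N^k_W$-submodule: if $V(f)=0$ for all $f \in \C$, then $(gV)(f) = g\,V(f) = 0$ for every $g \in \N^k_W$. Involutivity is equally formal from the derivation property: for $V,V' \in F_\C$ and $f \in \C$ one has $[V,V'](f) = V(V'(f)) - V'(V(f)) = 0$, so $[F_\C,F_\C] \subset F_\C$. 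Thus the only substantive point is local freeness.

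To establish this I would argue locally and exploit the structure built into the definition of a sheaf of invariant functions. Near a point of $W$, $\C$ is isomorphic to $\rr_U[z_1,\ldots,z_l]/\mathcal{I}_{k+1}$ and its generators map to a local defining function $s = (s_1,\ldots,s_l)$, with each $s_i \in \N^k_W$ vanishing on $W$ and with linearly independent conormal differentials $ds_i$. Because $\C$ is generated over $\rr$ by $s_1,\ldots,s_l$, a derivation lies in $F_\C$ if and only if $V(s_i)=0$ for all $i$. Using Lemma \ref{lem:germRepresentative} I would lift each $s_i$ to a germ $\tilde s_i$ on $M$; as these vanish along $W$ with independent differentials there, the submersion theorem completes them to a coordinate system $(z_1,\ldots,z_l,w_1,\ldots,w_{n-l})$ with $z_i = \tilde s_i$ and $W = \{z=0\}$. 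In these coordinates $\N^k_W \cong C^\infty(W)[z_1,\ldots,z_l]/(z^I : |I|=k+1)$ and $\C$ is exactly the subalgebra generated over $\rr$ by $z_1,\ldots,z_l$.

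Then I would simply read off $F_\C$. Recalling that $\Der(\N^k_W)$ consists of $k$-jets of vector fields tangent to $W$, I write such a derivation as $V = \sum_i a_i \partial_{z_i} + \sum_j b_j \partial_{w_j}$ with $a_i \in (z)$ and $b_j \in \N^k_W$; since $V(z_i) = a_i$, the condition $V(s_i)=0$ is precisely $a_i = 0$. Hence $F_\C = \N^k_W\inp{\partial_{w_1},\ldots,\partial_{w_{n-l}}}$, a free $\N^k_W$-module of rank $\dim W$ (one checks that each $\sum_j b_j \partial_{w_j}$ kills $z^I$ and so is an honest derivation). This exhibits $F_\C$ as a locally free submodule of constant rank, and combined with the formal involutivity above shows it is a foliation on $\N^k_W$.

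The main obstacle is exactly the local freeness step; the involutivity and module structure are automatic, but producing a local basis for $F_\C$ hinges on the nondegeneracy encoded in the notion of a local defining function, namely the linear independence of the conormal differentials $ds_i$. It is this independence that permits the coordinate normalization making $F_\C$ manifestly free of constant rank. Without it the rank of the annihilator could jump along $W$ and $F_\C$ would fail to be a distribution, so I expect the care to lie entirely in justifying that the defining-function hypothesis supplies adapted coordinates globally over a neighbourhood of $W$.
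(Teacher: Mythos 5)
Your proof is correct and follows essentially the same route as the paper's: both reduce to local coordinates adapted to the defining function supplied by the sheaf of invariant functions, in which $F_{\C}$ is visibly the free $\N^k_W$-module spanned by the coordinate vector fields along $W$, hence locally free and involutive. The paper's proof is just a terser version of this; your explicit construction of the coordinates (lifting the $s_i$ via germ representatives and applying the submersion theorem) and your verification of the formal module/bracket properties simply fill in details the paper leaves implicit.
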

\begin{proof}
As by assumption we have that $\C$ is locally isomorphic to $\rr_U[z_1,\ldots,z_l]/(z^I)$, $F_{\C}$ in the same coordinates is generated by $\inp{\partial_{x_{l+1}},\ldots,\partial_{x_n}}$. From this it immediately follows that $F$ is a foliation on $\N^k_W$.
\end{proof}

We conclude:
\begin{proposition}\label{prop:funequiv}
Let $W^{n-l}$ be an embedded connected submanifold of $M^n$. Foliations on $F$ on $\N^k_W$ are in one-to-one correspondence with sheaves of invariant functions on $\N^k_W$.
\end{proposition}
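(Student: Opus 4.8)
The plan is to show that the two constructions already in place, namely $F \mapsto \C_F$ (the sheaf of invariant functions of a foliation) and $\C \mapsto F_{\C}$ (the derivations annihilating a sheaf of invariant functions), are mutually inverse. Since both assignments are canonical and involve no auxiliary choices, the statement is local over $W$: it suffices to verify the two round trips $F \mapsto \C_F \mapsto F_{\C_F} = F$ and $\C \mapsto F_{\C} \mapsto \C_{F_{\C}} = \C$ on a neighbourhood of each point of $W$. The resulting local identifications then glue automatically, precisely because the subsheaves $\C_F \subset \N^k_W$ and $F_{\C} \subset \Der(\N^k_W)$ are defined intrinsically.

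For the first round trip, start with a foliation $F$ and invoke the Frobenius theorem for $k$-jets of foliations (Theorem \ref{th:Frobenius}) to choose local coordinates $(x_1,\ldots,x_n)$ with $W = \set{x_1 = \ldots = x_l = 0}$ in which $F = \inp{\partial_{x_{l+1}}, \ldots, \partial_{x_n}}$ and $\N^k_W \simeq \Ocal_U[z_1,\ldots,z_l]/\Ical_{k+1}$ with $z_i = x_i$. As established in the preceding Proposition, $\C_F$ is then exactly the subsheaf $\rr_U[z_1,\ldots,z_l]/\Ical_{k+1}$ of functions whose coefficients are locally constant along $W$. A derivation $V \in \Der(\N^k_W)$ annihilates all of $\C_F$ if and only if it annihilates each $z_i$; writing $V = \sum_i b_i \partial_{z_i} + \sum_{j > l} c_j \partial_{x_j}$, this forces $b_i = V(z_i) = 0$, so that $V$ lies in the $\N^k_W$-span of $\partial_{x_{l+1}}, \ldots, \partial_{x_n}$. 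Hence $F_{\C_F} = F$.

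For the second round trip, start with a sheaf of invariant functions $\C$ together with its structure morphism $\C \to \N^k_W$. Locally $\C$ is generated over $\rr$ by a local defining function $s = (f_1,\ldots,f_l)$; choosing germ representatives $\tilde f_i$, the condition that the $df_i$ be linearly independent in $\nu_W^*$ means that $(\tilde f_1,\ldots,\tilde f_l)$ is a submersion near $W$ cutting out $W$, so we may complete it to coordinates $(x_1,\ldots,x_n)$ with $x_i = \tilde f_i$ for $i \leq l$. In these coordinates $\C$ is identified with $\rr_U[x_1,\ldots,x_l]/\Ical_{k+1} \subset \N^k_W$, and as in the preceding Lemma $F_{\C} = \inp{\partial_{x_{l+1}},\ldots,\partial_{x_n}}$ consists of the derivations killing $x_1,\ldots,x_l$. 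The invariant functions of this foliation, $\C_{F_{\C}}$, are exactly the elements of $\N^k_W$ whose coefficients do not depend on $x_{l+1},\ldots,x_n$, which recovers $\C$ as a subsheaf of $\N^k_W$, together with its embedding.

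The genuine input is the Frobenius normal form of Theorem \ref{th:Frobenius}, on which the first round trip rests; granting it, both computations are direct. The step requiring the most care is not either local computation but the verification that the second round trip returns $\C$ together with its given realization inside $\N^k_W$, i.e. that no spurious invariant functions appear and that the abstract locally constant sheaf is recovered as the correct subsheaf. This is why it is essential to set up the coordinates using the defining function $s$ itself rather than an arbitrary Frobenius chart. Connectedness of $W$ guarantees that the locally constant sheaf $\C$ has a well-defined global type, so that these local matchings assemble into the claimed global bijection.
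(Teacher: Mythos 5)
Your proof is correct and takes essentially the same route as the paper, which states this proposition with no written argument (it follows ``We conclude:''), treating it as an immediate consequence of the two preceding results — the Frobenius-based structure proposition for $\C_F$ and the lemma producing $F_{\C}$ — exactly the ingredients you use. Your explicit round-trip verifications $F_{\C_F} = F$ and $\C_{F_{\C}} = \C$ in adapted coordinates simply fill in the details the paper leaves implicit.
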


\subsection{Equivalence}
For the reader's convenience we summarize the above results into one result:
\begin{theorem}\label{th:equivalence}
Let $M^n$ be a manifold together with a connected embedded submanifold $W^{n-l}$. Then there is a one-to-one correspondence between:
\begin{enumerate}
\item Singular foliations of type $b^{k+1}$, $\mathcal{A} \subset \mathfrak{X}(M)$ with leaves $W$ and $M\backslash W$.
\item $k$-th order foliations $\sigma \in \Hol^k_W(\Gr(TM,l))$ .
\item Foliations on the $k$-th order neighbrouhood of $W$, $F \subset \Der(\N^k_W)$.
\item Subsheaves of invariant functions $\C \subset \N^k_W$.
\end{enumerate}
\end{theorem}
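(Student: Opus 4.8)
The plan is to assemble the theorem from the pairwise equivalences already established in the preceding subsections, so that the proof is essentially a bookkeeping exercise chaining four bijections into the cycle $(1) \leftrightarrow (2) \leftrightarrow (3) \leftrightarrow (4)$. The key observation is that every one of the four descriptions is built from the same underlying germ of distribution $\xi$ representing $\sigma$, which is what guarantees that the composites are mutually inverse rather than merely abstractly bijective.

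First I would dispatch the equivalence between (1) and (2), which is exactly the content of Proposition \ref{prop:singfolstojets}: the assignments $\sigma \mapsto \A(\sigma)$ and $\A \mapsto J^k_W \A$ are mutually inverse bijections between $k$-th order foliations and singular foliations of type $b^{k+1}$. Next, for the equivalence between (2) and (3), I would specialise the general bundle statement of Lemma \ref{lem:characterisationSubbundles} to $E = TM$ with $l = \rank(\xi)$, obtaining a bijection $\sigma \mapsto \Gamma(\sigma)$ between elements of $\Hol^k_W(\Gr(TM,l))$ and subbundles of $TM|_{\Ncal_W^k}$. To cut this down to the involutive objects on both sides, I would invoke Lemma \ref{lem:jetsoffoltofolonjets}, which states that $\sigma$ is involutive if and only if $\Gamma(\sigma)$ is; hence the bijection restricts to one between $k$-th order foliations and foliations $F \subset \Der(\Ncal_W^k)$. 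Finally, the equivalence between (3) and (4) is precisely Proposition \ref{prop:funequiv}, the correspondence $F \mapsto \C_F$ between foliations on $\Ncal^k_W$ and sheaves of invariant functions.

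Since the heavy lifting — the local normal forms and the Frobenius-type argument underlying Proposition \ref{prop:singfolstojets} (deferred to Theorem \ref{th:Frobenius}) — is carried out in the constituent lemmas and propositions, there is no genuinely hard step remaining here. The only point requiring care is compatibility: I would check that the three bijections are defined with respect to compatible local data, so that traversing the full cycle returns the identity and all four descriptions are canonically identified. This is routine, because each assignment ($\sigma \mapsto \A(\sigma)$, $\sigma \mapsto \Gamma(\sigma)$, $F \mapsto \C_F$) is given by an explicit formula in terms of the same representative $\xi$, and the already-cited results confirm independence of that choice. The mildest subtlety will be verifying that under $\sigma \mapsto \Gamma(\sigma)$ the foliation-on-$\Ncal^k_W$ obtained in (3) agrees with the one extracted directly from $\A(\sigma)$ via the projection $p_k$ used in Definition of $\A_F$; this is exactly the remark, recorded just after that definition, that $\A_F = \A(\sigma)$ whenever $F$ and $\sigma$ correspond under Lemma \ref{lem:characterisationSubbundles}.
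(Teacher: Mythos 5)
Your proposal is correct and matches the paper's proof, which likewise assembles the theorem by citing Proposition \ref{prop:singfolstojets} for (1)$\leftrightarrow$(2), Lemma \ref{lem:jetsoffoltofolonjets} (resting on Lemma \ref{lem:characterisationSubbundles}) for (2)$\leftrightarrow$(3), and Proposition \ref{prop:funequiv} for the equivalence with (4). Your additional remark on checking compatibility of the chained bijections via the common representative $\xi$ is a sensible but minor elaboration of the same bookkeeping argument.
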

\begin{proof}
(1)$\leftrightarrow$(2) is the content of Proposition \ref{prop:singfolstojets}. (2)$\leftrightarrow$(3) is the content of Lemma \ref{lem:jetsoffoltofolonjets}.
(2)$\leftrightarrow$(4) is the contents of Proposition \ref{prop:funequiv}.
\end{proof}

\subsection{Remarks about the holomorphic case}
Some of the results discussed in this article can be established equally well in the holomorphic category. In particular, the definitions of singular foliations of type $b^{k+1}$, foliations on the $k$-th order neighbourhood and their equivalence carries over mutatis mutandis. Many of the other results depend on the existence of tubular neighbourhood embeddings and therefore do not generalize to the holomorphic category.


\section{Atiyah algebroid and holonomy}\label{sec:Atiyah}
In this section we will define a principal bundle consisting of $k$-jets of diffeomorphisms, and it's associated Atiyah algebroid (Subsection \ref{ssec:prinbun}). We will see that $k$-th order foliations can be viewed as flat connections on this principle bundle (Subsection \ref{ssec:algsplit}). We will use this point of view to define the holonomy for a $k$-th order foliation (Subsection \ref{ssec:holonomy}, which allows us to proof a Frobenius theorem (Theorem \ref{th:Frobenius}).

\subsection{Principal bundle of $k$-jets}\label{ssec:prinbun}
Let $\pi: E^l \to W$ be a vector bundle. Let $ \mathrm{Diff}_{\pi}(\mathbb{R}^l, E)$, the space of embeddings of $\mathbb{R}^l$ to $E$ with the property that $0$ gets sent to the $0$-section, and $\mathbb{R}^l$ gets sent completely into a fibre of $E$. Then there is a map $\mathrm{Diff}_{\pi}(\mathbb{R}^l, E) \to W$ whose fibre above $x \in W$ is the set of all parametrisations of $E_{x}$. This is a torsor for $\mathrm{Diff}(\mathbb{R}^l, 0)$. Hence, $\mathrm{Diff}_{\pi}(\mathbb{R}^l, E) \to W$ is a right principal $\mathrm{Diff}(\mathbb{R}^l, 0)$-bundle. 

We want to modify this construction by passing to $k$-jets. We first define the group required for this:
\begin{definition}\label{def:Gk}
We let $G_{k,l} := J^k\Diff(\rr^l,0)$ be the group of $k$-jet of diffeomorphisms fixing the origin.
\end{definition}
We may describe $G_{k,l}$ as automorphisms of $k$-jets of functions:
\begin{equation*}
G_{k,l} = \Aut(\rr[z_1,\ldots,z_r]/\mathcal{I}_k),
\end{equation*}
with $\mathcal{I}_k$ the ideal of polynomials vanishing to order at least $k$.

We can now truncate $\mathrm{Diff}_{\pi}(\mathbb{R}^l, E)$ by taking $k$-jets instead
\[
\Fr^k(E) = J^{k}\mathrm{Diff}_{\pi}(\mathbb{R}^l, E). 
\]
This is a right principal $G_{k,l}$-bundle over $W$. 

Note that $\Fr^1(E)$ is precisely the frame bundle ${\rm Fr}(E)$. The Atiyah algebroid of ${\rm Fr}(E)$ is precisely the Atiyah algebroid of $E$. Therefore we now want to explicitly describe the Atiyah algebroid of $\Fr^k(E)$:

\begin{proposition}
Let $E^l \rightarrow D$ be a vector bundle. Then the sections of the Atiyah algebroid $\At^k(E) := \At(\Fr^k(E))$ consist of $k$-jets of vector fields $V$ on $E$ satisfying:
\begin{itemize}
\item $V$ is $\pi$-projectible to a vector field $X$ on $W$,
\item $V$ is tangent to the zero section.
\end{itemize}
\end{proposition}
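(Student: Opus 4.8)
The plan is to exhibit an explicit isomorphism between the sheaf of sections of $\At^k(E)=\At(\Fr^k(E))$ and the sheaf of $k$-jets along the zero section $W\hookrightarrow E$ of vector fields on the total space of $E$ that are $\pi$-projectable and tangent to the zero section. Recall that for any principal $G$-bundle $P\to W$ the sections of the Atiyah algebroid $\At(P)=TP/G$ are canonically the $G$-invariant vector fields on $P$, with anchor induced by $\pi_*$. Thus it suffices to match the $G_{k,l}$-invariant vector fields on $\Fr^k(E)$ with such jets on $E$. Throughout, ``$k$-jet of a vector field on $E$'' is understood as a jet taken along $W$: full in the directions of $W$ and of order $k$ in the fibre directions, consistent with the notion of jets along submanifolds from Section \ref{sec:fols}.

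First I would construct the map from jets to invariant fields. Let $V$ be a $\pi$-projectable vector field on $E$ covering $X\in\mathfrak{X}(W)$ and tangent to the zero section, and let $\psi_t$ denote its (local) flow. Since $\psi_t$ sends fibres to fibres and preserves the zero section, post-composition $\phi\mapsto\psi_t\circ\phi$ defines a flow on $\mathrm{Diff}_{\pi}(\rr^l,E)$, hence on $\Fr^k(E)$ after taking $k$-jets at $0\in\rr^l$. Differentiating at $t=0$ yields a vector field $\widehat V$ on $\Fr^k(E)$. Because post-composition commutes with the pre-composition right action of $G_{k,l}$ (associativity of composition), $\widehat V$ is $G_{k,l}$-invariant and so defines a section of $\At^k(E)$; and since $\psi_t$ covers the flow of $X$, its anchor is $X$. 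Moreover $j^k_0(\psi_t\circ\phi)$ depends only on the $k$-jet of $\psi_t$ at the point $\phi(0)\in W$, which depends only on the $k$-jet of $V$ along $W$; hence $V\mapsto\widehat V$ factors through the $k$-jet of $V$ along $W$, giving a well-defined map on jets. As it is the infinitesimal action map of the post-composition action, it is $C^\infty(W)$-linear and intertwines the brackets.

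It remains to prove bijectivity, which I would deduce from the short five lemma applied to the Atiyah sequence $0\to\ad(\Fr^k(E))\to\At^k(E)\to TW\to 0$ and its jet counterpart $0\to V^k\to(\text{projectable, zero-section-tangent jets})\to TW\to 0$, where $V^k$ denotes the $k$-jets along $W$ of \emph{vertical} vector fields tangent to the zero section. The map above covers the identity on the quotient $TW$ (the anchor/projection compatibility), so it suffices to show it restricts to an isomorphism $V^k\xrightarrow{\sim}\ad(\Fr^k(E))$. This is local: a trivialization $E\cong U\times\rr^l$ induces $\Fr^k(E)|_U\cong U\times G_{k,l}$, under which $\ad(\Fr^k(E))|_U\cong C^\infty(U,\mathfrak{g}_{k,l})$ with $\mathfrak{g}_{k,l}=\mathrm{Lie}(G_{k,l})$ the $k$-jets at $0$ of vector fields on $\rr^l$ vanishing there; on the other side, a vertical field tangent to the zero section has a fibrewise $k$-jet vanishing at $0$, so $V^k|_U\cong C^\infty(U,\mathfrak{g}_{k,l})$ as well.

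The main obstacle is this last identification, and in particular the left/right bookkeeping: post-composition by a vertical flow $\psi_t$, read in a reference frame $\phi_0$, equals left multiplication on $G_{k,l}$ by $j^k_0(\phi_0^{-1}\psi_t\phi_0)$, whose $t$-derivative at $0$ is the element of $\mathfrak{g}_{k,l}$ represented by the fibre $k$-jet of $V$ in the frame $\phi_0$. Carrying this out carefully shows that the generator of post-composition is exactly the fundamental vector field of that Lie-algebra element, so the induced map on kernels is the tautological isomorphism, and the short five lemma concludes. As a consistency check, when $k=1$ one has $\Fr^1(E)=\Fr(E)$ and $\mathfrak{g}_{1,l}=\mathfrak{gl}(l,\rr)$, and the statement recovers the classical description of $\At(E)$ as the linear vector fields on $E$, i.e. the derivations of the vector bundle.
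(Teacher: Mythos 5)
Your proof is correct, and its core identification---sections of $\At(\Fr^k(E))$ as infinitesimal symmetries of $\Fr^k(E)$, which correspond to ($k$-jets of) fibre-preserving, zero-section-preserving symmetries of $E$---is the same idea as the paper's. The difference is one of level and completeness. The paper works at the integrated level: it recalls that the Atiyah algebroid integrates to the gauge groupoid, identifies the gauge transformations of $\Fr^k(E)$ with $k$-jets of diffeomorphisms $\phi: E \to E$ covering a base diffeomorphism and preserving the zero section, and then simply asserts that ``the infinitesimal version is precisely the desired description.'' You stay infinitesimal throughout: you realize $\Gamma(\At^k(E))$ as $G_{k,l}$-invariant vector fields, lift a projectable, zero-section-tangent field $V$ via the post-composition flow, and verify invariance and dependence only on $j^k_W V$ by hand. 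More importantly, you supply what the paper's last sentence leaves implicit: an actual bijectivity argument, via the short five lemma applied to the Atiyah sequence and its jet counterpart over $TW$, reduced to a local identification of the kernels $V^k \cong \ad(\Fr^k(E)) \cong C^\infty(U,\mathfrak{g}_{k,l})$, including the left/right bookkeeping that post-composition becomes left multiplication in a frame. Your route buys rigor; the paper's buys brevity and keeps the gauge-groupoid picture in view, which it reuses later (e.g.\ when integrating splittings via Lie II). One small quibble: asserting $C^\infty(W)$-linearity ``as it is the infinitesimal action map'' is not a justification; the honest reason is that $\widehat{V}_\phi$ depends only on the jet of $V$ at the single point $\phi(0)$, because $j^k_0(\psi_t\circ\phi) = j^k_{\phi(0)}(\psi_t)\circ j^k_0(\phi)$, so your map is defined pointwise and is therefore automatically a bundle map.
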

\begin{proof}
In order to understand $\Gamma(\At^k(E))$, we recall that these are precisely the infinitesimal gauge symmetries (the integration is the gauge groupoid, whose bisections are gauge transformations covering diffeomorphisms of the base). In this case, we see that the gauge symmetries can be realized by ($k$-jets of) diffeomorphisms $\phi: E \to E$ with the properties that 
\begin{itemize}
\item $\phi$ covers a diffeomorphism $g : W \to W$ of the base, 
\item $\phi$ preserves the zero section and $\phi|_{W} = g$. 
\end{itemize}
The infinitesimal version is precisely the desired description.
\end{proof}

It follows from this that $\At^k(E)$ is naturally a subsheaf of $\Der(\mathcal{N}_W^{k})$ consisting of the projectible vector fields. Furthermore, from this description we see that  
\[
\mathcal{N}_W^{k} \At^k(E) = \Der(\mathcal{N}_W^{k}). 
\]

\subsection{Lie algebroid splittings}\label{ssec:algsplit}
Given a vector bundle $E \rightarrow D$, recall that the Atiyah algebroid sits in a short exact sequence:
\begin{equation}\label{eq:Atiyahlin}
0 \rightarrow \End(E) \rightarrow \At(E) \overset{\rho}{\rightarrow} TD \rightarrow 0.
\end{equation}
Vector bundle splittings of \eqref{eq:Atiyahlin} correspond precisely to linear connections on $E$, these connections are flat precisely when the splitting is bracket preserving.

We now consider a non-linear version of \eqref{eq:Atiyahlin} by considering the Atiyah-sequence of $\At^k(E)$:
\begin{equation}\label{eq:Atiyahseq}
0 \rightarrow \ad(\Fr^k(E)) \rightarrow \At^k(E) \rightarrow TW \rightarrow 0
\end{equation}
We can now use this sequence to describe $k$-th order foliations as connections on $\Fr^k(E)$:
\begin{proposition}\label{prop:folsassplittings}
Let $W^{n-l} \subset M$ be an embedded submanifold of codimension $l$, and fix an embedded normal bundle $\nu_W=: E$ for $W$. Then, there is a one-to-one correspondence between $k$-th order foliations and bracket preserving splittings of $\eqref{eq:Atiyahseq}$. Consequently $k$-th order foliations correspond to flat principal $\Fr^k(E)$-connections.
\end{proposition}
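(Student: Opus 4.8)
The plan is to establish the correspondence between $k$-th order foliations $\sigma \in \Hol^k_W(\Gr(TM,l))$ and bracket-preserving splittings of the Atiyah sequence \eqref{eq:Atiyahseq}, and then invoke the standard dictionary between bracket-preserving splittings of an Atiyah sequence and flat principal connections. The final clause (``consequently $k$-th order foliations correspond to flat principal $\Fr^k(E)$-connections'') is immediate once the main equivalence is set up, since a splitting $TW \to \At^k(E)$ of the anchor is precisely a principal connection on $\Fr^k(E)$, and it is flat exactly when the splitting is a Lie algebroid morphism (bracket-preserving); this is the direct nonlinear analogue of the linear statement recalled just before the proposition for \eqref{eq:Atiyahlin}. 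So the real content is the first bijection.

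\textbf{First I would} fix the tubular neighbourhood identification so that $M$ is (a neighbourhood of the zero section in) the total space of $E = \nu_W$, and use the already-established identification of $\At^k(E)$ as the subsheaf of $\Der(\Ncal^k_W)$ consisting of $\pi$-projectible derivations, together with the relation $\Ncal^k_W \cdot \At^k(E) = \Der(\Ncal^k_W)$ noted after the Atiyah algebroid proposition. By Theorem \ref{th:equivalence}, a $k$-th order foliation is the same data as a foliation $F \subset \Der(\Ncal^k_W)$, i.e. an involutive $\Ncal^k_W$-submodule complementary to the vertical directions that projects isomorphically onto $TW$. The map I want to build sends such an $F$ to the splitting $s_\sigma : TW \to \At^k(E)$ obtained by taking, for each $X \in TW$, the unique projectible derivation in $F$ lifting $X$. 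The key point is that $F$ being tangent to $W$ and of the correct rank forces its projectible part to define a genuine $\Ocal_W$-linear section of the anchor $\At^k(E) \to TW$; here one uses that a foliation on $\Ncal^k_W$ restricted to the projectible derivations recovers $s_\sigma(TW)$, and that $\Ncal^k_W \cdot s_\sigma(TW) = F$ recovers $F$ from its projectible generators.

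\textbf{For the inverse}, given a bracket-preserving splitting $s : TW \to \At^k(E)$ I would set $F := \Ncal^k_W \cdot s(TW) \subset \Der(\Ncal^k_W)$, the $\Ncal^k_W$-span of the image. I would check that $F$ is involutive: closure under bracket follows from $s$ being bracket-preserving together with the Leibniz rule for $\Der(\Ncal^k_W)$ over $\Ncal^k_W$, exactly as in the flat-connection computation for the linear Atiyah sequence, since for $f,g \in \Ncal^k_W$ one has $[f\,s(X), g\,s(Y)] = fg\,s([X,Y]) + (\text{terms in } F)$ using $s([X,Y]) = [s(X),s(Y)] \bmod \ad(\Fr^k(E))$ and the fact that the vertical correction lands in $F$. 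That $F$ has the correct rank and is complementary to the verticals is immediate from $s$ being a section of the anchor. These two assignments are mutually inverse because the projectible part of $\Ncal^k_W \cdot s(TW)$ is $s(TW)$ and, conversely, $\Ncal^k_W$-extending the projectible part of a foliation recovers the foliation, using $\Ncal^k_W \cdot \At^k(E) = \Der(\Ncal^k_W)$.

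\textbf{The main obstacle} I anticipate is verifying that the bracket-preserving condition on $s$ matches involutivity of $F$ precisely, rather than up to lower-order corrections: because the Lie bracket on jets loses a derivative (the bidifferential operator $J^k \times J^k \to J^{k-1}$ discussed in Subsection \ref{ssec:distributions}), one must confirm that restricting attention to $\Der(\Ncal^k_W)$ — where the bracket genuinely closes by the Lie--Rinehart lemma — makes the two notions literally equivalent and not merely compatible modulo the vertical ideal. The cleanest way to handle this is to work in a local frame for $E$, where $\Fr^k(E)$ trivialises and $s$ is encoded by a $\Ncal^k_W$-valued connection form; then both ``bracket-preserving'' and ``involutive'' unwind to the same Maurer--Cartan-type flatness condition, reducing the global statement to the local computation already implicit in Theorem \ref{th:equivalence}, after which the independence-of-trivialisation and gluing are routine.
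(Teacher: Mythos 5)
Your proposal is correct and follows essentially the same route as the paper: the paper's (very terse) proof likewise invokes the identification of a $k$-th order foliation with a foliation $F \subset \Der(\Ncal^k_W)$ from Section \ref{sec:fols}, forms $Q := F \cap \At^k(E)$ (your ``projectible part''), and defines the splitting by the unique element of $Q$ lifting a given $X \in TW$. The only difference is one of completeness, not of method: you additionally spell out the inverse assignment $s \mapsto \Ncal^k_W \cdot s(TW)$, the Leibniz-rule check of involutivity, and the mutual-inverse verification via $\Ncal^k_W \cdot \At^k(E) = \Der(\Ncal^k_W)$, all of which the paper leaves implicit.
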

\begin{proof}
We view the $k$-th order foliation by its corresponding foliation $F \subset \Der(\mathcal{N}_W^k)$. Viewing $\At^k(E)$ as a subsheaf of $\Der(\mathcal{N}_W^k)$, we may consider $Q := F \cap \At^k(E)$. Then we may define $\sigma : TW \rightarrow \At^k(E)$, by sending a vector $X \in T_xW$ to the unique vector in $Q$ projecting to $W$.
\end{proof}

\subsection{Holonomy}\label{ssec:holonomy}
We may now define the holonomy of a jet of foliation as follows. Just as with usual foliations, the definition of holonomy depends on several choices. We therefore first describe out set-up in some detail:
\subsubsection{Setup}
Our starting data is a connected manifold $W^{n-l} \subset M^n$, a point $x \in W$, and an order $k$. This allows us to consider the class $J^k\Fol_*(W,x,l)$ of triples $(E,\phi,\sigma)$ consisting of:
\begin{itemize}
\item A smooth microbundle $E$ of dimension $\dim(W)+l$ and base $W$. Concretely, $E$ is a germ of vector bundle over $W$. In particular, there is a projection $\pi: E \rightarrow W$ and an inclusion $\iota: W \rightarrow E$, which is a right-inverse of $\pi$.
\item A germ of embedding $\phi: (\RR^l,0) \rightarrow (E,x)$ transverse to $W$, parametrising the fibre $E_x$.
\item A $k$-th order foliation $\sigma$ in $E$ along $W$.
\end{itemize}
We moreover say that two such elements $(E,\phi,\sigma)$ and $(E',\phi',\sigma')$ are isomorphic if there is a germ of diffeomorphism $f: E \rightarrow E'$, fibered over $W$, fixing $W$ pointwise, intertwining $\phi$ and $\phi'$, and pushing forward $\sigma$ to $\sigma'$.

Using the microbundle structure $\pi :E \rightarrow W$ on $E$ we can consider the principal $G_{k,l}$-bundle $\Fr^k(E) = J^k\Diff_{\pi}(\RR^l,E)$. Since $k>0$, we can apply Proposition \ref{prop:folsassplittings} to deduce that $\sigma$ defines a Lie algebroid splitting $\sigma : TW \rightarrow \At^k(E)$. we may now integrate this using Lie II for groupoids \cite{mackenzie2000integration, moerdijk2002integrability} to obtain a holonomy morphism
\begin{equation*}
\hol^\sigma: \Pi_1(W) \rightarrow \Gcal(\Fr^k(E))
\end{equation*}
into the gauge groupoid. This morphism can be restricted to the isotropy at $x$ in order to yield a group homomorphism:
\[ \hol^\sigma_x: \pi_1(W,x) \rightarrow J^k\Diff((E_x,0),(E_x,0)). \]

We then conjugate by $\phi$ in order to yield:
\begin{definition} \label{def:pointedRH}
The \textbf{holonomy} of $(E,\phi,\sigma)\in J^k\Fol_*(W,x,l)$ is defined as:
\begin{align*}
\pi_1(W,x) &\mapsto G_{k,l} \\
[\gamma] &\mapsto \phi^{-1} \circ \hol^\sigma([\gamma]) \circ \phi \hfill \qedhere
\end{align*}
\end{definition}
We will carefully address the dependence on choices of both $E$ and $\phi$, when we address the Riemann-Hilbert correspondence in the next section.

\subsection{Frobenius theorem}
We will prove that $k$-th order foliations, just as foliations, have a simple normal form. But as a $k$-th order foliation might not come from an actual foliation we cannot refer to the standard Frobenius theorem. Instead we will prove:

\begin{proposition}\label{prop:simplyconnected}
Let $\sigma : TW \rightarrow \At^k(E)$ be a k-th order foliation and let $\Sigma$ be its holonomy. When $W$ is simply connected, there exists a germ of actual foliation $\ff$ such that $\sigma = J^k_W\varphi^*\ff$.
\end{proposition}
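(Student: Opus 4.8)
The plan is to exploit the reformulation of Proposition \ref{prop:folsassplittings}, which presents a $k$-th order foliation as a flat principal connection on the finite-dimensional bundle $\Fr^k(E) \to W$, and then to use simple-connectivity to trivialize this flat bundle. First I would feed $\sigma : TW \to \At^k(E)$ into Lie's second theorem, exactly as in the setup of Subsection \ref{ssec:holonomy}, to obtain the integrating groupoid morphism $\hol^\sigma : \Pi_1(W) \to \mathcal{G}(\Fr^k(E))$. When $W$ is simply connected the fundamental groupoid is the pair groupoid, $\Pi_1(W) \cong W \times W$, so the values $\hol^\sigma(x,x_0)$ are genuinely path-independent. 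Fixing a basepoint $x_0$ and an element $p_0 \in \Fr^k(E)_{x_0}$, the assignment $x \mapsto \hol^\sigma(x,x_0)\cdot p_0$ defines a global flat section of $\Fr^k(E)$; equivalently, $(\Fr^k(E),\sigma)$ is isomorphic, as a flat principal $G_{k,l}$-bundle, to the trivial one $W \times G_{k,l}$. This is the familiar statement that a flat bundle over a simply connected base is trivial, transcribed into the groupoid language already in force here.

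Next I would convert this flat section into a genuine germ of foliation. A point of $\Fr^k(E)_x$ is the $k$-jet at $0$ of a fibre parametrisation $(\RR^l,0)\to (E_x,0)$, so the flat section is a smooth family of such $k$-jets along $W$. Choosing, smoothly in $x$, actual embeddings $\phi_x$ realising these $k$-jets -- which is possible since the jets are taken in the fibre direction and $E \to W$ is a vector bundle, or alternatively by Lemma \ref{lem:germRepresentative} -- assembles a germ of trivialisation $(W \times \RR^l, W\times 0) \to (E,W)$ fixing $W$ fibrewise; denote by $\varphi$ its inverse. Let $\ff$ be the product foliation of $W \times \RR^l$ with leaves $W \times \{t\}$, an honest germ of foliation. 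Because $\varphi$ trivialises $\sigma$ by the previous paragraph, transporting $\ff$ through $\varphi$ recovers $\sigma$ at the level of $k$-jets, giving the asserted representation $\sigma = J^k_W\varphi^*\ff$.

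I expect the main obstacle to be precisely this passage from $k$-jet-level data to an actual foliation, together with the verification that the resulting $k$-jet matches $\sigma$ on the nose. The delicate point is that the flat section records only the $k$-jets of the parametrisations, so one must check that $J^k_W\varphi^*\ff$ depends only on those $k$-jets and not on the arbitrary choices made when extending them to actual embeddings $\phi_x$. This is where I would lean on the fact that $J^k_W$ factors through the $k$-jet of $\varphi$, together with the dictionary of Proposition \ref{prop:folsassplittings} identifying the foliation $F \subset \Der(\Ncal_W^k)$ with the splitting via $F \cap \At^k(E)$: unwinding that correspondence shows that the product foliation $\ff$ corresponds to the trivial connection, so that $\varphi^*\ff$ corresponds to $\sigma$. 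Once that compatibility is pinned down, the smoothness in $x$ of the trivialisation -- guaranteed because $\hol^\sigma$ is a smooth groupoid morphism -- ensures that $\varphi$ is a genuine germ of diffeomorphism rather than merely a pointwise family. By contrast, the triviality of the flat bundle itself is routine given the groupoid machinery and is not where the real work lies.
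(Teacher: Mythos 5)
Your proposal is correct and follows essentially the same route as the paper: both integrate $\sigma$ via Lie's second theorem, use simple-connectivity to view the holonomy as a morphism out of the pair groupoid $W\times W$, assemble from it a fibrewise $k$-jet of trivialisation of the bundle, realise that jet by an actual diffeomorphism (possible since one works on a vector bundle / by Lemma \ref{lem:germRepresentative}), and pull back the product foliation. The paper phrases the trivialisation as a $k$-jet of diffeomorphism $\nu_W \to W\times \nu_{W,x}$ rather than as a flat section of $\Fr^k(E)$, but this is the same construction in different words.
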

\begin{proof}
By choosing a tubular neighbourhood we may assume that we are working on the normal bundle $\nu_W \rightarrow W$. As $W$ is simply connected, the holonomy is given by a groupoid morphism $\Sigma : W\times W \rightarrow \mathcal{G}(\Fr^k(E))$. Therefore, for every pair $(x,y)\in W\times W$ we obtain a $k$-jet of diffeomorphism $\varphi_{x,y} : \nu_{W,x} \rightarrow \nu_{W,y}$ given by $\Sigma(x,y)$. Now fix a point $x \in W$, then we obtain a $k$-jet of diffeomorphism
\begin{equation*}
\varphi : \nu_W \rightarrow W \times \nu_{W,x}, v \mapsto \varphi_{x,y}^{-1}(v).
\end{equation*}
As we are working on a vector bundle, we may represent this $k$-jet of diffeomorphism by an actual diffeomorphism $\psi : \nu_W \rightarrow W\times \nu_{W,x}$. For this diffeomorphism it holds that $(\psi^{-1})^*\sigma$ is the $k$-jet of the trivial foliation $\ff$. Hence $\sigma = J^k_W\psi^*\ff$, which finishes the proof.
\end{proof}

Consequently, we can immediately deduce the following local form result for $k$-th order foliations:
\begin{theorem}[Frobenius]\label{th:Frobenius}
Given a foliation $F \in \Der(\Ncal^k)$, then there exists local coordinates $(x_1,\ldots,x_n)$ in which $W = \set{x_1=\ldots = x_l = 0}$ and $F$ is spanned by
\begin{equation*}
\set{\partial_{x_{l+1}},\ldots,\partial_{x_n}}.
\end{equation*}

\end{theorem}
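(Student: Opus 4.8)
The plan is to deduce the statement directly from Proposition \ref{prop:simplyconnected} together with the classical Frobenius theorem. Since the conclusion is purely local around a point $x \in W$, I would first pass to a contractible—hence simply connected—neighbourhood $B \subset W$ of $x$ and work on the $k$-th order neighbourhood of $B$. By Lemma \ref{lem:jetsoffoltofolonjets} and Lemma \ref{lem:characterisationSubbundles}, the given $F \in \Der(\Ncal^k)$ corresponds to a $k$-th order foliation $\sigma$, and after choosing a tubular neighbourhood I may identify a neighbourhood of $B$ in $M$ with the normal bundle $\nu_W|_B$, so that $F = \Gamma(\sigma)$ as a subsheaf of $\Der(\Ncal^k_B)$.

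With $B$ simply connected, Proposition \ref{prop:simplyconnected} applies and produces a germ of \emph{honest} foliation $\Fcal$ around $B$ with $J^k_B\Fcal = \sigma$. I would record two features of this $\Fcal$. First, $\Fcal$ has rank $\dim W = n-l$, matching $F$. Second—the point I would make explicit—$W$ is a genuine leaf of $\Fcal$: in the proof of Proposition \ref{prop:simplyconnected} one writes $\Fcal = \psi^*\ff$, where $\ff$ is the trivial foliation on $B \times \nu_{W,x}$ (whose zero section $B \times \set{0}$ is a leaf) and $\psi$ is a diffeomorphism fixing $B$ pointwise; hence the zero section, namely $W$, is a leaf of $\Fcal$.

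Now I would invoke the ordinary Frobenius theorem for the honest involutive distribution $\Fcal$: there exist coordinates $(x_1,\ldots,x_n)$ in which $\Fcal = \inp{\partial_{x_{l+1}},\ldots,\partial_{x_n}}$. Because $W$ is the leaf of $\Fcal$ through $x$, it agrees as a germ at $x$ with the plaque through $x$, which is a level set of $(x_1,\ldots,x_l)$; after translating the coordinates I may assume this level set is $\set{x_1 = \cdots = x_l = 0}$, so that $W = \set{x_1 = \cdots = x_l = 0}$ in these coordinates. Finally, since $F = J^k_W\Fcal$, the images in $\Der(\Ncal^k_W)$ of the honest vector fields $\partial_{x_{l+1}},\ldots,\partial_{x_n}$ span $F$, which is exactly the asserted normal form.

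The only genuinely delicate step is the simultaneous normalisation in the last paragraph: one must arrange the Frobenius chart for $\Fcal$ so that $W$ becomes the coordinate slice $\set{x_1 = \cdots = x_l = 0}$ rather than merely some nearby leaf. This is precisely where it matters that $W$ is an actual leaf of the representing foliation $\Fcal$, since it allows me to flatten $\Fcal$ and $W$ in a single chart. Everything else is a routine translation between the honest foliation $\Fcal$ and its $k$-jet $F$ along $W$.
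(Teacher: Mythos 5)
Your proof is correct and follows essentially the paper's own route: the paper gives no written proof of Theorem \ref{th:Frobenius}, asserting only that it can be ``immediately deduced'' from Proposition \ref{prop:simplyconnected}, and your argument — localizing to a contractible piece of $W$, invoking Proposition \ref{prop:simplyconnected} to represent the jet by an honest foliation having $W$ as a leaf, then applying the classical Frobenius theorem — is precisely that intended deduction. Your explicit verification that $W$ is a genuine leaf of the representing foliation $\Fcal$ (so that $\Fcal$ and $W$ can be flattened in a single chart) is a detail the paper glosses over but which is indeed needed for the stated normal form.
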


\subsection{Jets of foliations as Maurer-Cartan elements}
For the remainder of this section, suppose we have chosen an embedded normal bundle so that we may assume we are working on a vector bundle $E \rightarrow W$.

If we have two flat splittings $\sigma$ and $\tsigma$ of \eqref{eq:Atiyahseq} we have that their difference is an element in $\Omega^2(D;\ad(\Fr^k(E)))$. By Proposition \ref{prop:algsplittingsabstract} both $\sigma$ and $\tsigma$ endow $\Omega^2(D;\ad(\Fr^k(E)))$ with the structure of a dgla, and $\sigma -\tsigma$ becomes a Maurer-Cartan element.

If one is given a $k$-th order foliation $\sigma_k$, one in particular is given a $1$-th order foliation $\sigma_1$ which corresponds to a linear connection $\nabla$ on $E$. This linear connection in fact induces an actual foliation $\ff^1$ on $E$, and hence a $k$-th order foliation for every $k$. Therefore, in describing $k$-th order foliations it is natural to first fix a linear flat connection on $E$, and then classify all $k$-jets which have that connection as their 1-jet. Then we have:

\begin{proposition}
Let $E \rightarrow W$ be a real line bundle endowed with a linear connection $\nabla$. Then we have the following correspondence:
\begin{itemize}
\item Bracket-preserving splittings $TW \rightarrow \At^k(E)$ with linear part given by $\nabla$.
\item Maurer-Cartan elements of $\Omega^2(D;\ad(\Fr^k(E)))$, equipped with the dgla structure induced by $\nabla$ via  Proposition \ref{prop:algsplittingsabstract}.
\end{itemize}
\end{proposition}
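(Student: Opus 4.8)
The plan is to fix a reference flat splitting determined by $\nabla$ and then reduce everything to the general parametrisation of flat splittings by Maurer--Cartan elements furnished by Proposition \ref{prop:algsplittingsabstract}. Since $E$ is a line bundle we have $l=1$, so the typical fibre of $\ad(\Fr^k(E))$ is the Lie algebra $\mathfrak{g}_{k,1} = \Lie(G_{k,1})$ of $k$-jets at $0$ of vector fields on $\RR$ vanishing at the origin. Grading these by the monomial degree of their coefficient yields a decomposition $\ad(\Fr^k(E)) \cong \bigoplus_{i=0}^{k-1} E^{-i}$, in which the degree-one summand $E^{0} = \End(E)$ is exactly the part recording linear connections; concretely it is the kernel of the truncation $\At^k(E) \to \At^1(E) = \At(E)$, whose splittings are linear connections on $E$. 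I will take the ``linear part'' of a bracket-preserving splitting $\sigma$ to be its composite with this truncation, i.e. the linear connection obtained by pushing $\sigma$ into $\At(E)$.

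First I would produce the basepoint $\sigma_0$. A bracket-preserving splitting can have linear part $\nabla$ only if $\nabla$ is itself flat (the linear part of a flat connection is flat), so this is implicitly assumed. A flat $\nabla$ determines a genuine linear horizontal foliation $\ff$ on the total space of $E$; taking its $k$-jet along $W$ gives a $k$-th order foliation, and Proposition \ref{prop:folsassplittings} converts this into a bracket-preserving splitting $\sigma_0 : TW \to \At^k(E)$ whose linear part is, by construction, $\nabla$. This $\sigma_0$ is the origin of the affine space of splittings: any vector-bundle splitting $\sigma$ of \eqref{eq:Atiyahseq} can be written uniquely as $\sigma = \sigma_0 + \eta$ with $\eta \in \Omega^1(W;\ad(\Fr^k(E)))$, since the difference of two splittings of a short exact sequence of vector bundles is a bundle map $TW \to \ad(\Fr^k(E))$. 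The assignment $\sigma \mapsto \eta = \sigma - \sigma_0$ is the bijection I will refine.

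Next I would match the two sides of the correspondence inside this affine picture. Both $\sigma$ and $\sigma_0$ have the same linear part precisely when the $\End(E)$-component of $\eta$ vanishes; hence the bracket-preserving splittings with linear part $\nabla$ are those with $\eta \in \Omega^1(W; \bigoplus_{i=1}^{k-1} E^{-i})$, that is $\eta = (\eta_1,\ldots,\eta_{k-1})$ with $\eta_i \in \Omega^1(W;E^{-i})$. On the algebraic side, the differential of the dgla attached to $\sigma_0$ by Proposition \ref{prop:algsplittingsabstract} is the covariant exterior derivative $d^{\sigma_0}$ built from the adjoint $\At^k(E)$-action along $\sigma_0$; because $\sigma_0$ comes from the linear connection $\nabla$, this agrees with $d^{\nabla}$ acting summand-by-summand on $\bigoplus_i E^{-i}$, so the dgla structure ``induced by $\nabla$'' is the one induced by $\sigma_0$.

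Finally, the core equivalence between bracket preservation and the Maurer--Cartan equation I would take directly from Proposition \ref{prop:algsplittingsabstract}: the curvature of $\sigma_0 + \eta$ equals $d^{\sigma_0}\eta + \tfrac{1}{2}[\eta,\eta]$, the term from $\sigma_0$ dropping out because $\sigma_0$ is flat. Thus $\sigma_0 + \eta$ is bracket preserving if and only if $d^{\nabla}\eta + \tfrac{1}{2}[\eta,\eta] = 0$, which, decomposed into homogeneous degrees, is exactly the Maurer--Cartan system in the statement. Combined with the previous paragraph, $\sigma \mapsto \sigma - \sigma_0$ restricts to the desired bijection. The main obstacle is not the curvature identity, which is precisely what Proposition \ref{prop:algsplittingsabstract} supplies, but the two preparatory identifications: verifying that $\sigma_0$ is genuinely bracket preserving (equivalently that flatness of $\nabla$ is both necessary and sufficient to build it), and checking under $\ad(\Fr^k(E)) \cong \bigoplus_i E^{-i}$ that the $\End(E)$-summand truly isolates the linear part and that $d^{\sigma_0} = d^{\nabla}$.
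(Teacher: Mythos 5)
Your proof is correct and takes essentially the same route as the paper: the paper's proof is a one-line application of Proposition \ref{prop:algsplittingsabstract} to the sequence $0 \to \ad(\Fr^k(E)) \to \At^k(E) \to TW \to 0$, using precisely the reference flat splitting $\sigma_0$ induced by the horizontal foliation of the flat connection $\nabla$, as set up in the discussion immediately preceding the proposition. The extra verifications you flag (that $\nabla$ must be flat, that matching linear parts amounts to the vanishing of the $\End(E)$-component of $\eta$, and that $d^{\sigma_0} = d^{\nabla}$) are exactly the details the paper leaves implicit or defers to the lemma that follows.
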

\begin{proof}
This follows immediately from Proposition \ref{prop:algsplittingsabstract} applied to  
\begin{equation*}
0 \rightarrow \ad(\Fr^k(E)) \rightarrow \At^k(E) \rightarrow TD \rightarrow 0. \qedhere
\end{equation*}
\end{proof}
\begin{lemma}
We have that
\begin{equation*}
\ad(\Fr^k(E)) \simeq \Sym^{\leq k}\nu_W^* \otimes \End(\nu_W).
\end{equation*}
Under this isomorphism the dgla structure induced by $\nabla$ via  Proposition \ref{prop:algsplittingsabstract} is given by:

\begin{equation*}
[s_i\otimes A, t_j\otimes B] = j(s_i \cdot A(t_j))\otimes B + i(t_j \cdot B(s_i))\otimes A+ (s_i\cdot t_j) \otimes [A,B],
\end{equation*}
for $s_i \in \Sym^i(\nu_W^*),t_j \in \Sym^j(\nu_W^*),A,B \in \End(\nu_W)$
\begin{equation*}
d(\eta_i) = d^{\nabla}\eta_i,
\end{equation*}
where $\nabla$ also denotes the induced connection on $\Sym^{\leq k}\nu_W^* \otimes \End(\nu_W)$.
\end{lemma}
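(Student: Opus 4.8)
The plan is to identify $\ad(\Fr^k(E))$ explicitly as a bundle of Lie algebras and then verify the two structural formulas directly. Since $\Fr^k(E) = J^k\Diff_{\pi}(\rr^l, E)$ is the principal $G_{k,l}$-bundle of $k$-jets of fibre parametrisations, its adjoint bundle $\ad(\Fr^k(E))$ is the associated bundle with fibre the Lie algebra $\mathfrak{g}_{k,l} = \Lie(G_{k,l})$, which consists of $k$-jets at $0$ of vector fields on $\rr^l$ vanishing at the origin and tangent to the zero-section, modulo jets of order $>k$. The first step is the purely pointwise (linear-algebra) identification $\mathfrak{g}_{k,l} \simeq \bigoplus_{i=1}^{k}\Sym^i(\rr^l)^* \otimes \rr^l$: a vector field $\sum_j f_j(z)\partial_{z_j}$ vanishing at $0$ has each $f_j$ with no constant term, and truncating at order $k$ decomposes the homogeneous-degree-$i$ part of $(f_1,\dots,f_l)$ as an element of $\Sym^i(\rr^l)^*\otimes \rr^l$. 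Globalising over $W$ (where the fibre of $E$ plays the role of $\rr^l$) gives $\ad(\Fr^k(E)) \simeq \Sym^{\leq k}\nu_W^*\otimes \End(\nu_W)$, using $\nu_W = E$ and the identification $\End(\nu_W) = \nu_W^*\otimes\nu_W$. (One should note the $\Sym^{\leq k}$ here means $\bigoplus_{i=1}^{k}\Sym^i$, i.e. the constant term $i=0$ is excluded because the vector fields vanish on $W$.)

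Next I would compute the bracket. The Lie bracket on $\mathfrak{g}_{k,l}$ is the bracket of vector fields on $\rr^l$ truncated to order $k$. Taking two homogeneous elements, written as $s_i\otimes A$ with $s_i\in\Sym^i(\nu_W^*)$ a degree-$i$ monomial coefficient and $A\in\End(\nu_W)$ encoding the linear ``direction'' $A(z) = \sum A^p_q z^q \partial_{z_p}$, the associated vector field is $X = s_i(z)\, A(z)$ interpreted as $\sum_p (s_i A)^p \partial_{z_p}$. A direct Lie-bracket computation $[X,Y] = X(Y) - Y(X)$ produces exactly the three terms in the stated formula: the derivative of $t_j$ along $A$ gives $j\,(s_i\cdot A(t_j))\otimes B$ (the factor $j$ arising from Euler's relation applied to the degree-$j$ symmetric part), the symmetric term $i\,(t_j\cdot B(s_i))\otimes A$ comes from differentiating $s_i$ along $B$, and the remaining $(s_i\cdot t_j)\otimes[A,B]$ comes from the commutator of the endomorphism parts. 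I would carry this out once for homogeneous monomials and invoke bilinearity; the main bookkeeping is tracking where the scalar factors $i$ and $j$ originate, which is precisely the Euler-derivative $z\cdot\partial_z$ acting on a homogeneous polynomial of the corresponding degree.

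For the differential, the claim is that the dgla differential induced by the flat connection $\nabla$ via Proposition \ref{prop:algsplittingsabstract} agrees with the covariant exterior derivative $d^\nabla$ on $\Omega^\bullet(D;\ad(\Fr^k(E)))$. Here I would appeal directly to Proposition \ref{prop:algsplittingsabstract}: the differential in that construction is the one induced by the reference flat splitting, and a flat splitting of the Atiyah sequence is by definition a flat connection, whose induced connection on the adjoint bundle is $\nabla$ (extended to $\Sym^{\leq k}\nu_W^*\otimes\End(\nu_W)$ by the Leibniz rule, since $\nabla$ acts on $\nu_W$ and hence on all its tensor and symmetric powers). Thus $d\eta_i = d^\nabla\eta_i$ is essentially the definition, once one checks that the connection $\nabla$ on $\nu_W$ coming from the $1$-jet $\sigma_1$ induces on the adjoint bundle precisely the tensor-product connection.

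The hard part will be the bracket computation, specifically verifying that the coefficients are exactly $j$ and $i$ (and not, say, $i$ and $j$ swapped, or some other combination), since this depends on a careful choice of conventions for how $s_i\otimes A$ is turned into a vector field and in which order the bracket $[X,Y]$ is taken. I expect the safest route is to write $X = \sum_p P^p(z)\partial_{z_p}$ with $P^p$ of total degree $i+1$ (namely $s_i$ times the linear form $A^p_\bullet z$), compute $[X,Y]^p = \sum_q (P^q \partial_{z_q} Q^p - Q^q\partial_{z_q}P^p)$, expand using the product rule, and then recognise the Euler operator $\sum_q z_q\partial_{z_q}$ acting on the homogeneous factors $s_i$ (degree $i$) and $t_j$ (degree $j$) to extract the scalars. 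The remaining obstacle is cosmetic: ensuring the global $d^\nabla$ formula is compatible with the local computation, which follows because $\nabla$ is a derivation of the symmetric algebra and the bracket is $\nabla$-parallel, so that $d^\nabla$ is a derivation of the dgla bracket, matching the abstract differential of Proposition \ref{prop:algsplittingsabstract}.
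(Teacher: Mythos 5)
You take essentially the same route as the paper: its entire proof consists of exhibiting the isomorphism $s_i\otimes A\mapsto \hat s_i\otimes a(A)$ (a fibrewise polynomial function times a fibrewise linear vector field, i.e.\ your $X = s_i(z)\,A(z)$) and asserting that the bracket and differential formulas ``readily follow''. Your plan carries out precisely the verification the paper omits, including the Euler-operator origin of the coefficients $i,j$ and the identification of the abstract differential of Proposition \ref{prop:algsplittingsabstract} with $d^{\nabla}$.

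Two bookkeeping points, however, where your write-up as planned would not literally match the statement. First, your re-indexing of $\Sym^{\leq k}$ is off: under the convention above, $s_i\otimes A$ with $s_i\in\Sym^i(\nu_W^*)$ and $A\in\End(\nu_W)$ corresponds to a vertical field of polynomial degree $i+1$, so the summands surviving the $k$-jet truncation are $i=0,\dots,k-1$. In particular the $i=0$ piece $\End(\nu_W)$ (the fibrewise linear fields) must be kept and $i=k$ discarded, consistent with the tuples $(\eta_0,\dots,\eta_{k-1})$ of Proposition \ref{prop:MCexplicit} and with the decomposition $\ad(\Fr^k(L)) = \bigoplus_{i=0}^{k-1}L^{-i}$ used later for line bundles; your $\bigoplus_{i=1}^{k}$ drops the former and keeps the latter. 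Second, the computation $[X,Y]=X(Y)-Y(X)$ you outline produces a \emph{relative minus sign} between the first two terms, as antisymmetry of any Lie bracket forces (with both outer signs positive, the sum of those two terms is symmetric under swapping $s_i\otimes A$ and $t_j\otimes B$). Specializing to a line bundle, where the bracket must reduce to $[s,t]=(j-i)\,s\otimes t$, confirms that the correct formula reads $j(s_i\cdot A(t_j))\otimes B - i(t_j\cdot B(s_i))\otimes A + \cdots$; so your computation would catch a sign typo in the statement rather than ``produce exactly'' the displayed formula, and you should say so rather than force agreement. Finally, a shared imprecision rather than a gap in your argument: for codimension $l>1$, the passage from degree-$(i+1)$ vertical fields $\Sym^{i+1}\nu_W^*\otimes\nu_W$ to $\Sym^{i}\nu_W^*\otimes\End(\nu_W)$, implicit both in your globalisation step and in the paper's map $s_i\otimes A\mapsto\hat s_i\otimes a(A)$, is a surjection onto the former but not an isomorphism; the identification is only literal in the codimension-one case that the paper ultimately uses.
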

\begin{proof}
The isomorphism is given by $s_i\otimes A \mapsto \hat{s}_i \otimes a(A)$, with $\hat{s}_i$ the fibrewise polynomial function corresponding with $s_i \in \Gamma(\Sym^i\nu_W^*))$, and $a(A)$ the fibre-wise linear vector field corresponding to $A \in \End(\nu_W)$. Using this explicit isomorphism the desired description of the dgla structures readily follows.
\end{proof}
This allows us to make the Maurer-Cartan equation explicit, concluding to:

\begin{proposition}\label{prop:MCexplicit}
Let $E \rightarrow W$ be a vector bundle with flat linear connection $\nabla$. Bracket preserving splittings $\sigma : TW \rightarrow  \At^k(E)$ correspond to tuples $(\eta_0,\ldots,\eta_{k-1})$ with $\eta_i \in \Omega^1(W;\Sym^i\nu_W^* \otimes \End(\nu_W))$ satisfying the Maurer-Cartan equation
\begin{equation*}
d\eta_r = \frac{1}{2}\sum_{i+j=r}[\eta_i,\eta_j]
\end{equation*}

\end{proposition}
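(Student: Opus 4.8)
The plan is to reduce the statement to the two immediately preceding results and then expand a single equation into its graded components. The preceding Lemma furnishes the dgla structure on $\Omega^\bullet(W;\ad(\Fr^k(E)))$ (with $E=\nu_W$ the chosen embedded normal bundle) explicitly, and Proposition \ref{prop:algsplittingsabstract}, applied to the Atiyah sequence \eqref{eq:Atiyahseq}, identifies bracket-preserving splittings with Maurer-Cartan elements of this dgla. Thus the content of the proposition is essentially the translation of one Maurer-Cartan equation into the grading.

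First I would invoke Proposition \ref{prop:algsplittingsabstract} to pass from a bracket-preserving splitting $\sigma:TW\to\At^k(E)$ to a degree-one element $\sigma\in\Omega^1(W;\ad(\Fr^k(E)))$ satisfying the Maurer-Cartan equation of the dgla $(\Omega^\bullet(W;\ad(\Fr^k(E))),d^\nabla,[-,-])$ determined by the fixed flat connection $\nabla$. Here $\sigma$ records the difference between the given splitting and the flat splitting induced by $\nabla$, so that the symmetric-degree-zero component of $\sigma$ is precisely the deviation of the underlying linear connection from $\nabla$.

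Next I would substitute the identification $\ad(\Fr^k(E))\cong\bigoplus_{i=0}^{k-1}\Sym^i\nu_W^*\otimes\End(\nu_W)$ from the Lemma and write $\sigma=\sum_{i=0}^{k-1}\eta_i$, with $\eta_i\in\Omega^1(W;\Sym^i\nu_W^*\otimes\End(\nu_W))$. Two facts, both visible in the explicit formulas of the Lemma, drive the computation: the differential acts as $d^\nabla$ on each homogeneous summand, hence preserves the symmetric degree; and the bracket is graded, sending the degree-$i$ and degree-$j$ summands into degree $i+j$, with any contribution of degree exceeding $k-1$ annihilated by the truncation to $k$-jets. Consequently the degree-$r$ component of $d^\nabla\sigma$ equals $d^\nabla\eta_r$, while that of $\tfrac12[\sigma,\sigma]$ equals $\tfrac12\sum_{i+j=r}[\eta_i,\eta_j]$, a finite sum since $i,j\in\{0,\dots,k-1\}$. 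Equating homogeneous components for $r=0,\dots,k-1$ yields the stated system; conversely, a tuple solving it reassembles into a Maurer-Cartan element, so the correspondence is a bijection.

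The step I expect to require the most care, and hence the main obstacle, is the bookkeeping of signs and combinatorial factors: one must confirm that the abstract Maurer-Cartan equation produced by Proposition \ref{prop:algsplittingsabstract} matches the normalisation $d\eta_r=\tfrac12\sum_{i+j=r}[\eta_i,\eta_j]$. In particular, one must check that the graded symmetry $[\eta_i,\eta_j]=[\eta_j,\eta_i]$ of the one-form-valued (odd) elements correctly accounts for the factor $\tfrac12$, and that the wedge product on the form components is compatible with the coefficient bracket exactly as spelled out in the Lemma. Once these conventions are pinned down, the degreewise expansion is routine.
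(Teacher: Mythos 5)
Your proposal is correct and follows essentially the same route as the paper: the paper likewise derives Proposition \ref{prop:MCexplicit} by combining Proposition \ref{prop:algsplittingsabstract} (applied to the Atiyah sequence \eqref{eq:Atiyahseq}) with the preceding lemma identifying $\ad(\Fr^k(E)) \simeq \Sym^{\leq k}\nu_W^* \otimes \End(\nu_W)$ together with its explicit graded dgla structure, and then expands the single Maurer--Cartan equation into its homogeneous symmetric-degree components. Your sign check is also consistent with the paper, whose proof of Proposition \ref{prop:algsplittingsabstract} yields exactly the normalisation $d^{\nabla}\eta_r = \tfrac{1}{2}\sum_{i+j=r}[\eta_i,\eta_j]$ once both splittings are bracket preserving.
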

In fact, as we assume that the first jet of $\sigma$ is precisely $\nabla$ we have that $\eta_0 = 0$.

\section{The Riemann-Hilbert correspondence} \label{sec:RH}
%

The Riemann-Hilbert correspondence is the name given to various results relating flat connections to representations of the fundamental group. In this section we will prove some results with this flavour for $k$-th order foliations. This will reduce the classification of $k$-th order foliations, up to isomorphism, to a representation theoretical problem (Theorems \ref{thm:pointedRH} and \ref{thm:RH}). 

\subsection{Setup}

Our Riemann-Hilbert statements come in two flavours, which we call parametrised and unparametrised. Both involve jets of foliations $\sigma$ in a manifold $E$ along a submanifold $W$. The ambient $E$ will always be assumed to be a small thickening of $W$. Where $W$ plays the role of a leaf of $\sigma$. In the parametrised case we additionally endow $E$ with the structure of a fibre bundle over $W$. With this structure in place, we may see the holonomy as a flat structure on the principal $G_{k,l}$-bundle $\Fr^k(E)$.
\subsubsection{The parametrised setting}

\begin{remark}
Observe that the data of a $0$-th order foliation is vacuous. Because of this, we assume $k>0$ for the remainder of the section.
\end{remark}
Recall the definition of $J^k\Fol_*(W,x,l)$ from Subsection \ref{ssec:holonomy}.
\begin{definition} \label{def:pointedRH}
The \textbf{parametrised Riemann-Hilbert map} is defined as:
\begin{align*}
\RH_*: J^k\Fol_*(W,x,l) &\rightarrow \Mor(\pi_1(W,x),G_{k,l}) \\
(E,\phi,\sigma) & \mapsto ([\gamma] \mapsto \phi^{-1} \circ \hol^\sigma([\gamma]) \circ \phi).
\end{align*}
\end{definition}

Then, the Riemann-Hilbert correspondence for $k$-th order foliations, in the parametrised setting, reads:
\begin{theorem} \label{thm:pointedRH}
The map $\RH_*$ defines a bijection
\[ \dfrac{J^k\Fol_*(W,x,l)}{\text{isomorphism}} \longrightarrow \Mor(\pi_1(W,x),G_{k,l}). \]
\end{theorem}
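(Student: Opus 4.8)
The plan is to prove that $\RH_*$ is a bijection by constructing an explicit inverse, using the machinery of flat principal bundles and Lie's second theorem that has already been set up. The map $\RH_*$ sends a triple $(E,\phi,\sigma)$ to a homomorphism $\pi_1(W,x) \to G_{k,l}$; to invert it, I need to recover, from an abstract representation $\rho: \pi_1(W,x) \to G_{k,l}$, a fibre bundle $E$, a parametrisation $\phi$, and a $k$-th order foliation $\sigma$, all well-defined up to the stated notion of isomorphism. First I would recall that, by Proposition \ref{prop:folsassplittings}, a $k$-th order foliation on $E$ along $W$ is the same as a flat principal $\Fr^k(E)$-connection, i.e.\ a flat principal $G_{k,l}$-connection together with the extra data identifying the associated bundle with a jet-frame bundle of an actual fibre bundle $E$.

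Let me describe the inverse construction. Given $\rho: \pi_1(W,x) \to G_{k,l}$, I form the flat principal $G_{k,l}$-bundle $P_\rho := \widetilde{W} \times_{\pi_1(W,x)} G_{k,l}$ over $W$, where $\widetilde{W}$ is the universal cover, equipped with its canonical flat connection. I then need to realise $P_\rho$ as $\Fr^k(E)$ for some germ of fibre bundle $E \to W$ and to produce the parametrisation $\phi$ at $x$. The associated bundle construction applied to the tautological action of $G_{k,l} = J^k\Diff(\rr^l,0)$ on $k$-jets of functions $\rr[z_1,\ldots,z_l]/\mathcal{I}_{k+1}$ furnishes a bundle of fibrewise $k$-th order neighbourhoods; dually this is exactly the $k$-th order neighbourhood data $\N^k_W$ of a germ of vector bundle $E$, and a point in the fibre of $P_\rho$ over $x$ determines the parametrisation $\phi$. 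The flat connection on $P_\rho$ is a flat principal $\Fr^k(E)$-connection, hence by Proposition \ref{prop:folsassplittings} a $k$-th order foliation $\sigma$. This produces a triple $(E,\phi,\sigma) \in J^k\Fol_*(W,x,l)$.

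Next I would verify that the two composites are the identity. That $\RH_* \circ (\text{inverse}) = \Id$ amounts to the classical fact that the holonomy of the canonical flat connection on $\widetilde{W} \times_{\pi_1} G_{k,l}$, read off in the frame determined by the chosen fibre point, recovers $\rho$; this is Lie II applied to the trivial-over-$\widetilde{W}$ splitting, and is essentially formal. For the reverse composite $(\text{inverse}) \circ \RH_* = \Id$, given $(E,\phi,\sigma)$ I must produce the isomorphism $f: E \to E_{\rho}$ (fibered over $W$, fixing $W$ pointwise, intertwining the parametrisations, pushing $\sigma$ to $\sigma_\rho$) where $\rho = \RH_*(E,\phi,\sigma)$. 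The required $f$ is built by developing $\sigma$ along paths: the flat connection on $\Fr^k(E)$ trivialises over $\widetilde{W}$ via parallel transport, and this trivialisation descends precisely to an isomorphism $\Fr^k(E) \cong P_\rho$ because the descent data is governed by the holonomy $\rho$. On associated bundles this is the desired germ of diffeomorphism $f$.

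The \textbf{main obstacle} is the careful bookkeeping of \emph{isomorphism classes} rather than isomorphisms: I must check that the inverse construction is independent of the choices made (a different lift of the basepoint in the universal cover, a different germ representative of $E$) up to the precise equivalence relation defining $J^k\Fol_*(W,x,l)/\!\sim$, and conversely that conjugate representations $\rho, g\rho g^{-1}$ correspond to isomorphic triples --- though in the pointed (parametrised) statement we are taking actual homomorphisms, not conjugacy classes, so the parametrisation $\phi$ rigidifies the situation and removes the conjugation ambiguity. The genuinely delicate point is confirming that the abstract associated bundle $E_\rho$ really is the germ of an honest fibre bundle carrying a holonomic $k$-jet of foliation, i.e.\ that passing to $k$-jets loses no information needed to reconstruct a bona fide element of $J^k\Fol_*(W,x,l)$; this is exactly where the identification $G_{k,l} = \Aut(\rr[z_1,\ldots,z_l]/\mathcal{I}_{k+1})$ and Proposition \ref{prop:folsassplittings} do the essential work, and I would lean on Lie II for groupoids as invoked in Subsection \ref{ssec:holonomy} to guarantee integrability at every step.
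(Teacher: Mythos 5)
Your overall strategy mirrors the paper's: build the flat principal $G_{k,l}$-bundle $P_\rho = \widetilde{W}\times_{\pi_1(W,x)} G_{k,l}$ out of a representation, convert flat connections into $k$-th order foliations via Proposition \ref{prop:folsassplittings} and Lie II, and in the other direction use parallel transport to compare two triples with the same holonomy. However, there is a genuine gap at the crux of surjectivity: you must realise the abstract principal $G_{k,l}$-bundle $P_\rho$ as $\Fr^k(E) = J^k\Diff_{\pi}(\rr^l,E)$ for an \emph{honest germ of vector bundle} $E \to W$, and your argument for this is only an assertion. The associated-bundle construction you invoke (acting on $\rr[z_1,\ldots,z_l]/\mathcal{I}_{k+1}$) produces a bundle of algebras, i.e.\ purely formal $k$-th-order-neighbourhood data; the claim that this is ``dually exactly'' the $k$-th order neighbourhood of an actual germ of vector bundle is precisely the statement that must be proven, and neither the identification $G_{k,l} = \Aut(\rr[z_1,\ldots,z_l]/\mathcal{I}_{k+1})$ nor Proposition \ref{prop:folsassplittings} supplies it (the latter presupposes that $E$ is already given). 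Note in particular that $G_{k,l}$ acts only on jet-level objects, not on $\rr^l$ itself, so you also cannot form $\widetilde{W}\times_{\pi_1}\rr^l$ directly from $\rho$. The paper fills exactly this hole with a dedicated argument (the ``second step'' of Subsection \ref{ssec:surjectivityRH}): it exhibits a model of $EG_{k,l}$ as $k$-jets of embeddings $(\rr^l,0)\to(\rr^\infty,0)$ with linear image, identifies $BG_{k,l}$ with $\Gr(\rr^\infty,l)$ and $EG_{k,l}$ with $J^k\Diff_{\pi}(\rr^l,\gamma)$ for the tautological bundle $\gamma$, and concludes by pullback that every principal $G_{k,l}$-bundle is of the form $J^k\Diff_{\pi}(\rr^l,E)$ (Corollary \ref{cor:concreteBundle}). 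An alternative repair would be to use that the kernel of the linearisation $G_{k,l}\to\GL(\rr^l)$ is contractible, reduce the structure group of $P_\rho$ to $\GL(\rr^l)$, take the associated vector bundle, and extend back up; but some such argument must appear, and your proposal contains none.

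A second, smaller instance of the same jet-versus-germ confusion occurs in your reverse composite: an isomorphism of flat principal bundles $\Fr^k(E)\cong P_\rho$ yields, on associated bundles, only a $k$-jet of fibrewise diffeomorphism, whereas isomorphism in $J^k\Fol_*(W,x,l)$ requires a genuine germ of diffeomorphism fixing $W$ and intertwining the parametrisations. The paper closes this in its injectivity argument by extending the family $y\mapsto j^k_y\Psi$ of jets of fibrewise embeddings, parametrically in $y$, to an honest germ of fibrewise diffeomorphism near the zero section; your write-up needs this (easy but non-vacuous) extension step as well.
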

Recall that $W$ is assumed to be connected. We prove Theorem \ref{thm:pointedRH} in Subsections \ref{ssec:surjectivityRH} and \ref{ssec:injectivityRH} below. We will address surjectivity first, then injectivity.

\subsubsection{The unparametrised setting}

The starting data in the unparametrised case is still the tuple $(W,x,l,k)$. The class under study is now $J^k\Fol(W,x,l)$, whose elements are pairs $(E,\sigma)$ with $E$ a germ of manifold of dimension $\dim(W)+l$, along $W$, and $\sigma$ a $k$-th order foliation in $E$ along $W$. An isomorphism in $J^k\Fol(W,x,l)$ is now a diffeomorphism germ mapping the $k$-th order foliations to one another.

Instead of the representations, we consider the \textbf{character variety}
\[ \Char(\pi_1(W,x),G_{k,l}) := \Mor(\pi_1(W,x),G_{k,l})/G_{k,l} \]
i.e. the quotient of the representation variety by the action of $G_{k,l}$ by conjugation. Then:
\begin{definition} \label{def:RH}
The \textbf{(unparametrised) Riemann-Hilbert map} is defined as:
\begin{align*}
\RH: J^k\Fol(W,x,l) &\rightarrow \Char(\pi_1(W,x),G_{k,l}) \\
(E,\sigma) & \mapsto \RH_*(E,\phi,\sigma)/G_{k,l}.
\end{align*}
\end{definition}
More precisely: Given $(E,\sigma)$ we endow $E$ with an auxiliary microbundle structure and we choose some germ of parametrisation $\phi$. This produces an element in $J^k\Fol_*(W,x,l)$, allowing us to apply $\RH_*$. We will prove in Subsection \ref{sssec:wellDefRH} that $\RH$ does not depend on these auxiliary choices and is thus well-defined. Then, the (unparametrised) Riemann-Hilbert correspondence, again for $W$ connected, reads:
\begin{theorem} \label{thm:RH}
The map $\RH$ defines a bijection
\[ \dfrac{J^k\Fol(W,x,l)}{\text{isomorphism}} \longrightarrow \Char(\pi_1(W,x),G_{k,l}). \]
\end{theorem}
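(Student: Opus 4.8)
The plan is to bootstrap the unparametrised statement from the parametrised one (Theorem~\ref{thm:pointedRH}), treating the passage to conjugacy classes as exactly the cost of forgetting the auxiliary parametrisation. Recall from Definition~\ref{def:RH} that to evaluate $\RH(E,\sigma)$ one first equips $E$ with a microbundle structure $\pi\colon E \to W$ and chooses a germ of parametrisation $\phi$ of the fibre $E_x$, producing an element of $J^k\Fol_*(W,x,l)$, and then projects $\RH_*(E,\phi,\sigma)$ to its conjugacy class. The first task is therefore to show that this conjugacy class is independent of the pair $(\pi,\phi)$; only then is $\RH$ defined.

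First I would dispose of the dependence on $\phi$ for a fixed $\pi$. Any two parametrisations of the single fibre $E_x$ differ by precomposition with some $g \in G_{k,l}$, and unwinding Definition~\ref{def:pointedRH} gives $\RH_*(E,\phi\circ g,\sigma) = g^{-1}\,\RH_*(E,\phi,\sigma)\,g$, so the conjugacy class is unchanged. The dependence on the microbundle structure $\pi$ is the genuine difficulty, and is the content of Subsection~\ref{sssec:wellDefRH}. Here the point is that the holonomy morphism $\hol^\sigma$ is intrinsic to the foliation germ: by Proposition~\ref{prop:simplyconnected}, over a simply connected chart $\sigma$ is the $k$-jet of an honest germ of foliation $\ff$, and for such an $\ff$ the abstract Lie-II holonomy of the splitting of Proposition~\ref{prop:folsassplittings} agrees with the geometric holonomy obtained by sliding the transversal $E_x$ along the leaves of $\ff$. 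The latter refers only to $\ff$ and the germ of transversal, not to the fibration; a second microbundle structure $\pi'$ merely replaces $E_x$ by another germ of transversal at $x$, related to $E_x$ by a germ of diffeomorphism fixing $x$, hence conjugating the holonomy. Patching these local identifications over $W$ shows $\RH_*(E,\phi,\sigma)/G_{k,l}$ is $\pi$-independent.

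With $\RH$ well-defined, surjectivity is immediate: given a class $[\rho]\in\Char(\pi_1(W,x),G_{k,l})$, surjectivity of $\RH_*$ in Theorem~\ref{thm:pointedRH} supplies some $(E,\phi,\sigma)$ with $\RH_*(E,\phi,\sigma) = \rho$, and forgetting $\phi$ yields $(E,\sigma)$ with $\RH(E,\sigma) = [\rho]$. For injectivity, suppose $(E,\sigma)$ and $(E',\sigma')$ satisfy $\RH(E,\sigma) = \RH(E',\sigma')$. Choose auxiliary data so that $\rho = \RH_*(E,\phi,\sigma)$ and $\rho' = \RH_*(E',\phi',\sigma')$ with $\rho' = g^{-1}\rho g$ for some $g\in G_{k,l}$. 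Replacing the parametrisation $\phi'$ by $\phi'\circ g^{-1}$ and using the conjugation formula above gives $\RH_*(E',\phi'\circ g^{-1},\sigma') = \rho$, so the two pointed data have equal holonomy. Injectivity in Theorem~\ref{thm:pointedRH} then produces an isomorphism in $J^k\Fol_*(W,x,l)$, i.e.\ a germ of diffeomorphism $f\colon E \to E'$ fixing $W$ pointwise, fibred over $W$, intertwining the parametrisations, and pushing $\sigma$ forward to $\sigma'$. Discarding the conditions about the fibration and parametrisations, $f$ is precisely an isomorphism $(E,\sigma)\cong(E',\sigma')$ in $J^k\Fol(W,x,l)$, proving injectivity.

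I expect the main obstacle to be the well-definedness with respect to the microbundle structure, since changing $\pi$ changes the ambient Atiyah algebroid $\At^k(E)$ and hence the very splitting from which $\hol^\sigma$ is built; the reduction to geometric holonomy via local integrability (Proposition~\ref{prop:simplyconnected}) is what makes this manageable, but one must check these local identifications are compatible on overlaps so that the global conjugacy class is unambiguous.
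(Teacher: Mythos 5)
Your proposal is correct in outline and, for three of its four steps, coincides with the paper's proof: independence of $\phi$ via the conjugation formula, surjectivity by forgetting the parametrisation after invoking surjectivity of $\RH_*$, and injectivity by absorbing the conjugating element into the parametrisation ($\phi' \mapsto \phi'\circ g^{-1}$) and then applying the injectivity half of Theorem \ref{thm:pointedRH} — this last reduction is exactly Subsection \ref{sssec:injectivityRH}. Where you genuinely diverge is the crux, independence of the microbundle structure $\pi$. The paper never appeals to geometric holonomy: it proves a lemma that the space of microbundle structures on the germ $A$ around $W$ is contractible (non-empty by tubular neighbourhoods, convex by geodesic interpolation), uses the resulting homotopy to equip the cylinder $A\times[0,1]$ with a microbundle structure interpolating the two, applies $\RH_*$ to $(A\times[0,1],\phi',\sigma\times[0,1]) \in J^k\Fol_*(W\times[0,1],x,l)$, identifies $\pi_1(W\times[0,1],x)\cong\pi_1(W,x)$, and lets the holonomy along the path $s\mapsto(x,s)$ supply the conjugating parametrisation $\phi''$. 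Your route — realising $\sigma$ locally by an honest foliation via Proposition \ref{prop:simplyconnected} and arguing that the Lie-II holonomy is the $k$-jet of the classical sliding holonomy, hence transversal-intrinsic up to conjugation — is viable and more geometric, but it rests on claims you assert rather than prove: (i) that parallel transport of the splitting $\sigma: TW\to\At^k(E)$ agrees with the $k$-jet of leafwise sliding for a representative foliation; (ii) that the $k$-jets of these sliding transports, including the transversal-change germs at the base point and at subdivision points of a global loop (whose conjugations must telescope), depend only on $j^k_W\mathcal{F}=\sigma$ and not on the chosen local representative. None of this should fail, but together it constitutes a lemma of comparable weight to the paper's contractibility lemma; in effect the two proofs trade the cylinder/homotopy trick against a careful comparison of algebraic (Lie-II) and geometric holonomy, and your write-up would need that comparison made precise before the patching step is legitimate.
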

Surjectivity will be established in Subsection \ref{ssec:surjectivityRH}, and it will follow immediately from the surjectivity of $\RH_*$. Injectivity is proven in Subsection \ref{sssec:injectivityRH}.

\begin{remark}
One can state and prove a version of Theorem \ref{thm:RH} for germs of foliations; see \cite{CC00}.
\end{remark}

\subsection{Surjectivity of the (parametrised) Riemann-Hilbert map} \label{ssec:surjectivityRH}

We begin by proving the surjectivity of the map $\RH_*$. This will immediately imply the surjectivity of the map $\RH$ and the surjectivity part of Theorems \ref{thm:pointedRH} and Theorem \ref{thm:RH} (i.e. half of the Riemann-Hilbert correspondence, both parametrised and unparametrised).

The proof amounts to showing that any representation in $\Mor(\pi_1(W,x),G_{k,l})$ can be realised as the holonomy of some $k$-th order foliation. This is a two-step process:
\begin{itemize}
\item Given a representation $\rho: \pi_1(W,x) \rightarrow G_{k,l}$, we associate to it a flat principal $G_{k,l}$-bundle $P \rightarrow W$.
\item To each principal $G_{k,l}$-bundle $P$ we associate a vector bundle $\pi: E \rightarrow W$ so that $P = J^k\Diff_{\pi}(\RR^l,E)$.
\end{itemize}
It will then follow that the flat structure on $P$ defines a $k$-th order foliation on $E$ along $W$ whose holonomy, seen from the fibre $E_x$, is precisely $\rho$, concluding the argument.

\subsubsection{First step}
The first step of the argument uses the well-known construction of a flat principal $G$-bundle out of a $\pi_1$-representation. We recall this construction, as we need to refer to it later:

\begin{lemma}\label{lem:abstractBundle}
Let $\rho : \pi_1(W,x) \rightarrow G$ be a representation. Then
\begin{equation*}
P := \dfrac{\widetilde{W} \times G}{\pi_1(W,x)},
\end{equation*}
is a principal $G$-bundle over $W$. Moreover, it comes equipped with a representation of the fundamental groupoid
\begin{equation*}
\hol := \Pi_1(W) \rightarrow \mathcal{G}(P),
\end{equation*}
satisfying $\hol_x = \rho$.
\end{lemma}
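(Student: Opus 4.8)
The plan is to recall the classical associated-bundle (monodromy) construction and then equip the quotient with its tautological flat structure. First I would fix conventions: let $\pi_1(W,x)$ act on the universal cover $\widetilde{W}$ on the left by deck transformations and on $G$ through $\rho$, so that the diagonal action on $\widetilde{W}\times G$ reads $\gamma\cdot(\tilde{w},g)=(\gamma\cdot\tilde{w},\rho(\gamma)g)$. Since this action is already free and properly discontinuous on the first factor, it is free and properly discontinuous on the product, and the quotient $P$ is a smooth manifold. The right translation $(\tilde{w},g)\cdot h=(\tilde{w},gh)$ commutes with the diagonal action and descends to a right $G$-action on $P$, while $[\tilde{w},g]\mapsto\pi(\tilde{w})$ defines the projection $P\to W$. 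To verify that $P$ is a principal $G$-bundle I would exhibit local trivializations: over an evenly covered open $U\subset W$, a choice of sheet $\tilde{U}\subset\widetilde{W}$ gives the $G$-equivariant chart $[\tilde{w},g]\mapsto(\pi(\tilde{w}),g)$. The relation $[\delta\tilde{w},h]=[\tilde{w},\rho(\delta)^{-1}h]$ shows that the transition between two sheets is the constant left translation by $\rho(\delta)$; in particular the transitions are locally constant, which simultaneously establishes local triviality and the flatness of $P$.

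Next I would produce the groupoid morphism from the horizontal foliation. The leaves $\widetilde{W}\times\{g\}$ descend to a foliation of $P$ transverse to the fibres, and parallel transport along it assigns to each homotopy class $[\gamma]\colon x_0\to x_1$ a $G$-equivariant isomorphism $P_{x_0}\to P_{x_1}$. Concretely, lifting $\gamma$ to a path $\tilde{\gamma}$ in $\widetilde{W}$ from $\tilde{x}_0$ to $\tilde{x}_1$, the transport is $[\tilde{x}_0,g]\mapsto[\tilde{x}_1,g]$, which is a morphism $x_0\to x_1$ in the gauge groupoid $\mathcal{G}(P)$. I would check independence of the chosen lift: replacing $\tilde{x}_0$ by $\delta\cdot\tilde{x}_0$ replaces $\tilde{x}_1$ by $\delta\cdot\tilde{x}_1$, and the quotient relation above shows the two prescriptions define the same map. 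Compatibility with concatenation of paths and with constant paths is then immediate from uniqueness of path lifts, so the assignment is a well-defined groupoid morphism $\hol\colon\Pi_1(W)\to\mathcal{G}(P)$.

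Finally I would evaluate at the isotropy over $x$. Fixing the base lift $\tilde{x}$ trivializes $P_x\cong G$ via $g\mapsto[\tilde{x},g]$. For a loop $[\gamma]$ its lift starting at $\tilde{x}$ terminates at the point $\gamma\cdot\tilde{x}$ prescribed by the isomorphism $\pi_1(W,x)\cong\mathrm{Deck}(\widetilde{W})$, so $\hol_x([\gamma])$ is computed from $[\tilde{x},g]\mapsto[\gamma\cdot\tilde{x},g]$; by the quotient relation this is translation by $\rho(\gamma)^{\pm1}$. Under the standard identification of the gauge isotropy $\mathcal{G}(P)_x$ with $G$, with the left/right conventions of the two $\pi_1$-actions fixed accordingly, this reads exactly $\hol_x=\rho$.

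The argument is entirely standard, so I do not expect a conceptual obstacle; the only genuine care is bookkeeping. Concretely, the main (minor) difficulty will be to pin down the left/right and inverse conventions in the two $\pi_1$-actions, in the deck-transformation dictionary, and in the composition law of $\mathcal{G}(P)$, so that $\hol_x$ comes out as $\rho$ itself rather than $\rho^{-1}$ or an anti-homomorphism. Once a single consistent convention is adopted, every identification above is forced and the lemma follows.
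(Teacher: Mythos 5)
Your proposal is correct and follows essentially the same route as the paper: the quotient construction of $P$, the holonomy defined by path lifting (the paper phrases it via the identification $\Pi_1(W) \cong (\widetilde{W}\times\widetilde{W})/\pi_1(W,x)$, with $\hol([y,z]) = ([y,a]\mapsto[z,a])$, which is precisely your parallel transport along lifted paths), and the evaluation at the isotropy over $x$. You supply more detail than the paper (local trivializations, well-definedness, functoriality, and the convention bookkeeping needed so that $\hol_x = \rho$ rather than $\rho^{-1}$), all of which the paper compresses into ``$P$ is a principal bundle by construction'' and a one-line computation.
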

\begin{proof}
$P$ is a principal bundle by construction, so we just need to construct $\hol$. Recall the identification
\[ \Pi_1(W) = \dfrac{\widetilde{W} \times \widetilde{W}}{\pi_1(W,x)}. \]
We can then write $[y,z]$ for an element in $\Pi_1(W)$ and $[y,a]$ for an element in $P$; here $y,z \in \widetilde W$ and $a \in G_{k,l}$. It follows that $\hol$ can be uniquely defined from $\rho$ by setting $\hol([y,z]) = ([y,a] \mapsto [z,a])$. Suppose then that $y = [c_x]$ is the class of the constant loop at $x$ and $z$ also represents a loop based at $x$. Then $\hol([y,z])([y,a]) = [z,a] = [y,\rho(z)(a)]$, as desired.
\end{proof}
Later on we will also need the fact that there is preferred identification between $G$ and the fibre $P_x$; it is given by $g \mapsto [[c_x],g]$, where $c_{x}$ is the constant loop at $x$. In particular, there is a marked element in $P_x$, the image $[[c_x],\id]$ of the identity $\id \in G$.

\subsubsection{Second step}

The second part of the argument, showing that every principal $G_{k,l}$-bundle is in fact the bundle of jets of fibrewise parametrisations of some vector bundle, is slightly more involved. To establish it, we note that it is enough if we prove that this is the case for the universal $G_{k,l}$-bundle $EG_{k,l}$, since all others are obtained from it by pullback.

We first introduce a model for $EG_{k,l}$ that is particularly suited to our arguments:
\begin{proposition}
The space
\[ \{ j^k_0f \,\mid\, f: (\RR^l,0) \rightarrow (\RR^{\infty},0) \text{ is an embedding with image in a linear subspace of dimension $l$} \} \]
is contractible and has a free $G_{k,l}$-action given by reparametrisation in the source. In particular, it is a model for $EG_{k,l}$.

Moreover, its quotient by $G_{k,l}$ is the grassmannian of $l$-planes $\Gr(\RR^{\infty},l)$, which is then a model for $BG_{k,l}$.
\end{proposition}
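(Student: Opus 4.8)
The plan is to exhibit the indicated space, which I will call $X$, as the total space of a principal $G_{k,l}$-bundle over $\Gr(\RR^\infty,l)$ and then prove directly that $X$ is contractible; the universal-bundle criterion then finishes the argument.

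First I would set up the bundle structure. Define $p : X \to \Gr(\RR^\infty,l)$ by $p(j^k_0 f) = \im(df_0)$. This is well defined: if $f$ has image in an $l$-dimensional linear subspace $V$, then every Taylor coefficient of $f$ at $0$ lies in $V$, and since $df_0$ has rank $l=\dim V$ we get $\im(df_0)=V$, so $p$ depends only on the $k$-jet. The group $G_{k,l}=J^k\Diff(\RR^l,0)$ acts on the right by reparametrising the source, $(j^k_0 f)\cdot (j^k_0\phi) = j^k_0(f\circ\phi)$, which is well defined on jets by the chain rule and preserves the fibres of $p$ because $\im(d(f\circ\phi)_0)=\im(df_0\circ d\phi_0)=\im(df_0)$. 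Since $df_0$ is injective, $f$ is an embedding germ onto an open subset of $V$; composing on the left with the $k$-jet of a germ-inverse $f^{-1}$ shows both that the action is free (if $j^k_0(f\circ\phi)=j^k_0 f$ then $j^k_0\phi=\mathrm{id}$) and that it is transitive on each fibre (given $f,f'$ with the same $l$-plane, the element $g=j^k_0(f^{-1}\circ f')$ satisfies $(j^k_0 f)\cdot g = j^k_0 f'$). Local triviality of $p$ follows by pulling back local sections of the frame bundle $V_l(\RR^\infty)\to\Gr(\RR^\infty,l)$: over a Grassmannian chart a smoothly varying linear isomorphism $\RR^l\to V$ determines a local section $V\mapsto j^k_0(\text{that isomorphism})$ of $p$. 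Hence $p$ is a principal $G_{k,l}$-bundle, and since the action is transitive on fibres the induced map $X/G_{k,l}\to\Gr(\RR^\infty,l)$ is a homeomorphism.

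Next I would prove contractibility of $X$ in two stages. For the first stage, write an element of $X$ as $j^k_0 f$ with $f(x)=Ax+\sum_{2\le |\alpha|\le k} c_\alpha x^\alpha + o(|x|^k)$, where $A=df_0$ is injective and, as noted above, all $c_\alpha$ lie in $V=\im(A)$. The straight-line homotopy $f_t(x)=Ax+t\sum_{2\le|\alpha|\le k}c_\alpha x^\alpha$ has $d(f_t)_0=A$ injective for every $t$, so each $f_t$ is an embedding germ with image in $V$; thus $H_t(j^k_0 f):=j^k_0 f_t$ is a deformation retraction of $X$ onto the subspace $X_1$ of jets of injective linear maps. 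For the second stage, $X_1$ is exactly the space $\mathrm{Inj}(\RR^l,\RR^\infty)$ of injective linear maps $\RR^l\to\RR^\infty$, the (non-compact) infinite Stiefel manifold, which is contractible. An explicit contraction: letting $\sigma$ be the coordinate shift $\sigma(e_i)=e_{i+l}$, the homotopy $(1-t)A+t\,\sigma A$ stays injective (a kernel vector would give a relation $w=-c\,\sigma w$ with $c>0$ and $w\neq 0$, impossible for a coordinate shift) and carries $\im(A)$ into the orthogonal complement of $U_0:=\langle e_1,\dots,e_l\rangle$; then $(1-t)\sigma A+t\,\iota_0$ interpolates through injective maps to the fixed inclusion $\iota_0:\RR^l\to U_0$, injectivity holding because $\im(\sigma A)\perp\im(\iota_0)$. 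Composing the two homotopies contracts $X_1$, hence $X$, to the point $\iota_0$.

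Finally, having shown that $G_{k,l}$ acts freely on the contractible space $X$ with $p:X\to\Gr(\RR^\infty,l)$ a locally trivial principal bundle, the standard criterion that a free action of a Lie group on a contractible space realises the universal bundle lets me conclude that $X$ is a model for $EG_{k,l}$ and that $\Gr(\RR^\infty,l)\iso X/G_{k,l}$ is a model for $BG_{k,l}$. The main obstacle is not the homotopy theory, which is formal once the pieces are in place, but the point-set bookkeeping for the colimit space $\RR^\infty$: checking that $X$ carries a sensible topology in which $p$ is locally trivial and the homotopies above are continuous, and pinning down the standard contractibility of $V_l(\RR^\infty)$. These are routine but must be handled with some care.
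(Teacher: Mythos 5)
Your proof is correct and follows essentially the same route as the paper: project to the Grassmannian via the image of the differential, obtain freeness and fibrewise transitivity by composing with the germ-inverse $f^{-1}$, and prove contractibility by linearly retracting onto the jets of injective linear maps and then contracting the infinite Stiefel manifold. The only difference is one of detail — you spell out the local triviality of the bundle and give the explicit shift-map contraction of $\mathrm{Inj}(\RR^l,\RR^{\infty})$, which the paper simply asserts as the known contractibility of $E\GL_l$.
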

\begin{proof}
Let us write $E$ and $B$ for the models of $EG_{k,l}$ and $BG_{k,l}$ under consideration. Observe that an element $j^k_0f \in E$ defines a linear subspace $A \subset \RR^\infty$; concretely, $A$ is the image of the differential $d_0f$. Since $f^{-1}: (A,0) \rightarrow (\RR^l,0)$ is well-defined as a diffeomorphism germ, postcomposition by $j^k_0f^{-1}$ defines a bijective correspondence between the $G_{k,l}$-orbit of $j^k_0f$ and $G_{k,l}$ itself. This proves that the action is free. Moreover, it shows that the quotient is $B$, the grassmannian of $l$-planes.

It remains to prove contractibility. This follows immediately from the contractibility of the space of embeddings $\RR^l \rightarrow \RR^n$, which in turn follows from the Whitney embedding theorem. A more down-to-earth argument goes instead as follows: We can define a deformation retraction from $E$ onto its subspace $E\GL_l$ using the linear interpolation
\begin{equation*}
f_s = s.d_0f + (1-s)f,
\end{equation*}
for each embedding $f$, which provides an analogous interpolation at the level of $k$-jets. This homotopy is fibered over $B$. Since $E\GL_l$ is contractible, $E$ is contractible as well.
\end{proof}

It follows immediately from the proposition that:
\begin{corollary}
Let $\pi: \gamma \rightarrow \Gr(\RR^{\infty},l)$ be the tautological vector bundle. Then:
\[ EG_{k,l} = J^k\Diff_{\pi}(\RR^l,\gamma). \]
\end{corollary}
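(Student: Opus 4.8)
The plan is to derive the identity by directly unwinding both sides against the explicit model for $EG_{k,l}$ provided by the preceding proposition. First I would recall that, by definition, a point of $J^k\Diff_{\pi}(\RR^l,\gamma)$ is the $k$-jet at $0$ of an embedding $f\colon \RR^l \to \gamma$ which sends $0$ into the zero section and maps $\RR^l$ entirely into a single fibre. Since the fibre of the tautological bundle over a point $A \in \Gr(\RR^{\infty},l)$ is the $l$-dimensional subspace $A \subset \RR^{\infty}$ itself, such an $f$ amounts to the choice of a subspace $A$ together with an embedding $(\RR^l,0) \to (A,0)$. Composing with the inclusion $A \hookrightarrow \RR^{\infty}$, this is precisely the $k$-jet of an embedding $(\RR^l,0)\to(\RR^{\infty},0)$ whose image lies in an $l$-dimensional linear subspace, which is exactly the description of the model $E$ for $EG_{k,l}$ from the proposition.

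Next I would confirm that the identification is canonical, i.e. that the spanning subspace is intrinsic to the jet. As already observed in the proof of the proposition, the containing subspace is recovered as $A = d_0f(\RR^l)$: since $f$ is an embedding, $d_0f$ is injective, so $d_0f(\RR^l)$ is $l$-dimensional and, being contained in the ambient $l$-dimensional subspace, equal to it. This yields a well-defined map $j^k_0 f \mapsto A$ to $B = \Gr(\RR^{\infty},l) = BG_{k,l}$ that matches the bundle projection $\pi$ of $\gamma$ under both descriptions. Finally I would check the residual structure: the $G_{k,l}$-action on $E$ is reparametrisation in the source, $j^k_0 f \cdot g = j^k_0(f \circ g)$, and the same holds for the parametrising maps into the fibres of $\gamma$, so the two free actions coincide, as do the two $G_{k,l}$-invariant projections to $B$. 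Hence $E$ and $J^k\Diff_{\pi}(\RR^l,\gamma)$ agree as principal $G_{k,l}$-bundles, and since $E$ models $EG_{k,l}$ the claim follows.

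I do not anticipate a genuine obstacle here: the corollary is a bookkeeping comparison of two unwindings of the same data, and the only subtle point — that the spanning subspace can be read off from the jet itself — has already been settled inside the proof of the proposition. The argument reduces to verifying that the free $G_{k,l}$-actions and the maps to the Grassmannian coincide, which is immediate from the definitions.
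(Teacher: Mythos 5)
Your proof is correct and takes the same route the paper intends: the paper offers no written proof, asserting the corollary follows immediately from the proposition, and your definitional unwinding (a jet of a fibrewise parametrisation of $\gamma$ is, after composing with the inclusion of the fibre $A \subset \RR^{\infty}$, exactly a jet of an embedding $(\RR^l,0)\to(\RR^{\infty},0)$ with image in an $l$-dimensional subspace) is precisely that immediate argument made explicit. Your additional checks, that the spanning subspace is recovered intrinsically as the image of $d_0f$ and that the two free $G_{k,l}$-actions and projections to $\Gr(\RR^{\infty},l)$ coincide, are the right points to verify and match the observations already made in the proof of the proposition.
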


%
%
%
%

Moreover, since any principal $G_{k,l}$-bundle is isomorphic to the pullback of the universal one and we can use the classifying map to pull back $\gamma$: 
\begin{corollary} \label{cor:concreteBundle}
Every principal $G_{k,l}$-bundle is of the form $J^k\Diff_{\pi}(\RR^l,E)$ for some vector bundle $\pi: E \rightarrow W$.
\end{corollary}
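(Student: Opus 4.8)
The plan is to combine the preceding corollary, which identifies the universal bundle as $EG_{k,l} = J^k\Diff_{\pi}(\RR^l,\gamma) = \Fr^k(\gamma)$, with the classification of principal bundles by homotopy classes of maps into the classifying space. The essential observation is that the construction $E \mapsto \Fr^k(E) = J^k\Diff_{\pi}(\RR^l,E)$ is \emph{natural} with respect to pullback of vector bundles.

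First I would invoke the classifying space theory. Since $W$ is a (paracompact) manifold and $G_{k,l}$ is a Lie group, any principal $G_{k,l}$-bundle $P \to W$ admits a classifying map $f: W \to BG_{k,l} = \Gr(\RR^{\infty},l)$, unique up to homotopy, together with an isomorphism $P \cong f^{*}EG_{k,l}$. I then define the candidate vector bundle $E := f^{*}\gamma$, the pullback of the tautological bundle $\gamma \to \Gr(\RR^{\infty},l)$; this is a rank-$l$ vector bundle over $W$.

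The heart of the argument, and the one step requiring genuine care, is the naturality isomorphism $\Fr^k(f^{*}\gamma) \cong f^{*}\Fr^k(\gamma)$ of principal $G_{k,l}$-bundles over $W$. This holds because $\Fr^k$ is defined fibrewise: the fibre of $\Fr^k(E)$ over $x \in W$ is the space of $k$-jets at $0$ of fibre parametrisations $\RR^l \to E_x$, and it depends only on the fibre $E_x$ with its smooth structure. As $(f^{*}\gamma)_x = \gamma_{f(x)}$, the fibres of $\Fr^k(f^{*}\gamma)$ and of $f^{*}\Fr^k(\gamma)$ over $x$ are canonically identified, and this identification intertwines the $G_{k,l}$-actions by reparametrisation in the source. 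One must check that this fibrewise identification is smooth and $G_{k,l}$-equivariant, but this is immediate from the local description of $\Fr^k(E)$ in terms of local frames, so the identification assembles into a genuine bundle isomorphism.

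Putting the pieces together gives
\[
\Fr^k(E) = \Fr^k(f^{*}\gamma) \cong f^{*}\Fr^k(\gamma) = f^{*}EG_{k,l} \cong P,
\]
which exhibits $P$ in the desired form $J^k\Diff_{\pi}(\RR^l,E)$. I expect the only nontrivial point to be the verification of naturality under pullback; everything else is a direct appeal to the previous corollary and to standard classifying space theory.
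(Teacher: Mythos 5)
Your proposal is correct and follows essentially the same route as the paper: the paper's argument is precisely the one-line observation that every principal $G_{k,l}$-bundle is a pullback of $EG_{k,l} = J^k\Diff_{\pi}(\RR^l,\gamma)$ along a classifying map, which can also be used to pull back $\gamma$. The only difference is that you spell out the naturality isomorphism $J^k\Diff_{\pi}(\RR^l,f^{*}\gamma) \cong f^{*}J^k\Diff_{\pi}(\RR^l,\gamma)$, which the paper leaves implicit.
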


\subsubsection{End of the surjectivity argument}

Let $\rho : \pi_1(W,x) \rightarrow G_{k,l}$ be a representation. According to Lemma \ref{lem:abstractBundle}, we can use $\rho$ to construct a representation $\hol: \Pi_1(W) \rightarrow \Gcal(P)$ into the gauge groupoid of some principal $G_{k,l}$-bundle $P \rightarrow W$. Moreover, we can assume that $\rho$ is precisely the restriction of $\hol$ to $x$.

By Corollary \ref{cor:concreteBundle}, $P$ is isomorphic to $J^k\Diff_{\pi}(\RR^l,E)$, with $\pi : E \rightarrow W$ a vector bundle of rank $l$. We fix one such isomorphism $\Phi$. As we observed after Lemma \ref{lem:abstractBundle}, there is a preferred element $[[c_x],\id]$ in the fibre $P_x$. The isomorphism $\Phi$ maps it to a preferred jet of embedding $j^k_0\phi \in J^k\Diff_{\pi}(\RR^l,E_x)$.

According to Proposition \ref{prop:folsassplittings}, the differential of $\hol$ defines a Lie algebroid splitting $\sigma : TW \rightarrow \At(P)$. It follows that $\sigma$ defines a $k$-th order foliation whose holonomy at $x$, once we conjugate with the parametrisation $j^k_0\phi$, is given by $\rho$. This means that $\RH_*(E,\phi,\sigma) = \rho$, concluding the proof. \hfill$\qed$

\subsection{Injectivity of the Riemann-Hilbert map} \label{ssec:injectivityRH}

We will now address the injectivity half of Theorems \ref{thm:pointedRH} and \ref{thm:RH}. A particular case of this argument, for simply-connected domains, appeared already in our proof of Frobenius' Theorem \ref{prop:simplyconnected}.

\subsubsection{The parametrised case} \label{sssec:injectivityPointedRH}

Let $(E,\phi,\sigma)$ and $(E',\phi',\sigma')$ be two elements in $J^k\Fol_*(W,x,l)$ with the same image $\rho: \pi_1(W,x) \rightarrow G_{k,l}$ under $\RH_*$. We write $\hol: \Pi_1(W) \rightarrow \Gcal(P)$ and $\hol': \Pi_1(W) \rightarrow \Gcal(P')$ for their holonomies.

We now define a $k$-jet of fibrewise diffeomorphism $j^k\Psi$ from $E$ to $E'$, fixing the zero section $W$. We do this in a fibrewise manner, at each point $y \in W$. We use the formula:
\[ j^k_y\Psi := \hol([\gamma]) \circ j^k\phi' \circ j^k\phi^{-1} \circ \hol(\bar{[\gamma]}), \]
where $\gamma$ is some path from $x$ to $y$. This expression is independent of the choice of $\gamma$, thanks to the assumption
\[ j^k\phi^{-1} \circ \hol_x \circ j^k\phi = j^k(\phi')^{-1} \circ \hol_x' \circ j^k\phi'. \]
By construction, $j^k\Psi$ identifies $\sigma$ with $\sigma'$ and intertwines $j^k\phi$ with $j^k\phi'$.

The proof concludes by choosing a germ of fibrewise diffeomorphism $\Psi: (E,W) \rightarrow (E',W)$ extending $j^k\Psi$. This follows from the fact that each $j^k_y\Psi$ can be extended to a fibered-over-$W$ map from $E$ to $E'$, parametrically in $y$. It is a fibrewise embedding sufficiently close to the zero section. This concludes the injectivity argument and therefore the proof of Theorem \ref{thm:pointedRH}. \hfill$\qed$


\subsubsection{The unparametrised Riemann-Hilbert map is well-defined} \label{sssec:wellDefRH}

Suppose $(E,\phi,\sigma)$ and $(E',\phi',\sigma)$ are two elements in $J^k\Fol_*(W,x,l)$ with the same underlying element in $J^k\Fol(W,x,l)$. In particular, this means that $E$ and $E'$ define the same germ of manifold $A$ along $W$, but the microbundle projections may differ.

We argue in two steps. First, consider $(E,\phi,\sigma)$ and $(E,\phi',\sigma)$. As $\RH_*(E,\phi',\sigma)$ differs from $\RH_*(E,\phi,\sigma)$ by conjugation with $j^k(\phi^{-1} \circ \phi') \in G_{k,l}$ it follows that the two define the same element in the character variety $\Char(\pi_1(W,x),G_{k,l})$, as claimed.
It remains to show that $\RH_*$ does not depend on the choice of microbundle structure. To do this, we need the following result (see for instance \cite{Cer61}):

\begin{lemma}
Let $W \subset M$ be an embedded submanifold, and $A \subset M$ a germ of manifold around $W$. The space of microbundle structures on $A$ is contractible.
\end{lemma}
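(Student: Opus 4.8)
The plan is to make precise what a microbundle structure on $A$ is, and then contract the resulting space by a fibrewise rescaling (Alexander-type) homotopy. Concretely, I regard a microbundle structure on the germ $A$ as a germ along $W$ of a submersive retraction $\pi \colon A \to W$ with $\pi|_W = \mathrm{id}_W$; this is exactly the projection datum which, together with the fixed inclusion $\iota \colon W \hookrightarrow A$, allows one to form the frame bundle $\Fr^k$. Write $\mathcal{P}$ for the space of such germs, topologised as germs of smooth maps (the colimit over shrinking neighbourhoods of $W$ of the $C^\infty$ topology on representatives). By the tubular neighbourhood theorem $\mathcal{P}$ is non-empty, and I would fix once and for all a tubular identification of $A$ with a neighbourhood of the zero section in the normal bundle $\nu_W \to W$. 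This identification supplies the fibrewise scalar multiplication $m_s \colon v \mapsto s v$ for $s \in [0,1]$, and singles out the linear projection $p \colon \nu_W \to W$, which is itself an element of $\mathcal{P}$.

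First I would define the candidate deformation retraction
\[
H \colon [0,1] \times \mathcal{P} \longrightarrow \mathcal{P}, \qquad H(s,\pi) = \pi \circ m_s,
\]
and verify its defining properties. Since $m_s$ preserves the zero section and $\pi|_W = \mathrm{id}_W$, each $\pi \circ m_s$ again fixes $W$ pointwise; moreover $H(1,\pi) = \pi$, while $H(0,\pi) = \pi \circ m_0 = p$ independently of $\pi$ (as $m_0$ collapses each fibre to its zero vector and $\pi$ is the identity on the zero section), and $H(s,p) = p$ for all $s$ because $p$ is invariant under fibre scaling. Granting that $H$ is well defined and continuous, this exhibits a homotopy $\mathrm{id}_{\mathcal{P}} \simeq \mathrm{const}_p$ through maps into $\mathcal{P}$, so $\mathcal{P}$ is contractible.

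The main obstacle is to check that $H$ genuinely lands in $\mathcal{P}$ and is continuous up to and including $s = 0$. For the first point I would compute the differential of $\pi \circ m_s$ along $W$: in bundle coordinates $(x,v)$ with $\pi(x,0) = x$ one finds $d(\pi \circ m_s)|_{v=0} = [\,\mathrm{Id} \mid s\,(\partial_v \pi)(x,0)\,]$, which is surjective for every $s \in [0,1]$, so each $\pi \circ m_s$ is a submersion near $W$ and hence a genuine microbundle structure (and the homotopy stays within $\mathcal{P}$, never degenerating). For continuity at $s = 0$ the key observation is that $\pi(x,sv)$ is smooth jointly in $(s,x,v)$ even at $s = 0$, by Taylor expanding $\pi$ in its fibre variable, with limiting value $x = p(x,v)$; this is precisely what makes the rescaling extend smoothly to the limiting projection rather than collapsing. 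Finally I would note that, although $H$ is built using an auxiliary tubular identification, contractibility is an intrinsic property of $\mathcal{P}$, so the conclusion is independent of that choice; this recovers the classical contractibility of the space of tubular neighbourhood projections (cf. \cite{Cer61}).
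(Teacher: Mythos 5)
Your proof is correct, but it takes a genuinely different route from the paper's. The paper establishes contractibility as \emph{non-emptiness plus convexity}: it fixes an auxiliary Riemannian metric on $W$ and, given two projections $\pi,\pi'\colon A\to W$, observes that sufficiently close to $W$ the points $\pi(a)$ and $\pi'(a)$ are joined by a unique short geodesic in $W$, so geodesic interpolation defines a canonical path $\pi_t$ of microbundle structures; convexity in this sense, together with non-emptiness (tubular neighbourhoods), gives contractibility. You instead fix a single tubular identification $A\cong\nu_W$ and produce an explicit strong deformation retraction of the whole space onto the linear projection $p$, via the fibrewise rescaling homotopy $H(s,\pi)=\pi\circ m_s$. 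The trade-offs are roughly these: the paper's interpolation is symmetric in the two structures and uses no linear structure on $A$ beyond the metric on $W$, but it leaves implicit that the geodesic interpolation is submersive and well defined on a neighbourhood that can be chosen uniformly, and that it depends continuously on the pair; your argument consumes a choice of tube (which the paper needs anyway for non-emptiness), but in exchange everything is explicit: submersivity along $W$ follows from the one-line computation $d(\pi\circ m_s)|_{W}=[\,\mathrm{Id}\mid s\,\partial_v\pi\,]$, the potentially delicate degeneration at $s=0$ is handled by joint smoothness of $(s,x,v)\mapsto\pi(x,sv)$, and the retraction fixes $p$ at every time. Your scaling trick is in fact closer in spirit to the classical Cerf-style proofs of uniqueness of tubular neighbourhoods that the paper cites. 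Both proofs are equally informal about the germ topology (neither makes the continuity of the homotopy in the variable $\pi$ fully precise), so your argument is on the same footing as the paper's in that respect.
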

\begin{proof}
The claim follows if we prove that it is non-empty and convex. The fact that it is non-empty follows from the existence of tubular neighbourhoods, hence we only need to address convexity. Fix an auxiliary Riemannian metric on $W$. Given two microbundle structures $\pi: A \to W$ and $\pi': A \to W$ and a sufficiently small neighbourhood of $W \subset A$, it holds that $\pi(a)$ takes values in a geodesic ball of $\pi'(a)$, for all $a$. I.e. there is a unique geodesic connecting the two. This allows us to uniquely define a linear interpolation between $\pi$ and $\pi'$. Sufficiently close to $W$ this interpolation is through submersions which, in an even smaller neighbourhood, are microbundle structures. 
\end{proof}
As there exists a homotopy of microbundle structures between $E$ and $E'$ we may endow $A \times [0,1]$ with this microbundle structure. We then consider the triple $(A \times [0,1],\phi',\sigma \times [0,1]) \in J^k\Fol_*(W \times [0,1],x,l)$. We can apply to it the map $\RH_*$, which lands in $\Mor(\pi_1(W \times [0,1],x),G_{k,l})$. The latter can be identified with $\Mor(\pi_1(W,x),G_{k,l})$, using that $W \equiv W \times \{0\}$ is a deformation retract of $W \times [0,1]$. From this we deduce that
\[ \RH_*(A \times [0,1],\phi',\sigma \times [0,1]) = \RH_*(E',\phi',\sigma). \]
Consider then the germ of diffeomorphism 
\[ \phi'' =\hol^\sigma(\gamma) \circ \phi' :(\RR^l,0) \rightarrow (A_{(x,1)},(x,1)), \]
where $\gamma$ is the path $s \mapsto (x,s)$. It follows that:
\[ \RH_*(A \times [0,1],\phi',\sigma \times [0,1]) = \RH_*(E,\phi'',\sigma), \]
as claimed. This shows that $\RH$ is well-defined.  \hfill$\qed$

\subsubsection{Injectivity in the unparametrised case} \label{sssec:injectivityRH}

We now prove injectivity in Theorem \ref{thm:RH}. Assume that $\RH(E,\sigma) = \RH(E',\sigma')$. This means that we can choose auxiliary parametrisations $\phi$ and $\phi'$ and auxiliary microbundle structures on $E$ and $E'$ so that $\RH_*(E,\phi,\sigma)$ and $\RH_*(E',\phi',\sigma')$ differ by an element $h$ in $G_{k,l}$. It follows that $\RH_*(E,\phi,\sigma) = \RH_*(E', \phi' \circ h,\sigma')$, so we obtain an isomorphism between the two, by Theorem \ref{thm:pointedRH}. This is in particular an isomorphism between $(E,\sigma)$ and $(E',\sigma')$. The proof of Theorem \ref{thm:RH} is complete.  \hfill$\qed$

\section{Extending to higher order}\label{sec:extending}
One question of fundamental importance in this paper is whether a $k$-th order foliation can be extended to a $k+1$ jet of foliation. That is, given a $k$-th order foliation $\sigma$ does there exists a $(k+1)$-th order foliation $\tilde{\sigma}$ with the same $k$-jet as $\sigma$?
 
In this section we will provide two cohomology classes which are the precise obstruction to extending a $k$-th order foliation. The first class is obtained from Lie algebroid extensions (Subsection \ref{sec:restseq}). The second class is a cohomology class in the group cohomology of the fundamental $\pi_1(W,x)$ (Subsection \ref{ssec:groupext}. Using integrating groupoids and Morita invariance and we will see that these two chomology maps are send to each other via the Van-Est map (Subsection \ref{ssec:groupoidext}).

\subsection{Lie algebroid extensions}\label{sec:restseq}
As the entire discussion in this section is local around $W$, we fix a tubular neighbourhood embedding $\mathcal{U} \rightarrow M$. With this choice in place, we can interchange between $k$-th order foliations and splittings $\sigma : TW \rightarrow \At^k(\nu_W)$ at will.

Consider a $k$-th order foliation $\sigma \in \Hol_W(\Gr(TM,l))$ around $W \subset M$, and the associated singular foliation $\mathcal{A}(\sigma)$. Although $\mathcal{A}(\sigma)$ can be represented by a Lie algebroid only when $W$ has codimension-one, the restriction to $W$ is always a Lie algebroid: 
\begin{proposition}\label{prop:restdesc}
Let $\sigma: TW \rightarrow \At^k(\nu_W)$ be a $k$-th order foliation. Then:
\begin{equation*}
\iota^{-1}_W\A(\sigma)\simeq \Gamma(\At^{k+1}(\nu_W)\oplus_{\At^k(\nu_W)}TW),
\end{equation*}
with fibre-sum taken along $\sigma$.
\end{proposition}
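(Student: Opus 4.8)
The plan is to work inside a tubular neighbourhood, identifying $M$ near $W$ with the total space of $\nu_W$ and $\sigma$ with a flat splitting $\sigma\colon TW\to\At^k(\nu_W)$ as in Proposition \ref{prop:folsassplittings}. I read the left-hand side $\iota^{-1}_W\A(\sigma)$ as the restricted Lie algebroid, i.e. the $\Ocal_W$-module $\A(\sigma)/\Ical_W\A(\sigma)$; the first thing to record is that this quotient really is a Lie algebroid, even though $\A(\sigma)$ itself need not be locally free (Proposition \ref{prop:isliealg}). This holds because every $X\in\A(\sigma)$ is tangent to $W$, so $X(\Ical_W)\subset\Ical_W$, and the Leibniz rule then shows $\Ical_W\A(\sigma)$ is an ideal for the bracket, which therefore descends. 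On the other side, the fibre product is a genuine Lie algebroid over $W$ sitting in the short exact sequence
\[ 0\to \ker\!\big(\At^{k+1}(\nu_W)\to\At^k(\nu_W)\big)\to \At^{k+1}(\nu_W)\oplus_{\At^k(\nu_W)}TW \xrightarrow{\ \rho\ } TW\to 0, \]
whose kernel is the bundle $\Sym^{k+1}\nu_W^*\otimes\nu_W$ of vertical vector fields of degree $k+1$.

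The comparison map I would build goes from the fibre product to the restriction. Write $p\colon\At^{k+1}(\nu_W)\to\At^k(\nu_W)$ for the truncation. Given a point of the fibre product, represented by a projectable $(k+1)$-jet $v\in\At^{k+1}(\nu_W)$ with $p(v)=\sigma(\rho(v))$, choose an honest projectable vector field $\hat v$ on $\nu_W$, tangent to $W$, with $j^{k+1}_W\hat v=v$. The key point is that $\hat v$ automatically lies in $\A(\sigma)$: the condition $p(v)=\sigma(\rho v)$ says precisely that $j^k_W\hat v$ is tangent to $\sigma$, i.e. $\hat v$ is $k$-tangent to $\xi$, and by Lemma \ref{lem:decomposeTangencySheaf} the module of such vector fields is exactly $\Tan^k(W,\xi)=\A(\sigma)$. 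Setting $\nu(v):=[\hat v]\in\A(\sigma)/\Ical_W\A(\sigma)$ gives a well-defined $\Ocal_W$-linear map: two projectable representatives of $v$ differ by an element of $\Ical_W^{k+2}\mathfrak{X}(M)\subset\Ical_W\A(\sigma)$, which is killed in the quotient.

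It then remains to check that $\nu$ is an isomorphism of Lie algebroids. Bijectivity I would verify through the two short exact sequences: on anchors $\nu$ induces the identity of $TW$, while on kernels it sends a degree-$(k+1)$ vertical field $v\in\Sym^{k+1}\nu_W^*\otimes\nu_W$ to the class of its representative, and the leading-coefficient (symbol) computation
\[ \Ical_W^{k+1}\mathfrak{X}(M)\big/\big(\Ical_W^{k+2}\mathfrak{X}(M)+\Ical_W^{k+1}\Gamma(\xi)\big)\;\cong\;\Sym^{k+1}\nu_W^*\otimes\nu_W \]
shows this is exactly the kernel of the anchor of the restriction; the five lemma then yields bijectivity, and in the Frobenius normal form of Theorem \ref{th:Frobenius} one sees directly that $\nu$ carries the frame $\{\partial_{x_{l+1}},\dots,\partial_{x_n},\,x^I\partial_{x_i}\}$ of the fibre product to the corresponding generators of $\A(\sigma)$. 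The main obstacle, and the step requiring real care, is bracket-preservation: one must show that the bracket on the fibre product inherited from $\At^{k+1}(\nu_W)$ agrees with the bracket descended from $\A(\sigma)$. This amounts to checking that the Lie bracket of two projectable representatives is again $k$-tangent to $\xi$ and represents the bracket of the corresponding classes modulo $\Ical_W\A(\sigma)$, which follows from the Leibniz identity for $[-,-]$ on $J^{k+1}(TM)$ together with the involutivity of $\sigma$; equivalently, it is the statement that $\nu$ is equivariant for the structure groupoid $\G(\Fr^{k+1}(\nu_W))$, so that the local identifications glue.
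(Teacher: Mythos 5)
Your proposal is correct and follows essentially the same route as the paper: the paper's (one-line) proof simply asserts that sections of the fibre product are exactly the $(k+1)$-jets of vector fields that are $k$-tangent to $\sigma$ along $W$, which is precisely the identification you construct via projectable representatives. Your write-up additionally supplies what the paper leaves implicit --- the reading of $\iota^{-1}_W\A(\sigma)$ as $\A(\sigma)/\Ical_W\A(\sigma)$, the well-definedness of the comparison map, bracket compatibility, and the five-lemma/symbol argument --- and your identification of the kernel as $\Sym^{k+1}\nu_W^*\otimes\nu_W$ is the accurate one (the paper's $\Sym^k(\nu_W^*)\otimes\End(\nu_W)$ coincides with it only when $W$ has codimension one, which is the case relevant to its later use).
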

\begin{proof}
Sections of the right-hand side consist of $(k+1)$-jets of vector fields with the property that the $k$-jet is contained in the image of the splitting $\sigma: TW \rightarrow \At^k(\nu_W)$. These are exactly the $(k+1)$-jets of vector fields $k$-tangent to $\sigma$ at $W$, which is exactly $\iota^{-1}_W\mathcal{A}(\sigma)$.
\end{proof}

For the purpose of extending a $k$-th order foliation $\sigma$ to a $(k+1)$-jet, we will only need to work with the Lie algebroid $\At^{k+1}(\nu_W)\oplus_{\At^k(\nu_W)}TW$ rather then the full singular foliation. This Lie algebroid sits in the short exact sequence:

\begin{proposition}
Let $\sigma : TW \rightarrow \At^k(\nu_W)$ be a $k$-th order foliation. Then there is a short exact sequence of Lie algebroids:
\begin{equation}\label{eq:sesofrest}
0 \rightarrow \Sym^k(\nu_W^*)\otimes \End(\nu_W) \rightarrow \At^{k+1}(\nu_W)\oplus_{\At^k(\nu_W)}TW \rightarrow TW \rightarrow 0.
\end{equation}
\end{proposition}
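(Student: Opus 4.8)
The plan is to realise \eqref{eq:sesofrest} as the anchor sequence of the fibre-product Lie algebroid $B := \At^{k+1}(\nu_W)\oplus_{\At^k(\nu_W)}TW$ appearing in Proposition \ref{prop:restdesc}. Write $p\colon \At^{k+1}(\nu_W)\to\At^k(\nu_W)$ for the truncation morphism, which sends a $(k+1)$-jet of a projectable vector field tangent to the zero section to its $k$-jet. This $p$ is a morphism of Lie algebroids over $W$, it is fibrewise surjective, and it intertwines the two anchors to $TW$; consequently the fibre product $\{(v,X):p(v)=\sigma(X)\}$ is a smooth vector subbundle of $\At^{k+1}(\nu_W)\oplus TW$ (clean intersection).

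First I would verify that $B$ is a Lie algebroid with the bracket $[(v_1,X_1),(v_2,X_2)]=([v_1,v_2],[X_1,X_2])$ inherited from the two factors. The only nontrivial point is that this bracket closes, i.e.\ that $p[v_1,v_2]=\sigma[X_1,X_2]$ whenever $p(v_i)=\sigma(X_i)$. Since $p$ is a morphism and $\sigma$ is bracket-preserving, $p[v_1,v_2]=[\sigma X_1,\sigma X_2]=\sigma[X_1,X_2]$, so the defining relation is preserved and smoothness of the bracket follows. This is the single place where the involutivity of the $k$-th order foliation $\sigma$ is genuinely used, and it is the crux of the argument; everything else is bookkeeping with the Atiyah sequences \eqref{eq:Atiyahseq}.

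Next I would identify the anchor of $B$ with the projection $\mathrm{pr}_{TW}\colon(v,X)\mapsto X$. Because the anchors of $\At^{k+1}(\nu_W)$ and $\At^k(\nu_W)$ are intertwined by $p$, the anchor of $v$ coincides with that of $p(v)=\sigma(X)$, which equals $X$ since $\sigma$ splits the anchor $\At^k(\nu_W)\to TW$; thus $\mathrm{pr}_{TW}$ is the anchor of $B$, and it is surjective because $p$ is. Finally the kernel of $\mathrm{pr}_{TW}$ consists of the pairs $(v,0)$ with $p(v)=\sigma(0)=0$, i.e.\ $v\in\ker p$. By the identification of the adjoint bundles $\ad(\Fr^\bullet(\nu_W))$ with symmetric powers recorded earlier, $p$ restricts on the $\ad$-parts to the projection discarding the top-degree summand, so $\ker p\cong\Sym^k(\nu_W^*)\otimes\End(\nu_W)$. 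This gives the left-hand inclusion $v\mapsto(v,0)$ of \eqref{eq:sesofrest} together with exactness. I would also note in passing that the induced bracket on this kernel vanishes (the bracket of two top-degree vertical fields has fibre-degree $2k+1>k+1$, hence is truncated away), so it is an abelian bundle of Lie algebras, which is exactly the form required by the extension-theoretic arguments to follow.
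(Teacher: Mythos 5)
Your proposal is correct and takes essentially the same route as the paper: both realise \eqref{eq:sesofrest} as the anchor sequence of the fibre product $\At^{k+1}(\nu_W)\oplus_{\At^k(\nu_W)}TW$ and identify the kernel of the anchor with the $(k+1)$-polynomial vertical vector fields, i.e.\ with $\Sym^k(\nu_W^*)\otimes\End(\nu_W)$. The only difference is one of detail: the paper's proof takes the Lie algebroid structure on the fibre product as given (it was identified with $\iota^{-1}_W\A(\sigma)$ in Proposition \ref{prop:restdesc}) and checks only the kernel, whereas you verify directly that the bracket closes (using that the truncation $p$ is a morphism and $\sigma$ is bracket-preserving), that the anchor is the projection to $TW$, and that it is surjective -- routine steps the paper leaves implicit.
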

\begin{proof}
We only need to proof that $\ker \rho$ is as described. The kernel of $\rho$ is generated precisely by the $(k+1)$-polynomial vector fields, which can in turn be identified with $\Sym^k(\nu_W^*)\otimes \End(\nu_W)$.
\end{proof}
Note that for $k \geq 1$ the sub Lie algebroid $\Sym^k(\nu_W^*)\otimes \End(\nu_W)$ is in fact Abelian. Consequently, by Proposition \ref{prop:algsplittingsabstract} we obtain a canonical $TW$-representation $\nabla^1$ on $\Sym^k(\nu_W^*)\otimes \End(\nu_W)$. But the underlying $1$-th order foliation also determines a splitting $\sigma_1 : TW \rightarrow \At_1(\nu_W)$ which corresponds to a linear connection on $\nu_W$. These connections are related:
\begin{lemma}
Given a $k$-th order foliation with $k\geq 1$, let $(\Sym^k(\nu_W^*)\otimes \End(\nu_W),\nabla^1)$ be the connection induced by \eqref{eq:sesofrest} and let $(\nu_W,\nabla^2)$ be the connection induced by the corresponding splitting $\sigma : TW \rightarrow \At(P_1)$. Then the induced connection $\nabla^{2,\rm ind}$ on $\Sym^k(\nu_W^*)\otimes \End(\nu_W)$ coincides with $\nabla^1$.  
\end{lemma}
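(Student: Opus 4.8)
The plan is to compute both connections as Lie brackets of vector fields on the total space of $\nu_W$ and to compare them term by term, using the grading by fibre-degree. First, since the kernel $K := \Sym^k(\nu_W^*)\otimes\End(\nu_W)$ of \eqref{eq:sesofrest} is abelian, Proposition \ref{prop:algsplittingsabstract} describes $\nabla^1$ explicitly: for $X\in\Gamma(TW)$ and $\kappa\in\Gamma(K)$ one has $\nabla^1_X\kappa=[\tilde X,\kappa]$, where $\tilde X$ is any lift of $X$ to $\At^{k+1}(\nu_W)\oplus_{\At^k(\nu_W)}TW$, the bracket being that of $(k+1)$-jets of vector fields; independence of the lift is precisely abelianness of $K$. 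Since the fibre product embeds into $\At^{k+1}(\nu_W)$ via its first projection (the $TW$-factor being recovered from the anchor through $\sigma$), this bracket can be computed directly inside $\At^{k+1}(\nu_W)$.

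Next I would introduce the grading by fibre-degree given by the vertical Euler field $\mathcal{E}$ of $\nu_W$: a vertical field $z^\alpha\partial_{z_j}$ has weight $|\alpha|-1$, so that sections of $\nu_W$ have weight $-1$, linear vertical fields (the image of $\At(\nu_W)$, i.e.\ $\End(\nu_W)$) have weight $0$, and $K$ is exactly the space of vertical fields of weight $k$. The Jacobi identity $[\mathcal{E},[V,V']]=[[\mathcal{E},V],V']+[V,[\mathcal{E},V']]$ shows the Lie bracket is additive in weights, and passing to $\At^{k+1}(\nu_W)$ truncates every vertical term of weight $>k$ to zero. Using the splitting $\sigma$ I would take the lift $\tilde X=\sigma_1(X)+\sum_{i=1}^{k-1}\eta_i(X)+\epsilon(X)$, where $\sigma_1(X)\in\At(\nu_W)$ is the weight-$\leq 0$ part (the image of $\sigma(X)$ under the truncation $\At^k(\nu_W)\to\At(\nu_W)$, i.e.\ the lift of $\nabla^2$), the $\eta_i(X)$ are the weight-$i$ vertical parts of $\sigma(X)$ for $i\geq 1$, and $\epsilon(X)\in K$ is the weight-$k$ ambiguity of the lift.

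Then I would expand $\nabla^1_X\kappa=[\tilde X,\kappa]$. By weight-additivity $[\eta_i(X),\kappa]$ has weight $i+k\geq k+1$ and hence vanishes after truncation to $\At^{k+1}(\nu_W)$, while $[\epsilon(X),\kappa]=0$ because $K$ is abelian (equivalently, its weight $2k>k$ is truncated). Thus only $[\sigma_1(X),\kappa]$ survives, depending on $\sigma$ solely through $\nabla^2$. The last step is to identify $[\sigma_1(X),\kappa]$ with $\nabla^{2,\rm ind}_X\kappa$: the field $\sigma_1(X)$ is the infinitesimal generator of parallel transport for $\nabla^2$ on the total space, so the Lie derivative $[\sigma_1(X),-]$ of a homogeneous vertical field representing a section of $\Sym^k(\nu_W^*)\otimes\End(\nu_W)$ is exactly the covariant derivative of that section for the induced connection. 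Combining these gives $\nabla^1=\nabla^{2,\rm ind}$.

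The main obstacle is really bookkeeping rather than a genuine difficulty: setting up the fibre-degree grading cleanly and verifying two compatibilities, namely that the bracket on $\At^{k+1}(\nu_W)$ is weight-additive (a one-line Jacobi computation) and, more substantively, that bracketing with the horizontal lift of a linear connection computes the induced connection on the associated tensor bundle. The latter is the standard description of the Atiyah-algebroid action on associated bundles, and it is the point at which one must be careful that the identification $K\cong\Sym^k(\nu_W^*)\otimes\End(\nu_W)$ used here is the one intertwining the bracket action with $\nabla^{2,\rm ind}$ (cf.\ the explicit isomorphism in the lemma preceding Proposition \ref{prop:MCexplicit}).
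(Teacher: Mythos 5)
Your proposal is correct and follows essentially the same route as the paper's proof: take an arbitrary lift $\tilde{\sigma}(X)$, observe that $\nabla^1_X V = [\tilde{\sigma}(X),V]$ is independent of the lift by abelianness of the kernel, and then use the $(k+1)$-jet truncation to discard all bracket terms of fibre-weight exceeding $k$, leaving only $[\sigma_1(X),V] = \nabla^{2,\mathrm{ind}}_X V$. The paper compresses the weight-counting and the identification of $[\sigma_1(X),-]$ with the induced connection into a single sentence, whereas you spell both out; the content is the same.
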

\begin{proof}
Let $\tilde{\sigma} : TW \rightarrow \At^{k+1}(\nu_W)\oplus_{\At^k(\nu_W)} TW$ be any splitting of Equation \eqref{eq:sesofrest}. Then $\nabla^1_X(V) = [\tilde{\sigma}(X),V]$ for all $X \in \Gamma(TW)$ and $V \in \Gamma((\Sym^k(\nu_W^*)\otimes \End(\nu_W))$. However, as $\At^{k+1}(\nu_W)\oplus_{\At^k(\nu_W)} TW$ consist of $(k+1)$-jets of vector fields $[\tilde{\sigma}(X),V] = [\sigma_1(X),V]$. As this is precisely $\nabla_X^{2,\rm ind}(V)$ this finishes the proof.
\end{proof}
Therefore, in what follows we may use either description of the connection on $\Sym^k(\nu_W^*)\otimes \End(\nu_W)$ to endow $\Omega^{\bullet}(W;\Sym^k(\nu_W^*)\otimes \End(\nu_W))$ with a differential. We will also simply right $\nabla$ for the induced flat connection on $\nu_W$, and all it's tensor powers.

\begin{definition}
Let $\sigma :TW \rightarrow \At^k(\nu_W)$ be a $k$-th order foliation with $k\geq 1$. The \textbf{extension class} of $\sigma$, $c(\sigma) \in H^2(W;\Sym^k(\nu_W^*)\otimes \End(\nu_W))$ is defined to be the extension class of \eqref{eq:sesofrest}. That is $c(\sigma) = [\Omega]$ with
\begin{equation*}
\Omega(X_1,X_2) = [\tilde{\sigma}(X_1),\tilde{\sigma}(X_2)] - \tilde{\sigma}([X_1,X_2]),
\end{equation*}
for any splitting $\tilde{\sigma}$ of \eqref{eq:sesofrest} and all $X_1,X_2 \in \mathfrak{X}(W)$.
\end{definition}

\begin{remark}
As we are working with $\At^k(\nu_W)$ we are making use of a tubular neighbourhood embedding, however $c(\sigma)$ is independent of this. Indeed, if we are to replace $\At^{k+1}(\nu_W)\oplus_{\At^k(\nu_W)}TW$ in Equation \ref{eq:sesofrest} with $\iota^{-1}_W\mathcal{A}(\sigma)$, we see that the extension class of this sequence, which is simply $c(\sigma)$ is independent of the choice of tubular neighbourhood embedding.
\end{remark}

\begin{remark}
For $k = 0$, a splitting of Equation \eqref{eq:sesofrest} is precisely the data of a linear connection $\nabla$ on $\nu_W$. The induced connection on $\End(\nu_W)$ obtained from Proposition \ref{prop:algsplittingsabstract} is precisely the connection induced by the linear connection. In this case, the definition of the extension class above depends on the choice of splitting. Indeed, in this case the extension class is precisely the curvature of $\nabla$.
\end{remark}

The extension class provides the precise obstruction of lifting a $k$-th order foliation to a $(k+1)$-th order foliation.

\begin{proposition}\label{prop:extensionresult}
A $k$-th order foliation $\sigma: TW \rightarrow \At^k(\nu_W)$ may be extended to a $(k+1)$-jet if and only if the extension class $c(\sigma) \in H^2(W;\Sym^k(\nu_W^*)\otimes \End(\nu_W))$ of \eqref{eq:sesofrest} vanishes.
\end{proposition}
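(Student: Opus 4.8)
The plan is to reduce the extension problem to a \emph{splitting} problem for the short exact sequence \eqref{eq:sesofrest} and then to feed it into the standard theory of abelian Lie algebroid extensions collected in Appendix \ref{sec:appa}. First I would reformulate what it means to extend $\sigma$. By Proposition \ref{prop:folsassplittings} a $(k+1)$-th order foliation is the same datum as a bracket-preserving splitting $\tilde{\sigma} : TW \to \At^{k+1}(\nu_W)$, and it extends $\sigma$ precisely when its composite with the truncation $p : \At^{k+1}(\nu_W) \to \At^k(\nu_W)$ equals $\sigma$. By the universal property of the fibre product, giving such a $\tilde{\sigma}$ is equivalent to giving a section $X \mapsto (\tilde{\sigma}(X), X)$ of the anchor of $\At^{k+1}(\nu_W) \oplus_{\At^k(\nu_W)} TW$; since the bracket on the fibre product is componentwise and the $TW$-component of such a section is automatically a morphism, this section is bracket-preserving if and only if $\tilde{\sigma}$ is. Hence extensions of $\sigma$ are in bijection with bracket-preserving splittings of \eqref{eq:sesofrest}.

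Next I would check that \eqref{eq:sesofrest} is genuinely an \emph{abelian} extension, so that the classifying-class machinery applies. For $k \geq 1$ the kernel $\Sym^k(\nu_W^*) \otimes \End(\nu_W)$ is a bundle of abelian Lie algebras: in the weight grading on vertical jets it occupies the top weight $k$ retained in $\At^{k+1}(\nu_W)$, so the bracket of two of its elements has weight $2k > k$ and is therefore truncated to zero. The induced $TW$-representation on this kernel is the flat connection $\nabla$ (by the preceding Lemma, $\nabla^1 = \nabla^{2,\mathrm{ind}}$), so by the general theory \eqref{eq:sesofrest} is classified by a cohomology class in $H^2(TW; \Sym^k(\nu_W^*)\otimes\End(\nu_W)) = H^2(W; \Sym^k(\nu_W^*)\otimes\End(\nu_W))$, computed with the differential $d^{\nabla}$, which vanishes exactly when the extension admits a bracket-preserving splitting.

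Finally I would identify that class with $c(\sigma)$ and conclude. Choosing any vector-bundle splitting $\tilde{\sigma}$ of \eqref{eq:sesofrest}, its failure to be a morphism is the kernel-valued two-form $\Omega(X_1,X_2) = [\tilde{\sigma}(X_1),\tilde{\sigma}(X_2)] - \tilde{\sigma}([X_1,X_2])$; the standard extension-theory computation (Jacobi in the total algebroid together with abelianness of the kernel) gives $d^{\nabla}\Omega = 0$, and replacing $\tilde{\sigma}$ by $\tilde{\sigma} + \tau$ with $\tau \in \Omega^1(W; \Sym^k(\nu_W^*)\otimes\End(\nu_W))$ changes $\Omega$ by $d^{\nabla}\tau$, so $[\Omega] = c(\sigma)$ is well defined. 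A bracket-preserving splitting exists iff some choice makes $\Omega = 0$, i.e. iff $c(\sigma) = 0$; combined with the first step this is precisely the claim. The step requiring the most care is not the extension-theoretic bookkeeping but the jet-theoretic subtlety that brackets of jets lose a derivative: this is exactly why one must work with vector fields tangent to $W$, equivalently with $\Der(\mathcal{N}_W^k)$ and the honest Lie algebroids $\At^k(\nu_W)$, so that both the bracket on \eqref{eq:sesofrest} and the abelian structure on its kernel are well defined; once that is in place the remaining arguments are routine applications of Appendix \ref{sec:appa}.
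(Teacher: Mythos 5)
Your proposal is correct and follows essentially the same route as the paper: the paper's proof likewise identifies extensions of $\sigma$ with bracket-preserving splittings of \eqref{eq:sesofrest} and then invokes the fact that the extension class $c(\sigma)$ is precisely the obstruction to such a splitting, delegating the abelian-kernel verification and the cocycle/coboundary bookkeeping to the surrounding discussion and Appendix \ref{sec:appa}. Your version merely spells out those delegated details (the fibre-product reformulation, the weight argument for abelianness, and the well-definedness of $[\Omega]$), which the paper treats as immediate.
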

\begin{proof}
A bracket-preserving splitting of \eqref{eq:sesofrest} is precisely a splitting $\tilde{\sigma} : TW \rightarrow \At^{k+1}(\nu_W)$ with the property that the induced map to $\At^k(\nu_W)$ is $\sigma$. As the extension class measures precisely the whether such a bracket-preserving splitting exists, the conclusion is immediate.
\end{proof}

\subsubsection{Elementary modification}
We now shortly specify to the case where $W = D$ is of codimension-one.

Given a $k$-th order foliation $\sigma_k :TD \rightarrow \At^k(\nu_W)$, consider the associated Lie algebroids $A(\sigma_k)$ and $A(\sigma_{k-1})$. As by Proposition \ref{prop:restdesc} $A(\sigma_k) = \At^k(\nu_W)\oplus_{\At^{k-1}(\nu_W)}TD$ the splitting $\sigma_k$ induced in particular a bracket preserving splitting of
\begin{equation*}
0 \rightarrow (\nu_D^*)^{k-1} \rightarrow A(\sigma_{k-1})_{k-1}|_D \rightarrow TD \rightarrow 0,
\end{equation*}
which we again denote by $\sigma_k$. This realizes $TD$ as a Lie subalgebroid $\sigma_k(TD) \subset A(\sigma_{k-1})|_D$ allowing us to proof the following:
\begin{proposition}\label{prop:restricted fol}
Let $\sigma_k : TW \rightarrow \At^k(\nu_W)$ denote a $k$-th order foliation. Then we can describe $A(\sigma_{k})$ as the \textbf{elementary modification}:
\begin{equation}\label{eq:elmod}
A(\sigma_{k}) = [A(\sigma_{k-1}): \sigma(TW)] := \set{ X \in A(\sigma_{k-1}) : X|_W \in \Gamma(\sigma(W))} . 
\end{equation}
\end{proposition}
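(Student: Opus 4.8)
The plan is to prove the equality \eqref{eq:elmod} of sheaves of sections directly at the level of $C^\infty(M)$-modules, avoiding any appeal to the abstract theory of elementary modifications. The starting point is the explicit description from Lemma \ref{lem:decomposeTangencySheaf}: fixing a single germ of distribution $\xi$ with $J^k_D\xi = \sigma_k$ (so that $J^{k-1}_D\xi$ represents the underlying $(k-1)$-th order foliation), one has $\mathcal{A}(\sigma_k) = \mathcal{I}_D^{k+1}\mathfrak{X}(M) + \Gamma(\xi)$ and $\mathcal{A}(\sigma_{k-1}) = \mathcal{I}_D^{k}\mathfrak{X}(M) + \Gamma(\xi)$. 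Since $\mathcal{I}_D^{k+1}\subseteq\mathcal{I}_D^k$, the inclusion $\mathcal{A}(\sigma_k)\subseteq\mathcal{A}(\sigma_{k-1})$ is immediate, so both sides of \eqref{eq:elmod} are submodules of $A(\sigma_{k-1})$ and it suffices to prove a double inclusion.

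Before doing so I would record the one structural fact on which everything hinges: viewing $A(\sigma_{k-1})|_D = \mathcal{A}(\sigma_{k-1})/\mathcal{I}_D\mathcal{A}(\sigma_{k-1})$, the subbundle $\sigma(TD)$ is precisely the image of $\Gamma(\xi)$ under restriction to $D$, and it is transverse to the kernel $K := (\nu_D^*)^{k-1} = \ker(\rho|_D)$. This is just the observation that the bracket-preserving splitting of $0\to(\nu_D^*)^{k-1}\to A(\sigma_{k-1})|_D\to TD\to 0$ produced from $\sigma_k$ via Proposition \ref{prop:folsassplittings} is the horizontal lift determined by $\xi$; transversality follows from $\xi|_D = TD$, which makes $\rho$ restrict to an isomorphism on $\sigma(TD)$.

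With this in hand the two inclusions become short module computations. For $\mathcal{A}(\sigma_k)\subseteq[A(\sigma_{k-1}):\sigma(TD)]$, I would write $X = Y + Z$ with $Y\in\mathcal{I}_D^{k+1}\mathfrak{X}(M)$ and $Z\in\Gamma(\xi)$, note that $Y\in\mathcal{I}_D\cdot\mathcal{I}_D^{k}\mathfrak{X}(M)\subseteq\mathcal{I}_D\mathcal{A}(\sigma_{k-1})$ so $Y|_D = 0$, and conclude $X|_D = Z|_D\in\sigma(TD)$. For the reverse inclusion I would take $X\in\mathcal{A}(\sigma_{k-1})$ with $X|_D\in\sigma(TD)$, write $X = Y+Z$ with $Y\in\mathcal{I}_D^{k}\mathfrak{X}(M)$, $Z\in\Gamma(\xi)$, and use $Z|_D\in\sigma(TD)$ to get $Y|_D\in\sigma(TD)$. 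Since $k\geq 1$ the vector field $Y$ vanishes along $D$, so $\rho(Y|_D)=0$ and hence $Y|_D\in K$; transversality of $\sigma(TD)$ and $K$ then forces $Y|_D = 0$, i.e. $Y\in\mathcal{I}_D\mathcal{A}(\sigma_{k-1}) = \mathcal{I}_D^{k+1}\mathfrak{X}(M)+\mathcal{I}_D\Gamma(\xi)$. As $\mathcal{I}_D\Gamma(\xi)\subseteq\Gamma(\xi)$, this gives $Y\in\mathcal{A}(\sigma_k)$ and hence $X = Y+Z\in\mathcal{A}(\sigma_k)$.

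The main obstacle I anticipate is justifying the structural fact of the second paragraph cleanly, i.e.\ matching the splitting $\sigma(TD)$ coming from the foliation with the restriction of $\Gamma(\xi)$; everything else is formal manipulation of the ideal $\mathcal{I}_D$. If this identification feels too slick, the safe fallback is to run the identical argument in Frobenius-adapted coordinates (Theorem \ref{th:Frobenius}) with $D=\{x_1 = 0\}$ and $\xi = \langle\partial_{x_2},\dots,\partial_{x_n}\rangle$, where $\mathcal{A}(\sigma_{k-1}) = \langle x_1^{k}\partial_{x_1},\partial_{x_2},\dots,\partial_{x_n}\rangle$, $K = \langle[x_1^{k}\partial_{x_1}]\rangle$ and $\sigma(TD) = \langle[\partial_{x_2}],\dots,[\partial_{x_n}]\rangle$; the condition $X|_D\in\sigma(TD)$ becomes the vanishing on $D$ of the $x_1^{k}\partial_{x_1}$-coefficient, which upgrades $x_1^{k}\partial_{x_1}$ to $x_1^{k+1}\partial_{x_1}$ and reproduces $\mathcal{A}(\sigma_k)$ on the nose.
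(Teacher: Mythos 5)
Your proposal is correct, and it is worth noting that it contains two proofs in one: your ``fallback'' is, almost word for word, the paper's actual proof. The paper argues purely locally: since equality of subsheaves of $\mathfrak{X}(M)$ can be checked locally, it invokes the Frobenius theorem (Theorem \ref{th:Frobenius}) to put $\sigma$ in the normal form given by level sets of $x_1$, identifies $\sigma(TD)$ with $\langle[\partial_{x_2}],\dots,[\partial_{x_n}]\rangle$, and observes that for $X = f\,x_1^{k}\partial_{x_1} + \cdots$ the condition $X|_D \in \sigma(TD)$ is exactly $f|_D = 0$, which upgrades $x_1^k\partial_{x_1}$ to $x_1^{k+1}\partial_{x_1}$ --- three sentences in total. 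Your primary route is genuinely different: a global, coordinate-free argument running both inclusions through the module description $\mathcal{A}(\sigma_k) = \mathcal{I}_D^{k+1}\mathfrak{X}(M) + \Gamma(\xi)$, $\mathcal{A}(\sigma_{k-1}) = \mathcal{I}_D^{k}\mathfrak{X}(M) + \Gamma(\xi)$ of Lemma \ref{lem:decomposeTangencySheaf} (using, correctly, that one germ $\xi$ represents both jets), plus the identification of $\sigma(TD)$ with the image of $\Gamma(\xi)$ in $A(\sigma_{k-1})|_D$ and its transversality to $\ker(\rho|_D)$. That structural identification is indeed the only non-formal input, and your instinct that it is the delicate point is sound --- the paper itself only ever pins down $\sigma(TD)$ concretely in Frobenius coordinates, so a fully rigorous version of your global argument would still need the coordinate check (or a careful unwinding of Proposition \ref{prop:restdesc}) to justify it. What your route buys is conceptual clarity: it shows the proposition is a formal consequence of the ideal-theoretic descriptions and of the splitting being the $\xi$-horizontal lift, independent of any normal form; what the paper's route buys is brevity, since Frobenius collapses everything to a one-line coefficient computation. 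Either way the proof is complete, because the sheaf equality is local and your coordinate argument settles it there.
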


\begin{proof}
A local computation will suffice. Therefore we may assume that we have coordinates $(x_1,\ldots,x_n)$ such that $D = \set{x_1 = 0}$ and $\sigma$ is given by the level sets of $x_1$. The associated splitting will simply be given by ${\sigma}(TD) = \set{\partial_{x_2},\ldots,\partial_{x_n}}$. If $X$ is a section of $A(\sigma_{k-1})$, then $X|_{D} = f|_Zx_1^{k}\partial_{x_1} + \cdots$. This is in $\sigma(TD)$ if and only if $f$ vanishes along $D$, and hence $X$ is in fact in $A(\sigma_{k})$.
\end{proof}

\subsection{Group extensions}\label{ssec:groupext}
Another approach in studying the extension problem is by first applying the Riemann-Hilbert correspondence. Therefore, we will first describe the group $G_{k,l}$ in a bit more detail in this section.

We again consider the group $G_{k,l} = J^k_0\Diff(\rr^l,0)$. Note that we have natural homomorphisms $\phi_k : G_{k,l} \rightarrow G_{k-1,l}$ which forget the $k$-th order part. The composition of these maps to $G_{1,l} = \GL(\rr^l)$ will be denoted by
\begin{equation*}
a: G_{k,l} \to G_{1,l} = \GL(\rr^l),
\end{equation*}
We have a short exact sequence of groups:
\begin{equation}\label{eq:groupseq}
0 \rightarrow \Sym^k(\rr^{l,*}) \otimes \End(\rr^l) \rightarrow G_{k+1,l} \rightarrow G_{k,l} \rightarrow 0,
\end{equation}
where $\Sym^k(\rr^{l,*}) \otimes \End(\rr^l)$ is equipped with addition. 
%
Consequently, we obtain a $G_{k,l}$-module structure on $\Sym^k(\rr^{l,*}) \otimes \End(\rr^l)$. Explicitly, this module structure is given by
\[
\psi \ast b = a(\psi)^k b, \quad \psi \in G_{k,l}, b \in \Sym^k(\rr^{l,*}) \otimes \End(\rr^l).
\]

The extension class of \eqref{eq:groupseq} is the cohomology class 
\[
[G_{k+1,l}] = H^{2}(G_{k,l}, \Sym^k(\rr^{l,*}) \otimes \End(\rr^l)). 
\]

By Theorem \ref{thm:RH} we have that $k$-th order foliations correspond to representations of the fundamental group in $G_{k,l}$. Hence suppose that we have such a representation 
\[
\psi : \pi_{1}(W,d) \to G_{k,l}.
\]

The extension problem is equivalent to finding a representation $\tilde{\psi} : \pi_{1}(W,d) \to G_{k+1,l}$ with the property that $\phi_{k+1}(\tilde{\psi}) = \psi$. This can be fully classified in terms of $[G_{k+1,l}]$ and $\psi$ as follows:
\begin{theorem}\label{th:secondextensionclass}
Let $\sigma : TW \rightarrow \At^k(\nu_W)$ be a $k$-th order foliation, with corresponding representation $\psi : \pi_{1}(W,d) \to G_{k,l}$. Then $\sigma$ may be extended to a $(k+1)$-th order foliation if and only if
\begin{equation*}
\psi^*[G_{k+1,l}] \in H^2(\pi_1(W,x),\Sym^k(\rr^{l,*}) \otimes \End(\rr^l))
\end{equation*}
vanishes.
\end{theorem}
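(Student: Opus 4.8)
The plan is to reduce the statement to a purely group-theoretic lifting problem by invoking the Riemann--Hilbert correspondence, and then to recognise the resulting obstruction as the pullback of the extension class of the sequence \eqref{eq:groupseq}. The two ingredients are thus: a dictionary between extensions of foliations and lifts of representations, and the classical fact that lifting a homomorphism along an abelian group extension is obstructed by the pullback of its class.

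First I would use Theorem \ref{thm:RH}, in its parametrised form (Theorem \ref{thm:pointedRH}), to replace $\sigma$ by its holonomy representation $\psi : \pi_1(W,x) \to G_{k,l}$. The key observation is that the forgetful operation sending a $(k+1)$-th order foliation to its underlying $k$-th order foliation corresponds, under Riemann--Hilbert, to post-composition with the projection $\phi_{k+1} : G_{k+1,l} \to G_{k,l}$. Concretely, a $(k+1)$-th order foliation $\tilde{\sigma}$ furnishes a flat splitting $TW \to \At^{k+1}(\nu_W)$; the projection of principal bundles $\Fr^{k+1}(\nu_W) \to \Fr^k(\nu_W)$ covering $\phi_{k+1}$ is compatible with the anchors, so integrating via Lie II and restricting to the isotropy at $x$ yields holonomies $\tilde{\psi}$ and $\psi = \phi_{k+1} \circ \tilde{\psi}$. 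Hence $\sigma$ admits an extension to order $k+1$ if and only if $\psi$ lifts to a homomorphism $\tilde{\psi} : \pi_1(W,x) \to G_{k+1,l}$ along $\phi_{k+1}$.

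It then remains to prove the lifting criterion for the abelian extension \eqref{eq:groupseq} with class $[G_{k+1,l}] \in H^2(G_{k,l}, A)$, where $A = \Sym^k(\rr^{l,*})\otimes\End(\rr^l)$ and $\Gamma = \pi_1(W,x)$ acts on $A$ through $\psi$. I would argue directly: fix a set-theoretic section $s : G_{k,l} \to G_{k+1,l}$ of $\phi_{k+1}$ and let $\iota$ denote the inclusion of $A$, writing the associated $2$-cocycle $f$ via $s(q_1) s(q_2) = \iota\big(f(q_1,q_2)\big)\, s(q_1 q_2)$, so that $[f] = [G_{k+1,l}]$. Any candidate lift takes the form $\tilde{\psi}(\gamma) = \iota(c(\gamma))\, s(\psi(\gamma))$ for a cochain $c : \Gamma \to A$, and using $s(q)\iota(b) = \iota(q \cdot b)\, s(q)$ one finds that $\tilde{\psi}$ is a homomorphism precisely when
\begin{equation*}
f(\psi(\gamma_1),\psi(\gamma_2)) = c(\gamma_1\gamma_2) - c(\gamma_1) - \gamma_1 \cdot c(\gamma_2),
\end{equation*}
that is, precisely when $\psi^* f = -\delta c$ for the group-cohomology coboundary $\delta$. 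Thus a lift exists exactly when the cocycle $\psi^* f$ is a coboundary, i.e. when $\psi^*[G_{k+1,l}]$ vanishes in $H^2(\Gamma, A)$; this is the standard obstruction-theoretic statement collected in Appendix \ref{sec:appa}.

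I expect the main obstacle to be the functoriality claim of the second step: that truncation of foliations matches $\phi_{k+1}$ on holonomies. Establishing this cleanly requires the naturality of integration (Lie's second theorem) with respect to the morphism of Atiyah sequences induced by $\Fr^{k+1}(\nu_W) \to \Fr^k(\nu_W)$, together with compatibility of the chosen parametrisations of the fibre over $x$. I would also check that the vanishing condition is insensitive to the conjugacy ambiguity inherent in passing from the parametrised to the unparametrised correspondence: since inner automorphisms act trivially on group cohomology, $\psi^*[G_{k+1,l}]$ depends only on the conjugacy class of $\psi$, hence only on the isomorphism class of $\sigma$, so the statement is well posed.
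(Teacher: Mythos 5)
Your proposal is correct and follows essentially the same route as the paper: Theorem \ref{thm:RH} converts the extension problem into lifting $\psi$ along $\phi_{k+1} : G_{k+1,l} \to G_{k,l}$, and the obstruction is identified with the pullback of the class of the extension \eqref{eq:groupseq}. The only cosmetic difference is that you verify the lifting criterion by an explicit cocycle computation, whereas the paper phrases it as the splitting problem for the pullback extension $G_{k+1,l} \times_{G_{k,l}} \pi_1(W,x) \to \pi_1(W,x)$, whose class is $\psi^*[G_{k+1,l}]$ --- the same standard fact recalled in Appendix \ref{sec:appa}.
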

\begin{proof}
The problem of lifting this to a representation into $G_{k+1,l}$ is equivalent to splitting the sequence 
\begin{equation}\label{eq:sequencegroups}
1 \to \End(\rr^l) \to G_{k+1,l} \times_{G_{k,l}} \pi_{1}(W,d) \to \pi_{1}(W,d) \to 1.
\end{equation}
The extension class of this sequence is precisely $\psi^*[G_{k+1,l}]$ which finishes the proof.
\end{proof}
Note that here $\Sym^k(\rr^{l,*}) \otimes \End(\rr^l)$ becomes equipped with the module structure induced by 
\[
a \circ \psi : \pi_{1}(W,d) \to GL(\rr^l). 
\]
Geometrically, this representation corresponds to $\nu_{W}$ equipped with a a flat connection.

\subsection{Groupoid extensions}\label{ssec:groupoidext}
We have shown that both extension classes $\psi^{*}[G_{k+1,l}] \in H^{2}(\pi_1(W,d), \Sym^k(\rr^{l,*}) \otimes \End(\rr^l))$ and $c\in H^2(W;\Sym^k(\nu_W^*)\otimes \End(\nu_W))$ are the precise obstruction for extending a $k$-th order foliation to a $(k+1)$-th order foliation. We will now show that the two may in fact be related via passing to the associated integrating groupoids.

%
%
%
%
%
%
Consider a $k$-th order foliation $\sigma:TW \rightarrow \At^k(\nu_W)$ and the associated holonomy $\Psi: \Pi^1(W) \to \mathcal{G}(\Fr^k(\nu_W))$. From the destription of $\A(\sigma)|_W$ in Proposition \ref{prop:restdesc} we immediately obtain:
\begin{lemma}
The source-simply connected integration of the Lie algebroid defining $\A(\sigma)|_W$ is given by 
\[
\mathcal{G} = \mathcal{G}(P_{k+1}) \times_{\Psi} \Pi(W).
\]
\end{lemma}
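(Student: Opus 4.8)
The plan is to recognise $\mathcal{G}$ as a fibre product of Lie groupoids and to verify the three properties characterising a source-simply connected integration: that it is a Lie groupoid, that its Lie algebroid is the one from Proposition \ref{prop:restdesc}, and that its source fibres are connected and simply connected. By Proposition \ref{prop:restdesc} the Lie algebroid underlying $\A(\sigma)|_W$ is $A := \At^{k+1}(\nu_W)\oplus_{\At^k(\nu_W)} TW$, the fibre product of the canonical projection $\At^{k+1}(\nu_W)\to\At^k(\nu_W)$ with the splitting $\sigma$. I would accordingly let $p_* : \mathcal{G}(P_{k+1})\to\mathcal{G}(P_k)$ be the gauge-groupoid morphism induced by the bundle map $P_{k+1}\to P_k$ (equivalently by $\phi_{k+1}:G_{k+1,l}\to G_{k,l}$), so that $\mathcal{G}=\mathcal{G}(P_{k+1})\times_\Psi\Pi_1(W)$ is precisely the fibre product of $p_*$ and $\Psi$ over $\mathcal{G}(P_k)$.

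First I would establish that $\mathcal{G}$ is a Lie groupoid. Since the sequence \eqref{eq:groupseq} exhibits $\phi_{k+1}$ as a surjective submersion with kernel the vector group $\Sym^k(\RR^{l,*})\otimes\End(\RR^l)$, the map $p_*$ is a surjective submersion; hence its pullback along $\Psi$ is a smooth manifold and $\mathcal{G}$ inherits the structure of a Lie groupoid over $W$. Next I would differentiate. The Lie functor carries a fibre product taken along a submersion to the corresponding fibre product of Lie algebroids, so using $\Lie(\mathcal{G}(P_{k+1}))=\At^{k+1}(\nu_W)$, $\Lie(\mathcal{G}(P_k))=\At^k(\nu_W)$, $\Lie(\Pi_1(W))=TW$, with $\Lie(p_*)$ the projection and $\Lie(\Psi)=\sigma$ (which is how $\Psi$ was defined, by integrating $\sigma$ via Lie II), one obtains $\Lie(\mathcal{G})=\At^{k+1}(\nu_W)\oplus_{\At^k(\nu_W)} TW = A$.

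The crux is source-simple-connectedness, and this is where I expect the real work to lie, precisely because $\mathcal{G}(P_{k+1})$ is itself \emph{not} source-simply connected (its $x$-source fibre is the total space $P_{k+1}$). The improvement comes exactly from fibring over $\Pi_1(W)$. Concretely, the source fibre of a gauge groupoid over $x$ is canonically the total space of the principal bundle, so $\mathcal{G}(P_{k+1})_x\cong P_{k+1}$ and $\mathcal{G}(P_k)_x\cong P_k$, under which $p_*$ becomes the projection $P_{k+1}\to P_k$; similarly $(\Pi_1 W)_x\cong\widetilde{W}$ and $\Psi$ restricts to a lift $\widetilde{W}\to P_k$ of the universal covering map. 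Hence $\mathcal{G}_x\cong P_{k+1}\times_{P_k}\widetilde{W}$, and projection to the second factor exhibits $\mathcal{G}_x$ as a fibre bundle over the simply connected space $\widetilde{W}$ whose fibres are the $\phi_{k+1}$-fibres of $P_{k+1}\to P_k$, that is, affine spaces modelled on the contractible vector group $\Sym^k(\RR^{l,*})\otimes\End(\RR^l)$. The homotopy long exact sequence then yields $\pi_0(\mathcal{G}_x)=\pi_1(\mathcal{G}_x)=0$. Having verified all three properties, uniqueness of the source-simply connected integration identifies $\mathcal{G}$ with the integration of $A$, completing the proof.
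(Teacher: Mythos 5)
Your proof is correct, and it follows exactly the route the paper intends: the paper states this lemma with no proof at all, asserting that it follows ``immediately'' from the fibre-product description of $\A(\sigma)|_W$ in Proposition \ref{prop:restdesc}, which is precisely the identification you start from. Your write-up simply supplies the details the authors suppress --- smoothness of the groupoid fibre product via the surjective submersion $p_*$, compatibility of the Lie functor with fibre products together with $\Lie(\Psi)=\sigma$, and the key point that the source fibre $P_{k+1}\times_{P_k}\widetilde{W}$ is a bundle over $\widetilde{W}$ with contractible (affine) fibres, hence connected and simply connected.
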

Therefore we obtain a short exact sequence of groupoids:
\begin{equation}\label{eq:groupoidseq}
1 \to \Sym^k(\nu_W^*)\otimes \End(\nu_W) \to \mathcal{G} \to \Pi(W) \to 1. 
\end{equation}
This sequence of groupoids in turn determines an extension class $[\mathcal{G}] \in H^2(\Pi_1(W);\Sym^k(\nu_W^*) \otimes \End(\nu_W))$.
We may consider the Van-Est map
\begin{equation*}
VE : H^2(\Pi_1(W);\Sym^k(\nu_W^*)\otimes \End(\nu_W)) \rightarrow H^2(W;\Sym^k(\nu_W^*)\otimes \End(\nu_W)).
\end{equation*}
By the proof of Theorem 5 in \cite{cra03} we have that $VE([\mathcal{G}]) = c(\sigma_k) \in H^2(W;\Sym^k(\nu_W^*) \otimes \End(\nu_W))$.

As the groupoid $\mathcal{G}$ is transitive it is Morita equivalent to its restriction to a point $x \in W$, which is the isotropy group 
\[
\mathcal{G}_{x} = G_{k+1,l} \times_{G_{k,l}} \pi_{1}(W,x). 
\]
The corresponding sequence of groups has precisely extension class $\psi^*[G_{k+1,l}]\in H^2(\pi_1(W,x),\Sym^k(\nu_{W,x}^*)\otimes \End(\nu_{W,x}))$ so we conclude:
\begin{proposition}
Let $[\mathcal{G}]$ be the extension class of \eqref{eq:groupoidseq}, and $\psi^*[G_{k+1,l}]\in H^2(\Pi(W),\Sym^k(\nu_W^*)\otimes \End(\nu_W))$ the class form Theorem \ref{th:secondextensionclass}. Then under the isomorphism induced by Morita equivalence
\begin{equation*}
H^{2}(\pi_{1}(W,x), \mathbb{R}) \cong H^{2}(\Pi(W), \Sym^k(\nu_W^*)\otimes \End(\nu_W)),
\end{equation*}
 $\psi^*[G_{k+1,l}]$ is send to $[\mathcal{G}]$. We conclude that the Van-Est map pre-composed with Morita equivalence identifies $\psi^*[G_{k+1,l}]$ with $c(\sigma) \in H^2(W;\Sym^k(\nu_W^*)\otimes \End(\nu_W))$
\end{proposition}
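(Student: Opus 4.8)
The plan is to deduce the statement from the naturality of the abelian extension class under the Morita equivalence realised by restriction to the isotropy group at $x$. Since $\mathcal{G}$ is transitive over $W$, the inclusion of the isotropy group $\mathcal{G}_x \hookrightarrow \mathcal{G}$ is an essential equivalence and hence induces, by restriction of cochains, an isomorphism from $H^\bullet(\Pi(W); \Sym^k(\nu_W^*)\otimes \End(\nu_W))$ onto $H^\bullet(\pi_1(W,x); \Sym^k(\nu_{W,x}^*)\otimes \End(\nu_{W,x}))$. Here the coefficient module on the right is the fibre over $x$ of the flat bundle $\Sym^k(\nu_W^*)\otimes \End(\nu_W)$, which carries exactly the $\pi_1(W,x)$-action induced by $a \circ \psi$ used in Theorem \ref{th:secondextensionclass}; after trivialising $\nu_{W,x} \cong \rr^l$ this is the module $\Sym^k(\rr^{l,*})\otimes \End(\rr^l)$ appearing there.

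The key observation is that restricting the short exact sequence of groupoids \eqref{eq:groupoidseq} to the isotropy at $x$ reproduces precisely the short exact sequence of groups \eqref{eq:sequencegroups}. Indeed, the isotropy group of $\mathcal{G} = \mathcal{G}(P_{k+1}) \times_{\Psi} \Pi(W)$ at $x$ is $\mathcal{G}_x = G_{k+1,l} \times_{G_{k,l}} \pi_1(W,x)$, the kernel bundle restricts to $\Sym^k(\nu_{W,x}^*)\otimes \End(\nu_{W,x})$, and the base $\Pi(W)$ restricts to $\pi_1(W,x)$. Thus the restriction functor carries the groupoid extension \eqref{eq:groupoidseq} to the group extension \eqref{eq:sequencegroups}, whose class is $\psi^*[G_{k+1,l}]$ by Theorem \ref{th:secondextensionclass}.

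It then suffices to invoke the functoriality of the extension class: for an abelian extension of groupoids the class in $H^2$ is natural with respect to pullback along groupoid morphisms, and in particular along the essential equivalence $\mathcal{G}_x \hookrightarrow \mathcal{G}$. Consequently the restriction isomorphism of the first paragraph sends $[\mathcal{G}]$ to $\psi^*[G_{k+1,l}]$; equivalently, its inverse — the isomorphism displayed in the statement — sends $\psi^*[G_{k+1,l}]$ to $[\mathcal{G}]$. Combining this with the already-established identity $VE([\mathcal{G}]) = c(\sigma)$ yields the conclusion: the Van-Est map precomposed with the Morita isomorphism identifies $\psi^*[G_{k+1,l}]$ with $c(\sigma) \in H^2(W; \Sym^k(\nu_W^*)\otimes \End(\nu_W))$.

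The hard part will be making the naturality of the extension class under the restriction functor fully precise, together with the matching of coefficient modules. The cleanest route is to unwind both extension classes as explicit groupoid and group $2$-cocycles: choose a set-theoretic splitting of \eqref{eq:groupoidseq} and restrict it to arrows based at $x$ to obtain a splitting of \eqref{eq:sequencegroups}, then check directly that the resulting groupoid $2$-cocycle restricts to the group $2$-cocycle computing $\psi^*[G_{k+1,l}]$, and that the identification of the abelian coefficients is genuinely an isomorphism of $\pi_1(W,x)$-modules and not merely of abelian groups. This is standard but slightly technical bookkeeping in groupoid cohomology.
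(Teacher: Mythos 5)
Your proposal is correct and follows essentially the same route as the paper: the paper likewise uses that the transitive groupoid $\mathcal{G}$ is Morita equivalent to its isotropy group $\mathcal{G}_x = G_{k+1,l}\times_{G_{k,l}}\pi_1(W,x)$, identifies the restricted extension with the group extension of Theorem \ref{th:secondextensionclass}, and combines this with $VE([\mathcal{G}]) = c(\sigma)$. Your write-up is in fact more explicit than the paper's (which asserts these identifications without the cocycle-level naturality argument), but the underlying mechanism is identical.
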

An immediate corollary of the fact that $c(\sigma)$ is in the image of the Van-Est map is that:
\begin{corollary}
The extension class $c \in H^2(W;\Sym^k(\nu_W^*)\otimes \End(\nu_W))$ is aspherical, that is:
\begin{equation*}
\int_{S^2}c(\sigma) = 0,
\end{equation*}
for any embedding of $S^2$ in $W$.
\end{corollary}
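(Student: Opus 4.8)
The plan is to leverage what was just proved, namely that $c(\sigma)$ lies in the image of the Van-Est map
\[
VE \colon H^2(\Pi_1(W);\Sym^k(\nu_W^*)\otimes\End(\nu_W)) \to H^2(W;\Sym^k(\nu_W^*)\otimes\End(\nu_W)),
\]
and to combine this with the naturality of Van-Est under pullback. Write $V=\Sym^k(\nu_W^*)\otimes\End(\nu_W)$ for the associated flat bundle and let $f\colon S^2\to W$ be the given embedding. The first step is the elementary observation that $\int_{S^2}c(\sigma)=\int_{S^2}f^*c(\sigma)$, so that it suffices to prove that the pulled-back class $f^*c(\sigma)\in H^2(S^2;f^*V)$ vanishes.

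To do this, I would use that $f$ induces a morphism of fundamental groupoids $\Pi_1(S^2)\to\Pi_1(W)$ covering the Lie algebroid morphism $df\colon TS^2\to TW$, together with the functoriality of the Van-Est construction of \cite{cra03}: pullback along this groupoid morphism is intertwined, through $VE$, with the de Rham pullback $f^*$ on the algebroid side. Writing $c(\sigma)=VE(\alpha)$ for some $\alpha\in H^2(\Pi_1(W);V)$, this naturality gives $f^*c(\sigma)=VE(f^*\alpha)$, where now $f^*\alpha\in H^2(\Pi_1(S^2);f^*V)$.

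The argument then concludes by a connectivity computation. Since $S^2$ is simply connected, the isotropy groups of $\Pi_1(S^2)$ are trivial, and by the Morita equivalence used above $H^2(\Pi_1(S^2);f^*V)\cong H^2(\{1\};(f^*V)_x)=0$. Therefore $f^*\alpha=0$, so $f^*c(\sigma)=VE(0)=0$ and the integral vanishes, as claimed. The one step requiring care is the naturality square for Van-Est with respect to the groupoid morphism induced by $f$; this is the main technical point, although it is a standard consequence of the simplicial description of groupoid cohomology and its comparison with the algebroid complex. Conceptually, this is the familiar fact that a class coming from $\pi_1$-cohomology is pulled back from the aspherical space $B\pi_1(W)$, on which every $2$-sphere is null-homotopic since $\pi_2(B\pi_1(W))=0$.
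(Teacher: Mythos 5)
Your proof is correct and takes essentially the same approach as the paper: the paper deduces the corollary ``immediately'' from the fact that $c(\sigma)$ lies in the image of the Van-Est map, and your argument just fills in the standard details behind that deduction (naturality of Van-Est under the groupoid morphism $\Pi_1(S^2)\to\Pi_1(W)$ induced by the embedding, and vanishing of $H^2(\Pi_1(S^2);f^*V)$ via Morita equivalence with the trivial isotropy group). Your closing conceptual remark also matches the paper's alternative justification, which invokes Proposition \ref{prop:simplyconnected} together with functoriality of the extension class to conclude that it dies when pulled back to the simply connected $S^2$.
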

Note that this also follows from Proposition \ref{prop:simplyconnected}: Indeed, as $S^2$ is simply connected and the extension class is functional we find that the the extension class vanishes when pulled back to $S^2$.
\section{Lie algebroid cohomology}\label{sec:cohomology}
In this section we specialise to the case of $W = D$ a submanifold of codimension $1$. As we saw in Proposition \ref{prop:isliealg}, the singular foliation $\A(\sigma)$, associated to a $k$-th order foliation $\sigma$, arises as the space of sections of a Lie algebroid $A(\sigma)$. In this section we will compute the cohomology of this Lie algebroid. We will first compute this cohomology in the case where the Lie algebroid is defined on the total space of a line bundle $L \to D$ (Subsection \ref{ssec:cohom}), to afterwards globalize this computation to general manifolds (Subsection \ref{ssec:globalcohom}). 

\subsection{The anti-canonical representation}

Let $\pi: L \to D$ be a line bundle over $D$. Note that $\At^k(L)$  can be described as:


\[
\At^k(L) = \At(L) \oplus \bigoplus_{i = 1}^{k-1}L^{-i}. 
\]
The kernel of the anchor map is $\ad(\Fr^k(L))$, the $k$-jets of vertical vector fields that vanish along $D$. Concretely: 
\[
\ad(\Fr^k(L)) = \bigoplus_{i = 0}^{k-1} L^{-i}. 
\]
The Lie bracket between vertical vector fields is given by 
\[
[ - , - ] : \Gamma(L^{-i}) \times \Gamma(L^{-j}) \to \Gamma(L^{-i - j}), \qquad [t, s] = (i-j) t \otimes s. 
\]
The algebroid $\At(L)$ has a canonical representation on $L$ which we denote $\nabla^{0}$. This induces a representation on all tensor powers, which we will also denote $\nabla^{0}$. The Lie bracket between $\At(L)$ and $L^{-i} \subset \ad(\Fr^k(L))$ is given by these connections: 
\[
[ - , - ] : \Gamma(\At(L)) \times \Gamma(L^{-i}) \to \Gamma(L^{-i}), \qquad [X, t] = \nabla^{0}_{X}(t). 
\]

Since $\At^k(L)$ consists of $k$-jets of vector fields, it naturally acts on the $k^{th}$ order neighbourhood of $D$ in $L$. This is given by
\[
\mathcal{N}_{k} = \bigoplus_{i = 0}^{k} L^{-i}
\]
and is naturally a sheaf of algebras over $D$, obtained as a quotient of the tensor algebra of $L^{-1}$. 
\begin{lemma}
The natural $\At^k(L)$ representation on $\mathcal{N}_{k}$ extends the canonical $At(L)$-representation $\nabla^{0}$ on the tensor powers of $L^{-1}$. This action is given by derivations of the algebra. Explicitely, the subbundle $L^{-i} \subset \ad(\Fr^k(L))$ acts via the following formula:
\[
\nabla^{0} :\Gamma( L^{-i}) \times \Gamma(L^{-j}) \to \Gamma(L^{-i - j}), \qquad \nabla^{0}_{t}(s) = j t \otimes s. 
\]
\end{lemma}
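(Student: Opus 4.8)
The plan is to realise the representation concretely as the Lie-derivative action of $k$-jets of vector fields on $k$-jets of functions, and then to read off every stated property from the fact that the Lie derivative is a derivation of the function algebra. First I would recall from the preceding Proposition that a section of $\At^k(L)$ is represented by a $k$-jet along $D$ of a vector field $V$ on the total space of $L$ that is $\pi$-projectible and tangent to the zero section. Such a $V$ acts on $\Ocal_L$ by $f \mapsto \mathcal{L}_V f$, and the crucial point is that this descends to $\mathcal{N}_k = \Ocal_L / \mathcal{I}_D^{k+1}$: tangency to the zero section gives $V(\mathcal{I}_D) \subseteq \mathcal{I}_D$, and the Leibniz rule then propagates this to $V(\mathcal{I}_D^{k+1}) \subseteq \mathcal{I}_D^{k+1}$, so $V$ induces a derivation of $\mathcal{N}_k$. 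Since the induced operator only depends on $V$ modulo vector fields whose Lie derivative lands in $\mathcal{I}_D^{k+1}$, it depends only on the class of $V$ in $\At^k(L)$. This produces a well-defined action that is manifestly by algebra derivations, establishing the derivation claim and giving the natural representation.

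To show it extends the canonical $\At(L)$-representation I would restrict to the linear subalgebroid $\At(L) \subset \At^k(L)$, whose sections correspond to fields of the form $X + c\, t\partial_t$ (a horizontal lift plus a multiple of the Euler field in a local trivialisation with fibre coordinate $t$). These preserve the fibrewise degree and, on the degree-one part $L^{-1} = L^*$ of fibrewise-linear functions, act exactly by the tautological connection $\nabla^0$. Because $\mathcal{N}_k = \bigoplus_{i=0}^k (L^{-1})^{\otimes i}$ is generated over $L^0 = \Ocal_D$ in degree one, and any two algebra derivations agreeing on generators agree everywhere, the action on $L^{-i}$ is forced to be the tensor-power representation induced by $\nabla^0$, which is precisely the asserted extension.

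Finally, for the explicit formula on the vertical part I would again exploit the derivation principle, which makes the numerical factor transparent. In a local trivialisation $u \in \Gamma(L^{-i})$ corresponds to the vertical field $\hat u = u\, t^{i+1}\partial_t$ and $s \in \Gamma(L^{-j})$ to the degree-$j$ function $s\, t^{j}$. Rather than expanding $\hat u(s\, t^{j})$ in one step, I would first record the degree-one case $\hat u(s\, t) = u\,s\, t^{i+1}$, i.e. $\nabla^0_u(s) = u \otimes s$ for $s \in L^{-1}$, and then apply Leibniz on $L^{-j} = (L^{-1})^{\otimes j}$: each of the $j$ tensor factors contributes one copy of $u \otimes (-)$, so the terms sum to $j\, u \otimes s$, giving $\nabla^0_u(s) = j\, u \otimes s$. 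This is exactly where the degree $j$ (not $i$ or $i+1$) enters, and it is consistent with the bracket on $\ad(\Fr^k(L))$ recorded above.

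The main obstacle is not any single deep step but the bookkeeping: pinning down the identification of $\At^k(L)$-sections with $k$-jets of vector fields so that the Lie-derivative action descends unambiguously to $\mathcal{N}_k$, and tracking the homogeneity degrees so that the combinatorial factor $j$ emerges correctly. The derivation-plus-Leibniz argument is what keeps this honest and replaces an error-prone direct expansion.
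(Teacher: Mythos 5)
Your proof is correct, and there is in fact nothing to compare it against: the paper states this lemma without proof, treating it as a routine verification of the local picture it has just set up (sections of $L^{-i}$ as vertical fields $u\,t^{i+1}\partial_t$, elements of $\mathcal{N}_k$ as fibrewise polynomials $s\,t^{j}$). Your argument supplies exactly that verification — the descent of the Lie-derivative action to $\mathcal{N}_k$ and its dependence only on the $k$-jet, the agreement with $\nabla^0$ on the degree-one generators, and the Leibniz count producing the factor $j$ are all sound.
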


Now let 
\[
S_{k}(L) = \mathcal{N}_{k}^* = \bigoplus_{i = 0}^{k } L^{i},
\]
which is equipped with the dual $\At^k(L)$-flat connection, denoted $\nabla^{AC}$. We refer to this as the \emph{anti-canonical representation}.  We can also decompose this connection into its components. The action of $t \in L^{-i}$ on $v \in L^{j}$ vanishes unless $i \leq j$, in which case it is given by 
\[
\nabla^{AC}_{t}(v) = (i-j) t \otimes v \in L^{j-i}. 
\]

\subsection{Maurer-Cartan element} \label{MCdgmodule}
Now let $\sigma :TD \rightarrow \At^k(L)$ be a $k$-th order foliation. This corresponds to a flat $TD$-connection $\nabla$ on $L$ as well as the data of $1$-forms $(0,\eta_{1}, ..., \eta_{k-1}) \in \Omega^{1}(D;\oplus_{i=0}^{k-1} L^{-i})$ satisfying the Maurer-Cartan equation. The anti-canonical connection pulls back to define a flat $TD$-connection $\sigma^{*}(\nabla^{AC})$ on $S_{k}(L)$. This turns $\Omega_{D}^{\bullet}(S_{k}(L))$ into an $\Omega_{D}^{\bullet}$-dg-module. The differential of a section $t_{r} \in \Gamma(L^{r})$ is given by 
\begin{equation} \label{Anticanonicaldifferential}
d(t_{r}) = d^{\nabla}t_{r} + \sum_{i = 1}^{r-1} (i-r) \eta_{i} \otimes t_{r},
\end{equation}
where $\eta_{i} \otimes t_{r} \in \Omega_{D}^1(L^{r-i})$.

\subsection{Algebroid differential forms}\label{ssec:cohom}
Now consider the $b^{k+1}$-algebroid $A(\sigma)$ on the total space of $L$ which is associated to $\sigma$. Its sections are generated by the vector fields
\[
\sigma(X) = \nabla_{X} + \sum_{i = 1}^{k-1} \eta_{i}(X), 
\]
for $X \in \mathfrak{X}(D)$, as well as the vertical vector fields coming from $L^{-k}$. Here, $\nabla_{X}$ is a linear vector field on $L$ and $\eta_{i}(X) \in L^{-i} = L^{-i -1} \otimes L$ is a vector field of the form 
\[
f(x) z^{i+1} \partial_{z}, 
\]
where $z$ is a vertical coordinate. The sections of $L^{-k} = L^{-k-1}\otimes L$ correspond to vector fields of the form 
\[
z^{k+1} \partial_{z}. 
\]

Now we describe the complex of algebroid differential forms for $A(\sigma)$. First, we use the linear $TD$-connection $\nabla$ to describe the differential forms on $L$. The linear connection defines a direct sum decomposition 
\[
TL \cong \pi^*(TD) \oplus \pi^*(L). 
\]
A pair $(X, s)$ defines the vector field $\nabla_{X} + s$. Dualizing, we get 
\[
T^*L \cong \pi^*(T^*D) \oplus \pi^*(L^{-1}). 
\]
Given a form $\beta \in \Omega^{\bullet}_{D}$, we abuse notation and let $\beta$ denote the pullback $\pi^*(\beta) \in \Omega^{\bullet}_{L}$. In this way, viewing $L^{-i}$ as a subset of the functions on $L \setminus D$, the sections of $\Omega^{\bullet}_{D} \otimes L^{-i}$ may be viewed as differential forms on $L \setminus D$. 

Given a section $s \in L^{-1}$, denote the corresponding vertical $1$-form by $s_{\vertrm} \in \Omega^{1}_{L}$. More generally, we can factor $L^{-i} = L^{-i + 1} \otimes L^{-1}$. Hence, for $s \in L^{-i}$, we write $s_{\vertrm} \in \Omega^{1}_{L}$ when we view the first factor as a function on $L \setminus D$ and the second factor as a vertical $1$-form. 

\begin{lemma} \label{derivative1}
Let $s \in L^{-i}$, viewed as a function on $L \setminus D$. Then 
\[
ds = \nabla(s) + i s_{\vertrm},
\]
where $\nabla(s) \in \Omega^{1}_{D}\otimes L^{-i}$. 
\end{lemma}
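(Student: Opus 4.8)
The plan is to reduce to a local computation in a frame, since the asserted identity is local and frame-independent. First I would fix a local nonvanishing section (frame) $e$ of $L$, with associated fibre coordinate $z$ on $L$ (so a point $v \in L$ is written $v = z\,e(\pi(v))$) and dual frame $e^*$ of $L^{-1}$. In this frame the connection is encoded by a local $1$-form $\theta \in \Omega^1_D$ via $\nabla e = \theta \otimes e$, and hence $\nabla e^* = -\theta \otimes e^*$ on the dual. A section $s \in L^{-i}$ is written $s = f\,(e^*)^i$ for a local function $f$ on $D$, and the associated function on $L\setminus D$ is $\hat s = f z^i$ (as $\widehat{(e^*)^i}=z^i$).

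The key conceptual point, which I expect to be the only subtle step, is to correctly identify the two summands of the splitting $T^*L \cong \pi^*(T^*D)\oplus \pi^*(L^{-1})$ in this frame. The horizontal lift determined by $\nabla$ is $\nabla_X = \tilde X - z\,\theta(X)\,\partial_z$, where $\tilde X$ denotes the lift acting only on base coordinates, and therefore the vertical $1$-form attached to $e^* \in L^{-1}$ — the unique $1$-form restricting to $dz$ on vertical vectors and annihilating horizontal ones — is
\[
(e^*)_{\vertrm} = dz + z\,\theta,
\]
as one checks by pairing with $\nabla_X$. Keeping the signs consistent between the connection form, the horizontal lift and this vertical $1$-form is where the care is needed; everything after this is bookkeeping.

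With these identifications in hand the computation is immediate. On one side, $ds = d(fz^i) = z^{i}\,df + i f z^{i-1}\,dz$. On the other, $\nabla(s) = (df - i f\theta)\otimes (e^*)^i$, which under the identification of $\Omega^1_D \otimes L^{-i}$ with forms on $L\setminus D$ (multiplying the base form by $\widehat{(e^*)^i}=z^i$) becomes $z^{i}(df - if\theta)$; and factoring $s = \big(f(e^*)^{i-1}\big)\otimes e^*$ gives $s_{\vertrm} = f z^{i-1}(dz + z\theta)$. Adding,
\[
\nabla(s) + i\, s_{\vertrm} = z^{i}df - i f z^{i}\theta + i f z^{i-1}dz + i f z^{i}\theta = z^{i}df + i f z^{i-1}dz = ds,
\]
which is the claim.

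Finally I would remark that the identity is frame-independent, since both $\nabla$ and the vertical-form assignment $u \mapsto u_{\vertrm}$ are globally defined, so the local formula patches to a global one. Alternatively, one can reduce the general exponent $i$ to the base case $i=1$ by the Leibniz rule: using $\widehat{t\otimes t'} = \hat t\,\hat t'$ together with $d(\hat t\,\hat t') = \hat t'\,d\hat t + \hat t\,d\hat t'$, the fact that $\nabla$ is a tensor derivation, and that the vertical components also combine via Leibniz, one obtains the formula for $(e^*)^i$ from $d\,\widehat{e^*} = \nabla(e^*) + (e^*)_{\vertrm}$.
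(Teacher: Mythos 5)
Your proof is correct. The paper states Lemma \ref{derivative1} without any proof, treating it as a routine verification, and your local-frame computation is exactly the check that is being left to the reader: the only delicate point is keeping the sign conventions consistent among $\nabla e = \theta \otimes e$, the horizontal lift $\nabla_X = \tilde X - z\,\theta(X)\,\partial_z$, and the vertical form $(e^*)_{\vertrm} = dz + z\,\pi^*\theta$, and you do this correctly, so that the terms $\pm i f z^{i}\theta$ cancel as they should. Your closing remark that both $\nabla$ and $u \mapsto u_{\vertrm}$ are globally defined (so the frame computation suffices), together with the optional Leibniz reduction to the case $i=1$, is the right justification and matches how the surrounding text of the paper uses these identities.
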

\begin{lemma} \label{derivative2}
Let $s_{\vertrm} \in L^{-i} = L^{-i + 1}\otimes L^{-1}$ be a vertical $1$-form. Then 
\[
d(s_{\vertrm}) = (\nabla(s))_{\vertrm} \in \Omega^{1}_{D} \otimes L^{i}. 
\]
\end{lemma}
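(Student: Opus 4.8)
The plan is to reduce the statement to a one-line computation by exploiting the flatness of $\nabla$. Since the connection $\nabla$ on $L$ induced by the $k$-th order foliation is flat, around any point of $D$ I can choose a $\nabla$-parallel local frame $e$ of $L$, i.e. one with $\nabla e = 0$. Let $e^* \in \Gamma(L^{-1})$ be the dual frame and let $w$ be the associated fibrewise-linear coordinate on the total space of $L$, so that $e^*$ is precisely the degree-one homogeneous function $w$. Because the frame is parallel, the horizontal distribution of $\nabla$ is exactly $\ker(dw)$, and hence the vertical one-form attached to $e^* \in \Gamma(L^{-1})$ is $(e^*)_{\vertrm} = dw$. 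More generally, factoring $L^{-i} = L^{-i+1}\otimes L^{-1}$ as in the setup, a section of $L^{-i}$ written $g\otimes e^*$ has vertical one-form $g\,dw$, where $g$ is viewed as a homogeneous function of degree $i-1$.

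First I would write an arbitrary local section $s \in \Gamma(L^{-i})$ as $s = f(e^*)^i$, corresponding to the function $f w^i$ on the total space. Since $\nabla e^* = 0$, one has $\nabla(s) = df \otimes (e^*)^i \in \Omega^1_D \otimes L^{-i}$, and therefore, by the recipe above, $(\nabla(s))_{\vertrm} = w^{i-1}\, df \wedge dw$. On the other hand $s_{\vertrm} = f(e^*)^{i-1}(e^*)_{\vertrm} = f w^{i-1}\,dw$, so that
\[
d(s_{\vertrm}) = d\bigl(f w^{i-1}\bigr)\wedge dw = \bigl(w^{i-1}\,df + (i-1)f w^{i-2}\,dw\bigr)\wedge dw = w^{i-1}\,df\wedge dw,
\]
the last equality because $dw\wedge dw = 0$. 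Comparing the two expressions gives $d(s_{\vertrm}) = (\nabla(s))_{\vertrm}$, which is the claim. Finally, since both sides are defined intrinsically and the identity has been verified on a cover by parallel-frame charts, it holds globally.

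The computation is genuinely routine once a parallel frame is in hand, so the only real content — and the step I would emphasise — is the appeal to flatness. If one instead runs the same computation in an arbitrary frame with connection one-form $\theta$ (so $\nabla e = \theta\otimes e$ and $(e^*)_{\vertrm} = dw + w\theta$), then the analogous calculation yields $d(s_{\vertrm}) = (\nabla(s))_{\vertrm} + s\,d\theta$, i.e. an extra curvature term $s\,d\theta$. This term vanishes precisely because $\nabla$ is flat, which is what licenses the parallel-frame shortcut; I would point to this as the conceptual heart of the lemma and as the reason it is stated in the flat setting of this section. As an independent check that avoids choosing a frame, one can differentiate Lemma \ref{derivative1}: from $i\,s_{\vertrm} = ds - \nabla(s)$ and $d^2 = 0$ one gets $i\,d(s_{\vertrm}) = -d(\nabla(s))$, and expanding $d(\nabla(s))$ reproduces $(\nabla(s))_{\vertrm}$ together with the same curvature correction $s\,d\theta$, which again drops out by flatness.
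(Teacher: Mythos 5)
Your proof is correct. Note that the paper itself offers no proof of this lemma (both Lemma \ref{derivative1} and Lemma \ref{derivative2} are stated as bare facts, evidently regarded as routine local computations), so there is no argument of the authors to compare against; your write-up simply supplies the missing details, and it does so accurately. The parallel-frame computation is valid: flatness of $\nabla$ gives the local frame $e$ with $\nabla e = 0$, in which $(e^*)_{\vertrm} = dw$, $s_{\vertrm} = f w^{i-1}\,dw$ and $(\nabla(s))_{\vertrm} = w^{i-1}\,df\wedge dw$, and the identity follows since both sides are frame-independent; the argument also works verbatim when $i \leq 0$ (sections of positive powers $L^{r}$, whose associated ``functions'' $w^{-(r+1)}$ have poles along $D$), which is the case actually needed when the lemma is invoked for $\tau_{r}(u)$ with $u \in L^{r}$. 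Your closing observation is the most valuable part: in a general frame one gets $d(s_{\vertrm}) = (\nabla(s))_{\vertrm} + s\,d\theta$, so the lemma is genuinely a statement about flat connections — consistent with the standing assumption in Section \ref{sec:cohomology} that $\sigma$ induces a flat connection on $L$ — and your cross-check via $d^2 = 0$ applied to Lemma \ref{derivative1} (which reproduces the same curvature correction, since the induced curvature on $L^{-i}$ is $-i\,d\theta$) confirms the two lemmas are compatible.
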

We now use the data of $\sigma$ to construct algebroid forms. 
\begin{lemma}
Given a section $s \in L^{r}$, for $0 \leq r \leq k$, the following expression defines an algebroid $1$-form with a pole of order $r + 1$ along $D$: 
\[
\tau_{r}(s) = s_{\vertrm} - \sum_{i = 1}^{r-1} \eta_{i} \otimes s \in \Omega^{1}(A(\sigma)). 
\]
In other words, we may view $\tau_{r} \in \Omega^{1}(A(\sigma)) \otimes \pi^{*}L^{-r}$. 
\end{lemma}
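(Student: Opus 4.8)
The plan is to verify directly that $\tau_r(s)$ --- which a priori is only an ordinary differential form on $\mathrm{tot}(L)\setminus D$ with a pole along $D$ --- restricts to a \emph{smooth} section of $A(\sigma)^*$. Since $D$ has codimension one, $A(\sigma)$ is locally free of rank $n$ (Proposition \ref{prop:isliealg}), with a local frame given by the lifts $\sigma(X)$, for $X$ ranging over a local frame of $TD$, together with the vertical generator $z^{k+1}\partial_z$ spanning $L^{-k}$. It therefore suffices to pair $\tau_r(s)$ against each of these generators and check that the resulting function is smooth across $D$, i.e. contains no negative powers of the fibre coordinate $z$.

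First I would fix convenient local data. Because $\nabla$ is flat, choose a local flat frame $e$ for $L$ over $U\subset D$, with induced fibre coordinate $z$; then the connection form vanishes, so the horizontal lift is simply $\nabla_X=X$, and a section $s=g\,e^{\otimes r}\in\Gamma(L^r)$ reads as the function $gz^{-r}$ on $L\setminus D$, with $s_{\vertrm}=g\,z^{-(r+1)}dz$ (consistent with Lemma \ref{derivative1}) and $\eta_i\otimes s=g\,z^{-(r-i)}\,\pi^*\eta_i$. In these terms the generators are $\sigma(X)=X+\sum_{i=1}^{k-1}\eta_i(X)\,z^{i+1}\partial_z$ and $z^{k+1}\partial_z$. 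Pairing against the vertical generator is immediate: the horizontal terms $\eta_i\otimes s$ annihilate $\partial_z$, while $\langle s_{\vertrm},z^{k+1}\partial_z\rangle=g\,z^{k-r}$, which is smooth precisely because $r\le k$.

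The heart of the matter is the pairing against $\sigma(X)$. Here $\langle s_{\vertrm},\sigma(X)\rangle=g\sum_{i=1}^{k-1}\eta_i(X)\,z^{i-r}$, whose summands with $i<r$ carry genuine poles $z^{i-r}$ of order $r-i$. On the other hand $\langle \eta_i\otimes s,\sigma(X)\rangle=g\,\eta_i(X)\,z^{i-r}$, so the subtracted sum $\sum_{i=1}^{r-1}\eta_i\otimes s$ is arranged to cancel exactly the polar range $1\le i\le r-1$, leaving $\langle\tau_r(s),\sigma(X)\rangle=g\sum_{i=r}^{k-1}\eta_i(X)\,z^{i-r}$, which involves only non-negative powers of $z$ and is smooth. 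This is the one step where the precise shape of the correction terms is essential, and the main (modest) obstacle is the bookkeeping of the bidegree conventions: tracking which tensor factor of $L^r=L^{r+1}\otimes L^{-1}$ is read as a function versus a vertical one-form, and consistently interpreting $L^{\pm i}$ either as fibrewise-homogeneous functions on $L\setminus D$ or as vertical vector fields and one-forms.

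Finally I would record the two residual assertions. The pole order is read off from the flat-frame expression: the leading term $g\,z^{-(r+1)}dz$ of $s_{\vertrm}$ has order $r+1$ and dominates the correction terms, whose orders are at most $r-1$. The tensorial statement $\tau_r\in\Omega^1(A(\sigma))\otimes\pi^*L^{-r}$ follows because both $s\mapsto s_{\vertrm}$ and $s\mapsto\eta_i\otimes s$ are $C^\infty(D)$-linear in $s$, so $\tau_r$ is a genuine vector-bundle morphism $L^r\to\Omega^1(A(\sigma))$; since all operations involved are natural, the smoothness established in one flat chart is independent of the chosen frame, yielding a globally well-defined algebroid one-form.
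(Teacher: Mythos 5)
Your proposal is correct. In fact the paper gives no proof of this lemma at all: it is stated bare, with the intended verification only hinted at by the remark that follows it (the coordinate formula $\tau_{r}(z^{-r}) = \frac{dz}{z^{r+1}} - \sum_{i=1}^{r-1}\frac{a_{i}}{z^{r-i}}$) and by the local frame $\sigma(X_{1}),\dots,\sigma(X_{n}),\,t\in L^{-k}$ that only appears later, in the proof of Lemma \ref{localbasislemma}. Your argument---work in a flat trivialization, pair against exactly that frame, note that $\langle s_{\vertrm},z^{k+1}\partial_{z}\rangle = g\,z^{k-r}$ is smooth since $r\le k$, and that the polar terms $g\,a_{i}(X)\,z^{i-r}$ with $1\le i\le r-1$ produced by $s_{\vertrm}$ are exactly cancelled by the correction $\sum_{i=1}^{r-1}\eta_{i}\otimes s$, leaving $g\sum_{i=r}^{k-1}a_{i}(X)\,z^{i-r}$---is precisely the computation the authors leave implicit, so your write-up supplies the missing verification rather than taking a different route. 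The only blemish is notational: in your flat frame one should write $\eta_{i}\otimes s = g\,z^{-(r-i)}\,\pi^{*}a_{i}$, where $\eta_{i}=a_{i}\otimes z^{i}$, rather than $\pi^{*}\eta_{i}$; this conflation of $\eta_{i}$ with its scalar coefficient in the chosen frame is harmless but worth cleaning up.
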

\begin{remark}
Note that although $\tau_{r}(s)$ has a pole when viewed as a differential form on $L$, when viewed as an algebroid $1$-form for $A(\sigma)$ it is perfectly smooth. 
\end{remark}
\begin{remark}
Let $z$ be a local flat section of $L^{-1}$, which also serves as a vertical coordinate on $L$. We may locally write $\eta_{i} = a_{i} z^{i}$, for $a_{i} \in \Omega_{D}^{1}$. Then 
\[
\tau_{r}(z^{-r}) = \frac{dz}{z^{r+1}} - \sum_{i = 1}^{r-1} \frac{a_{i}}{z^{r-i}}. \hfill\qedhere
\] 
\end{remark}

Now we can describe the dual space $A(\sigma)^*$. 
\begin{lemma} \label{localbasislemma}
Let $\beta_{1}, ..., \beta_{n}$ be a local basis of $T^*D$ and let $s \in L^{k}$ be a non-vanishing section. Then a local basis of $A(\sigma)^*$ is given by the forms $\beta_{i}$ along with $\tau_k(s)$. 

\end{lemma}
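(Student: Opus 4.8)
The plan is to exhibit the proposed forms as the coframe dual to an explicit local frame of $A(\sigma)$, by computing the pairing matrix and checking that it is the identity. First I would fix the local data: a local frame $X_1,\dots,X_n$ of $TD$ dual to $\beta_1,\dots,\beta_n$, together with a local $\nabla$-flat trivialisation of $L$ giving a vertical coordinate $z$ (a flat local section of $L^{-1}$) in which $\eta_i = a_i z^i$ with $a_i \in \Omega^1_D$ and in which the horizontal lift $\nabla_X$ is annihilated by $dz$. Taking the non-vanishing section $s = z^{-k} \in L^k$, the Remark preceding the statement gives $\tau_k(s) = z^{-k-1}\,dz - \sum_{i=1}^{k-1} a_i\,z^{i-k}$.

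Next I would record the local frame of $A(\sigma)$. By the description of its sections given above, the module is generated by $\sigma(X) = \nabla_X + \sum_{i=1}^{k-1}\eta_i(X)$ for $X \in \mathfrak{X}(D)$ together with the vertical fields coming from $L^{-k}$; locally $\eta_i(X) = a_i(X)\,z^{i+1}\partial_z$ and the vertical generator is $z^{k+1}\partial_z$. Since $A(\sigma)$ is a vector bundle of rank $n+1$ and the $n+1$ sections $\sigma(X_1),\dots,\sigma(X_n),\,z^{k+1}\partial_z$ are pointwise linearly independent (they project to the independent fields $X_1,\dots,X_n$ on $D$, while $z^{k+1}\partial_z$ is a nonvanishing generator of $\ker\rho$), they form a local frame of $A(\sigma)$.

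The core of the argument is the evaluation of the two families of forms on this frame. The pullback forms $\beta_p$ vanish on vertical vectors and satisfy $\beta_p(\sigma(X_j)) = \beta_p(d\pi\,\sigma(X_j)) = \beta_p(X_j) = \delta_{pj}$, so they pair as the identity with the $\sigma(X_j)$ and give $0$ on $z^{k+1}\partial_z$. The crucial computation is $\inp{\tau_k(s),\sigma(X_j)} = 0$: the vertical piece $z^{-k-1}dz$ detects only the vertical part of $\sigma(X_j)$, producing $\sum_{i=1}^{k-1} a_i(X_j) z^{i-k}$, while the horizontal piece $-\sum_i a_i z^{i-k}$ detects only $d\pi\,\sigma(X_j) = X_j$, producing $-\sum_{i=1}^{k-1} a_i(X_j) z^{i-k}$; the two cancel exactly. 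Finally $\inp{\tau_k(s),\,z^{k+1}\partial_z} = z^{-k-1}\,dz(z^{k+1}\partial_z) = 1$. Thus the pairing matrix between $\{\beta_1,\dots,\beta_n,\tau_k(s)\}$ and $\{\sigma(X_1),\dots,\sigma(X_n),z^{k+1}\partial_z\}$ is the identity, so the former is precisely the dual coframe and hence a local basis of $A(\sigma)^*$.

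The step I expect to be most delicate is this cancellation, since it hinges on keeping every convention consistent: the identification $L^{-i} \leftrightarrow z^{i+1}\partial_z$, the meaning of $s_{\vertrm}$ and of $\eta_i \otimes s$, and the choice of a flat trivialisation so that $\nabla_X$ carries no $dz$-component. Everything else is bookkeeping. To finish, I would note that $\tau_k$ is $C^\infty(D)$-linear in $s$ (both the $\vertrm$ operation and the tensoring with the $\eta_i$ are fibrewise-constant linear), so replacing $z^{-k}$ by an arbitrary non-vanishing $s \in L^k$ merely rescales $\tau_k(s)$ by a non-vanishing factor and therefore preserves the basis property; this also makes the conclusion manifestly independent of the auxiliary choices, yielding the claimed local basis for any non-vanishing $s$.
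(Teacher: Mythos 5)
Your proof is correct and follows essentially the same route as the paper: the paper's (very terse) proof also takes the frame $\sigma(X_1),\dots,\sigma(X_n)$ together with the vertical field $t = s^{-1} \in L^{-k}$ and asserts that $\beta_1,\dots,\beta_n,\tau_k(s)$ is the dual coframe. You have simply written out the duality computation in a flat trivialisation (including the key cancellation in $\inp{\tau_k(s),\sigma(X_j)} = 0$ and the reduction to $s = z^{-k}$ by $C^\infty(D)$-linearity), which the paper leaves implicit.
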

\begin{proof}
Let $X_{i}$ be a basis of vector fields on $D$ which is dual to the forms $\beta_{i}$ and let $t = s^{-1}\in L^{-k}$. Then $\sigma(X_{i})$ along with $t$, viewed as a vertical vector field, form a local basis of $A(\sigma)$. The above one forms are the dual basis. 
\end{proof}

Using the local basis of forms provided by Lemma \ref{localbasislemma}, we can write an arbitrary form $\omega \in \Omega^{\bullet}(A(\sigma))$ as follows:
\begin{equation*}
\omega = \alpha \wedge \tau_{k}(s) + \beta,
\end{equation*}
where both $\alpha$ and $\beta$ are smooth `horizontal' forms (i.e. generated by the $\beta_{i}$). We can expand the form $\alpha$ as 
\begin{equation*}
\alpha = \alpha_{0} + \alpha_{1} + ... + \alpha_{k} + \tilde{\alpha}_{k+1},
\end{equation*}
where $\alpha_{i} \in \Omega_{D}^{\bullet} \otimes L^{-i}$ and $\tilde{\alpha}_{k+1}$ vanishes to order at least $k+1$ along $D$. Hence $\tilde{\alpha}_{k+1} \wedge \tau_{k}(s)$ is a smooth form and so 
\[
\omega = \sum_{i = 0}^{k} \alpha_{i} \wedge \tau_{k}(s) + \gamma, 
\]
with $\gamma = \tilde{\alpha}_{k+1} \wedge \tau_{k}(s) + \beta$ a smooth form on $L$.  Now let $t \in L^{-1}$ be a local basis, so that $\alpha_{i} = \phi_{i} \otimes t^{i}$, with $\phi_{i} \in \Omega^{\bullet}_{D}$. Then 
\[
\alpha_{i} \wedge \tau_{k}(s) = \phi_{i} \wedge \tau_{r}(t^{i}s) - \phi_{i} \wedge \sum_{j = r}^{k-1} \eta_{j} \otimes (t^i s), 
\]
where $r = k-i$ and the second summand is smooth. Hence, 
\[
\omega = \sum_{i = 0}^{k} \phi_{i} \wedge \tau_{k-i}(t^{i}s) + \gamma',
\]
where $\gamma'$ is a smooth form on $L$. 

\begin{lemma} \label{decompositionlemma}
The following morphism is an isomorphism of vector spaces 
\[
\tau : \Omega^{\bullet}_{L} \oplus \bigoplus_{r = 0}^{k}  \Omega^{\bullet - 1}_{D}( L^{r}) \to \Omega^{\bullet}(A(\sigma)), \qquad (\beta, \sum_{r = 0}^{k} \phi_{r} \otimes u^{(r)}) \mapsto \beta + \sum_{r = 0}^{k} \phi_{r} \wedge \tau_{r}(u^{(r)}).
\]
\end{lemma}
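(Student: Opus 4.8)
The plan is to reduce everything to a local computation over trivialising opens of $L$ and then glue. Well-definedness of $\tau$ is immediate: the pullback along the anchor $\rho\colon A(\sigma)\to TL$ carries smooth forms $\Omega^\bullet_L$ to smooth algebroid forms, and the lemma preceding this one guarantees that each $\tau_r(u^{(r)})$ is a genuine (smooth) algebroid $1$-form, so $\beta+\sum_r\phi_r\wedge\tau_r(u^{(r)})$ indeed lies in $\Omega^\bullet(A(\sigma))$. The map is manifestly $\RR$-linear, so it remains to prove bijectivity, and since this is a local statement over $D$, I would fix an open $U\subset D$ over which $L$ admits a flat local section $z\in\Gamma(L^{-1})$ that doubles as a vertical coordinate, and prove that $\tau$ is bijective over $\pi^{-1}(U)$.

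Surjectivity over $U$ is exactly the computation carried out immediately before the lemma: using the local basis $\beta_1,\dots,\beta_n,\tau_k(s)$ of $A(\sigma)^*$ from Lemma \ref{localbasislemma}, an arbitrary $\omega\in\Omega^\bullet(A(\sigma))$ was rewritten as $\sum_{i=0}^{k}\phi_i\wedge\tau_{k-i}(t^i s)+\gamma'$ with $\gamma'$ a smooth form on $L$, which is precisely an element of the image of $\tau$. I would simply invoke that computation.

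The substance is injectivity, which I would establish by a pole-order argument in the coordinate $z$. Suppose $\tau(\beta,\sum_r\phi_r\otimes u^{(r)})=0$ over $U$; write $u^{(r)}=g_r z^{-r}$ with $g_r\in C^\infty(U)$ and $\eta_i=a_i z^{i}$ with $a_i\in\Omega^1_U$, so that $\tau_r(u^{(r)})=g_r z^{-(r+1)}\,dz-\sum_{i=1}^{r-1}g_r a_i\,z^{-(r-i)}$. Decomposing $\beta=\beta_{\mathrm{hor}}+\beta_{\mathrm{vert}}\wedge dz$ and collecting the terms of the relation that contain the vertical $1$-form $dz$, I obtain $\beta_{\mathrm{vert}}+\sum_{r=0}^{k}(g_r\phi_r)\,z^{-(r+1)}=0$, where $\beta_{\mathrm{vert}}$ is smooth in $z$. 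Since $z^{-1},\dots,z^{-(k+1)}$ are linearly independent over forms pulled back from $U$ and $\beta_{\mathrm{vert}}$ has only non-negative powers of $z$, matching powers forces $g_r\phi_r=0$ for every $r$, whence $\phi_r\otimes u^{(r)}=0$, and then $\beta_{\mathrm{vert}}=0$. Feeding $g_r\phi_r=0$ back into the horizontal correction terms $\phi_r\wedge a_i\,z^{-(r-i)}$ kills them all, so the horizontal part of the relation collapses to $\beta_{\mathrm{hor}}=0$; hence $\beta=0$ and the whole input vanishes.

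Having bijectivity over each trivialising open, I would conclude globally: injectivity is local and so holds directly on global sections, while global surjectivity follows since the local preimages are unique (by injectivity) and therefore agree on overlaps, gluing to a global preimage. I expect the only real obstacle to be the bookkeeping rather than any conceptual point: one must carefully separate the terms carrying the factor $dz$ from the purely horizontal ones and track pole orders. The structural fact that makes this separation clean is that the leading pole of $\tau_r$ is the \emph{unique} vertical term of order $z^{-(r+1)}$, so the various $\tau_r$ have pairwise distinct leading vertical poles while the smooth summand $\beta$ contributes none, which is exactly what decouples the three families $\beta$, $\{\phi_r\otimes u^{(r)}\}$ in the argument above.
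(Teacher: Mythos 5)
Your proposal is correct, and its core is the same as the paper's: for surjectivity you invoke exactly the computation preceding the lemma (rewriting an arbitrary $\omega$ in the local basis of Lemma \ref{localbasislemma} as $\sum_i \phi_i\wedge\tau_{k-i}(t^is)+\gamma'$), which is precisely what the paper does. The differences are in the parts the paper compresses. First, the paper dismisses injectivity as ``straightforward''; you actually prove it, via the pole-order argument in the vertical coordinate $z$ (smooth forms have no negative powers of $z$, while the $\tau_r$ have pairwise distinct leading vertical poles $z^{-(r+1)}dz$), which is the right way to make ``straightforward'' precise -- implicitly you are using that an algebroid form is determined by its restriction to $L\setminus D$, where the anchor is an isomorphism, which holds since $\wedge^\bullet A(\sigma)^*$ is a vector bundle and $L \setminus D$ is dense. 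Second, for globalization the paper uses a partition of unity (valid because $\tau$ is $C^{\infty}(D)$-linear, so $\psi_\alpha\,\xi_\alpha$ extends by zero), whereas you glue the unique local preimages (valid because $\tau$ commutes with restriction, i.e.\ is a morphism of sheaves over $D$, and injectivity holds over every trivialising open, hence over all overlaps). Both globalization mechanisms are standard and correct; yours has the mild advantage of not needing to check that local preimages of compactly supported forms extend by zero, while the paper's avoids having to note that intersections of trivialising opens are again trivialising.
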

\begin{proof}
Showing that $\tau$ is injective is straightforward. We have shown above that $\tau$ is surjective locally on $D$. From this we get global surjectivity using a partition of unity. 
\end{proof}
Observe that the second summand $\bigoplus_{r = 0}^{k}  \Omega^{\bullet - 1}_{D}( L^{r})$ is nothing by $\Omega^{\bullet}_{D}(S_{k}(L))[-1]$, the graded vector space underlying the anticanonical dg module of Section \ref{MCdgmodule} with the degrees shifted by $1$. Then, the map $\tau$ may be upgraded to an isomorphism of cochain complexes:
\begin{lemma}
The morphism $\tau: \Omega_{L}^{\bullet} \oplus \Omega^{\bullet}_{D}(S_{k}(L))[-1] \to \Omega^{\bullet}(A(\sigma))$ is a chain map. 
\end{lemma}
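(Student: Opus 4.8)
The plan is to verify the identity $d_{A(\sigma)}\circ\tau=\tau\circ d$ one summand at a time, where on the source $d$ denotes the de Rham differential on $\Omega^\bullet_L$ together with the twisted connection differential of Equation \eqref{Anticanonicaldifferential} on $\Omega^{\bullet-1}_D(S_k(L))$, these two pieces being independent. On the first summand there is nothing to prove: the inclusion $\Omega^\bullet_L\hookrightarrow\Omega^\bullet(A(\sigma))$ is precisely the pullback $\rho^*$ along the anchor $\rho\colon A(\sigma)\to TL$, and for any Lie algebroid the anchor induces a morphism of complexes. Using the Leibniz rule for $d_{A(\sigma)}$ together with the fact just noted that $\rho^*$ is a chain map, the verification on the second summand reduces to the degree-zero generators: since $\tau(\pi^*\phi\wedge\tau_r(u))$ splits by Leibniz into $\pi^*(d\phi)\wedge\tau_r(u)$ plus $\pm\,\pi^*\phi\wedge d_{A(\sigma)}\tau_r(u)$, it suffices to show
\[
d_{A(\sigma)}\tau_r(s)=\tau\bigl(d^{AC}s\bigr),\qquad s\in\Gamma(L^r),\ 0\le r\le k,
\]
where $d^{AC}s=d^\nabla s+\sum_{i=1}^{r-1}(i-r)\,\eta_i\otimes s$ is the anti-canonical differential of Section \ref{MCdgmodule}.

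Next I would reduce the computation of $d_{A(\sigma)}$ to an ordinary de Rham computation. Away from $D$ the anchor of the $b^{k+1}$-algebroid $A(\sigma)$ is an isomorphism, so on $L\setminus D$ the operator $d_{A(\sigma)}$ agrees with the usual exterior derivative under $\rho^*$; since algebroid forms are smooth and $L\setminus D$ is dense, it is enough to compute $d\,\tau_r(s)$ as an ordinary (singular) differential form there and then recognise the outcome. Writing $\tau_r(s)=s_{\vertrm}-\sum_{i=1}^{r-1}\eta_i\otimes s$, I would differentiate term by term with Lemmas \ref{derivative1} and \ref{derivative2}: Lemma \ref{derivative2} gives $d(s_{\vertrm})=(\nabla s)_{\vertrm}$, while for each product Lemma \ref{derivative1} yields a horizontal covariant piece $d^\nabla(\eta_i\otimes s)$ together with a vertical piece whose coefficient is the fibre weight $r-i$ of $\eta_i\otimes s\in\Omega^1_D(L^{r-i})$. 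The local model $\tau_r(z^{-r})=\tfrac{dz}{z^{r+1}}-\sum_i\tfrac{a_i}{z^{r-i}}$ from the preceding Remark is convenient for keeping track of signs and pole orders.

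Collecting these contributions, the output organises into a term $(d^\nabla s)_{\vertrm}$, terms linear in the $\eta_i$, and terms quadratic in the $\eta_i$; expanding $\tau(d^{AC}s)=(d^\nabla s)\wedge\tau_r+\sum_i(i-r)(\eta_i\otimes s)\wedge\tau_{r-i}$ and substituting the definition of each $\tau_{r-i}$ produces exactly the same three families. The $(d^\nabla s)_{\vertrm}$ terms agree on the nose, and the linear terms agree once the fibre-weight coefficients $r-i$ are reconciled with the $(i-r)$ appearing in $d^{AC}$, which is direct bookkeeping. The quadratic terms are where the content lies: differentiating the left-hand side produces $\sum_i(d^\nabla\eta_i)\otimes s$, whereas the right-hand side produces genuine products $\sum_{p+q=i}\eta_p\wedge\eta_q\otimes s$, and these match precisely because $(\eta_i)$ satisfies the Maurer--Cartan equation of Proposition \ref{prop:MCexplicit} (equivalently Equation \eqref{eq:MCfirst}), which replaces each $d^\nabla\eta_i$ by the corresponding quadratic bracket. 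Thus the Maurer--Cartan equation is exactly the input needed to close the computation, and it is here that the main difficulty sits; the remainder is the degree- and weight-bookkeeping described above. Once $d_{A(\sigma)}\tau_r(s)=\tau(d^{AC}s)$ holds on generators, the Leibniz reduction of the first paragraph upgrades it to the full chain-map identity, completing the proof.
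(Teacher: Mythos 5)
Your proposal is correct and takes essentially the same approach as the paper's proof: reduce via the Leibniz rule to the identity on generators $\tau_r(s)$, compute the exterior derivative of the singular forms using Lemmas \ref{derivative1} and \ref{derivative2}, and close the quadratic terms with the Maurer--Cartan equation of Proposition \ref{prop:MCexplicit}. The only differences are cosmetic: you compute from $d\tau_r(s)$ toward $\tau(d^{AC}s)$ whereas the paper expands $\tau(du)$ and works backwards, and you spell out the reduction to generators and the density-of-$L\setminus D$ justification that the paper leaves implicit.
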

\begin{proof}
It suffices to check that $d \tau_{r}(u) = \tau( d u)$ for $u \in L^r$. Starting from the right hand side, with the expression for $du$ given by Equation \ref{Anticanonicaldifferential}, we get 
\begin{align*} 
\tau( d u) &= \tau_{r}(d^{\nabla}u) + \sum_{i = 1}^{r-1}(i-r)\tau_{r-i}(\eta_{i} \otimes u) \\
&= (d^{\nabla}u + \sum_{i = 1}^{r-1} (i-r) \eta_{i} \otimes u)_{\vertrm} + \sum_{i=1}^{r-1} \eta_{i} \wedge d^{\nabla}u - \sum_{i = 1}^{r-1} \sum_{j = 1}^{r - i - 1} (i-r) \eta_{i} \wedge \eta_{j} \otimes u \\
&= (d^{\nabla}u + \sum_{i = 1}^{r-1} (i-r) \eta_{i} \otimes u)_{\vertrm} + \sum_{i=1}^{r-1} \eta_{i} \wedge d^{\nabla}u - \frac{1}{2} \sum_{s = 1}^{r-1} \sum_{i + j = s} (i - j) \eta_{i} \wedge \eta_{j} \otimes u \\
&= (d^{\nabla}u + \sum_{i = 1}^{r-1} (i-r) \eta_{i} \otimes u)_{\vertrm} + \sum_{i=1}^{r-1} \eta_{i} \wedge d^{\nabla}u - \sum_{s = 1}^{r-1} d^{\nabla}\eta_{s} \otimes u \\ 
&= (d^{\nabla}u + \sum_{i = 1}^{r-1} (i-r) \eta_{i} \otimes u)_{\vertrm} - \sum_{i=1}^{r-1}d^{\nabla}( \eta_{i} \otimes u) \\
&= d \tau_{r}(u). 
\end{align*}
In the fourth equality we have used the Maurer-Cartan equation and in the last equation we have used Lemmas \ref{derivative1} and \ref{derivative2}. 
\end{proof}

\begin{corollary}\label{cor:exactsequence}
There is a canonical isomorphism 
\[
H^{\bullet}(A(\sigma)) \cong H^{\bullet}(D) \oplus H^{\bullet - 1}(D, S_{k}(L)). 
\]
\end{corollary}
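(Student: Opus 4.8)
The plan is to read off the result directly from the chain isomorphism $\tau$ established in the previous lemma, together with two standard identifications of the cohomology of its two summands. No new analytic input is needed: all of the work has already been done in proving that $\tau$ is an isomorphism of cochain complexes.

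First I would observe that the source of $\tau$ is, by construction, the direct sum of two cochain complexes: $(\Omega_L^{\bullet}, d)$ with the ordinary de Rham differential, and $\Omega_D^{\bullet}(S_{k}(L))[-1]$ with the differential induced by the flat $TD$-connection $\sigma^{*}(\nabla^{AC})$ of Section \ref{MCdgmodule}. Since the preceding lemma asserts that $\tau$ is an isomorphism of cochain complexes, and cohomology commutes with finite direct sums of complexes, I obtain a canonical isomorphism
\[
H^{\bullet}(A(\sigma)) \cong H^{\bullet}(\Omega_L^{\bullet}) \oplus H^{\bullet}\big(\Omega_D^{\bullet}(S_{k}(L))[-1]\big).
\]

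It then remains to identify the two summands. For the first, $H^{\bullet}(\Omega_L^{\bullet})$ is the ordinary de Rham cohomology of the total space of $L$; since the zero section $D \hookrightarrow L$ is a deformation retract of the vector bundle $L$, homotopy invariance of de Rham cohomology gives $H^{\bullet}(\Omega_L^{\bullet}) \cong H^{\bullet}(D)$. For the second, $\Omega_D^{\bullet}(S_{k}(L))$ equipped with the differential induced by the flat connection $\sigma^{*}(\nabla^{AC})$ is by definition the complex computing the cohomology of $D$ with coefficients in the flat bundle $S_{k}(L)$, so that the degree shift $[-1]$ turns its cohomology in total degree $\bullet$ into $H^{\bullet-1}(D, S_{k}(L))$. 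Substituting both identifications into the displayed splitting yields the claimed isomorphism, and it is canonical precisely because $\tau$ is.

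The only points requiring any care are bookkeeping rather than substance: the degree-shift convention built into $\Omega_D^{\bullet}(S_{k}(L))[-1]$, and the fact that the decomposition is a genuine splitting of \emph{complexes}, i.e.\ that each of the two summands is preserved by the differential on the source of $\tau$. Both of these are already encoded in the statement of the preceding lemma, so there is no genuine obstacle to overcome here.
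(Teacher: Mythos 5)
Your proposal is correct and follows exactly the route the paper intends: Corollary \ref{cor:exactsequence} is stated without proof precisely because it is an immediate consequence of the chain isomorphism $\tau$ from Lemma \ref{decompositionlemma} and the subsequent lemma, combined with the identification $H^{\bullet}(\Omega_L^{\bullet}) \cong H^{\bullet}(D)$ via the deformation retraction onto the zero section and the definition of $H^{\bullet-1}(D, S_k(L))$ as the cohomology of the flat-connection complex. Your write-up simply makes these bookkeeping steps explicit, which matches the paper's argument.
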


We now prove a technical lemma. 
\begin{lemma} \label{technicalexactness}
Let $\alpha = \sum_{r = 0}^{k} \alpha_{r} \in \Omega_{D}^{i-1}(S_{k}(L))$ be a closed element. Let $h \in L^{-2}$ be a metric and let $\gamma \in \Omega_{D}^{1}$ be the closed $1$-form such that $\nabla(h) = -2 \gamma \otimes h$. Then 
\[
\tau(\alpha)|_{L \setminus D} = \alpha_{0} \wedge \gamma + d \eta,
\]
where $\eta = (-1)^{i} (\sum_{s = 1}^{k} s^{-1} \alpha_{s} - \frac{1}{2} \log(h) \alpha_{0})$ is a horizontal form with singularities along $D$. 
\end{lemma}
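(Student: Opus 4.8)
The plan is to prove the identity by an explicit computation on $L\setminus D$, reducing everything to the two derivative formulas of Lemmas~\ref{derivative1} and~\ref{derivative2} and to the closedness of $\alpha$. First I would unwind the left-hand side. Since $\tau$ sends $\alpha_s\in\Omega_D^{i-1}(L^s)$ to $\alpha_s\wedge\tau_s$ and $\tau_s(u)=u_{\vertrm}-\sum_{j=1}^{s-1}\eta_j\otimes u$, collecting terms by the power of $L$ gives, on $L\setminus D$,
\[
\tau(\alpha)|_{L\setminus D}=\sum_{s=0}^{k}(\alpha_s)_{\vertrm}+(-1)^{i}\sum_{m=1}^{k}\Bigl(\sum_{j=1}^{m-1}\eta_j\wedge\alpha_m\Bigr)^{\sim},
\]
where $(\alpha_s)_{\vertrm}$ is the vertical $i$-form built from $\alpha_s$, and $(\,\cdot\,)^{\sim}$ denotes the operation of regarding an element of $\Omega_D^\bullet(L^m)$ as a (polar) form on $L\setminus D$. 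The only content of this step is keeping track of the Koszul signs produced when the degree-$(i-1)$ coefficient $\alpha_s$ is commuted past the degree-$1$ factors $\eta_j$ and the vertical differential; the sign $(-1)^i=-(-1)^{i-1}$ is exactly such a commutation.

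Next I would compute $d\eta$ term by term. Extending Lemma~\ref{derivative1} from sections to forms gives, for $\alpha_s\in\Omega_D^{i-1}(L^s)$,
\[
d(\alpha_s^{\sim})=(d^\nabla\alpha_s)^{\sim}+(-1)^{i}\,s\,(\alpha_s)_{\vertrm},
\]
so that after dividing by $s$ the vertical contributions of the polar terms assemble into $\sum_{s\geq1}(\alpha_s)_{\vertrm}$. For the logarithmic term I would use that $h\in L^{-2}$, viewed as a positive function on $L\setminus D$, satisfies $d\log h=-2\gamma+2\,h_{\vertrm}/h$ (this being a direct consequence of $\nabla(h)=-2\gamma\otimes h$ and the derivative formula), and that $h_{\vertrm}/h$ is the canonical vertical $\mathrm{dlog}$ form $(1)_{\vertrm}$ (locally $dz/z$), independent of $h$. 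Differentiating $-\frac12\log(h)\,\alpha_0$ then produces, after restoring the Koszul signs relating $\gamma\wedge\alpha_0$ to $\alpha_0\wedge\gamma$ and $(1)_{\vertrm}\wedge\alpha_0$ to $(\alpha_0)_{\vertrm}$, the two pieces $-\alpha_0\wedge\gamma$ and $(\alpha_0)_{\vertrm}$ (the latter completing the sum to $\sum_{s\geq0}(\alpha_s)_{\vertrm}$), together with a term $\log(h)\,(d^\nabla\alpha_0)^{\sim}$ that will drop out once closedness is used.

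At this point the hypothesis that $\alpha$ is closed enters. Expanding the condition via~\eqref{Anticanonicaldifferential} and separating powers of $L$ yields
\[
d^\nabla\alpha_m=m\sum_{j=1}^{k-m}\eta_j\wedge\alpha_{m+j},\qquad 0\le m\le k.
\]
The instance $m=0$, namely $d^\nabla\alpha_0=0$, annihilates the $\log(h)\,(d^\nabla\alpha_0)^{\sim}$ contribution to $d\eta$; the instances $m\geq1$ turn each $\frac1s(d^\nabla\alpha_s)^{\sim}$ into $\sum_{j}(\eta_j\wedge\alpha_{s+j})^{\sim}$. A reindexing $m=s+j$ matches this double sum with the one in the expansion of $\tau(\alpha)$ above. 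Collecting everything, $d\eta=-\alpha_0\wedge\gamma+\tau(\alpha)|_{L\setminus D}$, which is the desired identity.

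The main obstacle I anticipate is organizational rather than conceptual: consistently identifying sections of the powers $L^m$ with functions on $L\setminus D$ (poles when $m>0$), with their associated vertical forms $(\,\cdot\,)_{\vertrm}$, and with horizontal forms, and then pinning down every sign coming from the Leibniz rule and from commuting degree-$(i-1)$ forms past the degree-$1$ objects $\eta_j$, $\gamma$ and the vertical differentials. In particular the $(-1)^{i-1}$ factors attached to the form-level versions of Lemmas~\ref{derivative1} and~\ref{derivative2} must be tracked carefully, since these are precisely what make the term $\alpha_0\wedge\gamma$ appear and the remaining cancellations close up.
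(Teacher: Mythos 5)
Your proposal is correct and is essentially the same argument as the paper's: both rest on the expansion of $\tau(\alpha)$ into vertical and $\eta_j\wedge\alpha_m$ terms, the form-level version of Lemma \ref{derivative1}, the decomposition of the closedness of $\alpha$ by powers of $L$ (giving $d^{\nabla}\alpha_{0}=0$ and $d^{\nabla}\alpha_{s}=s\sum_{j}\eta_{j}\wedge\alpha_{s+j}$), and the identity $d\log h=-2\gamma+2\epsilon$ coming from the metric. The only difference is the direction of the computation — you differentiate $\eta$ and match the result against $\tau(\alpha)|_{L\setminus D}-\alpha_{0}\wedge\gamma$, whereas the paper massages $\tau(\alpha)$ into $\alpha_{0,\vertrm}+(-1)^{i}\sum_{s}s^{-1}d\alpha_{s}$ and then rewrites $\alpha_{0,\vertrm}$ — which is a cosmetic reorganization of the same calculation.
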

\begin{proof}
We compute 
\begin{align*}
\tau(\alpha) &= \sum_{r = 0}^{k} \tau_{r}(\alpha_{r}) \\ 
&= \sum_{r = 0}^{k} \alpha_{r, \vertrm} + (-1)^i \sum_{r = 0}^{k} \sum_{j = 1}^{r-1} \eta_{j} \wedge \alpha_{r} \\
&= \sum_{s = 0}^{k} \alpha_{s, \vertrm} + (-1)^{i} \sum_{s = 1}^{k-1}\sum_{r = s+1}^{k} \eta_{r-s} \wedge \alpha_{r} \\
&= \sum_{s=0}^{k} \alpha_{s, \vertrm} + (-1)^{i} \sum_{s = 1}^{k-1} s^{-1} d^{\nabla}\alpha_{s} \\
&= \alpha_{0, \vertrm} + \sum_{s = 1}^{k} s^{-1}(-1)^{i} d\alpha_{s}.
\end{align*}
In the fourth equality we have used Proposition \ref{prop:MCexplicit} and in the last equality we have used Lemma \ref{derivative1} to show that 
\[
(-1)^{i}d\alpha_{s} = s \alpha_{s, \vertrm} + (-1)^{i} d^{\nabla}\alpha_{s}. 
\]
Now $\alpha_{0} \in \Omega_{D}^{i-1}$ and so $\alpha_{0, \vertrm} = \alpha_{0} \wedge \epsilon$, where $\epsilon = 1_{\vertrm}$, viewing $1 \in L^{1}\otimes L^{-1}$. This can be constructed from the metric $h$ as $\epsilon = \frac{h_{\vertrm}}{h}$. Now using Lemma \ref{derivative1} we compute 
\[
d\log(h) = \frac{ \nabla(h) + 2 h_{\vertrm}}{h} = -2 \gamma + 2 \epsilon. 
\]
Hence $\epsilon = \gamma + \frac{1}{2} d\log h$, and so 
\[
\alpha_{0, \vertrm} = \alpha_{0} \wedge \gamma - \frac{1}{2} (-1)^{i} d( \log h \alpha_{0}). \qedhere
\]
\end{proof}

Notice that $\Omega_{D}^{\bullet} \oplus \Omega^{\bullet}_{D}(S_{k}(L))[-1]$ is a quasi-isomorphic subcomplex of $\Omega^{\bullet}(A(\sigma))$. 
\begin{proposition} \label{localalgebraformula}
The (isomorphic) image of $\Omega_{D}^{\bullet} \oplus \Omega^{\bullet}_{D}(S_{k}(L))[-1]$ under $\tau$ is a quasi-isomorphic subalgebra of $\Omega^{\bullet}(A(\sigma))$. With respect to this algebra structure, $\Omega_{D}^{\bullet}$ is a subalgebra with its usual structure and $\Omega^{\bullet}_{D}(S_{k}(L))[-1]$ is an ideal. Hence, $\Omega^{\bullet}_{D}(S_{k}(L))[-1]$ is an $\Omega_{D}^{\bullet}$-cdga. The dg-module structure is the usual one, and the product is given by the following formula:
\[
t_{s} \ast t_{r} = \sum_{i = s}^{r-1} \eta_{i} \otimes t_{s} \otimes t_{r}. 
\]
where $s < r$ and $t_{r} \in L^{r}$ and $t_{s} \in L^{s}$. 
\end{proposition}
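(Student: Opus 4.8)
The plan is to transport the commutative differential graded algebra structure of $\Omega^{\bullet}(A(\sigma))$ across the chain isomorphism $\tau$ of Lemma \ref{decompositionlemma}, and then to verify that the subspace $\Omega_{D}^{\bullet}\oplus\Omega_{D}^{\bullet-1}(S_{k}(L))$ is closed under the wedge product. The quasi-isomorphism statement is already recorded in the remark preceding the proposition, so only the algebraic closure and the explicit formulas remain. Two of the three structural claims are immediate. First, the genuinely smooth forms on $L$ form a subalgebra of $\Omega^{\bullet}(A(\sigma))$, and $\Omega_{D}^{\bullet}$ sits inside them as the pullbacks $\pi^{*}\beta$, which visibly wedge to pullbacks; hence $\Omega_{D}^{\bullet}$ is a subalgebra with its usual structure. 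Second, for $\beta\in\Omega_{D}^{\bullet}$ and an ideal generator $\phi\wedge\tau_{r}(u)$ one has $\pi^{*}\beta\wedge(\phi\wedge\tau_{r}(u))=(\beta\wedge\phi)\wedge\tau_{r}(u)$, which lies in the ideal and exhibits the module action as the wedge on the $\Omega_{D}^{\bullet}$-coefficient, i.e. the usual dg-module structure on the anticanonical module of Section \ref{MCdgmodule}.

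The entire content therefore reduces, using graded-commutativity to assume $s\le r$ and the Leibniz-type identity above to strip off the $\Omega_{D}^{\bullet}$-coefficients, to computing the single product $\tau_{s}(t_{s})\wedge\tau_{r}(t_{r})$ for $t_{s}\in L^{s}$ and $t_{r}\in L^{r}$. First I would work locally, choosing a flat section $z$ of $L^{-1}$ and writing $\eta_{i}=a_{i}z^{i}$ with $a_{i}\in\Omega_{D}^{1}$, so that the local expression $\tau_{r}(z^{-r})=z^{-r-1}dz-\sum_{i=1}^{r-1}a_{i}z^{i-r}$ recorded in the remark above is available. Expanding the wedge, the leading vertical–vertical term $z^{-s-r-2}dz\wedge dz$ vanishes, and the two cross terms combine, for $s<r$, into $\sum_{j=s}^{r-1}z^{j-s-r-1}a_{j}\wedge dz$. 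The plan is then to reinstate each singular vertical monomial through $z^{-m-1}dz=\tau_{m}(z^{-m})+\sum_{l}a_{l}z^{l-m}$ with $m=s+r-j$; the $\tau_{m}$-parts assemble exactly into $\tau\!\left(\sum_{i=s}^{r-1}\eta_{i}\otimes t_{s}\otimes t_{r}\right)$, which is the claimed value of $t_{s}\ast t_{r}$ and lies in the ideal.

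What then remains is a purely horizontal remainder $R$, collecting the $a_{p}\wedge a_{q}$ terms produced both by this reinstatement and by the horizontal–horizontal part of the original product. The crux of the argument, and the step most prone to bookkeeping errors, is that this remainder vanishes. I expect to settle it in one of two equivalent ways: either by direct cancellation, grouping the monomials by total weight $p+q$ and observing that the resulting index set is symmetric under $p\leftrightarrow q$, so that antisymmetry of $a_{p}\wedge a_{q}$ kills each weight; or, more conceptually, by noting that $R=\tau_{s}(t_{s})\wedge\tau_{r}(t_{r})-\tau(t_{s}\ast t_{r})$ is a difference of genuine (smooth) algebroid forms, hence itself a smooth algebroid form, while being purely horizontal with strictly negative powers of $z$. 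Feeding this through $\tau$ and matching vertical components, whose distinct pole orders $z^{-m-1}dz$ are linearly independent, forces all coefficients to vanish, so $R=0$. It is worth emphasising that, in contrast with the chain-map lemma, no Maurer–Cartan input is needed for the product itself; the cancellation is purely combinatorial.

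In conclusion, the computation gives $\tau_{s}(t_{s})\wedge\tau_{r}(t_{r})=\tau(t_{s}\ast t_{r})$ with $t_{s}\ast t_{r}=\sum_{i=s}^{r-1}\eta_{i}\otimes t_{s}\otimes t_{r}$ for $s<r$, and the same expansion shows the product vanishes when $s=r$, consistently with the empty sum. Together with the two immediate structural observations, this proves that the image of $\Omega_{D}^{\bullet}\oplus\Omega_{D}^{\bullet-1}(S_{k}(L))$ is a subalgebra in which $\Omega_{D}^{\bullet}$ is a subalgebra and $\Omega_{D}^{\bullet-1}(S_{k}(L))$ is an ideal carrying precisely the stated cdga structure, the quasi-isomorphism having been noted above. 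The only additional care required is tracking the Koszul signs arising from the degree shift $[-1]$ when promoting this generator-level identity to arbitrary ideal elements $\phi_{s}\wedge\tau_{s}(t_{s})$ and $\phi_{r}\wedge\tau_{r}(t_{r})$.
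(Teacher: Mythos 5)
Your proposal is correct and follows essentially the same route as the paper's proof: reduce, via the $\Omega_D^{\bullet}$-module structure and graded commutativity, to the product of two generators $\tau_s(t_s)\wedge\tau_r(t_r)$, note the vertical--vertical term dies, combine the cross terms into $\sum_{j=s}^{r-1}$, reassemble them (by adding and subtracting the horizontal singular tails) into $\tau\bigl(\sum_{i=s}^{r-1}\eta_i\otimes t_s\otimes t_r\bigr)$, and kill the leftover horizontal terms by antisymmetry of symmetric sums $\eta_i\wedge\eta_j$ --- exactly the paper's cancellation, and your index-set symmetry check is sound. The only differences are presentational: you compute in a local flat trivialisation $\eta_i=a_iz^i$ where the paper computes invariantly with its vertical-form notation, and your alternative pole-order-matching argument for the vanishing of the remainder is a valid extra observation rather than a different method.
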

\begin{proof}
Consider two elements $(\beta_{1}, \sum_{r = 0}^{k} \phi_{r} \otimes u_{r})$ and $(\beta_{2}, \sum_{s = 0}^{k} \psi_{s} \otimes v_{s})$. Computing their product, we get 
\begin{align*}
\tau(\beta_{1}, \sum_{r = 0}^{k} \phi_{r} \otimes u_{r}) \wedge \tau(\beta_{2}, \sum_{s = 0}^{k} \psi_{s} \otimes v_{s}) &= (\beta_{1} + \sum_{r = 0}^{k} \phi_{r} \wedge \tau_{r}(u_{r})) \wedge (\beta_{2} + \sum_{s = 0}^{k} \psi_{s} \wedge \tau_{s}(v_{s})) \\ 
&= \beta_{1} \wedge \beta_{2} + \sum_{s = 0}^{k} \beta_{1} \wedge \psi_{s} \wedge \tau_{s}(v_{s}) + (-1)^{|\beta_{2}|} \sum_{r=0}^{k} \phi_{r} \wedge \beta_{2} \wedge \tau_{r}(u_{r}) \\
& \qquad \qquad + \sum_{r,s=0}^{k} (-1)^{|\psi_{s}|} \phi_{r} \wedge \psi_{s} \wedge \tau_{r}(u_{r}) \wedge \tau_{s}(v_{s}) \\
&= \tau(\beta_{1} \wedge \beta_{2}, \sum_{s = 0}^{k} \beta_{1} \wedge \psi_{s} \otimes v_{s} + (-1)^{|\beta_{2}|} \sum_{r = 0}^{k} \phi_{r} \wedge \beta_{2} \otimes u_{r}) \\ 
& \qquad \qquad + \sum_{r,s=0}^{k} (-1)^{|\psi_{s}|} \phi_{r} \wedge \psi_{s} \wedge \tau_{r}(u_{r}) \wedge \tau_{s}(v_{s}). 
\end{align*}
It thus remains to compute the formula for $\tau_{s}(v_{s}) \wedge \tau_{r}(u_{r})$. We can assume that $s < r$ (otherwise, just flip the order at the cost of a sign). Computing this we get 
\begin{align*}
\tau_{s}(v_{s}) \wedge \tau_{r}(u_{r}) &= (v_{s, \vertrm} - \sum_{i = 1}^{s-1} \eta_{i} \otimes v_{s}) \wedge (u_{r,\vertrm} - \sum_{j = 1}^{r-1} \eta_{j} \otimes u_{r}) \\ 
&= \sum_{j = 1}^{r-1} \eta_{j} \wedge (v_{s} \otimes u_{r})_{\vertrm} - \sum_{i = 1}^{s-1} \eta_{i} \wedge (v_{s} \otimes u_{r})_{\vertrm} + \sum_{j = 1}^{r-1} \sum_{i = 1}^{s-1} \eta_{i} \wedge \eta_{j} \otimes (v_{s} \otimes u_{r}) \\
&= \sum_{j = s}^{r-1} \eta_{j} \wedge (v_{s} \otimes u_{r})_{\vertrm} - \sum_{j = s}^{r-1} \sum_{i = 1}^{s-1} \eta_{j} \wedge \eta_{i} \otimes (v_{s} \otimes u_{r}) \\
&= \sum_{j = s}^{r-1} \eta_{j} \wedge ( (v_{s} \otimes u_{r})_{\vertrm} - \sum_{i = 1}^{s + r - j - 1} \eta_{i} \otimes (v_{s} \otimes u_{r})) + \sum_{\substack{i, j \geq s, \\  i + j \leq r + s -1}} \eta_{j} \wedge \eta_{i} \otimes (v_{s} \otimes u_{r}) \\
&= \tau( \sum_{j = s}^{r-1} \eta_{j} \otimes v_{s} \otimes u_{r}). 
\end{align*}
Here we have used $\eta_{j} \otimes v_{s} \otimes u_{r} \in \Omega_{D}^{1} \otimes L^{s + r - j}$ and the fact that symmetric sums of $\eta_{j} \wedge \eta_{i}$ vanish. 
\end{proof}

\subsection{A spectral sequence}
Note that $\Omega_{D}^{\bullet}(S_{k-1}(L))[-1] \subset \Omega_{D}^{\bullet}(S_{k}(L))[-1] $ is a subcomplex for all $k$. This gives rise to the following bounded decreasing filtration $F^{\bullet}$ on $\Omega_{D}^{\bullet}(S_{k}(L))[-1] $:
\[
F^{p}\big(\Omega_{D}^{\bullet}(S_{k}(L))[-1]\big) =\begin{cases}
   \Omega_{D}^{\bullet}(S_{k}(L))[-1]   & p \leq -k  \\
  \Omega_{D}^{\bullet}(S_{-p}(L))[-1]  & -k \leq p \leq 0 \\
  0 & p>0
\end{cases}
\]
\begin{proposition} \label{spectralsequenceprop}
The filtration $F^{\bullet}$ gives rise to a spectral sequence $E$ which converges to the cohomology $H^{\bullet-1}_{D}(S_{k}(L))$. The $E_{1}$ page is given by the cohomology of the local systems $L^{i}$: 
\[
E_{1}^{p,q} =  H^{p+q-1}(L^{-p}), \qquad p = 0, ..., -k.
\]
\end{proposition}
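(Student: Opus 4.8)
The plan is to recognize $(\Omega_{D}^{\bullet}(S_{k}(L))[-1], F^{\bullet})$ as a filtered cochain complex and run the associated spectral sequence, the only real content being the identification of the page-zero differential; convergence will then be automatic from boundedness.

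First I would verify that $F^{\bullet}$ is a filtration by subcomplexes. Inspecting the anti-canonical differential from Equation \eqref{Anticanonicaldifferential}, a section $t_{r} \in \Gamma(L^{r})$ is sent to $d^{\nabla}t_{r} \in \Omega_{D}^{1}(L^{r})$ together with the off-diagonal terms $(i-r)\eta_{i} \otimes t_{r} \in \Omega_{D}^{1}(L^{r-i})$ for $1 \leq i \leq r-1$. The diagonal part $d^{\nabla}$ preserves the $L^{\bullet}$-grading, while each off-diagonal term strictly lowers it, landing in $L^{r-i}$ with $r-i < r$. Consequently $d$ maps $F^{p} = \Omega_{D}^{\bullet}(S_{-p}(L))[-1]$ into itself, so the filtration is indeed by subcomplexes; it is moreover bounded, being all of $\Omega_{D}^{\bullet}(S_{k}(L))[-1]$ for $p \leq -k$ and zero for $p > 0$.

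Next I would compute the associated graded. Since $S_{-p}(L)/S_{-p-1}(L) \cong L^{-p}$, we get $\gr^{p} = F^{p}/F^{p+1} \cong \Omega_{D}^{\bullet}(L^{-p})[-1]$. The key observation, already recorded above, is that the $\eta_{i}$-terms lower the $L^{\bullet}$-grading strictly, hence land in the deeper step $F^{p+1}$ and vanish on the associated graded; therefore the induced page-zero differential $d_{0}$ on $\gr^{p}$ is exactly the de Rham differential $d^{\nabla}$ twisted by the flat connection on $L^{-p}$. This is the one step that requires genuine (if brief) attention: it is precisely where the Maurer–Cartan data $(\eta_{i})$ drops out, and getting the filtration degrees and the shift $[-1]$ to line up is the only place a sign or index error could creep in.

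Finally, a bounded filtration of a cochain complex yields a convergent spectral sequence whose abutment is $H^{\bullet}$ of the total complex, which here is $H^{\bullet-1}(D, S_{k}(L))$ by definition of the shifted complex. With the cohomological bigrading $E_{0}^{p,q} = \gr^{p}$ in total degree $p+q$, the $E_{1}$ page is the cohomology of $(\gr^{p}, d_{0}) = (\Omega_{D}^{\bullet}(L^{-p})[-1], d^{\nabla})$. Accounting for the degree shift, $E_{1}^{p,q} = H^{p+q-1}(\Omega_{D}^{\bullet}(L^{-p}), d^{\nabla}) = H^{p+q-1}(L^{-p})$, the cohomology of the local system $L^{-p}$, which is nonzero only for $0 \leq -p \leq k$, i.e. $-k \leq p \leq 0$. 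This is precisely the claimed $E_{1}$ page, completing the argument.
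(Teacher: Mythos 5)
Your proposal is correct and follows essentially the same route as the paper's proof: boundedness of $F^{\bullet}$ gives convergence, the associated graded identifies $E_{0}^{p,q} \cong \Omega_{D}^{p+q-1}(L^{-p})$, and the observation that the Maurer--Cartan terms $\eta_{i}$ strictly lower the $L^{\bullet}$-grading (hence die in the associated graded) leaves $d^{\nabla}$ as the page-zero differential, yielding $E_{1}^{p,q} = H^{p+q-1}(L^{-p})$. Your explicit verification that $F^{\bullet}$ consists of subcomplexes is a detail the paper records just before the proposition rather than inside the proof, but the argument is the same.
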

\begin{proof}
Because the filtration is bounded, the spectral sequence converges to the cohomology. The $E_{0}$ page of the spectral sequence is given by the associated graded of the complex. Hence, for $p = 0, ..., -k$ we have
\[
E_{0}^{p,q} = \frac{F^{p}\big(\Omega_{D}^{p+q}(S_{k}(L))[-1]\big)}{F^{p+1}\big(\Omega_{D}^{p+q}(S_{k}(L))[-1]\big)} \cong \Omega_{D}^{p+q-1}(L^{-p}),
\]
and $E_{0}^{p,q} = 0$ for all other values of $p$. The differential on the complex is given by Equation \ref{Anticanonicaldifferential}. The portion that survives after taking the associated graded is given by $d^{\nabla}$, the connection for the local systems $L^{i}$. Hence, the $E_{1}$ page is given by 
\[
E_{1}^{p,q} = H_{D}^{p+q-1}(L^{-p})
\]
for $p = 0, ..., -k$ and $E_{1}^{p,q} = 0$ for all other values of $p$. 
\end{proof}

\subsection{The algebra in the case of the trivial line bundle}
We study the special case where $L$ is the trivial bundle with trivial local system. Let $t_{r} \in L^{r}$ be the unit section. Hence $t_{r} \otimes t_{s} \cong t_{r+s}$. Now we prove a technical lemma that will be useful in computing examples. 

Let $(C^{\bullet},d) \subset (\Omega^{\bullet}_{D}, d)$ be a quasi-isomorphic sub-cdga. Assume that the Maurer-Cartan elements have the form $\eta_{i} = \alpha_{i} \otimes t_{-i}$ for $\alpha_{i} \in C^1$. Define 
\[
C^{\bullet}(\sigma) = \bigoplus_{i = 0}^{k}(C^{\bullet} \otimes t_{i}[-1]),
\]
where we now view $t_{i}$ to have degree $+1$. It is straightforward to check that $C^{\bullet}(\sigma)$ is a sub-cdga of $\Omega^{\bullet}_{D}(S_{k}(L))[-1]$. 
\begin{lemma} \label{inducedquasiiso}
The cdga $C^{\bullet}(\sigma) $ is a quasi-isomorphic subalgebra of $\Omega^{\bullet}_{D}(S_{k}(L))[-1]$. 
\end{lemma}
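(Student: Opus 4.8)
The plan is to show that the inclusion $C^{\bullet}(\sigma) \hookrightarrow \Omega_{D}^{\bullet}(S_{k}(L))[-1]$ induces an isomorphism on cohomology, by induction on $k$ and exploiting the fact that the Maurer–Cartan twisting terms \emph{strictly lower} the power of $L$. First I would record the key structural observation coming from Equation \eqref{Anticanonicaldifferential}: since $\eta_{i} = \alpha_{i}\otimes t_{-i}$ with $i \geq 1$, the differential maps $\Omega_{D}^{\bullet}(L^{r})$ into $\Omega_{D}^{\bullet}(L^{r}) \oplus \bigoplus_{j<r}\Omega_{D}^{\bullet}(L^{j})$, with diagonal part equal to $d^{\nabla}=d$ (the ordinary de Rham differential, as the local system is trivial) and off-diagonal part lying in strictly lower $L$-degree. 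Consequently $\Omega_{D}^{\bullet}(S_{k-1}(L))[-1]$ is a subcomplex of $\Omega_{D}^{\bullet}(S_{k}(L))[-1]$, and likewise $\bigoplus_{i=0}^{k-1}(C^{\bullet}\otimes t_{i}[-1])$ is a subcomplex of $C^{\bullet}(\sigma)$; the inclusion respects these subcomplexes.

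Next I would assemble the two short exact sequences of complexes
\begin{equation*}
0 \to \Omega_{D}^{\bullet}(S_{k-1}(L))[-1] \to \Omega_{D}^{\bullet}(S_{k}(L))[-1] \to \Omega_{D}^{\bullet}(L^{k})[-1] \to 0,
\end{equation*}
and its counterpart for $C^{\bullet}(\sigma)$, related by the inclusion. The crucial step is to identify the quotient complexes: in the quotient, any twisting term $\eta_{i}\otimes t_{k}$ lands in $L^{k-i}$ with $i\geq 1$ and is therefore killed, so the induced differential on $\Omega_{D}^{\bullet}(L^{k})[-1]$ is simply $d^{\nabla}=d$ acting on $\Omega_{D}^{\bullet}\otimes t_{k}$, and likewise on $C^{\bullet}\otimes t_{k}$. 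Using that $L$ is trivial with trivial local system, these two quotients are just $(\Omega_{D}^{\bullet},d)$ and $(C^{\bullet},d)$ (shifted and tensored with the unit section $t_{k}$), so the map between them is exactly the given quasi-isomorphism $C^{\bullet}\hookrightarrow \Omega_{D}^{\bullet}$.

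Then I would run the induction. The base case $k=0$ is the hypothesis that $C^{\bullet}\subset\Omega_{D}^{\bullet}$ is quasi-isomorphic. For the inductive step, the two short exact sequences yield a morphism of long exact sequences in cohomology; by the inductive hypothesis the maps on the sub-complexes $S_{k-1}$ are isomorphisms, and by the previous paragraph the maps on the quotients $L^{k}$ are isomorphisms, so the five lemma forces the map on $H^{\bullet}\big(\Omega_{D}^{\bullet}(S_{k}(L))[-1]\big)$ to be an isomorphism. Equivalently, one may phrase this as a comparison of the spectral sequences of Proposition \ref{spectralsequenceprop} attached to the two bounded filtered complexes: the inclusion is filtered and induces an isomorphism on $E_{1}$-pages (each $E_{1}$-term being a de Rham cohomology group, computed by $C^{\bullet}$ on one side and $\Omega_{D}^{\bullet}$ on the other), whence an isomorphism on the abutments.

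The main obstacle is the bookkeeping in the second step: one must verify carefully that passing to the quotient annihilates \emph{all} Maurer–Cartan contributions, so that the quotient differential is genuinely the untwisted de Rham differential. This is precisely where the hypotheses are used — that $C^{\bullet}$ is closed under $d$ and under products (so $d\phi$ and $\alpha_{i}\wedge\phi$ stay in $C^{\bullet}$), and that each $\eta_{i}$ carries strictly negative $L$-degree $-i$. Everything else is formal homological algebra.
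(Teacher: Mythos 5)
Your proposal is correct. The paper's own proof is exactly the argument you relegate to your closing remark: it observes that the filtration $F^{\bullet}$ of Proposition \ref{spectralsequenceprop} restricts to $C^{\bullet}(\sigma)$, that the inclusion is therefore a filtered map inducing an isomorphism on $E_{1}$-pages (each $E_{1}$-term being $H^{\bullet}(C^{\bullet})$ on one side and de Rham cohomology of $D$ on the other, isomorphic by hypothesis), and then invokes the Eilenberg--Moore comparison theorem for the bounded filtration to conclude. Your primary route --- induction on $k$ using the short exact sequences $0 \to \Omega_{D}^{\bullet}(S_{k-1}(L))[-1] \to \Omega_{D}^{\bullet}(S_{k}(L))[-1] \to \Omega_{D}^{\bullet}(L^{k})[-1] \to 0$ and their counterparts for $C^{\bullet}(\sigma)$, identifying the quotient differentials as untwisted, and applying the five lemma --- is an unrolled, more elementary version of the same comparison: for a finite filtration the comparison theorem is proved by precisely this induction. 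What your presentation buys is self-containedness (no appeal to the comparison theorem as a black box) and an explicit verification that the Maurer--Cartan terms die in each successive quotient, which the paper leaves implicit in the computation of the $E_{0}$-page; what the paper's phrasing buys is brevity and the fact that it reuses the filtration already set up for Proposition \ref{spectralsequenceprop} without introducing new exact sequences. Both arguments rest on the same two facts: the twisting terms strictly lower the $L$-degree, and $C^{\bullet} \hookrightarrow \Omega_{D}^{\bullet}$ is a quasi-isomorphism closed under $d$ and products with the $\alpha_{i}$.
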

\begin{proof}
It suffices to show that the inclusion, denoted $i$, is a quasi-isomorphism. Note first that the filtration $F^{\bullet}$ induces a filtration on $C^{\bullet}(\sigma)$ and hence the inclusion induces a morphism between the associated spectral sequences. The map on the $E_{1}$-page is given by 
\[
i: H^{p+q-1}(C^{\bullet}) \to H^{p+q-1}_{D}.
\]
This is an isomorphism because we have assumed that $C^{\bullet}$ is a quasi-isomorphic subcomplex of $\Omega_{D}^{\bullet}$. Hence, by the Eilenberg-Moore Comparison theorem \cite{MR1269324}, 
$i: C^{\bullet}(\sigma)  \to \Omega^{\bullet}_{D}(S_{k}(L))[-1]$ is a quasi-isomorphism. 
\end{proof}

\subsubsection{An Example} \label{Heisenbergalgebraexample}
We continue with the example of Heisenberg group from Example \ref{ex:Heisenberg1}. So recall that $H(\mathbb{R})$ denotes the Heisenberg group, $H(\mathbb{Z})$ the subgroup consisting of matrices with integer entries and $X = H(\mathbb{R})/H(\mathbb{Z})$ the quotient. Let $a = dx$, $b = dy$ and $c = dz - y dx$. These descend to $1$-forms in $\Omega^{1}(X)$ and satisfy $da = db = 0$ and $dc = a \wedge b$. Let 
\[
C = \mathbb{R}[a,b,c]
\]
be the induced subalgebra of $\Omega^{\bullet}(X)$. Then by the Gysin sequence, $C$ is quasi-isomorphic to $\Omega^{\bullet}(X)$. The cohomology is given by 
\[
H^{\bullet}(C) = \langle 1 \rangle \oplus \langle a ,b \rangle \oplus \langle ac, bc \rangle \oplus \langle abc \rangle.
\]
Choose Maurer-Cartan elements given by 
\[
\eta_{1} = a \otimes t_{-1}, \qquad \eta_{2} = b\otimes t_{-2}, \qquad \eta_{3} = -c \otimes t_{-3}. 
\]
This gives rise to the cdga 
\[
C(\sigma) = \mathbb{R}[a,b,c]\langle t_{0}, t_{1}, t_{2}, t_{3}, t_{4} \rangle, 
\]
with differentials 
\[
dt_{0} = 0, \ \ dt_{1} = 0, \ \ dt_{2} = -a t_{1}, \ \ dt_{3} = - 2at_{2} - bt_{1}, \ \ dt_{4} = -3at_{3} - 2bt_{2} + ct_{1},
\]
and products 
\begin{align*}
t_{0} t_{1} &= 0, \ \ t_{0}t_{2} =at_{1}, \ \ t_{0}t_{3} = at_{2} + bt_{1}, \ \ t_{0}t_{4} = at_{3} + bt_{2} - ct_{1}, \ \ t_{1}t_{2} = at_{2}, \\ 
t_{1}t_{3} &= at_{3} + bt_{2}, \ \ t_{1}t_{4} = at_{4} + bt_{3} - ct_{2}, \ \ t_{2}t_{3} = bt_{3}, \ \ t_{2} t_{4} = bt_{4} - ct_{3}, \ \  t_{3}t_{4} = -ct_{4}.
\end{align*}
This algebra is quasi-isomorphic to $\Omega_{X}^{\bullet}(S_{4}(L))[-1]$ by Lemma \ref{inducedquasiiso}. Using the spectral sequence, we compute the cohomology algebra. It is given by 
\[
H^{\bullet}(C(\sigma)) = H^{\bullet}(C)t_{0} \bigoplus \big( \langle t_{1} \rangle \oplus \langle bt_{1}, bt_{2} + ct_{1}, at_{4} + \frac{1}{2}bt_{3} - ct_{2} \rangle \oplus \langle bct_{1}, act_{3} + bct_{2}, act_{4} + bct_{3} \rangle \oplus \langle abct_{4} \rangle\big).
\]
Computing the products of this algebra we find that only two of them are non-vanishing. They are given by 
\[
t_{1} \ast (at_{4} + \frac{1}{2}bt_{3} - ct_{2} ) = \frac{1}{6}bct_{1}, \qquad (at_{4} + \frac{1}{2}bt_{3} - ct_{2} )^2 = abct_{4}. 
\]
The dimensions of the cohomology of the associated $b^{5}$-algebroid are given by 
\[
\dim(H^{i}(A(\sigma))) = 1, \ \ 4, \ \ 7, \ \ 6, \ \ 2. 
\]
\subsection{Universal algebra for foliations} \label{UniversalAlgebrasection}
Let $\mathfrak{g}_{k} = Lie(G_{k,1})$, which has basis $z\partial_{z}, z^2 \partial_{z}, ..., z^{k} \partial_{z}$. Let $x_{0}, x_{1}, ..., x_{k-1}$ be the dual basis of $\mathfrak{g}_{k}^*$. Then the Chevalley-Eilenberg algebra of this Lie algebra is given by 
\[
CE(\mathfrak{g}_{k}) = \mathbb{R}[x_{0}, ..., x_{k-1}], 
\]
where each $x_{r}$ has degree $+1$ and the differential is given by 
\[
\delta(x_{r}) = \sum_{i = 0}^{r}(i-r)x_{i}x_{r-i}. 
\]
Now define 
\[
S_{k} = \oplus_{r = 0}^{k} CE(\mathfrak{g}_{k}) t_{r} =  \mathbb{R}[x_{0}, ..., x_{k-1}]\langle t_{0}, ..., t_{k} \rangle, 
\]
where each $t_{i}$ has degree $+1$. Define 
\[
\delta(t_{r}) = \sum_{i = 0}^{r}(i-r)x_{i}t_{r-i},
\]
and 
\[
t_{s} t_{r} = x_{s} t_{r} + x_{s+1}t_{r-1} + ... + x_{r-1}t_{s+1},
\]
for $s < r$. Then $S_{k}$ is a cdga defined over $CE(\mathfrak{g}_{k}) $.

Now let $A(\sigma)$ be a $b^{k+1}$-algebroid defined over $D \times \mathbb{R}$ with leaves $D$, $D \times \mathbb{R}^{+}$ and $D \times \mathbb{R}^{-}$. This is specified by a Maurer-Cartan element $\sigma \in \Omega_{D}^{1}(\mathfrak{g}_{k})$. Equivalently, this is defined by the data of a morphisms of cdgas 
\[
\psi_{\sigma} : CE(\mathfrak{g}_{k}) \to \Omega_{D}^{\bullet}, \qquad x_{i} \mapsto \eta_{i}. 
\]
This turns $\Omega_{D}^{\bullet}$ into an algebra over $CE(\mathfrak{g}_{k}) $, and this allows us to define a new cdga 
\[
S_{k} \otimes_{CE(\mathfrak{g}_{k})} \Omega_{D}^{\bullet}. 
\]
It is straightforward to check that this algebra is nothing but $\Omega^{\bullet}_{D}(S_{k}(L))[-1]$. Hence, $S_{k}$ is the universal algebra controlling the cohomology of $b^{k+1}$-algebroids. 
\subsection{Globalising the cohomology}\label{ssec:globalcohom}
Now let $D \subset M$ be a hypersurface in a manifold. The data $\sigma$ determines a $b^{k+1}$-algebroid $A(\sigma)$ on $M$. To make things concrete, we may choose a tubular neighbourhood $U$ of $D$ which is isomorphic to $L = \nu_{D}$, the normal bundle. By Lemma \ref{decompositionlemma}, we have an isomorphism of cdga's
\[
\Omega_{U}^{\bullet}(\sigma) \cong \Omega_{U}^{\bullet} \oplus \Omega_{D}^{\bullet - 1}(S_{k}(L))
\]
from which we obtain a surjective chain morphism $\Omega_{U}^{\bullet}(\sigma)  \to \Omega_{D}^{\bullet - 1}(S_{k}(L))$. By restricting forms we obtain a map
\[
\Res: \Omega_{M}^{\bullet}(\sigma) \to \Omega_{U}^{\bullet}(\sigma) \to \Omega_{D}^{\bullet - 1}(S_{k}(L)).
\]
which we think of as a generalized residue map. Note that this map depends on the choice of a tubular neighbourhood embedding. There is also an inclusion of smooth forms $\Omega_{M}^{\bullet} \to \Omega_{M}^{\bullet}(\sigma)$ which is also a chain map. 

\begin{remark}
The restricted algebroid $A(\sigma)|_{D}$ lives in the short exact sequence 
\[
0 \to L^{-k} \to A(\sigma)|_{D} \to TD \to 0,
\]
and this allows us to define a canonical higher residue map $\Res^{k} : \Omega_{M}^{\bullet}(\sigma) \to \Omega^{\bullet - 1}_{D}(L^{k})$ which extracts the component of an algebroid form that has a pole of order $k+1$ along $D$. The kernel of this residue consists of algebroid forms for the truncated algebroid $A(\sigma_{k-1})$. In other words, these are forms with poles of order at most $k$. Hence, by making use of the splittings $\tau_{r}$, we may iteratively extract the order $r$ residues of a form $\alpha \in  \Omega_{M}^{\bullet}(\sigma)$, for $0 \leq r \leq k$. These residues $\Res^{r}$ form the components of the map $\Res$ and they depend on both the algebroid $A(\sigma)$ as well as the choice of the tubular neighbourhood embedding. For a $b^{k+1}$-algebroid determined by a defining function for $D$, the $r^{th}$-residue $\Res^{r}$ just extracts the coefficient of $\frac{dz}{z^{r+1}}$.
\end{remark}

\begin{lemma}
There is a short exact sequence of chain complexes 
\[
0 \to \Omega_{M}^{\bullet} \to \Omega_{M}^{\bullet}(\sigma) \to \Omega_{D}^{\bullet - 1}(S_{k}(L)) \to 0,
\]
which induces a long exact sequence in cohomology 
\[
... \to H^{i}(M) \to H^{i}(A(\sigma)) \to H^{i-1}(D, S_{k}(L)) \to H^{i+1}(M) \to ... 
\]
\end{lemma}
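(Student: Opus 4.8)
The plan is to exhibit the three maps concretely, check that each is a chain map, verify exactness of the short sequence degreewise, and then invoke the standard long exact sequence of a short exact sequence of cochain complexes.

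First I would describe the maps. The inclusion $\Omega_M^\bullet \to \Omega_M^\bullet(\sigma)$ is the pullback $\rho^*$ along the anchor $\rho \colon A(\sigma) \to TM$; as $\rho$ is a morphism of Lie algebroids, $\rho^*$ commutes with the differentials and is therefore a chain map. The residue map is the composite of the restriction $\Omega_M^\bullet(\sigma) \to \Omega_U^\bullet(\sigma)$ to a tubular neighbourhood $U \cong L$ with the projection onto the second summand of the decomposition $\Omega_U^\bullet(\sigma) \cong \Omega_U^\bullet \oplus \Omega_D^{\bullet-1}(S_k(L))$ furnished by Lemma \ref{decompositionlemma}. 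The restriction is tautologically a chain map, and the projection is one because $\tau$ is an isomorphism of cochain complexes whose differential is block-diagonal (de Rham on the first factor, anticanonical on the second); hence $\Res$ is a chain map.

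Next I would check injectivity and surjectivity. Injectivity of $\rho^*$ follows from $A(\sigma)$ being of $b^{k+1}$-type: the anchor is an isomorphism over the dense open set $M \setminus D$, so a smooth form with $\rho^*\beta = 0$ already vanishes on $M \setminus D$, hence everywhere. For surjectivity of $\Res$, given $\psi \in \Omega_D^{\bullet-1}(S_k(L))$ I would form the algebroid form $\tau(0,\psi)$ on $U$ and multiply it by a function $\chi \in C^\infty(M)$ supported in $U$ and identically $1$ near $D$; extending by zero gives a global form. Since the residue is determined by the jet of a form along $D$, where $\chi \equiv 1$, one has $\Res(\chi\,\tau(0,\psi)) = \psi$.

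The heart of the argument is exactness at the middle term, i.e. $\ker \Res = \image \rho^*$. The inclusion $\image \rho^* \subseteq \ker \Res$ is immediate, since $\rho^*\beta$ restricted to $U$ equals $\tau(\beta|_U,0)$ and so has vanishing second component. For the reverse inclusion, take $\alpha$ with $\Res(\alpha)=0$. On $U$ this forces $\alpha|_U = \tau(\beta_U,0) = \rho^*\beta_U$ for a genuine smooth form $\beta_U \in \Omega_U^\bullet$, while over $M \setminus D$ the anchor isomorphism identifies $\alpha$ with a smooth form $\beta'$. On the overlap $U \setminus D$ both describe the same algebroid form, so $\beta_U = \beta'$ there by the injectivity just established; the two forms therefore glue to a global smooth $\beta \in \Omega_M^\bullet$ with $\rho^*\beta = \alpha$. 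I expect this gluing step to be the main obstacle, as it is where the local normal-form information from the tubular neighbourhood must be reconciled with the anchor isomorphism away from $D$, relying on the density of $M \setminus D$ and the naturality of $\tau$. Finally, the resulting short exact sequence of cochain complexes yields the long exact sequence by the usual zig-zag construction; the only point to record is the degree bookkeeping, namely that the cohomology of the shifted complex $\Omega_D^{\bullet-1}(S_k(L))$ in total degree $i$ is $H^{i-1}(D, S_k(L))$, so that the connecting homomorphism takes the stated form $H^{i-1}(D, S_k(L)) \to H^{i+1}(M)$, and the identifications $H^i(\Omega_M^\bullet) = H^i(M)$ and $H^i(\Omega_M^\bullet(\sigma)) = H^i(A(\sigma))$ give exactly the asserted sequence.
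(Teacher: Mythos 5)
Your proposal is correct and follows essentially the same route as the paper's proof: the same inclusion and residue maps built from the decomposition of Lemma \ref{decompositionlemma}, the same bump-function argument for surjectivity, and the same observation that vanishing residue forces the restriction to $U$ to be a smooth form. The only difference is presentational: you spell out the chain-map verifications and the gluing of $\beta_U$ with the smooth form obtained from the anchor isomorphism over $M\setminus D$, a step the paper compresses into the sentence ``but this is nothing but a smooth form.''
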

\begin{proof}
The map $ \Omega_{M}^{\bullet} \to \Omega_{M}^{\bullet}(\sigma)$ is an inclusion and so is injective. Furthermore, since the map lands in smooth forms, these have zero projection to $\Omega_{D}^{\bullet - 1}(S_{k}(L))$. To see that $ \Omega_{M}^{\bullet}(\sigma) \to \Omega_{D}^{\bullet - 1}(S_{k}(L))$ is surjective, we choose a bump function $\psi_{U}$ which is supported in the tubular neighbourhood and is equal to $1$ in a neighbourhood of $D$. Now let $\alpha \in \Omega_{D}^{\bullet - 1}(S_{k}(L))$. By Lemma \ref{decompositionlemma}, this comes from an element $\tau(\alpha) \in \Omega_{U}^{\bullet}(\sigma)$, an algebroid form defined on $U$. The form $\psi_{U} \tau(\alpha)$ can be extended by zero to a global form in $ \Omega_{M}^{\bullet}(\sigma)$. And since $\psi_{U}$ is the identity on a neighbourhood of $D$, the singular part of $\psi_{U}\tau(\alpha)$ is $\alpha$. Hence $\Res(\psi_{U} \tau(\alpha)) = \alpha$, establishing surjectivity. Finally, to show exactness in the middle spot, note that a form $\omega$ with $\Res(\omega) = 0$ restricts to a form on $U$ with trivial singular part. But this is nothing but a smooth form. 
\end{proof}

\begin{proposition} \label{decomp1proof}
The connecting homomorphism $\delta : H^{i-1}(D, S_{k}(L)) \to H^{i+1}(M)$ vanishes. Therefore, we have isomorphisms 
\[
H^{i}(A(\sigma)) \cong H^{i}(M) \oplus H^{i-1}(D, S_{k}(L)),
\]
for all $i$. 
\end{proposition}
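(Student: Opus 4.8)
The plan is to prove that the connecting homomorphism $\delta$ vanishes; the isomorphism then follows formally. Indeed, once $\delta = 0$ for all degrees, the long exact sequence from the preceding lemma collapses into short exact sequences
\[
0 \to H^{i}(M) \to H^{i}(A(\sigma)) \to H^{i-1}(D, S_{k}(L)) \to 0
\]
of $\mathbb{R}$-vector spaces, which split, yielding the claimed decomposition. So everything reduces to showing $\delta = 0$.

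To compute $\delta$ I would use the explicit lift produced in the proof of the preceding lemma. Fix a metric $h \in L^{-2}$ and the associated closed $1$-form $\gamma$ of Lemma \ref{technicalexactness}, together with a bump function $\psi_{U}$ supported in the tubular neighbourhood $U \cong \nu_{D}$ and equal to $1$ near $D$. Given a closed class in $H^{i-1}(D, S_{k}(L))$, represent it by a closed element $\alpha = \sum_{r=0}^{k} \alpha_{r} \in \Omega_{D}^{i-1}(S_{k}(L))$. The preceding lemma shows that $\Res(\psi_{U}\tau(\alpha)) = \alpha$, so $\psi_{U}\tau(\alpha) \in \Omega_{M}^{i}(\sigma)$ is a lift of $\alpha$, and since $\tau$ is a chain map and $\alpha$ is closed, $d(\psi_{U}\tau(\alpha)) = d\psi_{U} \wedge \tau(\alpha)$. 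By the definition of the connecting map, $\delta$ sends the class of $\alpha$ to the class of this (smooth, closed) form in $H^{i+1}(M)$, so it remains to show that $d\psi_{U} \wedge \tau(\alpha)$ is exact as a smooth form.

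The decisive observation is that $d\psi_{U}$ is supported in $U \setminus D$, away from the zero section, where $A(\sigma)$ agrees with $TM$ and algebroid forms are ordinary differential forms. There I may apply Lemma \ref{technicalexactness} to write $\tau(\alpha) = \alpha_{0} \wedge \gamma + d\eta$, with $\eta$ horizontal but singular along $D$. Moreover, inspecting the anticanonical differential \eqref{Anticanonicaldifferential}, its $L^{0}$-component is simply $d^{\nabla}$, so closedness of $\alpha$ forces $d\alpha_{0} = 0$; together with $d\gamma = 0$ this gives $d(\alpha_{0} \wedge \gamma) = 0$. Consequently
\[
d\psi_{U} \wedge \tau(\alpha) = d\psi_{U} \wedge \alpha_{0} \wedge \gamma + d\psi_{U} \wedge d\eta = d(\psi_{U}\, \alpha_{0} \wedge \gamma) - d(d\psi_{U} \wedge \eta).
\]
Both primitives are genuine smooth forms on all of $M$: the pullback form $\alpha_{0} \wedge \gamma$ is smooth across $D$, so $\psi_{U}\, \alpha_{0} \wedge \gamma$ extends by zero outside $U$; and although $\eta$ is singular along $D$, the factor $d\psi_{U}$ vanishes near $D$, so $d\psi_{U} \wedge \eta$ likewise extends by zero to a smooth global form. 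Hence $\delta[\alpha] = 0$.

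I expect the main obstacle to be precisely this last bookkeeping: one must ensure that the singular primitive $\eta$ only ever appears multiplied by $d\psi_{U}$, or else absorbed into the smooth closed combination $\alpha_{0} \wedge \gamma$, so that every form actually written down is smooth on $M$. The point that makes it work is that the cutoff derivative $d\psi_{U}$ is supported strictly away from $D$ and therefore annihilates the poles furnished by Lemma \ref{technicalexactness}. I would also take care to verify without shortcuts the two inputs on which the argument hinges, namely that $\Res(\psi_{U}\tau(\alpha)) = \alpha$ (immediate since $\psi_{U} \equiv 1$ near $D$) and that the $L^{0}$-component of the anticanonical differential is exactly $d^{\nabla}$, which is what makes $\alpha_{0}$ closed.
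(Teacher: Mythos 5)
Your proof is correct and follows essentially the same route as the paper: lift a closed class via $\psi_{U}\tau(\alpha)$, invoke Lemma \ref{technicalexactness} to write $\tau(\alpha) = \alpha_{0}\wedge\gamma + d\eta$, and absorb $d\psi_{U}\wedge\tau(\alpha)$ into $d(\psi_{U}\,\alpha_{0}\wedge\gamma - d\psi_{U}\wedge\eta)$, using that $d\psi_{U}$ vanishes near $D$ to keep everything smooth. Your extra verification that $d\alpha_{0}=0$ (from the $L^{0}$-component of the anticanonical differential being $d^{\nabla}$) is a detail the paper asserts without proof, so it is a welcome addition rather than a deviation.
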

\begin{proof}
Let $\alpha \in \Omega_{D}^{i - 1}(S_{k}(L))$ be a closed form and let $\psi_{U} \tau(\alpha)$ be its extension to $M$. Then the connecting homomorphism is given by $\delta(\alpha) = d(\psi_{U} \tau(\alpha)). $ By Lemma \ref{technicalexactness} we can write $\tau(\alpha) = \alpha_{0} \wedge \gamma + d\eta$, with $\alpha_{0}$ and $\gamma$ smooth and closed forms pulled back from $D$ and $\eta$ a `horizontal' form with singularities along $D$. Hence 
\[
\delta(\alpha) = d \psi_{U} \wedge \alpha_{0} \wedge \gamma + d \psi_{U} \wedge d \eta = d(\psi_{U} \alpha_{0} \wedge \gamma - d\psi_{U} \wedge \eta).  
\] 
Note that $\psi_{U} \alpha_{0} \wedge \gamma$ extends by zero to a global form on $M$. Second, $\eta$ is a form with singularities along $D$, but since $\psi_{U}$ is identically equal to $1$ in a neighbourhood of $D$, its derivative $d\psi_{U}$ vanishes in this neighbourhood, and hence $d\psi_{U} \wedge \eta$ is smooth around $D$ and can be extended by zero to a global smooth form on $M$.  
Let 
\[
\chi(\alpha, \psi_{U}, h) = \psi_{U} \alpha_{0} \wedge \gamma - d\psi_{U} \wedge \eta,
\] 
a smooth global form which depends on the choices of $\alpha$, a metric $h$ on $L$, a tubular neighbourhood, and the bump function $\psi_{U}$. Then $\delta(\alpha) = d \chi$, so that the connecting homomorphism vanishes. 
\end{proof}

Let's take a closer look at the residue map $\Res$. It factors as $\Res = \pi_{2} \circ r_{1}$, where 
\[
r_{1} : \Omega_{M}^{\bullet}(\sigma) \to \Omega_{U}^{\bullet}(\sigma) \cong \Omega_{U}^{\bullet} \oplus \Omega_{D}^{\bullet}(S_{k}(L))[-1],
\]
and the second map $\pi_{2}$ is the projection onto the second factor. The first map is a morphism of cdga's and induces on cohomology the algebra map 
\[
H(r_{1}) : H^{\bullet}(A(\sigma)) \to H^{\bullet}(D) \oplus H^{\bullet}(D, S_{k}(L))[-1].
\]
Now $H^{\bullet}(M) \subset H^{\bullet}(A(\sigma))$ is a subalgebra and if we restrict the map $H(r_{1})$ to $\beta \in H^{\bullet}(M)$ we get 
\[
H(r_{1})(\beta) = (\beta|_{D}, 0) \in H^{\bullet}(D) \oplus H^{\bullet}(D, S_{k}(L))[-1]. 
\]
Note that the restriction map $H^{\bullet}(M) \to H^{\bullet}(D)$ is a homomorphism and in this way any $H^{\bullet}(D)$-module becomes a $H^{\bullet}(M)$-module. In particular, $H^{\bullet}(D, S_{k}(L))[-1]$ is in this way an $H^{\bullet}(M)$-module. 
\begin{proposition}
The sequence 
\[
0 \to H^{\bullet}(M) \to H^{\bullet}(A(\sigma)) \to H^{\bullet}(D, S_{k}(L))[-1] \to 0
\]
is a short exact sequence of $H^{\bullet}(M)$-modules. \end{proposition}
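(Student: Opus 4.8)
The plan is to observe that the underlying short exact sequence of graded vector spaces is already in hand: it is exactly the splitting of the long exact sequence produced in Proposition \ref{decomp1proof}, where the vanishing of the connecting homomorphism $\delta$ was established. Thus the only genuinely new content is that the two maps respect the $H^{\bullet}(M)$-module structures, and for this I must first pin down those three structures. On $H^{\bullet}(A(\sigma))$ the action is through the ring homomorphism $\iota : H^{\bullet}(M) \to H^{\bullet}(A(\sigma))$ induced by the cdga inclusion $\Omega_{M}^{\bullet} \hookrightarrow \Omega_{M}^{\bullet}(\sigma)$ followed by multiplication; on $H^{\bullet}(D, S_{k}(L))[-1]$ the action is through the restriction homomorphism $H^{\bullet}(M) \to H^{\bullet}(D)$ followed by the $H^{\bullet}(D)$-module structure coming from the $\Omega_{D}^{\bullet}$-cdga structure of Proposition \ref{localalgebraformula}.

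First I would dispatch the left-hand map. Since $\iota$ is a ring homomorphism, for $a, b \in H^{\bullet}(M)$ one has $\iota(ab) = \iota(a)\,\iota(b) = a \cdot \iota(b)$, where the last product is precisely the module action; hence $\iota$ is automatically $H^{\bullet}(M)$-linear, and it is injective by Proposition \ref{decomp1proof}.

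The substance of the argument is the right-hand map $\Res$. I would use the factorisation $\Res = \pi_{2} \circ r_{1}$ recorded just before the statement, where
\[
H(r_{1}) : H^{\bullet}(A(\sigma)) \to H^{\bullet}(D) \oplus H^{\bullet}(D, S_{k}(L))[-1]
\]
is the algebra homomorphism induced by the cdga map $r_{1}$, and $\pi_{2}$ is the projection onto the second summand. Given $a \in H^{\bullet}(M)$ and a class $\omega \in H^{\bullet}(A(\sigma))$, write $H(r_{1})(\omega) = (\beta, \mu)$, so that $\Res(\omega) = \mu$. Because $H(r_{1})$ is multiplicative and $H(r_{1})(\iota(a)) = (a|_{D}, 0)$ (the computation carried out before the statement), I obtain
\[
H(r_{1})\bigl(\iota(a) \cdot \omega\bigr) = (a|_{D}, 0) \cdot (\beta, \mu).
\]
Here I invoke the key structural fact from Proposition \ref{localalgebraformula}: in the target algebra, $\Omega_{D}^{\bullet}$ (hence $H^{\bullet}(D)$) is a subalgebra and $\Omega_{D}^{\bullet}(S_{k}(L))[-1]$ (hence $H^{\bullet}(D, S_{k}(L))[-1]$) is an ideal. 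Therefore the product $(a|_{D}, 0) \cdot (\beta, \mu)$ has second component exactly $a|_{D} \cdot \mu$, with no contribution from the first factor, so $\Res(\iota(a) \cdot \omega) = a|_{D} \cdot \Res(\omega)$, which is the definition of the $H^{\bullet}(M)$-action on $H^{\bullet}(D, S_{k}(L))[-1]$. This proves $\Res$ is $H^{\bullet}(M)$-linear; surjectivity and exactness in the middle are already contained in the preceding lemma and in Proposition \ref{decomp1proof}.

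The main obstacle, and the only place where care is genuinely required, is matching the module structure on the quotient with the ideal decomposition: one must verify that the action obtained by restricting to $D$ and then multiplying coincides with the ideal multiplication appearing in $(a|_{D}, 0) \cdot (\beta, \mu)$. This is precisely guaranteed by the statement in Proposition \ref{localalgebraformula} that $\Omega_{D}^{\bullet}(S_{k}(L))[-1]$ is an ideal whose $\Omega_{D}^{\bullet}$-module structure is the usual one; the exotic product $\ast$ on the ideal never enters, because one of the two factors lies in the subalgebra $H^{\bullet}(D)$. All remaining claims reduce to bookkeeping, so no further serious computation is needed.
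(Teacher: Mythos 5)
Your proposal is correct and follows essentially the same route as the paper's own proof: exactness is quoted from Proposition \ref{decomp1proof}, linearity of the left-hand map follows from its being an algebra morphism, and $H^{\bullet}(M)$-linearity of the residue map is deduced from the factorisation $\Res = \pi_{2}\circ r_{1}$, the multiplicativity of $H(r_{1})$, the identity $H(r_{1})(\beta) = (\beta|_{D},0)$ for $\beta \in H^{\bullet}(M)$, and the subalgebra/ideal structure supplied by Proposition \ref{localalgebraformula}. There are no gaps; your observation that the exotic product $\ast$ on the ideal never enters is exactly the point that makes the paper's fourth equality work.
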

\begin{proof}
The sequence is exact by Proposition \ref{decomp1proof}. Note first that $H^{\bullet}(M) \to H^{\bullet}(A(\sigma))$ is a morphism of algebras and hence in particular, a morphism of $H^{\bullet}(M)$-modules. To see that $H(\Res)$ is a module morphism, we compute, for $\beta \in H^{\bullet}(M) $ and $\omega \in H^{\bullet}(A(\sigma))$ : 
\begin{align*}
H(\Res)(\beta \wedge \omega) &= \pi_{2}(H(r_{1})(\beta \wedge \omega)) \\ 
&= \pi_{2}( H(r_{1})(\beta) \wedge H(r_{1})(\omega)) \\
&= \pi_{2}( (\beta|_{D}, 0) \wedge (\tilde{\omega}, H(\Res)(\omega)) ) \\
&= \pi_{2}( \beta|_{D} \wedge \tilde{\omega}, \beta|_{D} \wedge H(\Res)(\omega)) \\
&= \beta|_{D} \wedge H(\Res)(\omega) \\
&= \beta \ast H(\Res)(\omega). 
\end{align*}
The first equality uses the factorization, the second uses the fact that $H(r_{1})$ is an algebra homomorphism, the third uses the above calculation of the map, the fourth uses Proposition \ref{localalgebraformula}, and the last equality uses the definition of the action of $H^{\bullet}(M)$ on $H^{\bullet}(D, S_{k}(L))[-1]$. 
\end{proof}

\begin{remark}
One may expect that the decomposition $H^{\bullet}(A(\sigma)) = H^{\bullet}(M) \oplus H^{\bullet-1}(D, S_{k}(L))$ can be made compatibly with the algebra structures, generalizing Proposition \ref{localalgebraformula}. At present we do not know whether this is true.  
\end{remark}

\subsection{Restriction to the boundary}
Recall that the restricted algebroid $A(\sigma)|_{D}$ sits in the following short exact sequence 
\[
0 \to L^{-k} \to A(\sigma)|_{D} \to TD \to 0.
\]
Dualizing and taking wedge powers gives the following short exact sequence of complexes
\begin{equation}\label{eq:sesboundary}
0 \to \Omega_{D}^{\bullet} \to \Omega^{\bullet}_{A(\sigma)|_{D}} \to \Omega^{\bullet - 1}_{D} \otimes L^{k} \to 0. 
\end{equation}
It can be useful to express the restriction morphism $\mathcal{R}: \Omega_{M}^{\bullet}(\sigma) \to \Omega^{\bullet}_{A(\sigma)|_{D}} $ in terms of the decomposition of Lemma \ref{decompositionlemma}. First, the subcomplex $\Omega_{M}^{\bullet}$ maps to the subcomplex $\Omega_{D}^{\bullet} \subset \Omega^{\bullet}_{A(\sigma)|_{D}}$ via the restriction map.

Next, we focus on the subcomplex $\Omega_{D}^{\bullet - 1}(S_{k}(L))$. Note that $S_{k-1}(L) \subset S_{k}(L)$ is a subbundle which is preserved by the connection. Hence, we get a short exact sequence of complexes 
\[
0 \to \Omega_{D}^{\bullet - 1}(S_{k-1}(L)) \to \Omega_{D}^{\bullet - 1}(S_{k}(L)) \to \Omega_{D}^{\bullet - 1} \otimes L^{k} \to 0. 
\]
\begin{lemma}\label{lem:restrictmap}
The restriction map restricts to the following map 
\[
\mathcal{R} : \Omega_{D}^{\bullet - 1}(S_{k-1}(L)) \to \Omega_{D}^{\bullet}, \qquad \sum_{r = 0}^{k-1} \alpha_{r} \mapsto \sum_{r = 0}^{k-1} \alpha_{r} \wedge \eta_{r}.
\]
Therefore, we obtain a map of short exact sequences 
\[
\begin{tikzpicture}[scale=1.5]
\node (A) at (-0.5,1) {$0$};
\node (B) at (1,1) {$  \Omega_{D}^{\bullet - 1}(S_{k-1}(L))$};
\node (C) at (2.75,1) {$ \Omega_{D}^{\bullet - 1}(S_{k}(L)) $};
\node (D) at (4.5,1) {$ \Omega_{D}^{\bullet-1}\otimes L^k$};
\node (E) at (6,1) {$0$};
\node (A1) at (-0.5,0) {$0$};
\node (B1) at (1,0) {$\Omega_{D}^{\bullet}$};
\node (C1) at (2.75,0) {$\Omega^{\bullet}_{A(\sigma)|_{D}}$};
\node (D1) at (4.5,0) {$\Omega_{D}^{\bullet-1}\otimes L^k$};
\node (E1) at (6,0) {$0$};

\path[->,>=angle 90]
(A) edge (B)
(B) edge (C)
(C) edge (D)
(D) edge (E)
(A1) edge (B1)
(B1) edge (C1)
(C1) edge (D1)
(D1) edge (E1)
(B) edge node[left]{$\mathcal{R}$} (B1)
(C) edge node[left]{$\mathcal{R}$} (C1)
(D) edge node[right]{$=$} (D1);

\end{tikzpicture}
\]
\end{lemma}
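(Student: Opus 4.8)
The plan is to compute the restriction of each generating algebroid form $\tau_r$ directly, by evaluating it on a local frame of $A(\sigma)|_D$ and restricting the resulting functions to $D$. Fix a point of $D$, a local flat section $z$ of $L^{-1}$ (which doubles as a fibre coordinate on $L$), and a local frame $X_1,\dots,X_n$ of $TD$. By Lemma \ref{localbasislemma}, the fields $\sigma(X_i) = \nabla_{X_i} + \sum_{j=1}^{k-1}\eta_j(X_i)$ together with the vertical field $t = z^{k+1}\partial_z$ (a local generator of $L^{-k}\subset A(\sigma)|_D$) form a frame of $A(\sigma)$ which restricts to a frame of $A(\sigma)|_D$ adapted to the splitting of \eqref{eq:sesboundary}. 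Throughout I use the local expressions $\eta_j = a_j z^j$ with $a_j\in\Omega^1_D$, so that the vertical field $\eta_j(X) = a_j(X)z^{j+1}\partial_z$, and the pole formula $\tau_r(z^{-r}) = z^{-r-1}\,dz - \sum_{i=1}^{r-1} a_i z^{i-r}$ recorded in the Remark above.

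The heart of the argument is two evaluations. First, against the kernel direction $t$: since each $\eta_i\otimes s$ is horizontal it annihilates $t$, while $z^{-r-1}dz\,(z^{k+1}\partial_z) = z^{k-r}$; restricting to $z=0$ this vanishes when $r<k$ and equals the tautological pairing of $L^k$ with $L^{-k}$ when $r=k$. Second, against $\sigma(X)$: the horizontal lift $\nabla_X$ is killed by the vertical form $z^{-r-1}dz$, while each vertical term $\eta_j(X)$ contributes $a_j(X)z^{j-r}$, so that $s_{\vertrm}(\sigma(X)) = \sum_{j=1}^{k-1} a_j(X) z^{j-r}$. Subtracting the horizontal correction $\sum_{i=1}^{r-1} a_i(X) z^{i-r}$ cancels exactly the singular terms with $j<r$, leaving $\tau_r(z^{-r})(\sigma(X)) = \sum_{j=r}^{k-1} a_j(X) z^{j-r}$; restricting to $z=0$ only the $j=r$ term survives, giving $a_r(X) = \langle \eta_r(X), z^{-r}\rangle$ for $r<k$, and $0$ for $r=k$.

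Assembling these, for $s\in L^r$ with $r<k$ the restricted form $\mathcal{R}(\tau_r(s))$ annihilates $L^{-k}$ and equals the contraction $\langle \eta_r, s\rangle \in \Omega^1_D$ on the image of the splitting; under the identification $\Omega^\bullet_{A(\sigma)|_D}\cong \Omega^\bullet_D\oplus \Omega^{\bullet-1}_D(L^k)$ from \eqref{eq:sesboundary} it thus lies in the $\Omega^\bullet_D$ summand. Writing $\alpha_r = \phi_r\otimes u^{(r)}$ and using that $\mathcal R$ is an algebra map, $\mathcal R(\phi_r\wedge\tau_r(u^{(r)})) = \phi_r\wedge\langle\eta_r,u^{(r)}\rangle = \alpha_r\wedge\eta_r$, which is precisely the claimed formula $\sum_{r=0}^{k-1}\alpha_r\mapsto \sum_{r=0}^{k-1}\alpha_r\wedge\eta_r$ (the $r=0$ term vanishing since $\eta_0=0$). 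For $r=k$ the same computation shows $\mathcal{R}(\tau_k(s))$ pairs trivially with the splitting and reproduces $\alpha_k$ in the $\Omega^{\bullet-1}_D(L^k)$ summand, so the rightmost vertical map is the identity and the middle map sends $\sum_{r=0}^k\alpha_r$ to $(\sum_{r<k}\alpha_r\wedge\eta_r,\alpha_k)$. Commutativity of both squares is then immediate: elements of $\Omega^{\bullet-1}_D(S_{k-1}(L))$ have vanishing $L^k$-component, so the middle map carries them into $\Omega^\bullet_D$ in agreement with the left map followed by the inclusion, while projecting the middle map to $L^k$ recovers $\alpha_k$, matching the identity after the top quotient.

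The main obstacle is the second evaluation: taken termwise, the vertical form $s_{\vertrm}$ and the correction $\sum_i \eta_i\otimes s$ each produce negative powers of $z$, and one must verify that these singular contributions cancel so that the restriction to $D$ is well defined and isolates the single coefficient $a_r$. This is exactly the smoothness of $\tau_r$ as an algebroid form reappearing at the level of the restriction, and it is the only point where the Maurer--Cartan data enters, through the identification of the vertical fields $\eta_j(X)$ with the coefficients $a_j$ of $\eta_j$.
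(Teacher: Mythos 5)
Your proposal is correct. The paper states Lemma \ref{lem:restrictmap} without proof, and your argument --- evaluating each generator $\tau_r(z^{-r})$ against the adapted frame $\sigma(X_i)$, $t = z^{k+1}\partial_z$ of $A(\sigma)|_D$, checking that the apparent poles cancel, and restricting to $z=0$ --- is exactly the direct verification the authors leave implicit; it correctly recovers both the formula $\alpha_r \mapsto \alpha_r \wedge \eta_r$ for $r<k$ (with $\eta_0=0$ handling $r=0$) and the identity on the quotient $\Omega^{\bullet-1}_D \otimes L^k$, which together give commutativity of both squares.
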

As a result, we obtain a Gysin-like long exact sequence computing the cohomology of $A(\sigma)|_{D}$. 
\begin{proposition}
There is a long exact sequence of cohomology groups 
\[
... \to H^{i}_{D} \to H^{i}(A(\sigma)|_{D}) \to H^{i-1}_{D}(L^{k}) \to H^{i+1}_{D} \to ...
\]
with connecting homomorphism given by the wedge product with the extension class $c(\sigma) \in H^{2}_{D}(L^{-k})$: 
\[
\delta = (-1)^{i}c(\sigma) \wedge \ : H^{i-1}_{D}(L^{k}) \to H^{i+1}_{D}.
\]
\end{proposition}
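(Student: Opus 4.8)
The long exact sequence itself is immediate: it is the long exact sequence in cohomology associated to the short exact sequence of cochain complexes \eqref{eq:sesboundary}. All the content of the statement therefore lies in identifying the connecting homomorphism $\delta$ with $(-1)^i c(\sigma)\wedge -$. The plan is to compute $\delta$ by the usual zig-zag, using an explicit vector-bundle splitting of the Lie algebroid extension $0 \to L^{-k} \to A(\sigma)|_D \xrightarrow{\rho} TD \to 0$ whose curvature is a representative of $c(\sigma)$.

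First I would fix a vector-bundle splitting $s\colon TD \to A(\sigma)|_D$ of the anchor. Dually this produces a canonical \emph{vertical} algebroid one-form $\tau \in \Omega^1(A(\sigma)|_D)\otimes L^{-k}$, namely the projection $A(\sigma)|_D \to L^{-k}$ determined by $s$; concretely $\tau\circ s = 0$ and $\tau$ is the identity on $L^{-k}$. This is precisely the restriction to $D$ of the form $\tau_k$ appearing in Lemma \ref{localbasislemma}, and the projection $\pi$ in \eqref{eq:sesboundary} is extraction of the $\tau$-coefficient, as in Lemma \ref{lem:restrictmap}. The inclusion $i = \rho^*$ is pullback along the anchor, which is a Lie algebroid morphism and hence a chain map.

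The key computation is that of $d\tau$. Since the kernel $L^{-k}$ is an abelian ideal, with $[s(X),v] = \nabla_X v$ for $v\in\Gamma(L^{-k})$, and since $\tau$ vanishes on the horizontal part, one checks directly that $d\tau$ has neither a vertical-vertical nor a mixed component, and that on horizontal vectors
\[
(d\tau)(s(X),s(Y)) = -\tau\bigl([s(X),s(Y)] - s([X,Y])\bigr) = -\Omega(X,Y),
\]
where $\Omega(X,Y) = [s(X),s(Y)] - s([X,Y]) \in \Gamma(L^{-k})$ is exactly the representative of the extension class $c(\sigma)$ from the definition preceding Proposition \ref{prop:extensionresult}. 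Thus $d\tau = -\rho^*\Omega$, an honest horizontal form representing $-c(\sigma)$.

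Finally I would trace the connecting map. Let $\alpha\in\Omega^{i-1}_D(L^k)$ be $d^\nabla$-closed, so $[\alpha]\in H^{i-1}_D(L^k)$. A preimage under $\pi$ is the algebroid form $\rho^*\alpha \wedge \tau \in \Omega^i(A(\sigma)|_D)$, the twists $L^k\otimes L^{-k}$ cancelling. Applying the Leibniz rule and using that $\rho^*$ is a chain map, so $d(\rho^*\alpha) = \rho^*(d^\nabla\alpha) = 0$, gives
\[
d(\rho^*\alpha \wedge \tau) = (-1)^{i-1}\rho^*\alpha \wedge d\tau = (-1)^{i}\rho^*(\Omega\wedge\alpha),
\]
which lies in the image of $i=\rho^*$. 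Hence $\delta[\alpha] = (-1)^i[\Omega\wedge\alpha] = (-1)^i c(\sigma)\wedge[\alpha]$, as claimed. The only genuinely delicate points are the bookkeeping of the $L^{\pm k}$-twists and the signs, together with checking that $\delta$ is independent of the chosen splitting $s$; the latter follows because a change of splitting alters $\tau$ by a pulled-back horizontal form, hence alters $d\tau$ by an exact form, consistent with $c(\sigma)$ being a well-defined class.
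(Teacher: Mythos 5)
Your proof is correct, but it follows a genuinely different route from the paper's. The paper never touches the algebroid extension directly: it uses the map of short exact sequences from Lemma \ref{lem:restrictmap} to factor the connecting homomorphism of \eqref{eq:sesboundary} as $\mathcal{R}\circ\delta'$, where $\delta'$ is the connecting map of the coefficient sequence $0 \to \Omega_{D}^{\bullet-1}(S_{k-1}(L)) \to \Omega_{D}^{\bullet-1}(S_{k}(L)) \to \Omega_{D}^{\bullet-1}\otimes L^{k} \to 0$; since that sequence splits as graded vector spaces, $\delta'(\omega)$ is read off from the explicit differential \eqref{Anticanonicaldifferential} in terms of the Maurer--Cartan elements $\eta_{j}$, and applying $\mathcal{R}$ (which wedges with the $\eta_{j}$'s) yields $(-1)^{i}c(\sigma)\wedge\omega$ after matching against the explicit representative $\sum_{i<k/2}(k-2i)\eta_{i}\wedge\eta_{k-i}$ from Theorem \ref{extensionclasssection2}. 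You instead run the standard zig-zag for \eqref{eq:sesboundary} itself: choose a splitting $s$ of $0\to L^{-k}\to A(\sigma)|_{D}\to TD\to 0$, form its dual vertical one-form $\tau$, and use the identity $d\tau=-\rho^{*}\Omega$, where $\Omega$ is the curvature of $s$, i.e.\ precisely the representative of $c(\sigma)$ from the definition preceding Proposition \ref{prop:extensionresult}. Your key computation is right: because the kernel is a rank-one abelian ideal acting on itself trivially and on which $A(\sigma)|_{D}$ acts by the bracket, the mixed and vertical components of $d\tau$ vanish and only $-\Omega$ survives on horizontal pairs; the Leibniz rule then gives $d(\rho^{*}\alpha\wedge\tau)=(-1)^{i}\rho^{*}(\Omega\wedge\alpha)$, with the same sign as the paper. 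What your route buys is generality and economy: it is literally the Gysin sequence of an arbitrary Lie algebroid extension with rank-one abelian kernel, requiring no Maurer--Cartan data, no tubular neighbourhood, and no Lemma \ref{lem:restrictmap}. What the paper's route buys is that the connecting map comes out directly in the coordinates it actually uses downstream --- the $\eta_{j}$ and the complex $\Omega_{D}^{\bullet-1}(S_{k}(L))$ --- which is the form in which the class enters, e.g., Proposition \ref{lem:structureonD}. One minor remark: your closing point about independence of the splitting is not needed, since the connecting homomorphism of a fixed short exact sequence of complexes is independent of the chosen lift; the splitting is only a device for producing one.
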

\begin{proof}
By Lemma \ref{lem:restrictmap} we may compute the connecting homomorphism as $\mathcal{R} \circ \delta$, where $\delta$ is the connecting homomorphism of the short exact sequence for $\Omega_{D}^{\bullet}(S_{k}(L))$. This later sequence is split as a graded vector space. Hence, given $\omega \in H^{i-1}_{D}(L^{k})$, the connecting homomorphism is computed as 
\begin{align*}
\mathcal{R} \circ \delta(\omega) &= \mathcal{R}( d \omega) = \mathcal{R}(\sum_{j = 1}^{k-1} (j - r) \eta_{j} \wedge \omega) \\
&= (-1)^{i-1} (\sum_{j = 1}^{k-1} (j - r) \eta_{j} \wedge \eta_{k-j}) \wedge \omega \\
&= (-1)^{i} c(\sigma) \wedge \omega.
\end{align*}
In the second equality we used the expression for the differential from Equation \ref{Anticanonicaldifferential}, using the fact that $d^{\nabla}\omega = 0$. In the last equality we used the expression for $c(\sigma)$ from Theorem \ref{extensionclasssection2}. 
\end{proof}

Given a symplectic form $\omega \in \Omega^2(A(\sigma))$, we can use the above result to decompose the cohomology class $[\omega|_D]$. This allows us to study the geometry induced on the hypersurface $D$, as in Proposition \ref{trivialvariationfoliationinduced} and Proposition \ref{lem:structureonD}.

\appendix

\section{Group and Lie algebroid extensions}\label{sec:appa}
In this section we recall some general theory for the extension of groups, Lie algebroids and Lie groupoids. For a detailed account on Lie algebroid extensions see \cite{BRAHIC2010352}, or \cite{cra03}.
\subsection{Group extensions}
We consider group extensions of the form
\begin{equation*}
1 \rightarrow A \rightarrow B \rightarrow C \rightarrow 1,
\end{equation*}
with $A$ Abelian. Let $\sigma : C \rightarrow B$ be any set-theoretic splitting, then we may define a $C$-module structure:
\begin{equation*}
c\cdot a := \sigma(c)\cdot a\cdot \sigma(c)^{-1}
\end{equation*}
This structure does not depend on the choice of splitting as $A$ is abelian. Consequently, one may consider the complex of group-cochains with values in $A$: $\Omega^k(C;A)$.

Given any splitting $\sigma : C\rightarrow B$ we may consider
\begin{equation*}
C_{\sigma}(c_1,c_2) = \sigma(c_1c_2)^{-1}\sigma(c_1)\sigma(c_2).
\end{equation*}
It then follows that using $\sigma$, $B$ is isomorphic to $C \ltimes_{C_{\sigma}} A$ which is $C \times A$ as a set and has group structure given by
\begin{equation*}
(c_1,a_1)\cdot (c_2,a_2) = (c_1 \cdot c_2, c_1^{-1} \cdot a_1 \cdot a_2 \cdot (c_1c_2)^{-1} \cdot C_{\sigma}(c_1,c_2)).
\end{equation*}
The cohomology class $[C_{\sigma}] \in H^2(C;A)$ is independent on the choice of splitting, and vanishes precisely when the extension is split, that is $B$ is isomorphic to $A \rtimes C$ as groups.

\subsection{Lie algebroid extensions}

The following result is well-known, but for completeness-sake we will still provide a proof:
\begin{proposition}\label{prop:algsplittingsabstract}
Suppose that we have a short exact sequence of Lie algebroids:
\begin{equation*}
0 \rightarrow B \rightarrow A \rightarrow C \rightarrow 0,
\end{equation*}
with $B$ contained in $\ker \rho(A)$. Then:
\begin{enumerate}
\item Any vector bundle splitting $\sigma : C \rightarrow A$ induces a $C$-connection on $B$, given by $\nabla_c(b) := [\sigma(c),b]$.
\item If $\sigma$ is a bracket preserving splitting $\Omega^{\bullet}(C;B)$ obtains the structure of a dgla. If $\tilde{\sigma}$ is any other bracket preserving splitting then $\sigma - \tilde{\sigma} \in {\rm MC}(\Omega^{\bullet}(C,B))$.
\item When $B$ is abelian, the representation $\nabla$ does not depend on the choice of splitting $\sigma$.
\end{enumerate}
\end{proposition}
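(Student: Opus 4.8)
The plan is to establish the three statements in order, treating the first as the foundation and the third as the main payoff. For part (1), I would take an arbitrary vector bundle splitting $\sigma : C \to A$ and define $\nabla_c(b) := [\sigma(c), b]$ for $c \in \Gamma(C)$ and $b \in \Gamma(B)$. The first task is to verify that this lands in $\Gamma(B)$: since $B \subset \ker\rho(A)$ is an ideal (being the kernel of a Lie algebroid morphism), the bracket $[\sigma(c), b]$ remains a section of $B$. Then I would check the Leibniz rule in both slots. Because $B \subset \ker\rho$, the anchor of any section of $B$ vanishes, so $[\sigma(c), fb] = f[\sigma(c),b] + \rho(\sigma(c))(f)\, b$ exhibits $C$-linearity in the appropriate sense, while $\mathbb{C}^\infty$-linearity in the $c$-slot follows from $[\sigma(fc), b] = f[\sigma(c),b] - \rho(b)(f)\sigma(c) = f[\sigma(c),b]$, again using $\rho(b) = 0$. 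This is the step where the hypothesis $B \subset \ker\rho$ does all the work, so I would flag it carefully.

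For part (2), assuming $\sigma$ is bracket preserving, I would transport the graded Lie bracket on $\Gamma(B)$ and the $C$-representation $\nabla$ into a dgla structure on $\Omega^\bullet(C; B)$ in the standard way: the differential is $d^\nabla$ and the bracket combines wedge of forms with the Lie bracket on $B$. The fact that $\sigma$ is bracket preserving guarantees that $\nabla$ is a flat connection (its curvature measures the failure of $\sigma$ to preserve brackets into $B$), so $d^\nabla$ squares to zero. The claim that $\sigma - \tilde\sigma$ is a Maurer--Cartan element is then a direct computation: the difference of two splittings of the anchor sequence is a $B$-valued one-form on $C$, and expanding the condition that $\tilde\sigma = \sigma + (\sigma - \tilde\sigma)$ again preserves brackets yields exactly the MC equation $d^\nabla(\sigma-\tilde\sigma) + \tfrac12[\sigma-\tilde\sigma, \sigma-\tilde\sigma] = 0$. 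I would cite the general theory in Appendix \ref{sec:appa} (and \cite{cra03, BRAHIC2010352}) rather than reproving these standard identities.

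For part (3), the main point I actually want, I would argue as follows. Given two vector bundle splittings $\sigma, \tilde\sigma$, their difference $\phi := \sigma - \tilde\sigma$ is a bundle map $C \to B$. For the two induced connections $\nabla, \tilde\nabla$ and any $b \in \Gamma(B)$, I would compute
\[
\nabla_c(b) - \tilde\nabla_c(b) = [\sigma(c), b] - [\tilde\sigma(c), b] = [\phi(c), b].
\]
Since $\phi(c)$ is a section of $B$ and $B$ is abelian, the bracket $[\phi(c), b]$ vanishes identically. Hence $\nabla = \tilde\nabla$, so the representation is independent of the splitting. The main obstacle throughout is conceptual rather than computational: one must keep straight which bracket identities rely on $B \subset \ker\rho$ (needed for part (1) to be well-defined and Leibniz) versus which rely on $B$ being abelian (needed only for part (3)); the calculations themselves are short once the hypotheses are invoked correctly.
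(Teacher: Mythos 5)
Your proposal is correct and follows essentially the same route as the paper: part (1) via the two Leibniz computations using the vanishing of the anchor on $B$, part (2) by expanding the bracket-preservation of the two splittings into the Maurer--Cartan equation for $\sigma - \tilde{\sigma}$ (the paper carries this out explicitly, obtaining $d^{\nabla}(\sigma-\tilde{\sigma}) - \tfrac{1}{2}[\sigma-\tilde{\sigma},\sigma-\tilde{\sigma}] = C_{\sigma} + C_{\tilde{\sigma}}$, which vanishes for bracket-preserving splittings; your sign is a convention difference, not an error), and part (3) by noting that $\sigma(c)-\tilde{\sigma}(c)$ is a section of $B$ and hence brackets trivially with $B$ when $B$ is abelian. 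One caution: in part (2) you cannot cite Appendix \ref{sec:appa}, since this proposition \emph{is} the content of that appendix and the citation would be circular---but the external references and the computation you sketch are exactly what the paper does, so this is a citation slip rather than a gap.
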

\begin{proof}
1) We have that
\begin{equation*}
\nabla_c(fb) = [\sigma(c),fb] = f[\sigma(c),b] + \mathcal{L}_{\rho_A(\sigma(c))}(b) = f[\sigma(c),b] + \mathcal{L}_{\rho_C(c)}(b),
\end{equation*}
and
\begin{equation*}
\nabla_{fc}(b) = [f\sigma(c),b] = f[\sigma(c),b] - \mathcal{L}_{\rho_B(b)}(f)c = f[\sigma(c),b],
\end{equation*}
as $B$ has trivial anchor.\\
2) We have to show that
\begin{equation*}
d^{\nabla}(\sigma-\tilde{\sigma}) + \frac{1}{2}[\sigma-\tilde{\sigma},\sigma-\tilde{\sigma}] = 0.
\end{equation*}
We have
\begin{align*}
d^{\nabla}(\sigma-\tilde{\sigma})(X,Y) &= \nabla_X(\sigma(Y)-\tsigma(Y))-\nabla_Y(\sigma(X)-\tsigma(Y)) - \sigma([X,Y]) + \tsigma([X,Y])\\
&=[\sigma(X),\sigma(Y)-\tsigma(Y)]-[\sigma(Y),\sigma(X)-\tsigma(X)] - \sigma([X,Y]) + \tsigma([X,Y])\\
&= [\sigma(X),\sigma(Y)]-[\sigma(X),\tsigma(Y)]-[\sigma(Y),\sigma(X)]+[\sigma(Y),\tsigma(X)]-\sigma([X,Y])+\tsigma([X,Y])\\
&= C_{\sigma}(X,Y) + [\sigma(X),\sigma(Y)] - [\sigma(X),\tsigma(Y)]+[\sigma(Y),\tsigma(X)] + \tsigma([X,Y]),
\end{align*}
whereas
\begin{align*}
\frac{1}{2}[\sigma-\tilde{\sigma},\sigma-\tilde{\sigma}] &= [\sigma(X)-\tsigma(X),\sigma(Y)-\tsigma(Y)]\\
&= [\sigma(X),\sigma(Y)]-[\sigma(X),\tsigma(Y)]-[\tsigma(X),\sigma(Y)]+[\tsigma(X),\tsigma(Y)].
\end{align*}
From this we conclude that 
\begin{equation*}
d^{\nabla}(\sigma-\tilde{\sigma}) - \frac{1}{2}[\sigma-\tilde{\sigma},\sigma-\tilde{\sigma}] = C_{\sigma} + C_{\tsigma}.
\end{equation*}
3) 
As $\sigma(c)-\tsigma(c)$ lies in $B$ as the sequence is exact it follows that $[\sigma(c)-\tsigma(c),b] = 0$ for all $b \in B$. Consequently $\nabla_c(b)$ does not depend on the choice of splitting.
\end{proof}

Given a short exact sequence of Lie algebroids, and a vector bundle splitting $\sigma : C \rightarrow A$, we may define the curvature of $\sigma$ by:
\begin{equation*}
C_{\sigma} \in \Omega^2(C;B), \quad C_{\sigma}(X,Y) := [\sigma(X),\sigma(Y)] - \sigma([X,Y]).
\end{equation*} 

\begin{proposition}
Given a vector bundle splitting $\sigma : C \rightarrow A$ of a short exact sequence of Lie algebroids, the curvature $C_{\sigma}$ defines a cohomology class in $H^2(C;B)$, where the representation is induced by the above proposition.
\begin{itemize}
\item $C: = [C_{\sigma}] \in H^2(C;B)$ is independent from the choice of splitting.
\end{itemize}

\end{proposition}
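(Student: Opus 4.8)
The plan is to prove two things. First, that $C_\sigma$ is a cocycle for the differential $d^\nabla$ on $\Omega^\bullet(C;B)$ coming from the representation of Proposition \ref{prop:algsplittingsabstract}, so that it determines a class $[C_\sigma]\in H^2(C;B)$; and second, that this class is unchanged when $\sigma$ is replaced by any other vector bundle splitting. Before either step I would record that the target cohomology is unambiguous. By the Jacobi identity in $A$, the curvature of $\nabla_c(b)=[\sigma(c),b]$ is the operator $b\mapsto [C_\sigma(X,Y),b]$; since $C_\sigma$ takes values in $B$ and $B$ is abelian (as in the cases of interest, e.g. the kernel $\Sym^k(\nu_W^*)\otimes\End(\nu_W)$ of \eqref{eq:sesofrest}), this vanishes, so $\nabla$ is flat. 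By part (3) of Proposition \ref{prop:algsplittingsabstract} it is moreover independent of the splitting, so every $\sigma$ produces the same differential $d^\nabla$ on the same complex $\Omega^\bullet(C;B)$, and $H^2(C;B)$ is well-defined.

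For closedness I would expand the Lie algebroid de Rham differential of the two-form $C_\sigma$,
\[
(d^\nabla C_\sigma)(X,Y,Z)=\sum_{\mathrm{cyc}}\nabla_X C_\sigma(Y,Z)-\sum_{\mathrm{cyc}}C_\sigma([X,Y],Z),
\]
substitute $\nabla_X(\,\cdot\,)=[\sigma(X),\,\cdot\,]$ together with $C_\sigma(Y,Z)=[\sigma(Y),\sigma(Z)]-\sigma([Y,Z])$, and then sort the resulting terms. The triple-bracket terms $[\sigma(X),[\sigma(Y),\sigma(Z)]]$ cancel in the cyclic sum by the Jacobi identity of $A$; the mixed terms $[\sigma(X),\sigma([Y,Z])]$ cancel against the corresponding pieces of $C_\sigma([X,Y],Z)$; and what survives is $\sigma$ applied to the Jacobiator $\sum_{\mathrm{cyc}}[[X,Y],Z]$ of $C$, which is zero. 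This is the Bianchi identity, and I expect it to be the main—though entirely clerical—obstacle: the only real work is keeping the six signed terms straight and matching them correctly.

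For independence, let $\tilde\sigma$ be a second vector bundle splitting and set $\alpha:=\sigma-\tsigma$. Since $\sigma$ and $\tsigma$ are both right inverses of the projection $A\to C$, the difference $\alpha$ lands in the kernel $B$, hence $\alpha\in\Omega^1(C;B)$. Writing $\sigma=\tsigma+\alpha$ and expanding the bracket bilinearly yields
\[
C_\sigma(X,Y)-C_{\tsigma}(X,Y)=[\tsigma(X),\alpha(Y)]+[\alpha(X),\tsigma(Y)]+[\alpha(X),\alpha(Y)]-\alpha([X,Y]).
\]
The term $[\alpha(X),\alpha(Y)]$ vanishes because $B$ is abelian, and the first two terms are precisely $\nabla_X\alpha(Y)-\nabla_Y\alpha(X)$; thus the right-hand side equals $(d^\nabla\alpha)(X,Y)$. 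Therefore $C_\sigma-C_{\tsigma}=d^\nabla\alpha$ is exact and $[C_\sigma]=[C_{\tsigma}]$. I would remark that this is exactly the computation underlying part (2) of Proposition \ref{prop:algsplittingsabstract}, where the difference of two \emph{bracket-preserving} splittings is shown to satisfy the Maurer--Cartan equation; here the curvatures $C_\sigma,C_{\tsigma}$ measure the failure of bracket-preservation, and the same identity shows their difference is a coboundary. This establishes that $C:=[C_\sigma]$ is independent of the choice of splitting and completes the proof.
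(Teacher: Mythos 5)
Your proof is correct, and it fills a gap rather than paralleling an argument in the text: the paper states this proposition \emph{without} proof, so the only meaningful comparison is with the computations in the proof of Proposition \ref{prop:algsplittingsabstract}. Your route is the standard one, and it is sound: (i) flatness of $\nabla$ follows because its curvature is $b \mapsto [C_{\sigma}(X,Y),b]$, which vanishes as $C_{\sigma}$ is $B$-valued and $B$ is abelian, while part (3) of Proposition \ref{prop:algsplittingsabstract} makes $d^{\nabla}$ independent of the splitting, so $H^{2}(C;B)$ is unambiguous; (ii) closedness is the Bianchi identity, and your cancellation scheme (Jacobi in $A$ for the triple brackets, matching of mixed terms, Jacobi in $C$ for the $\sigma$-terms) goes through with the sign conventions you use; (iii) your independence computation $C_{\sigma} - C_{\tilde{\sigma}} = d^{\nabla}(\sigma - \tilde{\sigma})$ is essentially the same expansion the paper performs in part (2) of Proposition \ref{prop:algsplittingsabstract} for bracket-preserving splittings, now applied to arbitrary vector bundle splittings, with the quadratic term $[\alpha(X),\alpha(Y)]$ killed by commutativity of $B$ rather than by the Maurer--Cartan equation. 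One point you handle that deserves emphasis: the proposition only makes sense under the implicit hypothesis that $B$ is abelian (as in the kernel $\Sym^{k}(\nu_W^{*}) \otimes \End(\nu_W)$ of \eqref{eq:sesofrest} with $k \geq 1$), since otherwise the connection of part (1) is neither flat nor splitting-independent and the target cohomology is undefined; flagging this is not pedantry but exactly what legitimizes the statement as written.
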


\section{Characteristic classes of (symplectic) foliations}
We recall some theory of characteristic classes of (symplectic) foliations.

Let $\Omega^{\bullet}_{\ff}(M)$ denote the kernel of the restriction map $\iota^* : \Omega^{\bullet}(M) \rightarrow \Omega^{\bullet}(\ff)$. We thus obtain a short exact sequence of cochain complexes:
\begin{equation}\label{eq:folseq}
0 \rightarrow \Omega_{\ff}^{\bullet}(M) \rightarrow \Omega^{\bullet}(M) \rightarrow \Omega^{\bullet}(\ff) \rightarrow 0.
\end{equation}
We also consider $\Omega^{\bullet}(\ff,\nu^*)$, endowed with the differential induced by the Bott-connection. We have several maps relating these complexes, the first is:
\begin{align}
p : \Omega^{k+1}_{\ff}(M) &\rightarrow \Omega^k(\ff,\nu^*), \label{eq:p}\\
p(\alpha)(X_1,\ldots,X_k)(\bar{N}) &= \alpha(X_1,\ldots,X_n,N), \nonumber
\end{align}
with $X_1,\ldots,X_n \in \Gamma(T\ff)$ and where for $N \in \mathfrak{X}(M)$, $\bar{N}$ denotes the corresponding section of the zero-section. Another map, which we denote simply by the differential, is given by
\begin{align*}
d : H^k(\ff) &\rightarrow H^{k+1}_{\ff}(M),\\
[\alpha] &\mapsto [d\tilde{\alpha}],
\end{align*}
where $\tilde{\alpha} \in \Omega^k(M)$ is any extension of $\alpha$. We may combine both these maps to obtain the variation map $d_{\nu} = p \circ d : H^k(\ff) \rightarrow H^{k}(\ff,\nu^*)$.

\begin{definition}
The \textbf{symplectic variation} of a symplectic foliation $(\ff,\omega_{\ff})$ is the class ${\rm var}_{\omega_{\ff}} := d_{\nu}[\omega_{\ff}] \in H^2(\ff,\nu^*)$.
\end{definition}

Let $\theta \in \Omega^1(M;\nu)$ denote the quotient map arising from the sequence
\begin{equation*}
0 \rightarrow T\ff \rightarrow TM \rightarrow \nu \rightarrow 0.
\end{equation*}
One may phrase the involutivity condition for $\ff$ purely in terms of $\theta$ as follows:
\begin{lemma}
Let $\theta \in \Omega^1(M;\nu)$ and a flat connection $\nabla$ on $\nu$ be given. Then $\ker\theta$ is involutive if and only if
\begin{equation*}
d^{\nabla}\theta = \beta \wedge \theta,
\end{equation*}
for some $\beta \in \Omega^1(M)$.
\end{lemma}

\begin{lemma}
Let $\ff$ be a foliation on $M$, and $\nabla$ a flat connection on the normal bundle $\nu$ up to gauge equivalence. Then 
\begin{equation*}
m(\ff,\nabla) := [\beta|_{\ff}] \in H^1(\ff), 
\end{equation*}
is a well-defined cohomology class, called the \textbf{modular class} of $(\ff,\nabla)$.
\end{lemma}
Let 
\begin{equation*}
\delta : H^1(\ff) \rightarrow H^2_{\ff}(D) \rightarrow H^1(\ff,\nu^*)
\end{equation*}
be the connection morphism of \eqref{eq:folseq} combined with the map $p$ of Equation \ref{eq:p}.
\begin{definition}\label{def:varfol}
The \textbf{variation} of $\ff$ is defined as $\Var_{\ff} := \delta(m(\ff,\nabla))$.
\end{definition}
\begin{lemma}
The variation $\Var_{\ff}$ does not depend on the choice of connection.
\end{lemma}
\begin{proof}
Changing to another flat connection means 
\[
\tilde{\nabla} = \nabla + \gamma, 
\]
for $\gamma \in \Omega^{1, cl}_{X}$. Then 
\[
d^{\tilde{\nabla}}\theta = (\beta + \gamma) \wedge \theta. 
\]
Hence, $m(F, \tilde{\nabla}) = (\beta + \gamma)|_{F} = m(F, \nabla) + \gamma|_{F}$. Therefore, 
\[
\delta(m(F, \tilde{\nabla})) = \delta(m(F, \nabla)) + d\gamma = \delta(m(F, \nabla)). 
\]
\end{proof}

\begin{proposition}\label{prop:bottextend}
Consider a pair $(F, \nabla)$, let $\theta \in \Omega^1_{X} \otimes \nu$, and let $m \in H^1_{F}$ be the modular class. Then the following are equivalent
\begin{enumerate}
\item $\mathrm{var}(F) = \delta(m) = 0$, 
\item there exists a flat connection $\tilde{\nabla}$ on $\nu$ such that $d^{\tilde{\nabla}}\theta = 0$, 
\item there exists a flat extension of the Bott connection on $\nu$. 
\end{enumerate}
Furthermore, if these conditions are equivalent, the flat extension of the Bott connection is unique up to an element of $H^0_{F}(\nu^*)$. In other words, it is unique up to a flat section of the Bott connection on $\nu^*$. 
\end{proposition}
\begin{proof}
First from $1$ to $2$. Assume that $\delta(m) = 0$. This means that the class $[d\beta] \in H^{1}_{F}(\nu^*)$ is $0$, implying that $d\beta$ is in fact exact in $\Omega^{\bullet}_{F} \otimes \nu^*$. This means that there is some $\eta \in \nu^*$ with $d\eta = d\beta$. Therefore, $\tilde{\beta} = \beta - \eta \in \Omega_{X}^1$ is closed. But \[
\tilde{\beta} \wedge \theta = \beta \wedge \theta - \eta \wedge \theta = \beta \wedge \theta. 
\]
Now since $\tilde{\beta}$ is closed, we may define a new flat connection $\tilde{\nabla} = \nabla - \tilde{\beta}$ on $\nu$. With respect to this connection 
\[
d^{\tilde{\nabla}}\theta = d^{\nabla} \theta - \tilde{\beta} \wedge \theta = \beta \wedge \theta - \beta \wedge \theta = 0. 
\]
Now we show $2$ implies $1$. So let $\tilde{\nabla}$ be a flat connection such that $d^{\tilde{\nabla}}\theta = 0$. It will differ from $\nabla$ by a closed $1$-form: $\tilde{\nabla} = \nabla - \gamma$, for $\gamma \in \Omega^{1,cl}_{X}$. Hence 
\[
0 = d^{\tilde{\nabla}}\theta  = d^{\nabla} \theta - \gamma \wedge \theta = \beta \wedge \theta - \gamma \wedge \theta. 
\]
Therefore, $\beta \wedge \theta = \gamma \wedge \theta$. This means that $\beta|_{F} = \gamma|_{F}$ and so 
\[
\delta(m) = [ d \gamma ] = 0. 
\]

To show that $2$ and $3$ are equivalent we prove that a connection $\nabla$ on $\nu$ extends the Bott connection if and only if $d^{\nabla} \theta = 0$. Recall that the Bott connection is a $TF$ connection $\nabla^B$ on $\nu$ defined as follows 
\[
\nabla^{B}_{V}(W) = \theta([V, \tilde{W}]),
\]
where $V \in TF$ and $W \in \nu$ which has an extension $\tilde{W} \in TX$. Therefore, a flat $TX$-connection $\nabla$ on $\nu$ extends the Bott connection if and only if  
\[
\nabla_{V}(\theta(U)) = \theta([V, U])
\]
for $V \in F$ and $U \in TX$. But since $\theta(V) = 0$, this equation may be rewritten as follows: 
\[
d^{\nabla}\theta(V, U) = \nabla_{V}(\theta(U))  - \nabla_{U}(\theta(V)) - \theta([V, U]) = 0. 
\]
Hence, $\nabla$ extends Bott if and only if $d^{\nabla}\theta$ vanishes when one input lies in $F$. But since $\nu$ has rank $1$, this is precisely the condition that $d^{\nabla}\theta = 0$
\end{proof}

\begin{definition}
A corank $1$ foliation $F \subset TX$ on $X$ has trivial variation if $\mathrm{var}(F) = 0$. 
\end{definition}

Now let $F$ be a foliation with trivial variation. Let $\theta \in \Omega_{X}^1(\nu)$ be the defining $1$-form and let $\nabla$ be a flat connection such that $d^{\nabla}\theta = 0$. Now let $\alpha \in H^{0}_{F}(\nu^*)$. Namely, $\alpha \in \nu^* \subset \Omega_{X}^1$ such that $d\alpha = 0$. Then $\alpha \wedge \theta = 0$ and hence $d^{\tilde{\nabla}}(\theta) = 0$, where $\tilde{\nabla} = \nabla + \alpha$. 

Let $t_{\alpha} \in \nu^*$ be the section $\alpha$ but viewed as a section of the line bundle rather than a $1$-form. Then 
\[
\alpha = \langle t_{\alpha}, \theta \rangle,
\]
and 
\[
0 = d\alpha = \langle d^{\nabla} t_{\alpha}, \theta \rangle + \langle t_{\alpha}, d^{\nabla} \theta \rangle = \langle d^{\nabla} t_{\alpha}, \theta \rangle . 
\]
This means that $d^{\nabla}t_{\alpha} \in \nu^* \otimes \nu^*$. Hence, 
\[
d^{\nabla}t_{\alpha} =  \langle u_{\alpha}, \theta \rangle,
\]
with $u_{\alpha} \in \nu^{-2}$.

Now suppose that $[\theta] = 0 \in H^1_{X}(\nu, \nabla)$. This means that there is some section $s \in \nu$ such that 
\[
\nabla(s) = \theta. 
\]

\bibliographystyle{alpha}
\bibliography{references} 
\end{document}